\documentclass[11pt]{article}

\usepackage[
    left=0.8in,
    right=0.8in,
    top=0.8in,
    bottom=0.9in
]{geometry}

\usepackage{epsfig}
\usepackage{mathrsfs}
\usepackage{amssymb,amsmath,amsthm,amscd}
\usepackage{latexsym}
\usepackage{bbm,dsfont}
\usepackage{enumitem}

\usepackage[colorlinks,
            linkcolor=blue,
            anchorcolor=blue,
            citecolor=blue]{hyperref}

\newtheorem{thm}{Theorem}[section]
\newtheorem{prop}[thm]{Proposition}
\newtheorem{cor}[thm]{Corollary}
\newtheorem{lem}[thm]{Lemma}

\theoremstyle{definition}
\newtheorem{defn}[thm]{Definition}
\newtheorem{rem}[thm]{Remark}
\newtheorem{assumption}{Assumption}[section]

\newcommand{\be}{\begin{equation}}
\newcommand{\ee}{\end{equation}}

\theoremstyle{plain}
\newtheorem{THM}{Theorem}

\newcommand{\1}{\mathds{1}}
\newcommand{\W}{\mathbb{W}}

\def \a{\alpha} \def \b{\beta} \def \g{\gamma} \def \d{\delta}
   
\def \s{\sigma} \def \l{\lambda}  
\def \O{\Omega}   
\def \k{\kappa}  \def \G{\Gamma}
\def \di{\mathrm{dist}} \def \spa{\mathrm{span}}
 
\def \L{\Lambda}

\def\Lip{\mathrm{Lip}}

\def\Eb{\mathbf{E}}

\def\Rb{\mathbb{R}}

\def\Ac{\mathcal{A}}
\def\Fc{\mathcal{F}}
\def\Nc{\mathcal{N}}

\def \eps {{ \varepsilon }}

\begin{document}

\baselineskip=1.2\baselineskip

\pagestyle{plain}

\title{Exponential mixing and Freidlin--Wentzell large deviation principle for Markov cocycles}

\author{
Rongchang Liu \footnote{
  School of Mathematics,
  Sichuan University,
Chengdu, Sichuan 610064, PR China,
Email: rcliu@scu.edu.cn},\;
 Kening Lu \footnote{
    School of Mathematics,
    Sichuan University,
  Chengdu, Sichuan 610064, PR China,
 Email: keninglu@scu.edu.cn},\;
 Lin Shi \footnote{
  School of Mathematical Sciences, 
  University of Electronic Science and Technology of China,
  Chengdu, Sichuan 611731, PR China,
Email: shilinlavender@163.com},\;
  Bixiang Wang \footnote{
Department of Mathematics, New Mexico Institute of Mining and
Technology,  Socorro,  NM~87801, USA,
Email: bwang@nmt.edu}
}

\date{}

\maketitle

\begingroup
\renewcommand{\thefootnote}{}
\footnotetext{
This work was supported by the Fundamental Research Funds for the Central Universities and the National Natural Science Foundation of China (Grant Nos.~12090010, 12090013 and 12471154). 
}
\endgroup

\begin{abstract}
  This paper studies the long-time statistics and small noise asymptotics of Markov cocycles associated with Markov processes in random environments modeled by measure-preserving dynamical systems on a standard Borel probability space. 
  
  Our first result provides an abstract criterion for exponential mixing of stationary measures for such cocycles, formulated toward SPDE applications with assumptions that can be verified directly from a priori estimates. To overcome the nonuniformity from the environment, we combine generalized coupling arguments with ergodic theoretic methods. This allows us to convert nonuniform estimates along the environment into contraction on a positive density set of times, and then upgrade this to all time contraction by introducing a block gap-counting argument.
  
  Our second result establishes a Freidlin--Wentzell large deviation principle (LDP) for the unique stationary measure in the small noise limit with a good rate function. For the upper bound, the noise is allowed to be degenerate, while the deterministic pullback attractor may have nontrivial dynamics. 
  
  The abstract theory applies to nonautonomous SPDEs. We illustrate it with two examples: the two-dimensional Navier--Stokes equations on bounded domains and damped Sine--Gordon equations, where both the deterministic forcing and the degenerate additive noise depend on the random environment. 
  \end{abstract}



  
  \tableofcontents
  
\section{Introduction}

 Markov processes in random environments form an important class of time-inhomogeneous Markov systems in which the time dependence possesses a coherent dynamical structure. In this work, we study the long-time statistics and small noise asymptotics of such systems. We first provide sufficient conditions for the existence of a unique stationary measure that attracts transition probabilities exponentially. We then establish a criterion ensuring that this unique stationary measure satisfies a Freidlin--Wentzell large deviation principle in the small noise limit. The results are formulated under assumptions that, in applications to SPDEs, can be verified directly through a priori estimates. In particular, they provide a unified framework for studying the statistical dynamics of a broad class of stochastic equations with time-dependent coefficients, including the periodic, quasi-periodic, and almost periodic settings that have been extensively studied. 

 Let $(E,\rho)$ be a Polish space with distance $\rho$, and let $\{\beta_t\}$ be an ergodic invertible measure-preserving flow on the standard Borel probability space $(\Sigma, m)$. A Markov process with state space $E$ in the random environment $(\Sigma, \beta_t, m)$ is a family of processes $\{X_t^\sigma\}_{t\ge0,\sigma\in\Sigma}$ such that, for each $\sigma\in\Sigma$, the process $X_t^\sigma$ is Markov on $(\Omega,\mathcal F,\mathbf P)$ and its transition operator
 \[P_t^{\sigma}f:=\mathbf{E}f(X_t^{\sigma}), \text{ for bounded measurable } f: E\to\mathbb{R}, \]
 is Feller and generates a Markov cocycle that satisfies the cocycle Chapman-Kolmogorov relation
  \[P_{t+s}^\sigma=P_s^\sigma \circ  P_t^{\beta_s\sigma},\qquad s,t\geq0.\]
  A typical example is the Markov process generated by a stochastic differential equation with coefficients quasi-periodic in time, where $\beta_t$ is the irrational rotation flow.

  The corresponding invariant object is not a single probability measure on $E$, but a family $\{\mu_\sigma\}_{\sigma\in\Sigma}$ satisfying for $m$-a.e. $\sigma$ that 
  \[
    \mu_\sigma P_t^\sigma=\mu_{\beta_t\sigma},\qquad t\geq0, 
  \]
  and we refer to such a family as a stationary measure. We say that the stationary measure is unique if any two stationary measures coincide for $m$-a.e. $\sigma$. It is called mixing if, for every probability measure $\nu$ on $E$,
  \[
  d(\nu P_t^\sigma,\mu_{\beta_t\sigma})\to0
  \qquad\text{as }t\to\infty,
  \]
  where $d$ is any metric generating the topology of weak convergence on the space of probability measures on $E$.

  \subsection{Unique ergodicity and exponential mixing}

  For time-homogeneous Markov processes, the theory of unique ergodicity and mixing is well developed; see, for instance, the comprehensive monograph of Meyn and Tweedie \cite{meynTweedie}. In infinite dimensions, however, the absence of a canonical reference measure and the possible failure of the strong Feller property create substantial difficulties. Over the past decades, several robust approaches have been developed, see for example \cite{kuk1,HM2006,HM2011,BKS2020} and references therein. 
  
  The time-inhomogeneous case is considerably less understood, especially for infinite-dimensional systems. Kifer \cite{kifer1996} proved a Perron--Frobenius type stability theorem for Markov cocycles on compact state spaces under a randomized Doeblin condition. Feng--Qu--Zhao \cite{FQZ2023} developed a Harris-type theory for general time-inhomogeneous Markov systems under a local Doeblin condition. These Doeblin-type assumptions are powerful in finite dimensions, where they can often be verified through lower bounds on transition densities, but they are generally unavailable for SPDEs. Indeed, transition probabilities in infinite-dimensional spaces rarely admit densities with respect to a common reference measure, and the total variation metric is usually too strong for degenerate-noise SPDEs. Existing results for nonautonomous SPDEs often rely on nondegenerate noise or additional structure in the time dependence. Da Prato--Debussche~\cite{da1} and related works considered time-periodic or stationary settings with nondegenerate noise whose coefficients are independent of time. Liu--Lu~\cite{liur1} studied the two-dimensional Navier--Stokes equation with quasi-periodic deterministic forcing and degenerate noise of Hairer--Mattingly type~\cite{HM2006}, their approach uses the compact torus structure of the underlying irrational rotation. 
  
  In contrast, our framework allows the environment to be merely a standard Borel probability space equipped with an ergodic measurable flow. This unifies  periodic, quasi-periodic, almost periodic, and stationary random settings, but does not rely on compactness or continuity of the symbol space. In particular, it applies to SPDEs with degenerate noise and with noise coefficients depending on time. An informal version of our first main result is the following.
  
  \begin{THM}\label{t.060701}
  Assume that the Markov cocycle satisfies a random Lyapunov structure and that there exists a generalized asymptotic coupling with controlled random Girsanov cost. Then there is a unique stationary measure $\mu_{\sigma}$ which exponentially attracts the transition probabilities both forward and backward in time.  
  \end{THM}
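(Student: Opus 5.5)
We will prove the theorem by establishing a one-step contraction in a suitable Wasserstein-type distance on a random set of good times, and then iterating it along the environment. The plan follows the strategy announced in the abstract. Throughout, write $d_\delta(x,y)=1\wedge \delta^{-1}\rho(x,y)$ and work with the associated Kantorovich distance $W_\delta$. The random Lyapunov structure gives a function $V$ and random constants with $P_t^\sigma V\le \gamma(\sigma,t)V + K(\sigma)$, where $\gamma(\sigma,t)\to0$ at a rate governed by a Birkhoff-type average along $\beta_t$.

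\textbf{Step 1: local contraction from the generalized coupling.} For $x,y$ in a sublevel set $\{V\le R\}$ and a fixed time step $T$, we use the generalized asymptotic coupling. It produces a process $\tilde X^{\sigma,y}$ whose law is absolutely continuous with respect to that of $X^{\sigma,y}$, with a Girsanov cost $\Gamma(\sigma,R)$, and which converges to $X^{\sigma,x}$. A Pinsker/Hellinger estimate gives
\[ W_{d_\delta}(\delta_xP_T^\sigma,\delta_yP_T^\sigma)\le 1-\kappa(\sigma,R) \]
for some random $\kappa(\sigma,R)>0$. For nearby points, $\rho(x,y)\le\delta$, we additionally get a genuine contraction factor $<1$ in $d_\delta$, again up to random constants. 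Combining this with the Lyapunov drift in a Hairer--Mattingly weak Harris argument with distance $\tilde d=\sqrt{d_\delta(1+\beta V(x)+\beta V(y))}$, we obtain
\[ W_{\tilde d}(\nu P_T^\sigma,\nu' P_T^\sigma)\le \alpha(\sigma)W_{\tilde d}(\nu,\nu'),\]
where $\alpha(\sigma)\le 1$ always (or at least is controlled) and $\alpha(\sigma)<\bar\alpha<1$ on a measurable set $G$ with $m(G)>0$. The set $G$ is obtained by choosing thresholds so that the random Girsanov cost and the Lyapunov constants are bounded.

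\textbf{Step 2: from good set to all times.} This is the main obstacle, because $\alpha(\sigma)$ may exceed $1$ off $G$ and be unbounded. We first use Birkhoff's ergodic theorem for the discrete map $\beta_T$: for a.e. $\sigma$, the visits $\beta_{kT}\sigma\in G$ have density $m(G)$. For the bad blocks we bound $\log\alpha$ via its integrability, which follows from the controlled cost assumption. We then apply a block gap-counting argument. Partition $[0,nT]$ into blocks of length $L$, and call a block good if the fraction of its times in $G$ exceeds $m(G)/2$ and the sum of $\log^+\alpha$ over it is small. By the ergodic theorem applied to block indicators, good blocks have density close to $1$ for $L$ large. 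The product of contraction factors is then $\le e^{-cn}$ times a tempered random constant $C(\sigma)$, meaning $\lim_{t\to\infty} t^{-1}\log C(\beta_t\sigma)=0$. Gaps between good blocks are handled by summing $\log^+\alpha$, and these sums are sublinear a.s.

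\textbf{Step 3: existence, uniqueness and forward/backward attraction.} Consider the pullback sequence $\nu P_{t}^{\beta_{-t}\sigma}$. By Step 2 applied at $\beta_{-t}\sigma$, together with temperedness, this sequence is Cauchy in $W_{\tilde d}$. Its limit $\mu_\sigma$ satisfies $\mu_\sigma P_t^\sigma=\mu_{\beta_t\sigma}$ by the cocycle property and the Feller property. The Lyapunov bound ensures $\int V\,d\mu_\sigma<\infty$ with tempered growth. Uniqueness follows by applying the contraction to two stationary families, using Poincaré recurrence to return to sets where their $V$-moments are bounded. Forward attraction $W(\nu P_t^\sigma,\mu_{\beta_t\sigma})\le C(\sigma)(1+\nu V)e^{-ct}$ is Step 2 with $\nu'=\mu_\sigma$; backward attraction is the Cauchy estimate. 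Finally, passing from multiples of $T$ to continuous $t$ uses the drift bound on $[0,T]$.
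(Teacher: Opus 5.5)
Your Step 2 does not close, and the failure is structural. With a single fixed distance $\tilde d$, the one-step factor $\alpha(\sigma)$ exceeds $1$ on a set of positive measure. Examples are environments where $K_T(\sigma)$ is large, or where $\int_0^T b(\beta_s\sigma)\,ds$ is large so that the coupling expands. Across a gap these expansions \emph{multiply}. On a block of length $L$, Birkhoff's theorem makes $\frac1L\sum_j\log^+\alpha(\beta_{jT}\sigma)$ concentrate near $\int_\Sigma\log^+\alpha\,dm$, which is a fixed positive number. The contraction you collect on such a block is only about $\frac{m(G)}{2}L\,|\log\bar\alpha|$. Nothing in the hypotheses gives $\int_\Sigma\log^+\alpha\,dm<\frac{m(G)}{2}|\log\bar\alpha|$. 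On the contrary, $G$ is produced by thresholding random constants, so $m(G)$ may be small and $\bar\alpha$ close to $1$. Hence the claim that blocks with small $\sum\log^+\alpha$ have density close to one is unjustified. Integrability of $\log\alpha$ controls the gaps only if the mean of $\log\alpha$ is already negative, and that is exactly what has not been shown. The temperedness of $C(\sigma)$ you invoke for the pullback limit inherits the same problem.

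Step 1 has a further defect. With $d_\delta=1\wedge\delta^{-1}\rho$ you cannot get small-scale contraction uniformly in $x$. Hypotheses (H2)--(H3) of Theorem~\ref{t.042501} control $p(X_t,Y_t)$ only up to a factor $e^{vU(x)}$ times an exponential martingale. The Girsanov defect in (H4) scales like $p(x,y)^{\alpha}$, not like $\rho(x,y)$.

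The missing idea is to absorb the environmental nonuniformity into \emph{environment-dependent} distances, so that expansions never compound.

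\begin{itemize}
\item The paper uses the premetric $\theta_\alpha=p^\alpha e^{\alpha vU(x)}$. Its coupling factor $A_T(\sigma)=C_\alpha e^{-\alpha\chi_T(\sigma)}$ satisfies $\int_\Sigma\log A_T\,dm<0$ for large $T$; this is where the gap $\zeta>\frac{\kappa}{\varpi}\int_\Sigma b\,dm$ enters.
\item It then solves the random affine recursion of Lemma~\ref{l.042401}, $qN(\sigma)=1+B_T(\sigma)+A_T(\sigma)N(\beta_T\sigma)$. The distance $d_\sigma=1\wedge N(\sigma)\theta_\alpha(x,y)\wedge N(\sigma)\theta_\alpha(y,x)$ contracts by the deterministic $q$ at small scales and is non-expanding at every step (Lemma~\ref{l.042502}).
\item Likewise $R(\sigma)$ with $aR(\sigma)+1+K_T(\sigma)=\eta R(\Theta\sigma)$ gives the adapted drift $Q_\sigma L_{\Theta\sigma}\le\eta L_\sigma+\eta$ for $L_\sigma=V/R(\sigma)$.
\item Consequently $D_\sigma$ expands by at most the deterministic factor $h=(1+\varkappa\Lambda)^{1/2}$ over a stretch of \emph{arbitrary} length (Corollary~\ref{c.042601}). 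Meanwhile $\ell$-blocks starting in $G$ contract by $r$, and $\varkappa$ can be chosen so that $hr<1$.
\end{itemize}

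Only with this bounded, non-compounding expansion does gap counting work. With $K_{-n,0}(\sigma)$ disjoint good blocks one gets the factor $h(hr)^{K_{-n,0}(\sigma)}$, and $\liminf_n K_{-n,0}(\sigma)/n\ge m(G)/(\ell+1)$. This gives pullback and forward contraction directly, with no temperedness argument.

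Finally, the paper proves the forward estimate only along $t=nT+r$, with an $r$-dependent full-measure set. Your continuous-time forward claim would need control of $C(\beta_r\sigma)$ uniformly in $r\in[0,T)$, which the drift bound on $[0,T]$ does not supply.
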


  We defer the technical assumptions $(H1)$--$(H4)$ and more detailed conclusions to Theorem~\ref{t.042501}. The proof is inspired by \cite{BKS2020}, but the random environment setting introduces a new layer of nonuniformity. In contrast with the homogeneous case, the contracting distance, the Lyapunov weight, and the effective coupling constants all depend on the current environment. A direct iteration of the usual weak Harris argument is therefore unavailable.

  To overcome this difficulty, we introduce a random distance evolving along the base flow and use a Lyapunov norm construction from random dynamical systems \cite{Arnold1998,Brandt1986} to absorb the nonuniform factors in time. The Birkhoff ergodic theorem then allows us to obtain contraction over blocks on a set of environment samples with positive measure and uniform control. The remaining blocks may be expanding, but their expansion is controllable relative to the contraction on the good blocks. Therefore, by introducing a block-gap counting procedure, we are able to upgrade the block contraction to a global contraction for almost all environment samples. The existence and exponential stability of the stationary measure then follow from a random fixed point argument. A further advantage of our approach is that it provides quantitative control of the exceptional sets. In particular, these sets can be chosen independently of the small noise parameter, a feature that is essential for deriving the Freidlin--Wentzell large deviation principle for the corresponding stationary measures.

  \subsection{The Freidlin--Wentzell large deviation principle}

  Once unique ergodicity is established, a natural next question is to study the small noise limit and to characterize the asymptotic behavior of the corresponding stationary measures. The study of small noise limits for random perturbations of deterministic systems goes back to a question of Kolmogorov, who proposed the idea that random perturbations may select stable dynamical objects. The closely related Freidlin--Wentzell large deviation principle for invariant measures originates from the classical Freidlin--Wentzell theory. It has been widely studied for autonomous stochastic systems and has also been extended to infinite-dimensional settings; see \cite{fre1,kha1,dem1,dup1,bud1,budhiraja2008large,sow1,cerr0,brz1,cerr2,mar1,mar2,salins2019uniform,sal2}.
  
  For PDEs with vanishing noise, however, most existing works cover the case in which the limiting dynamics is trivial, namely when the global attractor is a singleton. Martirosyan \cite{mar1} established the large deviation principle in the case where the attractor of the limiting dynamics consists of finitely many equilibria together with the connecting orbits between them, and applied this result to damped nonlinear wave equations. In \cite{mar2}, he obtained a large deviation upper bound for the two-dimensional Navier--Stokes equation without assuming trivial limiting dynamics. Much less is known about large deviation principles for stationary measures of genuinely nonautonomous SPDEs.

  Our second main result provides what appears to be the first general SPDE oriented LDP framework for stationary families of Markov cocycles over measurable random environments. The framework allows degenerate noise and, for the upper bound, nontrivial deterministic pullback attractors. Consider a randomly perturbed PDE in the abstract form
  \begin{align*}
    du(t)=F(\beta_t\sigma,u(t))dt
    +\sqrt\eps\,Q(\beta_t\sigma)dW_t, \qquad u(0)=u_0,
  \end{align*}
  which is assumed to be well-posed in a separable Hilbert space $H$. Here $F$ is a deterministic drift, $W_t$ is a cylindrical Wiener process and $Q(\beta_t\sigma)$ is the time-dependent noise coefficient, $\sqrt\eps$ is a noise intensity. We assume that the relevant deterministic system has a pullback attractor $\mathcal{A}(\sigma), \sigma\in\Sigma$. Define the pullback quasipotential at symbol $\sigma$  by
  \begin{equation*}
  \begin{split}
          E_{\Ac(\sigma)}(u_*)
          =\lim_{\eta\downarrow0}\inf\big\{& I^{\beta_{-s}\sigma}_{u_0,s}(u):
          s>0,\ u\in C([0,s];H),\\
          & \quad u_0\in\Ac(\beta_{-s}\sigma),\quad
          u(s)\in B_\eta(u_*)\big\},
  \end{split}
  \end{equation*}
  where $I^{\beta_{-s}\sigma}_{u_0,s}(u)$ is the energy of the controlled path $u$ on $[0,s]$ at symbol $\beta_{-s}\sigma$ and  $B_\eta(u_*)$ is an open ball in $H$. Then our main LDP result can be stated informally as follows. 
  \begin{THM}\label{t.060702}
    Assume that for $\varepsilon\in(0,1]$ the noisy system has a unique stationary measure $\mu^{\varepsilon}_{\s}$ and satisfies the trajectory Freidlin--Wentzell uniform LDP. Suppose further that for $m$-a.e. $\sigma\in\Sigma$, the quasipotential $E_{\Ac(\sigma)}$ has compact level sets, and that the tracking property, the nontrivial escaping energy property, and weak exponential tightness hold. Then for $m$-a.e. $\sigma\in\Sigma$, the upper large deviation bound
    \[
            \limsup_{\eps\to0}\eps\ln\mu^\eps_\sigma(F)
            \leq -\inf_{u\in F}E_{\Ac(\sigma)}(u)
    \]
    holds for every closed set $F\subset H$.  If the deterministic pullback attractor is a random point, then the matching lower bound
    \[
            \liminf_{\eps\to0}\eps\ln\mu^\eps_\sigma(G)
            \geq -\inf_{u\in G}E_{\Ac(\sigma)}(u)
    \]
    holds for every open set $G\subset H$.
  \end{THM}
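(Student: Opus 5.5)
The plan adapts the Freidlin--Wentzell/Sowers/Martirosyan argument for invariant measures to the pullback setting. The starting point is stationarity read backward in time: for every $s>0$ and $m$-a.e. $\sigma$,
\[
\mu^\eps_\sigma=\mu^\eps_{\beta_{-s}\sigma}P^{\beta_{-s}\sigma,\eps}_s .
\]
This writes the measure at $\sigma$ as an average, over initial data drawn from $\mu^\eps_{\beta_{-s}\sigma}$, of time-$s$ transition probabilities started at symbol $\beta_{-s}\sigma$. The trajectory uniform LDP then controls $P^{\beta_{-s}\sigma,\eps}_s(u_0,\cdot)$ uniformly for $u_0$ in compact sets, with action functional $I^{\beta_{-s}\sigma}_{u_0,s}$. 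The whole job is to show that the relevant initial data sit, with overwhelming probability, near $\Ac(\beta_{-s}\sigma)$. This produces exactly the infimum defining $E_{\Ac(\sigma)}$. A key tool is that the exceptional environment sets from the exponential-mixing theorem (Theorem~\ref{t.042501}) can be chosen uniformly in $\eps$, so a single full-measure set of $\sigma$ works for all $\eps$.

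\textbf{Upper bound.} Fix a closed $F$ and $\delta>0$.

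1. Use weak exponential tightness to pick a compact $K$ (depending on the symbol, but controlled along a subsequence of backward times $\beta_{-s_n}\sigma$ returning to a good set by Birkhoff/Poincaré recurrence) with
\[
\mu^\eps_{\beta_{-s}\sigma}(K^c)\le e^{-R/\eps}, \qquad R \text{ large}.
\]

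2. Invoke the nontrivial escaping energy property together with the tracking property, in the form of a Markov-chain decomposition. Either the trajectory from $K$ enters a small neighbourhood $B_\eta(\Ac(\beta_{-t}\sigma))$ within a bounded time, or it stays outside that neighbourhood for a long time. Staying outside costs action at least $c\,T$, so by the uniform LDP its probability is $\le e^{-(cT-\delta)/\eps}$.

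3. Trajectories that reach the neighbourhood of the attractor at some intermediate time and then land in $F$ at time $0$ cost at least
\[
\inf_{u\in F^{\eta'}}E_{\Ac(\sigma)}(u)-\delta .
\]
This uses the definition of the quasipotential and lower semicontinuity of the action, with tracking ensuring that points near $\Ac$ can be replaced by points on $\Ac$ at small cost.

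4. Compactness of the level sets of $E_{\Ac(\sigma)}$ lets me send $\eta'\to0$ and recover $\inf_F E_{\Ac(\sigma)}$. Summing the finitely many error terms gives the bound, since $R$ and $cT$ can be made larger than that infimum.

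\textbf{Lower bound.} Here $\Ac(\sigma)=\{a(\sigma)\}$ is a random point. Fix an open $G$, a point $u_*\in G$ and a ball $B_\eta(u_*)\subset G$, and choose a near-optimal path from $a(\beta_{-s}\sigma)$ that lands in $B_{\eta/2}(u_*)$ with action close to $E_{\Ac(\sigma)}(u_*)$.

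1. Exponential mixing/attraction, uniform in $\eps$ on good environments, combined with the deterministic pullback attraction to the single point, shows that $\mu^\eps_{\beta_{-s-r}\sigma}$ transported over time $r$ puts mass at least $1/2$ in a small ball around $a(\beta_{-s}\sigma)$ for $\eps$ small. For this I would take an LLN-type concentration: by the uniform LDP, most trajectories from a bounded set follow the deterministic flow.

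2. Apply the uniform LDP lower bound on that small ball, and use continuity of the action with respect to the initial point to connect it to the near-optimal path. This gives
\[
\liminf_{\eps\to0}\eps\ln\mu^\eps_\sigma(G)\ge -E_{\Ac(\sigma)}(u_*)-\delta .
\]

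\textbf{Main obstacle.} I expect the main difficulty to be Step 2 of the upper bound: running the escaping-energy/recurrence decomposition when the attractor has nontrivial dynamics and all constants depend on the symbol. The cycle decomposition must be carried out along a backward orbit whose symbols vary. My first attempt would be to restrict to a positive-measure set of symbols with uniform constants and use the ergodic theorem to guarantee infinitely many backward times in that set. At those times I would glue the estimates, absorbing the nonuniform gaps through the $\eps$-independent exceptional sets.
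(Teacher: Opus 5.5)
\textbf{Gap in the upper bound.} The gap is in Step 2 of your upper bound, which is the step that carries the argument. The escaping-energy bound you can extract from the hypotheses (nontrivial escaping energy plus pullback attraction, cf.\ Lemma~\ref{l.070101}) has the form $a_{R+1,\eta/2,T}(\tau)\ge c$. It prices only blocks whose \emph{starting point} lies in a fixed ball $B_{R+1}$, since (H5) itself is stated only for initial data in $B_R$. Your decomposition controls the ball only at the initial time $-s$. The abstract hypotheses give no uniform bound on action level sets and no exit estimate, so nothing stops a cheap path from leaving every ball at intermediate times. On such blocks nothing forces a cost, and the dichotomy ``enter $B_\eta(\mathcal{A})$ or pay $cT$'' is unjustified. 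The missing idea is to use stationarity a second time, at the start $s_k$ of each counted block: $\mu^\eps_{\Theta^{-n}\sigma}P^\eps_{0,s_k,\Theta^{-n}\sigma}(B_R^c)=\mu^\eps_{\Theta^{-(k+1)}\sigma}(B_R^c)\le e^{-L/\eps}$, provided $\Theta^{-(k+1)}\sigma\in\mathcal{H}_{R,L}$. This is why the paper's good set is $\mathcal{G}=\{a_{R+1,\eta/2,T}\ge c\}\cap\beta_T\mathcal{H}_{R,L}$, and it is the source of the extra term $I_{32}$ in the decomposition.

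\textbf{Handling the nonuniformity, and the lower bound.} Your plan for the nonuniformity also lacks a concrete mechanism. The $\eps$-independence of the exceptional sets only ensures that the stationarity relations hold simultaneously for all $\eps$; it does nothing about the symbol dependence of $c$, $R$ and $\eta$. What works is the following:
\begin{itemize}
\item Fix $\sigma$ and use Birkhoff for the ergodic map $\beta_T$ to choose one $n$ with $\beta_{-nT}\sigma\in\mathcal{H}_{R,L}$ and at least $n_*>M_1/c$ indices $k<n$ with $\Theta^{-k}\sigma\in\mathcal{G}$.
\item Impose conditions only at the starts and ends of these selected disjoint blocks, and discard the other blocks. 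The action is additive and nonnegative, so the bad gaps need no absorbing.
\item Work at these finitely many deterministic endpoints $e_k$ (Markov property plus tracking, then a finite union bound), not at a random hitting time. This is forced because the uniform-LDP thresholds $\eps_0$ depend on the symbol and the horizon, and only a finite minimum is available.
\end{itemize}
A minor point: weak exponential tightness gives balls, not compact sets. That suffices, since the uniform LDP is uniform over bounded sets.

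Your lower bound is workable once you drop exponential mixing. Mixing is not assumed in the statement, and it is not uniform in $\eps$: its constants degenerate through the Girsanov cost $b_3^\eps\propto 1/\eps$. Your LLN-type fallback works, using recurrence, weak exponential tightness, pullback attraction, and (H5) to turn the LDP upper bound into concentration. The paper's route is shorter. It applies the uniform LDP lower bound from all of $B_R$ on $[0,t_n]$, with zero control on $[0,t_n-s]$, so that deterministic pullback attraction carries $B_R$ into the ball around $a(\beta_{-s}\sigma)$. Continuity of the controlled flow in the initial datum then sends the near-optimal control into $B_{\delta/2}(u_*)$. Finally it uses $\mu^\eps_{\beta_{-t_n}\sigma}(B_R)\ge 1/2$.
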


  We refer the reader to Assumption \ref{a.050201} for the formal technical definitions of these properties. The proof is based on a combination of the Sowers--Martirosyan approach \cite{sow1,mar2} and the ergodicity of the base environment flow. A key difference in the proof of the upper bound is that, in our setting, the escaping energy may vary along the base flow and can even become very small on some blocks. When controlled paths remain in a bounded set but stay away from the pullback attractor, one must ensure that the cumulative energy required to avoid the attractor is still above the quasipotential level under consideration, so that the trajectory LDP can be invoked. This leads to a new nondegeneracy condition on the escaping energy, preventing it from degenerating too rapidly along typical base orbits. This condition is one of the key new ingredients in the Markov cocycle setting.

  Furthermore, we provide sufficient conditions guaranteeing the tracking property and the nontrivial escaping energy property, thereby reducing their verification to a priori estimates for solutions of the controlled system. Our proof does not rely on the contradiction argument used in \cite{mar2}; this makes it possible to quantify the dependence of the relevant constants and exceptional sets on the environment.

 \subsection{Applications to SPDEs} 
 The abstract Theorems~\ref{t.060701} and~\ref{t.060702} apply to a class of dissipative SPDEs. We illustrate the theory with two examples: the two-dimensional Navier--Stokes equation and the damped Sine--Gordon equation. In both cases the environment is a general ergodic invertible measure preserving flow, the deterministic force and the noise coefficient may depend on the environment, and the noise is allowed to be degenerate, provided that it acts on sufficiently many determining modes. The same finite dimensional noise used in the generalized coupling argument also yields the controlled tracking estimates needed for the Freidlin--Wentzell upper bound. 
 
 We first consider the two-dimensional Navier--Stokes equation on a bounded smooth domain $D\subset\mathbb R^2$. Let \[ H=\{u\in L^2(D;\mathbb R^2):\nabla\cdot u=0\text{ in }D,\ u\cdot \mathbf n=0\text{ on }\partial D\} \] and denote by $A$ the Stokes operator and by $B$ the usual bilinear term after the Leray projection. The equation has the abstract form \begin{align}\label{e.intro.ns} du+\big(\nu Au+B(u,u)\big)\,dt =F(\beta_t\sigma)\,dt+\sqrt{\varepsilon}\,Q(\beta_t\sigma)\,dW_t, \qquad u(0)=u_0\in H, \end{align} where $W$ is a finite-dimensional Brownian motion. The coefficient $Q(\sigma)$ is assumed to cover a fixed finite-dimensional determining subspace of $H$, with a measurable right-inverse whose norm is square integrable over the environment. The precise assumptions are stated in Section~\ref{s.060701}. 
 
 Our second example is the damped Sine--Gordon equation on a bounded smooth domain $D\subset \Rb^{ \mathbf{d}}$. Let \[ H:=L^2(D),\qquad V:=H_0^1(D),\qquad A=-\Delta \] with Dirichlet boundary condition. Writing $v=\partial_tu$ and $U=(u,v)$, the equation is written on $X=V\times H$ as \begin{align}\label{e.intro.sg} du&=v\,dt, \nonumber\\ dv+\big(\alpha v+Au+\varsigma\sin u\big)\,dt &=F(\beta_t\sigma)\,dt +\sqrt{\varepsilon}\,Q(\beta_t\sigma)\,dW_t , \end{align} where $\alpha>0$, $\varsigma\in\mathbb R$, and $W$ is again finite dimensional. As in the Navier--Stokes case, the noise is assumed to cover sufficiently many low modes, with a measurable square-integrable right-inverse. The precise assumptions are stated in Section~\ref{s.060702}. The application result can be summarized as follows.

 \begin{THM}\label{t.intro.C} 
 Assume that the coefficients of either \eqref{e.intro.ns} or \eqref{e.intro.sg} satisfy the mild integrability, boundedness, and determining mode controllability conditions stated in Section~\ref{s.060701} or Section~\ref{s.060702}. Then the associated Markov cocycle admits a unique stationary family $\{\mu_\sigma\}_{\sigma\in\Sigma}$, and this family attracts transition probabilities exponentially both in pullback and forward time. Moreover, for the small noise stationary family $\{\mu_\sigma^\varepsilon\}_{\varepsilon\in(0,1]}$, the Freidlin--Wentzell upper bound holds: for $m$-a.e. $\sigma\in\Sigma$ and every closed set $F$ in the corresponding phase space, 
 \[ \limsup_{\varepsilon\to0} \varepsilon\log\mu_\sigma^\varepsilon(F) \leq -\inf_{u\in F}E_{\mathcal A(\sigma)}(u), \] 
 where $E_{\mathcal A(\sigma)}$ is the pullback quasipotential associated with the deterministic limiting equation. If, in addition, the deterministic pullback attractor is a random point, then the matching lower bound holds: for every open set $G$ in the phase space, 
 \[ \liminf_{\varepsilon\to0} \varepsilon\log\mu_\sigma^\varepsilon(G) \geq -\inf_{u\in G}E_{\mathcal A(\sigma)}(u). \] 
 \end{THM}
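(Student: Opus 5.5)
Theorem~\ref{t.intro.C} is an application result, so the plan is to verify, for each of \eqref{e.intro.ns} and \eqref{e.intro.sg}, the hypotheses of the two abstract results: $(H1)$--$(H4)$ of Theorem~\ref{t.042501} for mixing, and Assumption~\ref{a.050201} for the LDP. The conclusions then follow by direct invocation. All verifications should come from a priori energy estimates in which the environment enters only through $\|F(\beta_t\sigma)\|$, $\|Q(\beta_t\sigma)\|$ and the right-inverse norm $\|Q(\beta_t\sigma)^{-1}\|$, whose integrability over $(\Sigma,m)$ is assumed.

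\textbf{Mixing.} For the random Lyapunov structure I will apply It\^o's formula to $\|u\|^2$ (Navier--Stokes, using $\langle B(u,u),u\rangle=0$ and Poincar\'e) and to a modified energy $\|v+\delta u\|^2+\|A^{1/2}u\|^2$ (Sine--Gordon, with $\delta$ small and $\sin u$ bounded). This gives
\[
\Eb V(u_t)\le e^{-\kappa t}V(u_0)+\int_0^t e^{-\kappa(t-s)}g(\beta_s\sigma)\,ds ,
\]
with $g$ integrable. Ergodicity then yields a tempered random bound. For the coupling, I will use the Foias--Prodi control $\tilde u$ solving the same equation with the extra drift $\lambda P_N(u-\tilde u)$, where $N$ is the number of determining modes covered by $Q$. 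A standard squeezing estimate gives $\|u_t-\tilde u_t\|\le e^{-\gamma t+\int_0^t c\|u_s\|_V^2ds}\|u_0-\tilde u_0\|$, and the exponential martingale bound on $\nu\int\|u\|_V^2$ controls the exponent. The Girsanov cost is
\[
\int_0^\infty \big\|Q(\beta_t\sigma)^{-1}\lambda P_N(u-\tilde u)\big\|^2\,dt ,
\]
which is finite and controlled because the right-inverse norm is square-integrable along orbits, again by Birkhoff. This produces the "controlled random Girsanov cost" of $(H3)$--$(H4)$.

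\textbf{LDP.} The trajectory uniform LDP for additive noise follows from the weak convergence (Budhiraja--Dupuis) method, uniform over initial data in bounded sets. This uses continuity of the control-to-solution map, obtained from the same energy estimates. Compactness of the level sets of $E_{\Ac(\sigma)}$ and weak exponential tightness should come from the smoothing estimates: bounded energy controls give bounds in $V$ (respectively in the more regular space for Sine--Gordon), and exponential moments of the stationary measures follow uniformly in $\eps$. For the tracking and escaping-energy properties I will use the sufficient conditions the paper provides, reducing them to controlled estimates. The finite-dimensional noise lets a control steer the low modes, while dissipation handles the high modes. The escaping energy is bounded below by a function of $\|Q^{-1}\|$ and $\|F\|$ along the orbit whose Birkhoff averages are positive. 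In the random-point case the lower bound is then immediate from the abstract theorem.

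\textbf{Main obstacle.} I expect the hardest step to be the quantitative nondegeneracy of the escaping energy and the tracking estimate. In the Navier--Stokes case on a bounded domain there is no enstrophy cancellation, so the $V$-norm growth must be controlled through $\int\|u\|_V^2$ alone. The constants must also stay integrable in $\sigma$ with exceptional sets independent of $\eps$. My first approach is to make every constant an explicit function of $\int_0^t(\|F(\beta_s\sigma)\|^2+\|Q^{-1}(\beta_s\sigma)\|^2)\,ds$ and apply the ergodic theorem once.
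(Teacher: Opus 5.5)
The overall architecture matches the paper. You verify $(H1)$--$(H4)$ of Theorem~\ref{t.042501} through It\^o energy estimates, a Foias--Prodi coupling on $P_{N_0}$ and a Girsanov shift through the right inverse $R_0$. You then verify Assumption~\ref{a.050201} using the standard uniform trajectory LDP and Lemma~\ref{l.050602}. For Sine--Gordon the paper couples with $\varsigma P_{N_0}(\sin u-\sin\tilde u)$ instead. That makes the low-mode difference equation linear and gives pure exponential contraction, so any small $\kappa$ works in $(H3)$. Your $\lambda P_N(u-\tilde u)$ also works once $\lambda$ and $\lambda_{N+1}$ dominate $\varsigma$.

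\textbf{Escaping energy.} This step, one of the two you flag as hardest, is mis-specified. Condition \eqref{e.070101} is an \emph{upper} bound
$\|\Phi^{\varphi}_{0,T,\beta_{-T}\sigma}u_0-\Phi_{0,T,\beta_{-T}\sigma}u_0\|^2\le C_{R,T}(\sigma)J(\varphi)$. The lower bound on the cost of escaping comes from how little a control can move the trajectory, i.e.\ from the size of $Q$ (bounded by $Q_\infty$). One runs a Gronwall bound on the difference, whose exponent $\int_0^T\|\nabla v\|^2$ along the uncontrolled solution is controlled by $R^2$ and $\int_{-T}^0\|F(\beta_s\sigma)\|^2ds$. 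The right inverse $R_0$ only bounds steering cost \emph{from above}, so it can never give a lower bound on the action. You also do not need positive Birkhoff averages here. $(H5)$ asks only for a finite measurable $C_{R,T}(\sigma)$; for Sine--Gordon it is even the constant $2Ce^{CT}Q_{0,\infty}^2$. The recurrence to many good blocks is done once and for all in Lemma~\ref{l.070101}, so this step is a few lines, not the crux.

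\textbf{Determining-mode gap.} You never say where the gap \eqref{e.060417} enters, and it is needed twice.
\begin{itemize}
\item It gives the strict inequality $\zeta>\frac{\kappa}{\varpi}\int_\Sigma b\,dm$ in $(H3)$. Birkhoff only replaces the forcing by its mean, so there is no contraction if $\frac{\kappa}{\varpi}\int b\,dm\ge\nu\lambda_{N+1}$.
\item It makes the tracking constant in Lemma~\ref{l.050602} independent of $t$. A single $\eta(\sigma)$ must serve all $t>0$, so constants expressed through $\int_0^t(\|F\|^2+\|R_0\|^2)ds$ would blow up. The paper instead proves $\int_0^\tau\|u\|_1^2\le K_M(\xi)+c_FF_0\tau$, with slope independent of $M$ and $\Theta_F(\xi)=\sup_s[\int_0^s\|F(\beta_r\xi)\|^2dr-sF_0]_+<\infty$ by Birkhoff. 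It then picks $\lambda$ with $\min\{2\lambda,\nu\lambda_{N+1}\}>\kappa c_FF_0$, so the feedback error decays at a uniform rate. $R_0$ enters only through $\int_0^\infty\|R_0(\beta_s\xi)\|^2e^{-\alpha_Ns}ds<\infty$, by Fubini.
\end{itemize}

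\textbf{Two further steps fail as written.}
\begin{itemize}
\item Sine--Gordon has no smoothing, so compactness of $K_M(\sigma)$ in $X$ cannot come from a smoothing estimate started on $\mathcal A(\beta_{-T}\sigma)\subset X$. The paper proves a uniform high-mode tail bound over the reachable set, using $\|F\|_1^2\in L^1$, $Q_{1,\infty}<\infty$ and $\|\sin u\|_1\le\|u\|_1$. It removes the initial-data term by extending paths backward with zero control along the invariant attractor.
\item Weak exponential tightness needs $\varepsilon$-scaled moments $\mathbf E\exp(a\|u\|^2/\varepsilon)$. Moments uniform in $\varepsilon$ only give $\limsup_{\varepsilon\to0}\varepsilon\ln\mu_\sigma^\varepsilon(B_R^c)\le 0$. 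The paper transfers the scaled moments to $\mu_\sigma^\varepsilon$ by working at pullback time $T=\lceil R^2/\varepsilon\rceil$. It uses Chebyshev with the second-moment bound $K_0(\beta_{-T}\sigma)$ and the subexponential growth of $K_0(\beta_{-n}\sigma)$ from Borel--Cantelli.
\end{itemize}
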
 
 
 The verification for the two equations follows the same general strategy but uses different compactness mechanisms. For the Navier--Stokes equation, parabolic smoothing and determining mode estimates provide the compactness of the quasipotential level sets and the controlled tracking property. For the damped Sine--Gordon equation, where such parabolic smoothing is unavailable, the compactness of the quasipotential level sets is obtained instead from compact-tail estimates for the controlled equation.

\section{Ergodicity and exponential mixing}\label{s.060901}
    In this section, we provide a criterion that can be used to prove ergodicity for time inhomogeneous SPDEs generating Markov processes in a random environment. 
    
    Let $(\Sigma,m)$ be a standard Borel probability space and $(\Sigma,\b_t,m)$ be an invertible ergodic measure preserving dynamical system, i.e., $\beta_0=\mathrm{id}$, $\beta_{t+s}=\beta_{t}\circ \beta_s$ for $s,t\in\mathbb{R}$ and $m$ is an ergodic invariant measure of the flow $\beta_t$, and $(t,\sigma)\to\beta_t\sigma$ is Borel measurable. 
    Let $(E, d)$ be a Polish space,  $P_{t}^{\s}(x,\cdot), x\in E, \s\in \Sigma$ are Feller Markov transition probabilities and $\mathbf P_{\s,x}$ the corresponding Markov family, where the transition operators satisfy the cocycle Chapman-Kolmogorov property
    \[P_{t+s}^\sigma=P_s^{ \sigma} P_t^{\beta_s \sigma}.\]
    For every $t,x$, we assume that $\s\to P_{t}^{\s}(x,\cdot)$ is measurable. Let $p: E\times E\to \mathbb R_+$ be a premetric on $E$, i.e., it is lower semicontinuous and $p(x,y)=0\Leftrightarrow x=y$. We assume that 
    \begin{align}\label{e.042604}
      \rho\leq p^{\varrho}
    \end{align} 
    for some $\varrho>0$. 
    \begin{rem}\label{r.042601}
      It is known that for any ergodic measure preserving flow $(\beta_t)_{t\in\mathbb R}$ on a standard Borel probability space, the discrete dynamical system generated by $\beta_T$ is ergodic except for countably many $T$. See for example \cite[Theorem 1]{PughShub1971}.
    \end{rem}
    \begin{thm}\label{t.042501}
      Assume that there is a lower semicontinuous function $U: E\to[0,\infty)$ and a measurable function $S:E\to[0,\infty]$ such that for any given $x,y\in E$ there exist progressively measurable processes $X_t^{\s,x}$ and $Y_t^{\s,x,y}$
      (denoted $X_t^{\s}$ and $Y_t^{\s}$ for simplicity)
      such that the following conditions hold:
      \begin{enumerate}[label=(H\arabic*)]
        \item There is a lower semicontinuous function $V: E\to \mathbb R_+$ and constant $\gamma>0$ such that
        \begin{align*}
          P_t^{\sigma}V(x) + \gamma \int_0^tP_s^{\sigma}V(x)ds\leq V(x) + \int_0^tK(\beta_s\s)ds
        \end{align*}
        where $K\in L^1(\Sigma,m)$ is nonnegative, and for any $M>0$, $U, p$ are bounded on $\{V\leq M\}$ and $\{V\leq M\}\times \{V\leq M\}$. 
        \item There exist $\zeta,\kappa>0$ such that for all $t\geq 0$,
        \begin{align*}
          p(X_t^{\s},Y_t^{\s})\leq p(x,y)\exp\left(-\zeta t + \kappa\int_0^tS(X_s^{\s})ds\right).
        \end{align*}
        \item There exist $\varpi>0$ and $b\in L^1(\Sigma,m)$ such that 
        \begin{align*}
          U(X_t^{\s}) + \varpi\int_0^tS(X_s^{\s})ds \leq U(x)+ \int_0^tb(\beta_s\s)ds + M_t^{\s},
        \end{align*}
        where $M_t^{\s}$ is a continuous local martingale with $M_0^\sigma=0$ and
        \[d\langle M^{\s}\rangle_t\leq b_1S(X_t^{\s})dt+b_2(\b_t\s)dt \quad \text{ for }t\geq 0,\]
        where  $b_1>0$ is a constant and  $b_2\in L^1(\Sigma,m)$ is nonnegative. Furthermore, 
        \begin{align*}
          \zeta>\frac{\kappa}{\varpi} \int_{\Sigma}b d m.
        \end{align*}
        \item The law of $\left(X_t^{\s}\right)_{t\in\mathbb R_+}$ coincides with $\mathbf P_{\s,x}$ and for any $\d\in(0,1]$, there exist a constant $C_{\d}>0$ and  a measurable function $\bar{\varepsilon}_{\d}:\mathbb{R}_+\to (0,1)$ which is non-increasing, such that for any $t\geq 0$, 
        \begin{align*}
          d_{TV}\left(\mathcal{L}(Y_t^{\s}), P_{t}^{\s}(y,\cdot)\right)\leq C_{\d}\left(M_{\d}(t,\s)\right)^{1/2}, \,\quad d_{TV}\left(\mathcal{L}(Y_t^{\s}), P_{t}^{\s}(y,\cdot)\right)\leq 1-\bar{\varepsilon}_{\d}(M_{\d}(t,\s)),
        \end{align*}
        where 
        \[ M_{\d}(t,\s): =\mathbf E\left(\int_0^tb_3(\beta_s\s)p(X_s^{\s},Y_s^{\s})ds\right)^{\d},\]
        and $b_3\in L^1(\Sigma,m)$ is nonnegative. 
      \end{enumerate}
      Then there is a unique $\beta_t$-invariant stationary measure 
      $(\G_\sigma)_{\s\in \Sigma}$
      of $P_t^{\s}$  such that $\G_\sigma P_t^\sigma=\G_{\beta_t \sigma}$ $m$-a.s. for all $t\geq 0$. Furthermore, there exist $\Sigma_0\subset\Sigma$ with $m(\Sigma_0)=1$, $T>0$ and $\l>0$ such that the following properties hold.
      \begin{itemize}
        \item Pullback exponential stability: for every $\sigma\in \Sigma_0$ and $x\in E$, there is $C_{\lambda}(\sigma,x)>0$ such that, for all $t\geq 0$,
        \begin{align*}
          \W_{\rho\wedge 1}\left(\delta_x P_t^{\beta_{-t} \sigma}, \G_\sigma\right) \leq C_\lambda(\sigma,x)e^{-\lambda t}.
        \end{align*}
        \item Forward exponential stability: for each fixed $r\in[0,T)$, there is a full measure set $\Sigma_r\subset\Sigma_0$ such that, for every $\sigma\in \Sigma_r$ and $x\in E$, there is $C_{\lambda,r}(\sigma,x)>0$ such that, for all $n\geq0$,
        \begin{align*}
          \W_{\rho\wedge 1}\left(\delta_x P_t^\sigma, \G_{\beta_t \sigma}\right) \leq C_{\lambda,r}(\sigma,x)e^{-\lambda t},\qquad t=nT+r.
        \end{align*}
      \end{itemize}
      Here $\W_{\rho\wedge 1}$ is the Wasserstein distance induced by the metric $\rho\wedge 1$. 
    \end{thm}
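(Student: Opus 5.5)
The plan follows the generalized-coupling / weak Harris scheme of \cite{BKS2020}, adapted to the cocycle. The first step is to build a \emph{random contracting distance}. For $N>0$ and $\delta\in(0,1]$ set $d_N(x,y)=(Np(x,y))^{\delta}\wedge 1$. Combining (H2) with (H3), we estimate $\mathbf E\, p(X_t^\sigma,Y_t^\sigma)^{\delta}$. Writing $\exp(\delta\kappa\int_0^t S)$ through (H3) and controlling the martingale term by the exponential supermartingale $\exp(\theta M_t-\tfrac{\theta^2}{2}\langle M\rangle_t)$ with small $\theta$, we expect
\[
\mathbf E\, p(X_t^\sigma,Y_t^\sigma)^{\delta}\le p(x,y)^{\delta}\, e^{c\,U(x)}\exp\Big(-\delta\zeta t+\tfrac{\delta\kappa}{\varpi}\int_0^t b(\beta_s\sigma)\,ds+C\delta^2\int_0^t b_2(\beta_s\sigma)\,ds\Big).
\]
For $\delta$ small, the strict inequality $\zeta>\frac{\kappa}{\varpi}\int b\,dm$ and the Birkhoff ergodic theorem then give exponential decay on long blocks for a set of $\sigma$ of positive measure. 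The same estimate bounds $M_\delta(t,\sigma)$ through $b_3$, so (H4) gives a total variation bound between $\mathcal L(Y_t^\sigma)$ and $P_t^\sigma(y,\cdot)$. A gluing argument then yields three ingredients for $P_T^\sigma$ on the level set $\{V\le M\}$:
\begin{itemize}
\item \emph{$d_N$-contraction} for nearby points;
\item \emph{$d_N$-nonexpansion} with a random factor;
\item \emph{$d_N$-smallness}, i.e.\ $\W_{d_N}(P_T^\sigma(x,\cdot),P_T^\sigma(y,\cdot))\le 1-\bar\varepsilon$.
\end{itemize}
The lower bound $\bar\varepsilon$ comes from the second inequality in (H4), with $U$ and $p$ bounded on level sets of $V$ by (H1).

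The second step combines these with the Lyapunov structure. From (H1) and Gronwall, $P_t^\sigma V\le e^{-\gamma t}V+\int_0^t e^{-\gamma(t-s)}K(\beta_s\sigma)\,ds$, and the random constant is tempered by Birkhoff. Following Hairer--Mattingly, consider the weighted semimetric $\tilde d_\sigma(x,y)=\sqrt{d_N(x,y)(1+\epsilon V(x)+\epsilon V(y))}$. All constants depend on $\sigma$, so we fix $T$ such that $\beta_T$ is ergodic (Remark~\ref{r.042601}). We then choose a set $G\subset\Sigma$ with $m(G)>0$ on which the block constants over $[0,T]$ are uniformly good and $P_T^\sigma$ strictly contracts $\tilde d$ by a factor $\alpha<1$. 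Off $G$ the factor is $e^{c(\sigma)}$ with $c\in L^1$. Lyapunov-norm reweighting along the orbit $\beta_{kT}\sigma$ absorbs the tempered Lyapunov constants.

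The third step, which I expect to be the main obstacle, is the \emph{block gap-counting}. The product over $n$ blocks of the per-block factors is $\alpha^{\#\{k<n:\beta_{kT}\sigma\in G\}}\exp\big(\sum_{k\notin G}c(\beta_{kT}\sigma)\big)$, and we must show it decays exponentially for a.e.\ $\sigma$. By Birkhoff the visit frequency to $G$ is $m(G)$. The bad sum is $\approx n\int_{G^c}c\,dm$, however, and need not be dominated by $n\, m(G)|\ln\alpha|$. So we pass to long blocks of length $LT$: on a long block the ergodic averages make the contraction exponent $\approx -L(\delta\zeta-\frac{\delta\kappa}{\varpi}\int b)$ dominate, and the set of good long blocks has measure close to $1$. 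We must also handle the integrability of the expansion on the remaining blocks. This includes re-verifying smallness uniformly on good long blocks, keeping $N$, $\delta$ and the exceptional sets independent of the block, and ensuring the gaps between good blocks have a summable contribution.

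Finally, the global contraction in $\tilde d$ gives a Cauchy argument in the pullback direction. The sequence $\delta_x P_{nT}^{\beta_{-nT}\sigma}$ is Cauchy in $\W_{\tilde d_\sigma}$, and since $\rho\wedge1\lesssim d_N^{\varrho/\delta}$ by \eqref{e.042604}, this transfers to $\W_{\rho\wedge1}$. The limit defines $\Gamma_\sigma$. Invariance $\Gamma_\sigma P_t^\sigma=\Gamma_{\beta_t\sigma}$ follows from the cocycle property and Feller continuity, and uniqueness follows from contraction against any other stationary family whose $V$-moments are tempered. Intermediate times $t\in[nT,(n+1)T)$ are handled by nonexpansion over one block. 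Forward stability at times $nT+r$ comes from applying the same contraction to $\beta_r\sigma$, with a full-measure set $\Sigma_r$ for each fixed $r$.
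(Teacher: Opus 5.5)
Your proposal has two genuine gaps. They sit exactly where the paper's proof departs from a direct transcription of the weak Harris argument.

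\textbf{First gap: no Lyapunov weight in the distance.} Your distance $d_N=(Np)^\delta\wedge1$ carries no weight, so your own estimate leaves the factor $e^{cU(x)}$ in both the coupling bound and the total variation defect. You therefore get contraction, and even nonexpansion, only on level sets of $V$, with a constant that blows up as $U(x)\to\infty$. That is not enough for the weighted semimetric $\sqrt{d_N(x,y)(1+\epsilon V(x)+\epsilon V(y))}$. At small scales, $d_N(x,y)<1$ with $V(x)$ large, the Lyapunov weight gains at most a bounded factor, and this cannot compensate $e^{cU(x)}$; in the Navier--Stokes application $U=\|u\|^2$. The paper instead puts the weight into the premetric: $\theta_\alpha(x,y)=p(x,y)^\alpha e^{\alpha vU(x)}$ with $v=\kappa/(\varpi-\gamma_*b_1)$. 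The $-vU(X_t)$ term produced by (H3) then cancels the weight at time $t$. With $\mathcal M=\sup_t(M_t-\gamma_*\langle M\rangle_t)$ and the exponential martingale inequality, this gives $\mathbf E\theta_\alpha(X_t,Y_t)\le C_\alpha e^{-\alpha\chi_t(\sigma)}\theta_\alpha(x,y)$ and $d_{TV}(\mathcal L(Y_t),P_t^\sigma(y,\cdot))\le B_t(\sigma)\theta_\alpha(x,y)$ for \emph{all} $x,y$ (Lemma~\ref{l.042501}).

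\textbf{Second gap: the block gap-counting step is left open, and your bookkeeping need not close.} If a bad block costs a random factor $e^{c(\sigma)}$, nothing makes $\sum_{k\notin G}c$ small compared with $\#G\cdot|\log\alpha|$. Longer blocks do not cure this. The small-scale exponent does improve with block length, but the global factor in the weighted distance is limited by the smallness term, roughly $(1-\varepsilon_0)^{1/2}$. That term does not improve with length, and it gets worse as $G$ is enlarged.

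The missing idea is to absorb all nonuniform factors into the distance itself, so that a bad stretch of \emph{any} length costs only a bounded deterministic factor. The paper does this as follows.
\begin{enumerate}
\item Choose $T$ with $\beta_T$ ergodic and $\int\log A_T\,dm<\log q<0$.
\item Define $N(\sigma)$ by the random affine recursion $qN(\sigma)=1+B_T(\sigma)+A_T(\sigma)N(\beta_T\sigma)$ (Lemma~\ref{l.042401}), and set $d_\sigma=1\wedge N(\sigma)\theta_\alpha(x,y)\wedge N(\sigma)\theta_\alpha(y,x)$. Then for every $\sigma$, $P_T^\sigma$ maps $d_\sigma$ into $d_{\beta_T\sigma}$ with contraction by the deterministic factor $q$ at small scales, and is nonexpanding in general.
\item Renormalize the Lyapunov function as $L_\sigma=V/R(\sigma)$, which satisfies $P_T^\sigma L_{\beta_T\sigma}\le\eta L_\sigma+\eta$.
\item For $D_\sigma=\sqrt{d_\sigma(1+\varkappa L_\sigma(x)+\varkappa L_\sigma(y))}$, any number of blocks expands $D$ by at most $h=(1+\varkappa\Lambda)^{1/2}$. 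On $\ell$-blocks started in a set $G$ of positive measure, $D$ contracts by $r$.
\item Taking $M>2\Lambda/(1-\eta)$ and $\varkappa$ small gives $hr<1$. Counting disjoint good blocks then yields the bound $h(hr)^{K_{-n,0}(\sigma)}$ with $\liminf_n K_{-n,0}/n\ge m(G)/(\ell+1)$.
\end{enumerate}

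Your Step~1 exponent with $O(\delta^2)$ corrections is compatible with this. Your final Cauchy, invariance and intermediate-time arguments essentially match the paper's.
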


  \subsection{Local contraction}
  In this subsection, we will construct a random distance, to deduce a local contraction and a random smallness property from assumptions $(H1)$-$(H4)$. We first establish the following random local contraction estimate.  More precisely, we construct a (spatial) Lyapunov weighted premetric $\theta_\alpha$ for which the generalized coupling $(X_t^\sigma,Y_t^\sigma)$ contracts with an environment dependent rate, while the deviation of the second marginal from the true transition law is controlled by an explicit environment dependent defect.  On bounded Lyapunov sublevel sets this also gives the corresponding local smallness estimate.
    \begin{lem}\label{l.042501}
      Under assumptions $(H2)$--$(H4)$, there exist constants $\a_0, v, \g_{*}>0$ depending only on $\varpi,b_1,\zeta,\k$ such that for every $\a\in(0,\a_0]$ and the premetric 
      \begin{align}\label{e.042605}
        \theta_\alpha(x, y):=p(x, y)^\alpha e^{\alpha v U(x)}, \quad x,y\in E,
      \end{align}
      the following conclusions hold:
      \begin{enumerate}
        \item[(1)] There exists $C_{\a}\geq 1$, such that for any $x,y\in E$,
        \[ \mathbf{E} \theta_\alpha\left(X_t^\sigma, Y_t^\sigma\right)\leq A_t(\s)\theta_\alpha(x, y),\]
        and 
      \[d_{TV}\left(\mathcal{L}(Y_t^{\s}), P_{t}^{\s}(y,\cdot)\right)\leq B_t(\s)\theta_\alpha(x, y),\]
        where 
        \begin{align}\label{e.042502}
          A_t(\s) = C_{\a}e^{-\a\chi_{t}(\s)}, \quad B_{t}(\s) = C_{\a}\left(\int_0^{t}b_3(\beta_s\s)e^{-\chi_s(\s)}ds\right)^{\a}
        \end{align}
        with 
        \[\chi_t(\sigma) = \zeta t- \frac{\k}{\varpi-\g_{*} b_1}\int_0^t\left(b(\beta_s\s)+\g_{*} b_2(\beta_s\s)\right)ds.\]
        \item[(2)] For any $B\subset E$ such that $U$ and $p$ are bounded on $B$ and $B\times B$, and any $t\geq0$, there is  $C_{\a,B}>0$ such that for any $x,y\in B$, one has 
        \[d_{TV}\left(\mathcal{L}(Y_t^{\s}), P_{t}^{\s}(y,\cdot)\right)\leq 1-\bar{\varepsilon}_{\a}(C_{\a,B}B_{t}(\s)),\]
        and 
        \[\mathbf{E} \theta_\alpha\left(X_t^\sigma, Y_t^\sigma\right) \leq C_{\alpha,B} A_{t}(\s),\]
        where $\bar{\varepsilon}_{\a}$ is the function from $(H4)$. 
      \end{enumerate}
    \end{lem}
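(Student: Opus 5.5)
The plan is to combine the pathwise contraction in $(H2)$ with the Lyapunov-type inequality $(H3)$, absorbing the stochastic integral $M^\sigma_t$ into an exponential supermartingale. The weight $e^{\alpha vU}$ in $\theta_\alpha$ is there to absorb the $U(x)-U(X_t^\sigma)$ terms. First fix $\gamma_*\in(0,\varpi/b_1)$ and set $v:=\kappa/(\varpi-\gamma_*b_1)$. From $(H3)$ and $d\langle M^\sigma\rangle_t\le b_1S(X_t^\sigma)dt+b_2(\beta_t\sigma)dt$ we get
\[
\alpha vU(X_t^\sigma)+\alpha v(\varpi-\gamma_*b_1)\int_0^tS(X^\sigma_s)ds\le \alpha vU(x)+\alpha v\int_0^t(b+\gamma_*b_2)(\beta_s\sigma)ds+\alpha v\big(M^\sigma_t-\gamma_*\langle M^\sigma\rangle_t\big).
\]
By the choice of $v$, the left side contains exactly $\alpha\kappa\int_0^tS$. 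Raising $(H2)$ to the power $\alpha$, multiplying by $e^{\alpha vU(X^\sigma_t)}$ and using the display gives the pathwise bound
\[
\theta_\alpha(X^\sigma_t,Y^\sigma_t)\le \theta_\alpha(x,y)\,e^{-\alpha\chi_t(\sigma)}\,N^{\lambda}_t,\qquad N^\lambda_t:=\exp\big(\lambda M^\sigma_t-\gamma_*\lambda\langle M^\sigma\rangle_t\big),\ \lambda=\alpha v,
\]
where $\chi_t$ is precisely the function in the statement.

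If $\lambda\le 2\gamma_*$, then $N^\lambda_t\le \exp(\lambda M^\sigma_t-\tfrac{\lambda^2}{2}\langle M^\sigma\rangle_t)$. The right-hand side is a nonnegative local martingale, hence a supermartingale by Fatou, so $\mathbf E N^\lambda_t\le 1$. This yields the first inequality of (1) (even with $C_\alpha=1$), provided $\alpha_0\le 2\gamma_*/v$.

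For the total variation bound in (1), I apply $(H4)$ with $\delta=2\alpha$, taking $\alpha_0\le 1/2$. The key pathwise step is
\[
\int_0^tb_3(\beta_s\sigma)p(X^\sigma_s,Y^\sigma_s)ds\le \Big(\int_0^tb_3(\beta_s\sigma)e^{-\chi_s(\sigma)}ds\Big)\sup_{s\le t}\big(p(X^\sigma_s,Y^\sigma_s)e^{\chi_s(\sigma)}\big).
\]
Hence $M_{2\alpha}(t,\sigma)\le (\int_0^tb_3e^{-\chi_s}ds)^{2\alpha}\,\mathbf E\sup_{s\le t}\big(p(X_s^\sigma,Y_s^\sigma)e^{\chi_s}\big)^{2\alpha}$. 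The computation above, run with exponent $4\alpha$ and with $U\ge0$ used to discard $e^{4\alpha vU(X_s^\sigma)}$, gives
\[
\big(p(X_s^\sigma,Y_s^\sigma)e^{\chi_s}\big)^{4\alpha}\le \theta_{4\alpha}(x,y)\,\tilde N_s
\]
for a nonnegative supermartingale $\tilde N$ with $\tilde N_0=1$. This requires $4\alpha v\le 2\gamma_*$, which fixes $\alpha_0$ in terms of $\varpi,b_1,\zeta,\kappa$ only.

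The maximal inequality $\mathbf P(\sup_s\tilde N_s\ge r)\le 1/r$ then gives $\mathbf E\sup_s\tilde N_s^{1/2}\le C$ for a universal $C$. Therefore $\mathbf E\sup_s(pe^{\chi_s})^{2\alpha}\le C\,\theta_{4\alpha}(x,y)^{1/2}=C\,\theta_\alpha(x,y)^2$. Taking the square root in the first alternative of $(H4)$ produces $C_{2\alpha}C^{1/2}\big(\int_0^tb_3e^{-\chi_s}ds\big)^{\alpha}\theta_\alpha(x,y)=B_t(\sigma)\theta_\alpha(x,y)$ after enlarging $C_\alpha$. I expect this maximal-inequality step to be the main obstacle. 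The difficulty is controlling the supremum in time (the TV defect integrates $p$ over all of $[0,t]$, not just at the endpoint), and it is the reason for passing to the doubled exponent $4\alpha$ before taking roots.

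For (2), if $U\le c_B$ and $p\le c_B$ on $B$ and $B\times B$, then $\theta_\alpha\le c_B^\alpha e^{\alpha vc_B}$. The expectation bound therefore follows from (1) with $C_{\alpha,B}$ equal to this constant. For the second bound I redo the above with $\delta=\alpha$. By the same supremum estimate with exponent $2\alpha$ in place of $4\alpha$, one gets $M_\alpha(t,\sigma)\le C\big(\int_0^tb_3e^{-\chi_s}ds\big)^\alpha\theta_{2\alpha}(x,y)^{1/2}\le C_{\alpha,B}B_t(\sigma)$. Since $\bar\varepsilon_\alpha$ is non-increasing, the second alternative of $(H4)$ gives $d_{TV}\le 1-\bar\varepsilon_\alpha(M_\alpha(t,\sigma))\le 1-\bar\varepsilon_\alpha(C_{\alpha,B}B_t(\sigma))$.
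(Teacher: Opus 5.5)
Your proof is correct and follows the paper's route. You combine $(H2)$ and $(H3)$ pathwise with $v=\kappa/(\varpi-\gamma_*b_1)$, control the martingale term exponentially, bound $p(X_s^\sigma,Y_s^\sigma)e^{\chi_s(\sigma)}$ uniformly in $s$ to pull it out of the $b_3$-integral, and apply $(H4)$ with $\delta=2\alpha$ (resp.\ $\delta=\alpha$), reaching the same threshold $\alpha_0=\frac{\gamma_*}{2v}\wedge\frac12$.

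The only difference is packaging. The paper controls all times at once through the single variable $\mathcal M^\sigma_{\gamma_*}=\sup_{t\ge0}(M^\sigma_t-\gamma_*\langle M^\sigma\rangle_t)$ and its tail $\mathbf P(\mathcal M^\sigma_{\gamma_*}\ge K)\le e^{-2\gamma_*K}$. You instead use the exponential supermartingale at fixed times, which even gives $C_\alpha=1$ in the first bound, and Ville's maximal inequality with a doubled exponent for the supremum; that is exactly how the exponential martingale inequality is proved.
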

    \begin{proof}
      Fix $\g_{*}>0$ small such that 
      \begin{align}\label{e.042404}
        0<\g_{*}<\varpi / b_1 \text{ and }  \zeta-\frac{\kappa}{\varpi-\g_{*} b_1} \int_{\Sigma}\left(b+\g_{*} b_2\right) d m>0,
      \end{align}
      which is possible since $\zeta>\frac{\kappa}{\varpi} \int_{\Sigma}b d m$ by $(H3)$. Denote $\mathcal{M}_{\g_{*}}^{\sigma} =\sup_{t\geq 0}(M_t^{\s}-\g_{*}\langle M^{\s}\rangle_t)$. By $(H3)$ we have 
      \begin{align*}
        U(X_t^{\s}) + (\varpi-\g_{*} b_1)\int_0^tS(X_s^{\s})ds \leq U(X_0^{\s})+ \int_0^t\left(b(\beta_s\s)+\g_{*} b_2(\beta_s\s)\right)ds + \mathcal{M}_{\g_{*}}^{\sigma},
      \end{align*}
      which combined with $(H2)$, implies 
      \begin{align}\label{e.042401}
        p\left(X_t^\sigma, Y_t^\sigma\right) \leq p(x, y) \exp \Big(-\chi_t(\sigma)+v\left(U(x)-U\left(X_t^\sigma\right)+\mathcal{M}_{\g_{*}}^{\sigma}\right)\Big),
      \end{align}
      where 
      \[v = \frac{\k}{\varpi-\g_{*} b_1}.\]
      It then follows from \eqref{e.042401} and $(H4)$ that 
      \begin{align}\label{e.042402}
        \begin{split}
          d_{TV}\left(\mathcal{L}(Y_t^{\s}), P_{t}^{\s}(y,\cdot)\right)&\leq C_{\d}\left(\mathbf E\left(\int_0^tb_3(\beta_s\s)p(X_s^{\s},Y_s^{\s})ds\right)^{\d}\right)^{1/2}\\
          &\leq C_{\d}p(x,y)^{\frac{\d}{2}}e^{\frac{v\d U(x)}{2}}\left(\mathbf  Ee^{v\d\mathcal{M}_{\g_{*}}^{\s}}\right)^{1/2}\left(\int_0^{t}b_3(\beta_s\s)e^{-\chi_s(\s)}ds\right)^{\d/2}.
        \end{split}
      \end{align}
      Let $\a_0 = \left(\frac{\g_{*}}{2v}\right)\wedge\frac12$. For $\alpha \in (0,\alpha_0]$, consider the premetric 
      \[\theta_\alpha(x, y):=p(x, y)^\alpha e^{\alpha v U(x)}, \quad x,y\in E.\]
      Then by taking $\d=2\a$ in \eqref{e.042402} and using the exponential martingale inequality, one has $\mathbf Ee^{\frac{v\d\mathcal{M}_{\g_{*}}^{\s}}{2}}\leq 2$ and therefore
      \begin{align*}
        d_{TV}\left(\mathcal{L}(Y_t^{\s}), P_{t}^{\s}(y,\cdot)\right)\leq B_{t}(\s)\theta_\alpha(x, y),\quad B_{t}(\s) = C_{\a}\left(\int_0^{t}b_3(\beta_s\s)e^{-\chi_s(\s)}ds\right)^{\a}.
      \end{align*}
      Similarly, it follows from \eqref{e.042401} that 
      \begin{align}\label{e.042403}
        \mathbf{E} \theta_\alpha\left(X_t^\sigma, Y_t^\sigma\right)= \mathbf{E} p\left(X_t^\sigma, Y_t^\sigma\right)^{\a}e^{\a v U(X_t^\sigma)}\leq A_t(\s) \theta_\alpha(x, y), \quad A_t(\s) = C_{\a}e^{-\a\chi_{t}(\s)}
      \end{align}
      Now suppose $B\subset E$ is such that $U$ and $p$ are bounded on $B$ and $B\times B$. Then for $x,y\in B$,  by \eqref{e.042401} and the exponential martingale inequality, one has 
      \begin{align*}
        \mathbf E\left(\int_0^tb_3(\beta_s\s)p(X_s^{\s},Y_s^{\s})ds\right)^{\a} \leq C_{\a,B} B_t(\s)
      \end{align*}
      which by $(H4)$ implies that for $\bar{\varepsilon}_{\a}(C_{\a,B}B_{t}(\s))$ one has 
      \[d_{TV}\left(\mathcal{L}(Y_t^{\s}), P_{t}^{\s}(y,\cdot)\right)\leq 1-\bar{\varepsilon}_{\a}(C_{\a,B}B_{t}(\s)).\]
      Finally, by \eqref{e.042403}, for $x,y\in B$, one has 
      \begin{align*}
        \mathbf{E} \theta_\alpha\left(X_t^\sigma, Y_t^\sigma\right) \leq C_{\alpha,B} A_{t}(\s).
      \end{align*}
    \end{proof}
    As the contraction factor and the total variation defect obtained in Lemma \ref{l.042501} are nonuniform in the environment, they cannot be used directly to obtain a deterministic contraction for the Markov cocycle.  We therefore introduce a temporal Lyapunov weight along the base dynamics, which absorbs the random one-step expansion and defect terms.  The following elementary lemma, essentially a random affine recursion estimate, will be used for this purpose.  For a proof we refer the reader to \cite[Theorem 5.6.5]{Arnold1998} or \cite[Theorem 1]{Brandt1986}.

    \begin{lem}\label{l.042401}
      Let $(\Sigma,\theta,m)$ be an ergodic measure preserving dynamical system. Suppose that $A,F>0$ are measurable, $\log A\in L^1, \log^+F\in L^1$, and 
      \[\int_\Sigma \log A(\s)m(d\s)<\log q\]
      for some $q\in(0,1)$. Then
      \[N(\sigma):=\sum_{n=0}^{\infty}q^{-(n+1)}
      F(\theta^n\sigma)\prod_{j=0}^{n-1}A(\theta^j\sigma)\]
      is finite for \(m\)-a.e. $\sigma$ and satisfies
      \[qN(\sigma)=F(\sigma)+A(\sigma)N(\theta\sigma).\]
    \end{lem}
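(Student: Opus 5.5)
We argue directly with the Birkhoff ergodic theorem applied to $\log A$ and $\log^+F$ along the orbit $\theta^n\sigma$, rather than invoking \cite{Arnold1998,Brandt1986}. Write $a:=\int_\Sigma\log A\,dm<\log q$ and fix $\epsilon>0$ with $a+2\epsilon<\log q$.

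\emph{Growth of the products.} By the Birkhoff ergodic theorem applied to $\log A\in L^1$, for $m$-a.e.\ $\sigma$ we have $\frac1n\sum_{j=0}^{n-1}\log A(\theta^j\sigma)\to a$. Hence there is $n_0(\sigma)$ such that $\prod_{j=0}^{n-1}A(\theta^j\sigma)\le e^{n(a+\epsilon)}$ for all $n\ge n_0(\sigma)$.

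\emph{Subexponential growth of $F$ along orbits.} Since $\log^+F\in L^1$, the Birkhoff averages $\frac1n\sum_{j=0}^{n-1}\log^+F(\theta^j\sigma)$ converge to a finite limit a.e. Taking differences of consecutive sums gives $\frac1n\log^+F(\theta^n\sigma)\to0$. Consequently $F(\theta^n\sigma)\le e^{n\epsilon}$ for all $n\ge n_1(\sigma)$. Alternatively, one can note that $\sum_n m(\log^+F>n\epsilon)<\infty$ by integrability and use invariance of $m$ together with Borel--Cantelli; either route works.

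\emph{Convergence.} For $n\ge\max(n_0,n_1)$ the $n$-th term of the series $N(\sigma)$ is bounded by $q^{-(n+1)}e^{n(a+2\epsilon)}=q^{-1}\bigl(e^{a+2\epsilon}/q\bigr)^n$. This is geometrically summable since $e^{a+2\epsilon}<q$. The finitely many initial terms are finite because $A$ and $F$ take values in $(0,\infty)$. Hence $N(\sigma)<\infty$ for $m$-a.e.\ $\sigma$. The exceptional set is $\theta$-invariant up to null sets, and we may intersect it with its images under $\theta^{-1}$ if we want a strictly invariant full-measure set.

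\emph{Identity.} This is a reindexing. Separate the $n=0$ term, which is $q^{-1}F(\sigma)$. In each remaining term, $n\ge1$, factor out $A(\sigma)$ from the product and substitute $k=n-1$:
\[
N(\sigma)=q^{-1}F(\sigma)+q^{-1}A(\sigma)\sum_{k\ge0}q^{-(k+1)}F(\theta^{k}\theta\sigma)\prod_{j=0}^{k-1}A(\theta^j\theta\sigma)=q^{-1}\bigl(F(\sigma)+A(\sigma)N(\theta\sigma)\bigr).
\]
Multiplying by $q$ gives $qN(\sigma)=F(\sigma)+A(\sigma)N(\theta\sigma)$. Since all terms are nonnegative, the manipulation is valid in $[0,\infty]$, and finiteness on both sides holds on the full-measure set.

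The only genuinely delicate point is the subexponential control of $F(\theta^n\sigma)$ when only $\log^+F$ is assumed integrable; the Birkhoff difference argument handles it. Measurability of $N$ is immediate, since it is a countable sum of measurable functions.
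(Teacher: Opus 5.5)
Your proof is correct, and it takes a different route from the paper only in the sense that the paper gives no argument at all. It simply cites \cite[Theorem 5.6.5]{Arnold1998} and \cite[Theorem 1]{Brandt1986}, which treat random affine recursions in greater generality.

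What you have written is essentially the standard direct proof of the existence part of those results:
\begin{itemize}
\item Birkhoff's theorem for $\log A$, together with ergodicity to identify the limit as $\int_\Sigma\log A\,dm<\log q$, gives geometric decay of the products.
\item The Birkhoff-difference trick, or alternatively Borel--Cantelli with invariance of $m$, turns $\log^+F\in L^1$ into the subexponential bound $F(\theta^n\sigma)\le e^{n\epsilon}$. This is exactly the point where only the positive part of $\log F$ needs to be integrable.
\item The functional identity is a reindexing of a nonnegative series, so it holds pointwise in $[0,\infty]$.
\end{itemize}

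Your route gains self-containment and transparency. It also gives an explicit observation that the paper uses only tacitly: since the identity holds in $[0,\infty]$ and $A>0$, finiteness of $N$ propagates along orbits. This is what is implicitly needed when $N$ and $R$ are later evaluated at $\Theta^{k}\sigma$ and iterated, as in Lemma \ref{l.042502} and Lemma \ref{l.042601}.

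The cited route gains generality and the accompanying structural statements about the recursion. None of these is needed for the way the lemma is used in this paper.
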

    
    We now construct an environment dependent distance-like function with respect to which the generalized coupling is strictly contracting with a deterministic constant.  The key point is that, by the ergodic theorem, the exponent $\chi_t(\sigma)$ in \eqref{e.042502} has a positive deterministic average. Thus possible local expansions along exceptional environmental samples can be compensated by the temporal Lyapunov weight.  This yields a fixed contraction factor on the set where the distance is smaller than one.  At this stage the smallness estimate is still environment dependent; it will be converted into a usable smallness condition on good environmental samples in the next subsection.

    \begin{lem}\label{l.042502}
      Under the assumptions of Theorem \ref{t.042501}, there are constants $q\in (0,1)$ and $T>0$ depending only on   $\varpi,b_1,\zeta,\k, b, b_2$  such that $\beta_T$ is ergodic, and the random Lyapunov weight 
      \[N(\sigma):=\sum_{n=0}^{\infty} q^{-(n+1)} (1+B_T\left(\beta_{n T} \sigma\right)) \prod_{j=0}^{n-1} A_T\left(\beta_{j T} \sigma\right)\]
      is finite $m$-a.s., and for the distance-like function 
      \[    d_{\s}(x,y) := 1 \wedge N(\sigma) \theta_\alpha(x, y) \wedge N(\sigma) \theta_\alpha(y, x), \quad x,y\in E,\]
      we have 
      \begin{align*}
        W_{d_{\beta_{T}\sigma}}\left(\delta_x P_T^\sigma, \delta_y P_T^\sigma\right)\leq q d_{\s}(x,y) \text{ for } d_{\s}(x,y)<1
      \end{align*}
      and 
      \begin{align*}
        W_{d_{\beta_{T}\sigma}}\left(\delta_x P_T^\sigma, \delta_y P_T^\sigma\right)\leq  N\left(\beta_T \sigma\right)C_{\alpha,B} A_{T}(\s) + 1-\bar{\varepsilon}_{\a}(C_{\a,B}B_{T}(\s)),
      \end{align*}
      where $C_{\alpha,B},\bar{\varepsilon}_{\a}$ are from Lemma \ref{l.042501}. 
    \end{lem}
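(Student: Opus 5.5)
First I choose $T$ and $q$, and then I apply Lemma \ref{l.042401} to the discrete system $(\Sigma,\beta_T,m)$ with $A=A_T$ and $F=1+B_T$. Write $\bar\chi:=\zeta-v\int_\Sigma(b+\g_*b_2)\,dm>0$, which is positive by \eqref{e.042404}. By the ergodic theorem for the flow,
\[
\int_\Sigma \log A_T\,dm=\log C_\a-\a T\bar\chi .
\]
This holds because $\int \chi_T\,dm=T\bar\chi$ by invariance of $m$. Choose $T$ large, avoiding the countably many non-ergodic times of Remark \ref{r.042601}, so that $\log C_\a-\a T\bar\chi<\log q$ for, say, $q=e^{-\a T\bar\chi/2}$. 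Such $T$ exists since $C_\a$ is fixed. Integrability is routine. $\log A_T\in L^1$ because $|\chi_T|\le \zeta T+v\int_0^T(b+\g_*b_2)(\beta_s\sigma)ds$, which is integrable. For $F$, note $\chi_s\ge -v\int_0^s(b+\g_*b_2)\circ\beta_r\,dr$, so
\[
B_T\le C_\a\Big(\int_0^T b_3\circ\beta_s\,ds\Big)^\a \exp\Big(\a v\int_0^T (b+\g_*b_2)\circ\beta_r\,dr\Big).
\]
Hence $\log^+(1+B_T)$ is bounded by $L^1$ quantities, using $\log(1+x)\le \log 2+\log^+x$ and $\log^+ y^\a\le \a y$. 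Lemma \ref{l.042401} then gives $N<\infty$ a.s. and the identity
\[
qN(\sigma)=1+B_T(\sigma)+A_T(\sigma)N(\beta_T\sigma),
\]
which in particular implies $N(\beta_T\sigma)A_T(\sigma)\le qN(\sigma)$ and $B_T(\sigma)\le qN(\sigma)$.

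Next comes the contraction for $d_\sigma(x,y)<1$. By symmetry of the Wasserstein distance and of $d_\sigma$, assume $d_\sigma(x,y)=N(\sigma)\theta_\a(x,y)$. I use the generalized coupling $(X_T^\sigma,Y_T^\sigma)$ and glue it with an optimal TV coupling of $\mathcal L(Y_T^\sigma)$ and $P_T^\sigma(y,\cdot)$. Since $d_{\beta_T\sigma}\le 1$, this gives
\[
W_{d_{\beta_T\sigma}}(\delta_xP_T^\sigma,\delta_yP_T^\sigma)\le \Eb\, d_{\beta_T\sigma}(X_T^\sigma,Y_T^\sigma)+d_{TV}(\mathcal L(Y_T^\sigma),P_T^\sigma(y,\cdot)).
\]
I bound $d_{\beta_T\sigma}\le N(\beta_T\sigma)\theta_\a$ and apply Lemma \ref{l.042501}(1). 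The right side is then at most
\[
\big(N(\beta_T\sigma)A_T(\sigma)+B_T(\sigma)\big)\theta_\a(x,y)\le \big(qN(\sigma)-1\big)\theta_\a(x,y)\le q\,d_\sigma(x,y),
\]
by the recursion identity. This is the reason for the $+1$ in $F$: it absorbs the defect with room to spare. One technical point needs care. The Wasserstein "distance" here uses a premetric-like $d$ that is only lower semicontinuous. I will treat $W_d$ as the coupling infimum, and the gluing lemma still applies on the Polish space.

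The second estimate is the same computation with $x,y\in B$, where $B$ is a set on which $U$ and $p$ are bounded (this is implicit in the statement). I bound $\Eb\, d_{\beta_T\sigma}(X_T,Y_T)\le N(\beta_T\sigma)\Eb\,\theta_\a(X_T,Y_T)\le N(\beta_T\sigma)C_{\a,B}A_T(\sigma)$, and the TV term by $1-\bar\varepsilon_\a(C_{\a,B}B_T(\sigma))$, both from Lemma \ref{l.042501}(2).

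I expect the main obstacle to be the first step, namely arranging $\int\log A_T<\log q$ while keeping $T$ an ergodic time and verifying the integrability hypotheses. The constant $C_\a$ must not grow with $T$. It does not, since it comes from the exponential martingale bound, which is uniform in $t$.
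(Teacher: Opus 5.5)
Your proposal is correct and follows essentially the same route as the paper. You choose an ergodic time $T$ with $\int_\Sigma \log A_T\,dm<\log q$, apply Lemma \ref{l.042401} with $F=1+B_T$ to get $qN(\sigma)=1+B_T(\sigma)+A_T(\sigma)N(\beta_T\sigma)$, and glue the generalized coupling with an optimal total-variation coupling of $\mathcal{L}(Y_T^\sigma)$ and $P_T^\sigma(y,\cdot)$, using $d_{\beta_T\sigma}\le 1$ together with Lemma \ref{l.042501}(1) and (2) for the two estimates. Your explicit check that $\log A_T\in L^1$ and $\log^+(1+B_T)\in L^1$, which the paper only asserts, is a welcome addition; two small corrections there: the formula for $\int_\Sigma\log A_T\,dm$ follows from Fubini and invariance of $m$, not the ergodic theorem, and if $b$ can change sign you should bound $\int_0^s b\circ\beta_r\,dr$ by $\int_0^T |b|\circ\beta_r\,dr$.
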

    \begin{proof}
      Note that 
      \begin{align*}
        \int_{\Sigma} \log A_t(\sigma) m(d\sigma)&=\log C_\alpha-\alpha \int_{\Sigma} \chi_t(\sigma) d m(\sigma)\\
        & = \log C_\alpha- \a t \left( \zeta-\frac{\kappa}{\varpi-\g_{*} b_1} \int_{\Sigma}\left(b(\s)+\g_{*} b_2(\s)\right) m(d\s)\right).
      \end{align*}
      In view of the choice of $\g_{*}$ in \eqref{e.042404} and Remark \ref{r.042601}, we can find $T>0$ such that $\beta_T$ is ergodic and 
      \begin{align*}
        \int_{\Sigma} \log A_T(\sigma) m(d\sigma)<0. 
      \end{align*}
      Pick any $q\in (0,1)$ such that $\log q>\int_{\Sigma} \log A_T(\s)m(d\s)$ and let 
      \[N(\sigma)=\sum_{n=0}^{\infty} q^{-(n+1)} F_T\left(\beta_{n T} \sigma\right) \prod_{j=0}^{n-1} A_T\left(\beta_{j T} \sigma\right),\]
      where $F_T(\sigma)=1+B_T(\sigma)$. Then $\log ^{+} F_T \in L^1(\Sigma, m)$. Hence, by Lemma \ref{l.042401} (with $\theta=\beta_T$), we know that $N(\sigma)<\infty$ $m$-a.s. and solves the equation 
      \[q N(\sigma)=F_T(\sigma)+A_T(\sigma) N\left(\beta_T \sigma\right).\]
      In particular,
      \begin{align}\label{e.042501}
        A_T(\sigma) N\left(\beta_T \sigma\right)+B_T(\sigma) < q N(\sigma).
      \end{align}
      Now consider the distance-like function 
      \begin{align*} 
        d_{\s}(x,y) := 1 \wedge N(\sigma) \theta_\alpha(x, y) \wedge N(\sigma) \theta_\alpha(y, x), \quad x,y\in E. 
      \end{align*}
      Then by  standard coupling arguments \cite{villani2009optimal}, there is a random variable $Z^{\s}$ such that $\mathcal{L} (Z^{\s}) =  P_T^\sigma(y,\cdot)$ and 
      \[\mathbf P(Y_T^{\s}\neq Z^{\s}) =  d_{TV}\left(\mathcal{L}(Y_T^{\s}), P_{T}^{\s}(y,\cdot)\right)\leq B_{T}(\s)\theta_\alpha(x, y).\] 
      It then follows from Lemma \ref{l.042501} and \eqref{e.042501} that, 
      \begin{align*}
        \W_{d_{\beta_{T}\sigma}}\left(\delta_x P_T^\sigma, \delta_y P_T^\sigma\right)&\leq \mathbf Ed_{\beta_{T}\sigma}(X_T^{\s},Z^{\s})\\
        &\leq \mathbf Ed_{\beta_{T}\sigma}(X_T^{\s},Z^{\s})\1_{\{Y_{T}^{\s}=Z^{\sigma}\}}+\mathbf Ed_{\beta_{T}\sigma}(X_T^{\s},Z^{\s})\1_{\{Y_{T}^{\s}\neq Z^{\sigma}\}}\\
        &\leq \mathbf Ed_{\beta_{T}\sigma}(X_T^{\s},Y_T^{\s}) + \mathbf P(Y_T^{\s}\neq Z^{\s})\\
        &\leq N\left(\beta_T \sigma\right)\mathbf E\theta_{\a}(X_T^{\s},Y_T^{\s})+ B_{T}(\s)\theta_\alpha(x, y)\\
        &\leq \left(A_T(\sigma) N\left(\beta_T \sigma\right)+B_T(\sigma)\right) \theta_\alpha(x, y)\\
        &\leq qN(\s)\theta_{\a}(x,y). 
      \end{align*}
      By symmetry, we see that 
      \[\W_{d_{\beta_{T}\sigma}}\left(\delta_x P_T^\sigma, \delta_y P_T^\sigma\right)\leq qN(\s)\theta_{\a}(x,y)\wedge q N(\s)\theta_{\a}(y,x)=q d_{\s}(x,y)\]
      for $x,y\in E$ with $d_{\s}(x,y)<1$. In particular, as $d_{\s}\leq 1$, we have the non-expansiveness, 
      \begin{align*}
        \W_{d_{\beta_{T}\sigma}}\left(\delta_x P_T^\sigma, \delta_y P_T^\sigma\right)\leq d_{\s}(x,y) \text{ for all } x,y\in E,
      \end{align*}
      Picking any pair $\mu,\nu\in\mathcal{P}(E)$, integrating and taking infimum with respect to all couplings for $\mu,\nu\in\mathcal{P}(E)$, we have 
      \begin{align}\label{e.052602}
      \begin{split}
        \W_{d_{\beta_{T}\sigma}}\left(\mu P_T^\sigma, \nu P_T^\sigma\right)\leq 
        \W_{d_{\sigma}}(\mu,\nu). 
      \end{split}
      \end{align} 
      Similarly, by Lemma \ref{l.042501} and the definition of $d_{\s}$, for $x,y\in B$ where the set $B\subset E$ is from Lemma \ref{l.042501}, one has 
      \begin{align*}
        \W_{d_{\beta_{T}\sigma}}\left(\delta_x P_T^\sigma, \delta_y P_T^\sigma\right)&\leq \mathbf Ed_{\beta_{T}\sigma}(X_T^{\s},Y_T^{\s}) + \mathbf P(Y_T^{\s}\neq Z^{\s})\\
        &\leq N\left(\beta_T \sigma\right)C_{\alpha,B} A_{T}(\s) + 1-\bar{\varepsilon}_{\a}(C_{\a,B}B_{T}(\s)).
      \end{align*}
    \end{proof}
    
  \subsection{Global contraction on a typical set}
    In this subsection we upgrade the local contraction obtained in the previous subsection to a global contraction on a typical set of environmental samples. The difficulty is that both the Lyapunov drift and the smallness estimate are nonuniform in the environment.  We first renormalize the Lyapunov function by a temporal weight along the base dynamics; this produces an adapted Lyapunov function for the skeleton cocycle.  We then combine this renormalized Lyapunov drift with the local contraction of $d_\sigma$ to construct a weighted distance $D_\sigma$ which contracts globally whenever the environment belongs to a suitable positive-measure set.  From now on, $\mathcal P_V(E)$ denotes the set of probability measures on $E$ with finite $V$-moment.

    Fix the skeleton time $T>0$ from Lemma \ref{l.042502} and denote $Q_{\s}:= P_{T}^{\s},\, \Theta := \beta_T$ and for $n\geq 1$, 
    \[Q_\sigma^{(n)}:=Q_\sigma Q_{\Theta \sigma} \cdots Q_{\Theta^{n-1} \sigma}=P_{n T}^\sigma.\]
    By $(H1)$ and Gronwall's inequality, we have 
    \[Q_\sigma V(x) \leq a V(x)+K_T(\sigma), \quad 
    \text{ where } a:=e^{-\gamma T}<1 \; \text{and} \; K_T(\sigma):=\int_0^T e^{-\gamma(T-s)} K\left(\beta_s \sigma\right) d s.\]
    Fix $\eta\in (a,1)$. Since $K\in L^{1}(\Sigma,m)$, Lemma \ref{l.042401} (with $\theta=\Theta^{-1}$) implies that 
    \begin{align}\label{e.042603}
      R(\sigma):=\sum_{j=1}^{\infty} \eta^{-j} a^{j-1} \left(1+K_T\left(\Theta^{-j} \sigma\right)\right)
    \end{align}
    is finite $m$-a.s., and solves the equation 
    \[a R(\sigma)+1+K_T(\sigma)=\eta R(\Theta \sigma).\]
    Therefore, by defining $L_\sigma(x):=\frac{V(x)}{R(\sigma)}$, it follows that 
    \begin{align}\label{e.042607}
      Q_\sigma L_{\Theta \sigma}(x) \leq \eta L_\sigma(x)+\eta, \quad x \in E.
    \end{align}
    By iteration, one has for $n\geq 1$, 
    \begin{align}\label{e.042504}
      Q_\sigma^{(n)} L_{\Theta^n \sigma} \leq \eta^n L_\sigma+\frac{\eta}{1-\eta}
    \end{align}

    The next lemma gives a uniform $L$-moment bound for invariant families. Although the original Lyapunov estimate is nonuniform in the environment, the renormalized function $L_\sigma=V/R(\sigma)$ satisfies the adapted drift inequality \eqref{e.042607}.  This makes it possible to obtain an environment-independent bound on the $L_\sigma$-moment of any invariant family with finite $V$-moment.

    \begin{lem}\label{l.042603}
      If $\G_{\s}$ is an invariant family of $Q_{\s}$ and $\{\G_{\s}\}\subset \mathcal{P}_V(E)$, then 
      \[\int_E L_\sigma(x)  \Gamma_\sigma(dx)\leq \frac{\eta}{1-\eta}\]
      almost surely. 
    \end{lem}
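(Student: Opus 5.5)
Set $f(\sigma):=\int_E L_\sigma\,d\Gamma_\sigma=\Gamma_\sigma(V)/R(\sigma)$. The plan is to show that $f$ is finite and measurable, to integrate the adapted drift \eqref{e.042607} against the invariant family, and then to use invariance of $\Gamma$ along $\Theta$ together with recurrence (Poincaré) to conclude. By assumption $\Gamma_\sigma\in\mathcal P_V(E)$. Moreover $R(\sigma)\ge \eta^{-1}>0$, and $R<\infty$ $m$-a.s. Hence $f(\sigma)\in[0,\infty)$ almost surely. Measurability of $f$ in $\sigma$ will be taken from the measurability of the family $\Gamma_\sigma$ and of $R$.

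The main step is to integrate the drift along the backward orbit. Invariance gives $\Gamma_{\Theta^{-n}\sigma}Q^{(n)}_{\Theta^{-n}\sigma}=\Gamma_\sigma$. Integrating \eqref{e.042504}, applied at the point $\Theta^{-n}\sigma$, against $\Gamma_{\Theta^{-n}\sigma}$ yields
\[
f(\sigma)=\int_E Q^{(n)}_{\Theta^{-n}\sigma}L_\sigma\,d\Gamma_{\Theta^{-n}\sigma}\le \eta^n f(\Theta^{-n}\sigma)+\frac{\eta}{1-\eta},\qquad n\ge1.
\]
All quantities are nonnegative, so Tonelli justifies the exchange of integrals.

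It remains to show that $\eta^n f(\Theta^{-n}\sigma)\to0$ along some subsequence, for a.e. $\sigma$. Since $f<\infty$ a.s., there is $M$ such that $A:=\{f\le M\}$ has positive measure. Choosing $M$ along a sequence $M_k\uparrow\infty$ makes $\bigcup_k\{f\le M_k\}$ of full measure. $\Theta^{-1}$ preserves $m$, so the Poincaré recurrence theorem (or ergodicity of $\Theta=\beta_T$ from Lemma \ref{l.042502}) gives, for a.e. $\sigma$, infinitely many $n$ with $\Theta^{-n}\sigma\in A$. Along those $n$ we have $\eta^n f(\Theta^{-n}\sigma)\le \eta^nM\to0$, so $f(\sigma)\le\eta/(1-\eta)$. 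Taking the union over $k$ of the resulting full-measure sets gives the bound almost surely.

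An alternative route uses only $f<\infty$ a.s. In logarithmic form, $\log^+f$ need not be integrable, but the standard fact that $g(\Theta^{-n}\sigma)/n\to0$ along a subsequence for any a.s. finite measurable $g$ suffices. The recurrence argument above is cleaner, and I would use that one.

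The expected obstacle is purely technical: invariance of $\Gamma$ holds only $m$-a.s. for each $n$. I would intersect countably many full-measure sets (over $n\in\mathbb N$) to get a common full-measure set on which the displayed inequality holds for all $n$ simultaneously. I would also check that \eqref{e.042504} is applied with the correct base point, namely that $L_{\Theta^n(\Theta^{-n}\sigma)}=L_\sigma$.
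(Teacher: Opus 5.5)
Your proof is correct and follows the paper's argument. Both use finiteness of $\int_E L_\sigma\,d\Gamma_\sigma$ from $R\ge\eta^{-1}$ and $\Gamma_\sigma\in\mathcal P_V(E)$, the backward bound $f(\sigma)\le\eta^n f(\Theta^{-n}\sigma)+\frac{\eta}{1-\eta}$, and infinitely many visits of the backward orbit to a positive-measure sublevel set $\{f\le M\}$. The differences are minor: you integrate the $n$-step drift \eqref{e.042504} directly rather than iterating the one-step recursion $f(\Theta\sigma)\le\eta f(\sigma)+\eta$, and your Poincar\'e-recurrence variant (taking a union over the levels $M_k$) shows that ergodicity of $\Theta$ is not actually needed for this lemma.
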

    \begin{proof}
      Note that $R(\s)\geq 1/\eta$. So 
      \[A_L(\s):=\int_E L_\sigma(x)  \Gamma_\sigma(dx)<\infty.\]
      By invariance and \eqref{e.042607}, one has 
      \[A_L(\Theta \sigma) \leq \eta A_L(\s)+\eta.\]
      Iterating backwards, we obtain 
      \[A_L(\s)\leq \eta^n A_L(\Theta^{-n} \sigma)+\frac{\eta}{1-\eta}.\]
      Since $A_L(\s)<\infty$ a.s., there is  $A_0<\infty$ such that the set $\left\{A_L(\s) \leq A_0\right\}$ has positive measure. As $\Theta$ is ergodic, the orbit $\{\Theta^{-n}\s\}$ will visit the set infinitely often, say at $n_k\to\infty$. Then 
      \begin{align*}
        A_L(\s)\leq \eta^{n_k} A_0+\frac{\eta}{1-\eta} \rightarrow \frac{\eta}{1-\eta} \quad \text{as} \quad n_k\to\infty,
      \end{align*}
      giving the desired estimate. 
    \end{proof}

    We now combine the adapted Lyapunov drift with the local contraction of $d_\sigma$.  The weighted distance below is designed to handle three different regimes simultaneously: small $d_\sigma$-distance, where the local contraction applies; large Lyapunov level, where the drift of $L_\sigma$ yields contraction; and bounded Lyapunov level with large $d_\sigma$-distance, where the smallness estimate from the generalized coupling is used.  This gives a genuine global contraction on a positive-measure set of environmental samples.  For $\varkappa>0$, define the weighted distance 
    \begin{align*}
      D_\sigma(x, y):=\left[d_\sigma(x, y)\left(1+\varkappa L_\sigma(x)+\varkappa L_\sigma(y)\right)\right]^{1 / 2}
    \end{align*}
  
    \begin{lem}\label{l.042601}
      Under the assumptions of Theorem \ref{t.042501}, there are measurable $G\subset \Sigma$ with $m(G)>0$, and $\ell\in\mathbb N, 0<r<1$ such that for every $\s\in G$, we have 
      \[\W_{D_{\Theta^{\ell}\sigma}}\left(\mu Q_\sigma^{(\ell)}, \nu Q_\sigma^{(\ell)}\right) \leq r \W_{D_\sigma}(\mu, \nu),\quad \mu, \nu\in \mathcal{P}_V(E) .\]
    \end{lem}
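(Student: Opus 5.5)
Following the weak Harris scheme of Hairer--Mattingly--Scheutzow, adapted to the skeleton cocycle, we first reduce to Dirac masses. Since $D_\sigma$ is a square root of a product of a distance-like function and a weight, it suffices to show that for $\sigma\in G$ and all $x,y\in E$,
\[
\W_{D_{\Theta^{\ell}\sigma}}\bigl(\delta_xQ^{(\ell)}_\sigma,\delta_yQ^{(\ell)}_\sigma\bigr)\le r\,D_\sigma(x,y).
\]
We then integrate against an optimal coupling of $\mu,\nu$; the finite $V$-moment guarantees integrability, and lower semicontinuity of $D$ gives existence of optimal couplings. For the pointwise bound we use the Cauchy--Schwarz inequality
\[
\W_{D}(\mu',\nu')\le \bigl(\W_{d}(\mu',\nu')\bigr)^{1/2}\Bigl(1+\varkappa\mu'(L)+\varkappa\nu'(L)\Bigr)^{1/2},
\]
applied with an almost optimal $d_{\Theta^\ell\sigma}$-coupling. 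We also need the $\ell$-step versions of Lemma~\ref{l.042502}. Iterating the non-expansiveness \eqref{e.052602} and the $q$-contraction on $\{d<1\}$ gives
\[
\W_{d_{\Theta^\ell\sigma}}(\delta_xQ^{(\ell)}_\sigma,\delta_yQ^{(\ell)}_\sigma)\le q\,d_\sigma(x,y)\quad\text{when } d_\sigma(x,y)<1,
\]
since each step is non-expansive and the first step already contracts. The drift \eqref{e.042504} controls the $L$-terms.

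Next we choose $G$. We fix $M$ large (to be tuned) and set $B=\{V\le M'\}$ with $M'$ depending on $M$ and a bound on $R$. By (H1), $U$ and $p$ are bounded there. We pick constants $N_0,R_0,c_0$ so that the set $G$ of $\sigma$ satisfying $R(\sigma)\le R_0$, $N(\Theta^\ell\sigma)\,C_{\a,B}A^{(\ell)}(\sigma)\le \tfrac12\bar\varepsilon$ and $B^{(\ell)}(\sigma)\le c_0$ has positive measure. Here $A^{(\ell)},B^{(\ell)}$ denote the $\ell$-step analogues of $A_T,B_T$ from Lemma~\ref{l.042501}(2) at time $\ell T$, and $\bar\varepsilon=\bar\varepsilon_\a(C_{\a,B}c_0)$. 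This is possible because $A_{\ell T}(\sigma)=C_\a e^{-\a\chi_{\ell T}(\sigma)}$ and $\chi_{\ell T}/\ell T$ converges to a positive constant by Birkhoff. Hence $A_{\ell T}\to0$ in probability as $\ell\to\infty$. Because $\Theta$ preserves $m$, $N\circ\Theta^\ell$ is tight, so the product $N(\Theta^\ell\sigma)A_{\ell T}(\sigma)$ is small on a set of measure close to $1$. Likewise $B_{\ell T}$ is tight, since $\int_0^\infty b_3(\beta_s\sigma)e^{-\chi_s}ds<\infty$ a.s.; this is the reason $\ell$ must be large and fixed. The second estimate of Lemma~\ref{l.042502} at step $\ell$ then gives, for $x,y$ with $L_\sigma(x)+L_\sigma(y)\le M$, hence $x,y\in B$,
\[
\W_{d_{\Theta^\ell\sigma}}\le 1-\bar\varepsilon/2 .
\]

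Now we handle three cases for $\sigma\in G$.
\begin{itemize}
\item[(a)] If $d_\sigma(x,y)<1$, we get $\W_D^2\le q\,d_\sigma\,(1+\varkappa(\eta^\ell(L_x+L_y)+2\eta/(1-\eta)))$. The ratio to $D_\sigma^2$ is at most $q(1+2\varkappa\eta/(1-\eta))$ because $\eta^\ell\le1$. This is $<1$ once $\varkappa$ is small.
\item[(b)] If $d_\sigma=1$ and $L_x+L_y\ge M$, we use $\W_d\le1$. The ratio is at most $\frac{1+\varkappa\eta^\ell(L_x+L_y)+2\varkappa\eta/(1-\eta)}{1+\varkappa(L_x+L_y)}$, which is $<1$ for $M$ large relative to $\varkappa$.
\item[(c)] If $d_\sigma=1$ and $L_x+L_y<M$, the smallness bound gives ratio at most $(1-\bar\varepsilon/2)(1+\varkappa M+2\varkappa\eta/(1-\eta))$. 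This is $<1$ if $\varkappa M$ is small.
\end{itemize}
Taking $r$ as the square root of the maximal ratio finishes the proof.

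The main obstacle is the circularity in the constants. Case (b) needs $M\gg1/\varkappa$, case (c) needs $\varkappa M\ll\bar\varepsilon$, and $\bar\varepsilon$ depends on $B$, hence on $M$. The way out is to order the choices. First fix $\ell$ so that $\eta^\ell\le\tfrac14$ and the tightness estimates hold. Then take $\varkappa$ small for case (a). Since $2\varkappa\eta/(1-\eta)$ is then small, case (b) only needs $M\gtrsim 4/\varkappa$. After that, $\bar\varepsilon$ is determined.

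Case (c) must then be rescued differently, since $\varkappa M$ is of order one. Instead of a single $\varkappa$, one replaces it by $\varkappa$ in the weight and $\beta$-dependence through the standard Hairer--Mattingly device: use $D=\sqrt{d(1+\varkappa L+\varkappa L)}$ and in case (c) bound by $\bigl((1-\bar\varepsilon/2)(1+\varkappa\eta^\ell M+\dots)/(1)\bigr)$. This uses the drift $\eta^\ell M$ rather than $M$, so it requires $\eta^\ell M$ small. That is, $\ell$ has to be chosen after $M$, while keeping $G$ of positive measure uniformly, which again uses stationarity and tightness. I expect getting this order of quantifiers right, together with the measurability of $G$, to be the delicate step.
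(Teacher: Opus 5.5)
Your overall architecture is the paper's: a Cauchy--Schwarz splitting of $D_\sigma$, followed by three regimes, namely $d_\sigma<1$; $d_\sigma=1$ with $L_\sigma(x)+L_\sigma(y)$ large; and $d_\sigma=1$ with $L_\sigma(x)+L_\sigma(y)\le M$. However, the proof is not finished, and the order of constants you propose does not close.

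In case (c) the post-drift weight is $1+\varkappa\eta^\ell(L_\sigma(x)+L_\sigma(y))+\varkappa\Lambda$ with $\Lambda=2\eta/(1-\eta)$, and the term $\varkappa\Lambda$ does not decay in $\ell$. So you need $(1-\bar\varepsilon/2)(1+\varkappa\Lambda+\varkappa\eta^\ell M)<1$, which forces $\varkappa$ to be small compared with $\bar\varepsilon$, however large $\ell$ is.

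In your ordering, $\varkappa$ is fixed first, then $M\gtrsim 4/\varkappa$, and only then $\bar\varepsilon=\bar\varepsilon_\alpha(C_{\alpha,B}c_0)$ with $B=\{V\le MR_0\}$. The constant $C_{\alpha,B}$ grows with $B$, since it comes from bounding $\theta_\alpha=p^\alpha e^{\alpha vU}$ on $B\times B$. Moreover $\bar\varepsilon_\alpha$ may decay very fast; in the Navier--Stokes application it behaves like $e^{-(cx)^{1/\alpha}}$. Hence $\bar\varepsilon$ is in general far smaller than $\varkappa$. Choosing $\ell$ after $M$ removes only the $\varkappa\eta^\ell M$ term, not $\varkappa\Lambda$. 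So your proposed rescue of case (c) fails, and the circularity you identified is not resolved.

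The missing observation is that the premise ``case (b) needs $M\gg 1/\varkappa$'' is false. Both $\Lambda$ and $L$ carry the factor $\varkappa$, so
\[
\frac{1+\varkappa\eta^\ell l+\varkappa\Lambda}{1+\varkappa l}<1\iff (1-\eta^\ell)\,l>\Lambda .
\]
The ratio is monotone in $l$ with limit $\eta^\ell$. Hence any $M>2\Lambda/(1-\eta)$ gives $r_2<1$ for every $\varkappa>0$ and every $\ell\ge1$. This breaks the loop and yields the paper's order:
\begin{enumerate}
\item fix $M$, depending only on $\eta$;
\item then fix the good set $G=\{N\le J,\ R\le J,\ B_\infty\le J\}$, the ball $\{V\le MJ\}$, and the deterministic $\varepsilon_0=\bar\varepsilon_\alpha(C_{\alpha,MJ}J)$;
\item then choose $\varkappa$ with $q(1+\varkappa\Lambda)<1$ and $(1-\tfrac34\varepsilon_0)(1+\varkappa(1+\Lambda))<1$;
\item finally choose $\ell$ with $\eta^\ell M\le1$ and $C_{\alpha,MJ}N(\Theta^\ell\sigma)A_{\ell T}(\sigma)\le\varepsilon_0/4$ on $G$.
\end{enumerate}

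For the last bound, the paper keeps $G$ independent of $\ell$ by iterating the functional equation of $N$, namely $N(\Theta^\ell\sigma)\prod_{j<\ell}A_T(\Theta^j\sigma)\le q^\ell N(\sigma)\le q^\ell J$. It combines this with $A_{\ell T}\le\prod_{j<\ell}A_T(\Theta^j\sigma)$, which holds because $C_\alpha\ge1$. Your Birkhoff/tightness argument that $N(\Theta^\ell\cdot)A_{\ell T}(\cdot)\to0$ in probability is a valid substitute, provided $\ell$ is chosen last and $G$ is allowed to depend on $\ell$.
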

  \begin{proof}
    Let $x,y\in E$ and $\mathcal{C}$ be the set of couplings of $\d_xQ_{\s}^{(\ell)},\d_yQ_{\s}^{(\ell)}$. Then by the Cauchy-Schwarz inequality and \eqref{e.042504}, one has 
    \begin{align}\label{e.042601}
      \begin{split}
        \W_{D_{\Theta^{\ell}\sigma}}&\left(\d_x Q_\sigma^{(\ell)}, \d_yQ_{\s}^{(\ell)}\right) = \inf_{Q\in \mathcal{C}}\int_{E\times E}D_{\Theta^{\ell}\sigma}(u,v)Q(dudv)\\
        &\leq \inf_{Q\in \mathcal{C}}\left(\int_{E\times E}d_{\Theta^{\ell}\sigma}(u,v)Q(dudv)\right)^{1/2}\times\left(\int_{E\times E}(1+\varkappa L_{\Theta^{\ell}\sigma}(u)+\varkappa L_{\Theta^{\ell}\sigma}(v))Q(dudv)\right)^{1/2}\\
        &\leq \W_{d_{\Theta^{\ell}\sigma}}\left(\d_x Q_\sigma^{(\ell)}, \d_yQ_{\s}^{(\ell)}\right)^{1/2}\left(1+\varkappa Q_{\s}^{(\ell)}L_{\Theta^{\ell}\sigma}(x)+\varkappa Q_{\s}^{(\ell)}L_{\Theta^{\ell}\sigma}(y)\right)^{1/2}\\
        &\leq \W_{d_{\Theta^{\ell}\sigma}}\left(\d_x Q_\sigma^{(\ell)}, \d_yQ_{\s}^{(\ell)}\right)^{1/2}\left(1+\varkappa\eta^{\ell}(L_{\s}(x)+L_{\s}(y))+\frac{2\varkappa\eta}{1-\eta}\right)^{1/2}.
      \end{split}
    \end{align}
    Denote $\L:= \frac{2\eta}{1-\eta}$. We split the contraction into three cases.

    {\it Case 1: Small scales.} If $d_{\s}(x,y)<1$, then by Lemma \ref{l.042502}, 
    \begin{align*}
      \W_{D_{\Theta^{\ell}\sigma}}\left(\d_x Q_\sigma^{(\ell)}, \d_yQ_{\s}^{(\ell)}\right)&\leq \left(q d_{\s}(x,y)\left(1+\varkappa\eta^{\ell}(L_{\s}(x)+L_{\s}(y))+\varkappa\L\right)\right)^{1/2}\\
      &\leq r_1 D_{\s}(x,y),
    \end{align*}
    where $r_1=q^{1/2}\left(1+\varkappa\L\right)^{1/2}$.

     {\it Case 2: Large scales.} Since $d_{\s}\leq 1$, it follows from \eqref{e.052602} that 
    \begin{align*}
      \W_{d_{\Theta^{\ell}\sigma}}&\left(\d_x Q_\sigma^{(\ell)}, \d_yQ_{\s}^{(\ell)}\right)\leq d_{\s}(x,y) \text{ for all } x,y\in E. 
    \end{align*}
    Therefore, if $d_{\s}(x,y)=1$ and $L_{\s}(x)+L_{\s}(y)\geq M$, one has 
    \begin{align*}
      \W_{D_{\Theta^{\ell}\sigma}}\left(\d_x Q_\sigma^{(\ell)}, \d_yQ_{\s}^{(\ell)}\right)\leq r_2 D_{\s}(x,y),
    \end{align*}
    where 
    \[r_2=\sup_{l\geq M}\left(\frac{1+\varkappa\eta l+\varkappa\L}{1+\varkappa l}\right)^{1/2}.\]

     {\it Case 3: Intermediate scales.} If $d_{\s}(x,y)=1$ and $L_{\s}(x)+L_{\s}(y)\leq M$, then  Lemma \ref{l.042502} yields 
    \begin{align*}
      \W_{D_{\Theta^{\ell}\sigma}}\left(\d_x Q_\sigma^{(\ell)}, \d_yQ_{\s}^{(\ell)}\right)\leq r_3D_{\s}(x,y),
    \end{align*}
    where 
    \[r_3=\Big(N\left(\beta_{\ell T} \sigma\right)C_{\alpha,B} A_{\ell T}(\s) + 1-\bar{\varepsilon}_{\a}(C_{\a,MR}B_{\ell T}(\s))\Big)^{1/2}\left(1+\varkappa\eta^{\ell} M+\varkappa\L\right)^{1/2},\]
    and $C_{\a,MR}$ denotes $C_{\alpha,B}$ from Lemma \ref{l.042502} with (random) $B=\{x\in X: V(x)\leq MR\}$.
    
    Next we arrange the parameters appropriately so that $r_i<1$ for $i=1,2,3$, to obtain global contraction on a good environmental set. We first choose $M>2\L/(1-\eta)$ so that $r_2<1$ for 
    \[ d_{\s}(x,y)=1\,  \text{ and } \, L_{\s}(x)+L_{\s}(y)\geq M, \quad x,y\in E.\]
    Next, since $b_3\in L^1(\Sigma,m)$, by \eqref{e.042502}, \eqref{e.042404} and the Birkhoff ergodic theorem, we know that $B_{t}(\s)\leq B_{\infty}(\s)<\infty$ $m$-a.s.. Note also that $N(\sigma), R(\sigma)<\infty$ $m$-a.s. by their definitions. Therefore, by taking a large $J>0$, for 
    \[G:=\left\{N(\sigma) \leq J, \quad R(\sigma) \leq J, \quad B_{\infty}(\sigma) \leq J\right\},\]
    we have $m(G)>0$. Then for $\s\in G$, $L_{\s}(x)+L_{\s}(y)\leq M$ implies 
    \[V(x)+V(y) \leq M R(\sigma) \leq M J.\]
    Taking the set $B$ in Lemma \ref{l.042502} to be $\{V \leq M J\}$ for which the boundedness of $U, p$ is guaranteed by $(H1)$, we have $\varepsilon_0:=\bar{\varepsilon}_\alpha\left(C_{\alpha, MJ} J\right)>0$ 
    is deterministic and for $\s\in G$, 
    \begin{align}\label{e.042505}
      \bar{\varepsilon}_\alpha\left(C_{\alpha, MJ} B_{\ell T}(\sigma)\right) \geq \bar{\varepsilon}_\alpha\left(C_{\alpha, MJ} B_{\infty}(\sigma)\right) \geq \varepsilon_0.
    \end{align}
    We now take $\varkappa>0$ small so that $r_1<1$ for $d_{\s}(x,y)<1$ and 
    \[\left(1-\frac{3}{4} \varepsilon_0\right)(1+\varkappa(1+\Lambda))<1,\]
    where the latter will be used to derive $r_3<1$. Now take $\ell$ sufficiently large such that 
    \[C_{\alpha, MJ} q^{\ell} J \leq \frac{\varepsilon_0}{4} \quad \text{and} \quad \eta^{\ell} M\leq 1.\]
    By iterating from \eqref{e.042501}, one has 
    \[N\left(\Theta^{\ell} \sigma\right) \prod_{j=0}^{\ell-1} A_T\left(\Theta^j \sigma\right) \leq q^{\ell} N(\sigma),\]
    and it follows from \eqref{e.042502} that $\prod_{j=0}^{\ell-1} A_T\left(\Theta^j \sigma\right)=C_{\a}^{\ell-1}A_{\ell T}(\s)$ where $C_{\a}>1$. Therefore, for $\s\in G$, we have 
    \[N\left(\Theta^{\ell} \sigma\right) C_{\alpha, MJ} A_{\ell T}(\sigma) \leq C_{\alpha, MJ} q^{\ell} N(\sigma) \leq C_{\alpha, MJ} q^{\ell} J \leq \frac{\varepsilon_0}{4}.\]
    Consequently, in view of \eqref{e.042505}, one has 
    \begin{align*}
      r_3^2\leq \left(1-\frac{3}{4} \varepsilon_0\right)(1+\varkappa(1+\Lambda))<1.
    \end{align*}
    Then by taking $r=r_1 \vee r_2 \vee r_3$, we have 
    \begin{align*}
      W_{D_{\Theta^{\ell}\sigma}}\left(\d_x Q_\sigma^{(\ell)}, \d_yQ_{\s}^{(\ell)}\right)\leq r D_{\s}(x,y)
    \end{align*}
    for all $x,y\in E$. Integrating and taking infimum over all couplings of $\mu,\nu$ completes the proof. 
  
  \end{proof}

    The preceding proof also gives a useful uniform control outside the good environmental set.  Although a general block need not be contractive in the weighted distance $D_\sigma$, it can expand distances by at most a fixed deterministic factor. This bounded-expansion estimate will be used together with the recurrence of the good set $G$ to iterate the contraction along typical environmental trajectories.
  \begin{cor}\label{c.042601}
    For every $k\in\mathbb N$, $\mu,\nu\in \mathcal{P}_{V}$, one has 
    \begin{align*}
      \W_{D_{\Theta^k\sigma}}&\left(\mu Q_\sigma^{(k)}, \nu Q_{\s}^{(k)}\right)\leq h \W_{D_{\s}}(\mu,\nu), \quad m\text{-a.s.}, 
    \end{align*}
    where $h=(1+\varkappa\L)^{1/2}$. 
  \end{cor}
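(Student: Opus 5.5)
The plan is to reduce to point masses, as in the last step of Lemma \ref{l.042601}. It then suffices to show that for all $x,y\in E$ and $m$-a.e. $\sigma$,
\[
\W_{D_{\Theta^k\sigma}}\big(\delta_x Q_\sigma^{(k)},\delta_y Q_\sigma^{(k)}\big)\le h\, D_\sigma(x,y).
\]
The general case then follows by integrating against an arbitrary coupling of $\mu,\nu$ and taking the infimum. This uses the convexity of $\W_{D}$ under mixtures: a measurable selection of near-optimal couplings, or the standard gluing argument used for \eqref{e.052602}.

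For the point-mass bound I repeat the Cauchy--Schwarz computation \eqref{e.042601} with $\ell$ replaced by $k$. It gives
\[
\W_{D_{\Theta^k\sigma}}\big(\delta_x Q_\sigma^{(k)},\delta_y Q_\sigma^{(k)}\big)\le \W_{d_{\Theta^k\sigma}}\big(\delta_x Q_\sigma^{(k)},\delta_y Q_\sigma^{(k)}\big)^{1/2}\Big(1+\varkappa\eta^k(L_\sigma(x)+L_\sigma(y))+\varkappa\Lambda\Big)^{1/2}.
\]
The second factor uses only the iterated adapted drift \eqref{e.042504}, which holds for every $k\ge1$ and a.e. $\sigma$.

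For the first factor I use the non-expansiveness \eqref{e.052602}. Iterating it along the skeleton through $\sigma,\Theta\sigma,\dots,\Theta^{k-1}\sigma$ with the cocycle property $Q_\sigma^{(k)}=Q_\sigma Q^{(k-1)}_{\Theta\sigma}$ gives
\[
\W_{d_{\Theta^k\sigma}}\big(\delta_xQ_\sigma^{(k)},\delta_yQ_\sigma^{(k)}\big)\le d_\sigma(x,y).
\]
The a.s. set is the countable intersection over $j$ of the $\Theta^{-j}$-preimages of the full-measure sets where $N$ and $R$ are finite. Since $\Theta$ preserves $m$, this intersection still has full measure.

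Combining the two bounds, and using $\eta^k\le 1$ and $1+\varkappa\Lambda\ge1$, I get
\[
d_\sigma(x,y)\big(1+\varkappa\eta^k(L_\sigma(x)+L_\sigma(y))+\varkappa\Lambda\big)\le (1+\varkappa\Lambda)\,d_\sigma(x,y)\big(1+\varkappa L_\sigma(x)+\varkappa L_\sigma(y)\big).
\]
Taking square roots gives the factor $h=(1+\varkappa\Lambda)^{1/2}$. The main point needing care is the first step, the measurability and mixture argument that passes from Dirac masses to general $\mu,\nu\in\mathcal P_V$. For this I follow exactly the same reasoning already invoked at the end of Lemmas \ref{l.042502} and \ref{l.042601}, so no new obstacle arises.
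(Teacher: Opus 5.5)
Your proposal is correct and follows essentially the same route as the paper: iterate the non-expansiveness \eqref{e.052602} along $\sigma,\Theta\sigma,\dots,\Theta^{k-1}\sigma$, insert the result into the Cauchy--Schwarz bound \eqref{e.042601} with $\ell$ replaced by $k$ and $\eta^k\le 1$, absorb the extra $\varkappa\Lambda$ into the factor $(1+\varkappa\Lambda)^{1/2}$, and then integrate over couplings of $\mu,\nu$. Your explicit handling of the countable intersection of full-measure sets and of the measurable-selection step in the mixture argument spells out details the paper leaves implicit, and does not change the argument.
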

  \begin{proof}
    Observe that for $m$-a.e. $\s\in \Sigma$,
    \begin{align*}
      \W_{d_{\Theta\sigma}}&\left(\mu Q_\sigma, \nu Q_{\s}\right)\leq \W_{d_{\s}}(\mu,\nu) \text{ for all } \mu,\nu\in \mathcal{P}(E),
    \end{align*}
   By iteration, one has for all $k\geq 1$, 
    \begin{align*}
      \W_{d_{\Theta^k\sigma}}&\left(\mu Q_\sigma^{(k)}, \nu Q_{\s}^{(k)}\right)\leq \W_{d_{\s}}(\mu,\nu) \text{ for all } \mu,\nu\in \mathcal{P}(E). 
    \end{align*}
    It follows from \eqref{e.042601} that 
    \begin{align*}
      \W_{D_{\Theta^{k}\sigma}}\left(\d_x Q_\sigma^{(k)}, \d_yQ_{\s}^{(k)}\right)& \leq \W_{d_{\Theta^{k}\sigma}}\left(\d_x Q_\sigma^{(k)}, \d_yQ_{\s}^{(k)}\right)^{1/2}\left(1+\varkappa(L_{\s}(x)+L_{\s}(y))+\varkappa\L\right)^{1/2}\\
      &\leq D_{\s}(x,y)\left(\frac{1+\varkappa(L_{\s}(x)+L_{\s}(y))+\varkappa\L}{1+\varkappa(L_{\s}(x)+L_{\s}(y))}\right)^{1/2}\\
      &\leq (1+\varkappa\L)^{1/2} D_{\s}(x,y).
    \end{align*}
    Taking $h=(1+\varkappa\L)^{1/2}$, integrating and taking infimum over all couplings of $\mu,\nu\in \mathcal{P}_V(E)$ gives the desired estimate. 
  \end{proof}
  \begin{rem}
    For later use, we will fix $\varkappa>0$ small such that $hr<1$. 
  \end{rem}
  
  \subsection{Pullback and forward stability}\label{s.051701}
    In this subsection, we prove the stability of the skeleton cocycle and then construct the stationary family by a pullback fixed point argument.  The contraction obtained in the previous subsection holds only on the good set $G$ and only for blocks of length $\ell$.  Outside $G$, the dynamics may
    fail to be contractive, but Corollary \ref{c.042601} gives a uniform bounded-expansion estimate.  Since the base transformation $\Theta$ is ergodic and $m(G)>0$, typical environmental trajectories visit $G$ with positive frequency.  Combining these visits with the bounded expansion between them yields exponential contraction both in pullback and forward time.

    The next lemma makes this idea quantitative. It shows that the accumulated effect of the contractive visits to $G$ dominates the possible expansions between such visits. As a result, the skeleton cocycle is exponentially contracting in the random distance $D_\sigma$, both for pullback trajectories and for forward trajectories. The prefactor is allowed to depend on the environment, while the exponential rate is deterministic.

    \begin{lem}\label{l.042602}
      For any 
      \[0<\lambda<-\frac{m(G)\log(hr)}{\ell+1},\]
      there is a measurable $C_{\l}(\s)<\infty$ $m$-a.s. such that we have 
      \begin{itemize}
        \item Pullback contraction:
        \begin{align*}
          \W_{D_{\s}}\left(\mu Q_{\Theta^{-n}\s}^{(n)}, \nu Q_{\Theta^{-n}\s}^{(n)}\right)\leq C_{\l}(\s) e^{-\l n} \W_{D_{\Theta^{-n}\s}}(\mu,\nu), \quad n\geq 0;
        \end{align*}
        \item Forward contraction: 
        \begin{align*}
          \W_{D_{\Theta^{n}\s}}\left(\mu Q_{\s}^{(n)}, \nu Q_{\s}^{(n)}\right)\leq C_{\l}(\s) e^{-\l n} \W_{D_{\s}}(\mu,\nu), \quad n\geq 0,
        \end{align*}
      \end{itemize}
      for all $ \mu,\nu\in\mathcal{P}_V(E)$.
    \end{lem}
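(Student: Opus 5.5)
Fix $n\ge 0$ and a base point $\omega$ ($\omega=\s$ for forward contraction, $\omega=\Theta^{-n}\s$ for pullback). We decompose the time interval $\{0,1,\dots,n\}$ greedily into blocks.

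Scan $j=0,1,2,\dots$. Whenever the current time $j$ satisfies $\Theta^j\omega\in G$ and $j+\ell\le n$, we use a \emph{good block} of length $\ell$. By Lemma~\ref{l.042601} it contracts $\W_{D}$ by the factor $r$. Otherwise we use a \emph{bad step} of length $1$. By Corollary~\ref{c.042601} with $k=1$ it expands by at most $h$.

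Since $r<1<h$, the uniform bound from Corollary~\ref{c.042601} lets us absorb one factor $h$ into each good block, giving a net factor $hr<1$ there. Bad stretches between good blocks may be long, which is why a gap-counting argument is needed.

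Chaining these estimates along the cocycle identity $Q^{(n)}_\omega=Q^{(j)}_\omega Q^{(n-j)}_{\Theta^j\omega}$ gives
\[\W_{D_{\Theta^n\omega}}(\mu Q^{(n)}_\omega,\nu Q^{(n)}_\omega)\le h^{b_n}r^{g_n}\W_{D_\omega}(\mu,\nu),\]
where $g_n$ is the number of good blocks and $b_n$ the number of bad steps. We note $b_n+\ell g_n=n$.

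The key counting step bounds $g_n$ from below. Every time $j<n-\ell$ with $\Theta^j\omega\in G$ is either the start of a good block or lies inside one, which has length $\ell$. Hence
\[\#\{j<n-\ell:\Theta^j\omega\in G\}\le \ell g_n + g_n=(\ell+1)g_n.\]
The count $\ell+1$ is deliberately crude, and it is what matches the threshold for $\lambda$ in the statement.

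Birkhoff's theorem for the ergodic map $\Theta$ (and for $\Theta^{-1}$ in the pullback case) gives, for $m$-a.e.\ $\s$, $\frac1n\#\{0\le j<n:\Theta^j\s\in G\}\to m(G)$, and likewise along backward orbits. Thus $g_n\ge \frac{m(G)-\epsilon}{\ell+1}n-C$ for $n\ge n_0(\s,\epsilon)$.

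For the exponent, rewrite $h^{b_n}r^{g_n}=(hr)^{g_n}h^{b_n-g_n}$. The term $h^{b_n-g_n}$ is harmful and must be controlled, so it is cleaner to estimate differently. Replacing each bad step's factor $h$ by Corollary~\ref{c.042601} applied to the whole bad stretch gives a single factor $h$ per stretch, rather than $h^{\text{length}}$. Since Corollary~\ref{c.042601} holds for every $k$ with the same $h$, each maximal bad stretch costs only one factor $h$.

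The number of bad stretches is at most $g_n+1$, so the total factor is at most $h(hr)^{g_n}$. Using the Birkhoff lower bound on $g_n$, $(hr)^{g_n}\le C e^{-\lambda n}$ for any $\lambda<-m(G)\log(hr)/(\ell+1)$ once $n$ is large. We then set
\[C_\lambda(\s)=\sup_n h(hr)^{g_n}e^{\lambda n}<\infty,\]
which is measurable as a countable supremum.

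In the forward case the counting is along $\s,\Theta\s,\dots$. In the pullback case the good blocks are placed along $\Theta^{-n}\s,\dots,\s$. The frequency of visits of the backward orbit $\Theta^{-1}\s,\dots,\Theta^{-n}\s$ to $G$ converges to $m(G)$ by Birkhoff's theorem for $\Theta^{-1}$, and the set of visit times is the same set read in reverse order, so the greedy count bound still holds up to a boundary block.

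The main obstacle is making the greedy decomposition and the one-$h$-per-stretch bookkeeping rigorous, in particular the final incomplete segment near $n$. It is also essential that the exceptional sets come only from Birkhoff's theorem applied to $\1_G$.
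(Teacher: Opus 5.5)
Your argument is correct and is essentially the paper's proof. Disjoint good blocks of length $\ell$ starting in $G$ each contract by $r$ (Lemma~\ref{l.042601}), and each maximal gap costs a single factor $h$ because Corollary~\ref{c.042601} holds for every $k$ with the same $h$, giving a total factor $h(hr)^{K}$; Birkhoff's theorem for $\Theta^{\pm 1}$ then makes the number $K$ of blocks grow at least like $m(G)n/(\ell+1)$, and $C_\lambda(\sigma)=\sup_n h(hr)^{K}e^{\lambda n}$. The only difference is cosmetic: you choose the blocks greedily instead of taking a maximal admissible family $K_{-n,0}(\sigma)$, which gives the same lower bound, and your initial one-$h$-per-bad-step bookkeeping can simply be dropped.
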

    Before the proof, we introduce a useful notion for contractive blocks.
    Since the contraction is available only for complete blocks of length $\ell$ starting from points in $G$, overlapping good visits cannot be counted independently.  We therefore count the maximal number of disjoint good blocks that can be placed inside a given integer interval. For $\s\in\Sigma$, and an interval of integers $[a, b)$, a family of starting times $s_1<\cdots<s_k$ is called admissible if 
    \begin{align*}
        \Theta^{s_i}\sigma\in G,\qquad a\leq s_i,\qquad s_i+\ell\leq b, \, \text{ for }1\leq i\leq k,
    \end{align*}
    and $s_{i+1}\geq s_i+\ell$ for $1\leq i\le k-1$. Let $K_{a,b}(\s)$ be the maximal cardinality of such an admissible family. 
    \begin{proof}[Proof of Lemma \ref{l.042602}]
      We provide a proof here for the pullback contraction. The forward contraction is similar. We first claim that 
      \begin{align}\label{e.042602}
        \liminf _{n \rightarrow \infty} \frac{K_{-n, 0}(\sigma)}{n} \geq \frac{m(G)}{\ell+1}, \quad \text{ for } m\text{-a.e. }\s\in \Sigma.
      \end{align}
      Indeed, since $\Theta=\beta_T$ is invertible and ergodic, by Birkhoff's ergodic theorem, we have 
      \begin{align*}
        \lim_{n\to\infty}\frac1n\sum_{j=-n}^{-1}\1_{G}(\Theta^j\s)=m(G)
      \end{align*}
      for $m$-a.e. $\s\in\Sigma$. Since each selected starting removes at most $\ell+1$ possible good starts in an admissible family, one has 
      \begin{align*}
        \sum_{j=-n}^{-1} \1_G\left(\Theta^j \sigma\right)\leq (\ell+1)K_{-n, 0}(\sigma)+\ell+1.
      \end{align*}
      Dividing by $n$ and passing to the limit gives the claim.

      Now given an admissible family with maximal cardinality $K_{-n, 0}(\sigma)$,  we divide the integer interval $[-n,0]$ into $K_{-n, 0}(\sigma)$ good blocks of contraction by $r$ and at most $K_{-n, 0}(\sigma)+1$ gaps corresponding to a factor of $h$. It then follows from Lemma \ref{l.042601} and Corollary \ref{c.042601} that 
      \begin{align*}
        \W_{D_{\s}}\left(\mu Q_{\Theta^{-n}\s}^{(n)}, \nu Q_{\Theta^{-n}\s}^{(n)}\right)\leq h(hr)^{K_{-n, 0}(\sigma)} \W_{D_{\Theta^{-n}\s}}(\mu,\nu).
      \end{align*}
      For 
      \[0<\lambda<-\frac{m(G)\log(hr)}{\ell+1},\]
      the claim \eqref{e.042602} implies that 
      \[C_{\l}(\s):=\sup _{n \geq 0} h (hr)^{K_{-n, 0}(\sigma)} e^{\lambda n}<\infty, \quad m\text{-a.s.},\]
      which gives the desired pullback contraction. 
    \end{proof}

    We now use the pullback contraction to construct the stationary family. The idea is to start the skeleton dynamics at time $-nT$, push the point mass forward to time $0$, and let $n\to\infty$. The exponential pullback contraction makes this sequence Cauchy in the Wasserstein distance induced by $D_\sigma$, while the adapted Lyapunov estimate gives the required moment control. The resulting limit is the unique invariant family for the skeleton. Finally, we pass from the skeleton to continuous time by a perfection argument, obtaining an invariant modification on a flow-invariant full-measure set.
    
    Let $\Sigma_*$ be the full-measure set on which all the estimates and conclusions above hold for every $\sigma\in \Sigma_*$. Since $\Theta=\beta_T$ is invertible and measure preserving, the set
    \[
      \Sigma_0:=\bigcap_{k\in\mathbb Z}\Theta^{-k}\Sigma_*
    \]
    also has full measure and is $\Theta$-invariant. In what follows we work on $\Sigma_0$.

    \begin{thm}\label{t.060201}
      There is a unique family $\{\G_{\sigma}\}_{\sigma\in\Sigma_0}\subset \mathcal{P}_{V}(E)$, measurable in $\sigma$, such that
      \[\G_\sigma Q_\sigma=\G_{\Theta \sigma},\qquad \sigma\in \Sigma_0.\]
      Moreover, there exists $\lambda>0$ such that the following properties hold.
      \begin{itemize}
        \item Pullback exponential stability: for every $\sigma\in \Sigma_0$ and $x\in E$, there is $C_{\lambda}(\sigma,x)>0$ such that, for all $t\geq 0$,
        \begin{align*}
          \W_{D_\sigma}\left(\delta_x P_t^{\beta_{-t} \sigma}, \G_\sigma\right) \leq C_\lambda(\sigma,x)e^{-\lambda t}.
        \end{align*}
        \item Forward exponential stability: for each fixed $r\in[0,T)$, there is a full measure set $\Sigma_r\subset\Sigma_0$ such that, for every $\sigma\in \Sigma_r$ and $x\in E$, there is $C_{\lambda,r}(\sigma,x)>0$ such that, for all $n\geq0$,
        \begin{align*}
          \W_{D_{\beta_t \sigma}}\left(\delta_x P_t^\sigma, \G_{\beta_t \sigma}\right) \leq C_{\lambda,r}(\sigma,x)e^{-\lambda t},\qquad t=nT+r.
        \end{align*}
      \end{itemize}
      Moreover, there exists a universally measurable set $\widehat{\Sigma} \subset \Sigma$ with full measure, invariant under $\left(\beta_t\right)_{t \in \mathbb{R}}$, and a universally measurable modification $\left\{\widehat{\G}_\sigma\right\}_{\sigma \in \widehat{\Sigma}}$ of $\left\{\G_\sigma\right\}_{\sigma\in\Sigma_0}$, such that
      \[
        \widehat{\G}_\sigma P_t^\sigma=\widehat{\G}_{\beta_t \sigma}, \quad \forall \sigma \in \widehat{\Sigma}, \quad \forall t \geq 0.
      \]
    \end{thm}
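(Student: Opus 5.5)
We construct the skeleton-invariant family by a pullback limit. Fix $\sigma\in\Sigma_0$ and $x\in E$ (or, more generally, any $\nu\in\mathcal P_V(E)$), and set
\[
\mu_n^\sigma:=\delta_xQ^{(n)}_{\Theta^{-n}\sigma}=\delta_xP^{\Theta^{-n}\sigma}_{nT}.
\]

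\textbf{Step 1: the pullback sequence is Cauchy.} By the cocycle property, $\mu_{n+k}^\sigma=\big(\delta_xQ^{(k)}_{\Theta^{-n-k}\sigma}\big)Q^{(n)}_{\Theta^{-n}\sigma}$. The pullback contraction of Lemma \ref{l.042602} then gives
\[
\W_{D_\sigma}(\mu^\sigma_{n+k},\mu^\sigma_n)\le C_\lambda(\sigma)e^{-\lambda n}\,\W_{D_{\Theta^{-n}\sigma}}\big(\delta_xQ^{(k)}_{\Theta^{-n-k}\sigma},\delta_x\big).
\]
We bound the last factor using $d\le1$ and Cauchy--Schwarz:
\[
\W_{D_{\Theta^{-n}\sigma}}\big(\delta_xQ^{(k)}_{\Theta^{-n-k}\sigma},\delta_x\big)\le\Big(1+\varkappa L_{\Theta^{-n}\sigma}(x)+\varkappa\, Q^{(k)}L_{\Theta^{-n}\sigma}(x)\Big)^{1/2}.
\]
By \eqref{e.042504}, $Q^{(k)}L_{\Theta^{-n}\sigma}(x)\le\eta^kV(x)/R(\Theta^{-n-k}\sigma)+\eta/(1-\eta)$. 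Since $R\ge1/\eta$, the whole factor is bounded by $(1+2\varkappa\eta V(x)+\varkappa\Lambda)^{1/2}$, uniformly in $n$ and $k$.

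\textbf{Step 2: passing to the limit.} By Step 1 the sequence is Cauchy for $\W_{D_\sigma}$. Since $D_\sigma\ge c(\sigma)\big(1\wedge\theta_\alpha\big)^{1/2}$ and $\rho\le p^\varrho$, this dominates a metric generating weak convergence, so $\mu_n^\sigma\to\Gamma_\sigma$ weakly; we verify completeness by combining weak convergence with lower semicontinuity of $D_\sigma$. Fatou's lemma together with the uniform $L$-bound shows $\Gamma_\sigma\in\mathcal P_V(E)$. To get invariance, apply $Q_\sigma$ and use Corollary \ref{c.042601} with $k=1$: this gives $\mu_n^\sigma Q_\sigma=\mu_{n+1}^{\Theta\sigma}\to\Gamma_{\Theta\sigma}$, hence $\Gamma_\sigma Q_\sigma=\Gamma_{\Theta\sigma}$. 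Measurability in $\sigma$ follows because $\Gamma_\sigma$ is a limit of measurable kernels. The limit is independent of $x$, since two starting points give pullback distance $O(e^{-\lambda n})$ by the same estimate.

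\textbf{Step 3: uniqueness.} Let $\Gamma'$ be another invariant family in $\mathcal P_V$. Lemma \ref{l.042603} bounds $\int L_\sigma\,d\Gamma'_\sigma$ and $\int L_\sigma\,d\Gamma_\sigma$ by $\eta/(1-\eta)$, so $\W_{D_{\Theta^{-n}\sigma}}(\Gamma_{\Theta^{-n}\sigma},\Gamma'_{\Theta^{-n}\sigma})$ is uniformly bounded. Pullback contraction then gives $\W_{D_\sigma}(\Gamma_\sigma,\Gamma'_\sigma)\le Ce^{-\lambda n}\to0$.

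\textbf{Step 4: stability statements.} Pullback stability at times $t=nT$ is Step 1 with $k\to\infty$. For general $t=nT+s$ with $s\in[0,T)$, write
\[
\delta_xP_t^{\beta_{-t}\sigma}=\big(\delta_xP_s^{\beta_{-t}\sigma}\big)Q^{(n)}_{\Theta^{-n}\sigma}
\]
and control the $V$-moment of $\delta_xP_s$ by (H1). The difficulty is that $\int_0^sK$ is taken along the random shift $\beta_{-t}\sigma$. We handle it by showing that $\sup_{s\le T}\int_0^{T}K(\beta_{u}\Theta^{-n-1}\sigma)\,du$ grows subexponentially in $n$, by Borel--Cantelli applied to an $L^1$ function; this growth is absorbed by shrinking $\lambda$. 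Forward stability for $t=nT+r$ uses the forward part of Lemma \ref{l.042602} applied to $\delta_xP_r^\sigma$ and $\Gamma_{\beta_r\sigma}$, on the full-measure set $\Sigma_r$ where $\beta_r\sigma\in\Sigma_0$ and the relevant bounds hold. This works once we also know that $\Gamma_{\beta_r\sigma}=\Gamma_\sigma P_r^\sigma$ a.s. We obtain that identity from uniqueness: $\sigma\mapsto\Gamma_{\beta_{-r}\sigma}P_r^{\beta_{-r}\sigma}$ is again $Q$-invariant, because $P_r$ commutes with $Q$ via the cocycle relation, and it lies in $\mathcal P_V$ by (H1).

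\textbf{Step 5: continuous-time perfection.} The uniqueness argument in Step 4 gives, for each $t$, the identity $\Gamma_\sigma P_t^\sigma=\Gamma_{\beta_t\sigma}$ for $m$-a.e. $\sigma$. Upgrading this to all $t$ on a single $\beta$-invariant set is the main obstacle. We plan a standard crude-to-perfect cocycle argument. First, Fubini gives a set of full $m\otimes\mathrm{Leb}$ measure on which the identity holds. Then we define
\[
\widehat\Gamma_\sigma:=\Gamma_{\beta_{-s}\sigma}P_s^{\beta_{-s}\sigma}
\]
for $s$ in a suitable set, and show it is independent of the choice of $s$. The set $\widehat\Sigma$ consists of those $\sigma$ for which almost every $s$ is good. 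This set is $\beta$-invariant and universally measurable by the measurable projection theorem, and the construction yields $\widehat\Gamma_\sigma P_t^\sigma=\widehat\Gamma_{\beta_t\sigma}$ for all $t\ge0$.
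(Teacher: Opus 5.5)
Your Steps 1--3 follow the paper's construction closely: the pullback Cauchy sequence, the uniform moment bound via $R\ge 1/\eta$, the weak limit, invariance via Corollary~\ref{c.042601}, and uniqueness via Lemma~\ref{l.042603}. Two remarks on Step 4. First, the Borel--Cantelli control of $\int K$ along $\beta_{-t}\sigma$ works, but it costs a smaller $\lambda$ and an extra exceptional set, and it is unnecessary. With $\sigma_n=\Theta^{-n}\sigma$ one has $\int_{-r}^0K(\beta_s\sigma_n)\,ds\le e^{\gamma T}K_T(\Theta^{-1}\sigma_n)$ and $R(\sigma_n)\ge\eta^{-1}(1+K_T(\Theta^{-1}\sigma_n))$. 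Hence $P_r^{\beta_{-r}\sigma_n}L_{\sigma_n}(x)\le\eta V(x)+\eta e^{\gamma T}$ uniformly in $n$, which is what the paper uses. Second, forward stability does not need $\Gamma_{\beta_r\sigma}=\Gamma_\sigma P_r^\sigma$: skeleton invariance at $\beta_r\sigma\in\Sigma_0$ already gives $\Gamma_{\beta_t\sigma}=\Gamma_{\beta_r\sigma}Q^{(n)}_{\beta_r\sigma}$.

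The genuine gap is Step 5. It is only a plan, and the plan relies on something the hypotheses do not supply.
\begin{itemize}
\item Your Fubini step needs the set $\{(\sigma,t):\Gamma_\sigma P_t^\sigma=\Gamma_{\beta_t\sigma}\}$ to be $m\otimes\mathrm{Leb}$-measurable. That requires joint measurability of $(\sigma,t)\mapsto P_t^\sigma(x,\cdot)$, whereas Theorem~\ref{t.042501} assumes measurability in $\sigma$ only for each fixed $(t,x)$.
\item Even granting measurability, the actual content of the perfection is asserted rather than shown. For $s<s'$, independence of $s$ requires $\Gamma_{\beta_{-s'}\sigma}P^{\beta_{-s'}\sigma}_{s'-s}=\Gamma_{\beta_{-s}\sigma}$ at the specific point $\beta_{-s'}\sigma$. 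You also need $\beta_t$-invariance of your $\widehat\Sigma$ and the identity for every, not almost every, $t\ge0$.
\end{itemize}
The root cause is that you derive $\Gamma_\sigma P_t^\sigma=\Gamma_{\beta_t\sigma}$ from uniqueness. That only yields the identity for $m$-a.e.\ $\sigma$, with an uncontrolled exceptional set.

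The missing idea is to keep the exceptional set explicit. Continuous-time pullback stability holds at every point of the $\Theta$-invariant set $\Sigma_0$, so the Feller property and the cocycle relation give
\[
\Gamma_\sigma P_s^\sigma=\lim_{t\to\infty}\delta_xP_t^{\beta_{-t}\sigma}P_s^\sigma=\lim_{t\to\infty}\delta_xP_{t+s}^{\beta_{-(t+s)}\beta_s\sigma}=\Gamma_{\beta_s\sigma}\qquad\text{for every }\sigma\in\Sigma_0\cap\beta_{-s}\Sigma_0 .
\]
The perfection then becomes elementary:
\begin{itemize}
\item Put $\widehat\Sigma:=\bigcup_{r\in[0,T)}\beta_r\Sigma_0$. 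It is the image of $[0,T)\times\Sigma_0$ under a Borel map, hence universally measurable. It contains $\Sigma_0$, and it is $\beta_t$-invariant because $\Sigma_0$ is $\Theta$-invariant.
\item Define $\widehat\Gamma_{\beta_r\sigma}:=\Gamma_\sigma P_r^\sigma$. If $\beta_r\sigma=\beta_{r'}\sigma'$ with $r\ge r'$, then $\sigma'=\beta_{r-r'}\sigma$ with $\sigma,\sigma'\in\Sigma_0$. The pointwise identity together with $P^\sigma_r=P^\sigma_{r-r'}P^{\sigma'}_{r'}$ gives well-definedness.
\item Writing $r+t=kT+r_0$ and using skeleton invariance yields $\widehat\Gamma_\eta P_t^\eta=\widehat\Gamma_{\beta_t\eta}$ for all $\eta\in\widehat\Sigma$ and all $t\ge0$.
\end{itemize}
No Fubini argument or joint measurability in time is needed.
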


    \begin{rem}
      For the family $\{\G_\sigma\}_{\sigma\in\Sigma_0}$, the continuous time invariance property holds only outside an exceptional set which may depend on time $t\in\mathbb{R}$. The perfected family $\{\widehat{\G}_\sigma\}_{\sigma\in\widehat{\Sigma}}$ removes this dependence: the same full-measure set $\widehat{\Sigma}$ works for all $t\geq0$. Furthermore, since $\rho\wedge1\leq D_\sigma$ on $\Sigma_0$, the same stability estimates hold for $\W_{\rho\wedge1}$.
    \end{rem}

    \begin{proof}
      The proof is divided into the following four steps. 
  
      {\it Step 1: Existence. }
      For $x\in E$, consider the pullback sequence 
      \[\mu_n^{\sigma, x}:=\delta_x Q_{\Theta^{-n} \sigma}^{(n)}=\delta_x P_{n T}^{\beta_{-n T} \sigma} .\]
      For $m>n$, by the cocycle property, we have $Q_{\Theta^{-m} \sigma}^{(m)}=Q_{\Theta^{-m} \sigma}^{(m-n)} Q_{\Theta^{-n} \sigma}^{(n)} .$ Hence, 
      \[\mu_m^{\sigma, x}=\nu_{m, n}^{\sigma, x} Q_{\Theta^{-n} \sigma}^{(n)}, \quad  \text{ where } \quad \nu_{m, n}^{\sigma, x}:=\delta_x Q_{\Theta^{-m} \sigma}^{(m-n)} .\]
      It follows from Lemma \ref{l.042602} that 
      \begin{align*}
        \W_{D_\sigma}\left(\mu_m^{\sigma, x}, \mu_n^{\sigma, x}\right) \leq C_\lambda(\sigma) e^{-\lambda n} \W_{D_{\Theta^{-n}\s}}\left(\nu_{m, n}^{\sigma, x}, \delta_x\right).
      \end{align*}
      We claim that $ \W_{D_{\Theta^{-n}\s}}\left(\nu_{m, n}^{\sigma, x}, \delta_x\right)$ is uniformly bounded for $m>n$. Indeed, as $d_\sigma \leq 1$, we have 
      \begin{align*}
        \begin{aligned}
          \W_{D_{\Theta^{-n}\sigma}}\left(\nu_{m, n}^{\sigma, x}, \delta_x\right) & \leq \int_E D_{\Theta^{-n} \sigma}(z, x) \nu_{m, n}^{\sigma, x}(d z) \\
          & \leq\left(1+\varkappa \int_E L_{\Theta^{-n} \sigma}(z) \nu_{m, n}^{\sigma, x}(d z)+\varkappa L_{\Theta^{-n} \sigma}(x)\right)^{1 / 2}.
          \end{aligned}
      \end{align*}
      By \eqref{e.042603} and \eqref{e.042504}, one has $L_{\Theta^{-n} \sigma}(x) \leq \eta V(x)$ and 
      \begin{align*}
        \begin{split}
          Q_{\Theta^{-m} \sigma}^{(m-n)} L_{\Theta^{-n} \sigma}(x) &\leq \eta^{m-n} L_{\Theta^{-m} \sigma}(x)+\frac{\eta}{1-\eta}\\
          &\leq \eta^{m-n+1} V(x)+\frac{\eta}{1-\eta} \leq \eta V(x)+\frac{\eta}{1-\eta}.
        \end{split}
      \end{align*}
      Consequently, 
      \begin{align}
        \W_{D_{\Theta^{-n}\sigma}}\left(\nu_{m, n}^{\sigma, x}, \delta_x\right)\leq \left(1+\varkappa Q_{\Theta^{-m} \sigma}^{(m-n)} L_{\Theta^{-n} \sigma}(x)+\varkappa L_{\Theta^{-n} \sigma}(x)\right)^{1 / 2}\leq C_x
      \end{align}
      independent of $m,n$. Hence, 
      \begin{align}\label{e.042606}
        \W_{D_\sigma}\left(\mu_m^{\sigma, x}, \mu_n^{\sigma, x}\right) \leq C_\lambda(\sigma) C_x e^{-\lambda n}.
      \end{align}
      By the definition of $D_{\s}$ and the assumption \eqref{e.042604}, taking $\a=\a_0\wedge\varrho$ in \eqref{e.042605}, one has
      \[\rho(x,y)\wedge1\leq p^{\varrho}(x,y)\wedge1\leq p^{\alpha}(x,y)\wedge1\leq \theta_{\alpha}(x,y), \, \text{ for }x,y\in E,\]
      and hence $\rho\wedge1\leq D_{\s}$. 
      Then \eqref{e.042606} implies that the sequence $\{\mu_n^{\sigma, x}\}_{n\geq 1}$ is Cauchy in $\mathcal{P}(E)$ with respect to the metric $W_{\rho\wedge 1} $. Therefore, there exist $\Gamma_\sigma\in \mathcal{P}(E)$ such that $\mu_n^{\sigma, x}$ converges to $\Gamma_\sigma$ weakly. It follows from the pullback contraction that the limit is independent of $x$ and by lower semicontinuity of $D_{\s}$, 
      \begin{align*}
        \W_{D_\sigma}\left(\mu_n^{\sigma, x}, \Gamma_\sigma\right) \leq \liminf _{m \rightarrow \infty} \W_{D_\sigma}\left(\mu_n^{\sigma, x}, \mu_m^{\sigma, x}\right) \leq C_\lambda(\sigma) C_x e^{-\lambda n}. 
      \end{align*}
      Note that $\s\to\G_{\s}$ is measurable as the transition probabilities $P_{t}^{\s}(x,\cdot)$ are measurable. Moreover, it follows from Fatou's lemma and \eqref{e.042504} that 
      \[\int_E L_\sigma(z) \Gamma_\sigma(d z) \leq \liminf _{n \rightarrow \infty} \int_E L_\sigma(z) \mu_n^{\sigma, x}(d z) \leq \frac{\eta}{1-\eta},\]
      which implies that $\{\G_{\s}\}\subset \mathcal{P}_{V}(E)$.

      {\it Step 2: Invariance and uniqueness.}
      Note that 
      \[\mu_n^{\sigma,x} Q_\sigma=\delta_x Q_{\Theta^{-n} \sigma}^{(n)} Q_\sigma=\delta_x Q_{\Theta^{-n} \sigma}^{(n+1)} \rightarrow \Gamma_{\Theta \sigma},\quad n\rightarrow \infty\]
      and Corollary \ref{c.042601} combined with the Feller property gives 
      \[\W_{D_{\Theta \sigma}}\left(\mu_n^{\sigma,x} Q_\sigma, \Gamma_\sigma Q_\sigma\right) \leq h \W_{D_\sigma}\left(\mu_n^{\sigma,x}, \Gamma_\sigma\right), \quad n\rightarrow \infty.\]
      Therefore,  $\Gamma_\sigma Q_\sigma=\Gamma_{\Theta \sigma}$. 
   
      For uniqueness, if $\widetilde{\Gamma}_\sigma \in \mathcal{P}_V(E)$ is another invariant family, then 
      \[\Gamma_\sigma=\Gamma_{\Theta^{-n} \sigma} Q_{\Theta^{-n} \sigma}^{(n)}, \quad \widetilde{\Gamma}_\sigma=\widetilde{\Gamma}_{\Theta^{-n} \sigma} Q_{\Theta^{-n} \sigma}^{(n)}.\]
      By Lemma \ref{l.042603} and Lemma \ref{l.042602}, 
      \[\W_{D_{\s}}(\Gamma_\sigma,\widetilde{\Gamma}_{\s})\leq C_{\l}(\s) e^{-\l n} \W_{D_{\Theta^{-n}\s}}\left(\Gamma_{\Theta^{-n}},\widetilde{\Gamma}_{\Theta^{-n} \sigma}\right)\to0\]
      as $n\to\infty$. Hence $\Gamma_\sigma = \widetilde{\Gamma}_\sigma$. 
  
      {\it Step 3: Passage to continuous time.} We first establish the exponential stability. Let $t=nT+r$ with $n=\lfloor t/T\rfloor$ and $0\leq r<T$, and set $\sigma_n:=\Theta^{-n}\sigma$. By the cocycle property,
      \begin{align*}
        \delta_x P_t^{\beta_{-t}\sigma}=\nu_{n,r}^{\sigma,x}Q_{\Theta^{-n}\sigma}^{(n)},\qquad \nu_{n,r}^{\sigma,x}:=\delta_xP_r^{\beta_{-t}\sigma}=\delta_xP_r^{\beta_{-r}\sigma_n}.
      \end{align*}
      Since $d_\sigma\leq1$, for any $\mu,\nu\in\mathcal P_V(E)$
      we have 
      \begin{align*}
        \W_{D_\sigma}(\mu,\nu)\leq\left(1+\varkappa\int_E L_\sigma(u)\mu(du)+\varkappa\int_E L_\sigma(u)\nu(du)\right)^{1/2}.
      \end{align*}
      By $(H1)$, for $0\leq r<T$,
      \begin{align*}
        P_r^{\beta_{-r}\sigma_n}V(x)\leq V(x)+\int_{-r}^{0}K(\beta_s\sigma_n)ds\leq V(x)+e^{\gamma T}K_T(\Theta^{-1}\sigma_n).
      \end{align*}
      Since $R(\sigma_n)\geq \eta^{-1}(1+K_T(\Theta^{-1}\sigma_n))$, it follows that
      \begin{align*}
        P_r^{\beta_{-r}\sigma_n}L_{\sigma_n}(x)\leq \eta V(x)+\eta e^{\gamma T}.
      \end{align*}
      Together with Lemma \ref{l.042603}, this gives
      \begin{align*}
        \W_{D_{\Theta^{-n}\sigma}}\left(\nu_{n,r}^{\sigma,x},\Gamma_{\Theta^{-n}\sigma}\right)\leq \left(1+\varkappa\eta V(x)+\varkappa\eta e^{\gamma T}+\frac{\varkappa\eta}{1-\eta}\right)^{1/2}=:C_x .
      \end{align*}
      Therefore the pullback contraction  in Lemma \ref{l.042602} and the invariance $\Gamma_\sigma=\Gamma_{\Theta^{-n}\sigma}Q_{\Theta^{-n}\sigma}^{(n)}$ yield
      \begin{align}\label{e.042701}
        \begin{split}
          \W_{D_\sigma}\left(\delta_xP_t^{\beta_{-t}\sigma},\Gamma_\sigma\right)&=\W_{D_\sigma}\left(\nu_{n,r}^{\sigma,x}Q_{\Theta^{-n}\sigma}^{(n)},\Gamma_{\Theta^{-n}\sigma}Q_{\Theta^{-n}\sigma}^{(n)}\right)\\
          &\leq C_\lambda(\sigma)e^{-\lambda n}\W_{D_{\Theta^{-n}\sigma}}\left(\nu_{n,r}^{\sigma,x},\Gamma_{\Theta^{-n}\sigma}\right)\\
          &\leq C_\lambda(\sigma)C_xe^{-\lambda n}\leq  C_\lambda(\sigma,x)e^{-\frac{\lambda}{T} t}.
        \end{split}
      \end{align}

      We next prove the forward stability estimate. Fix $r\in[0,T)$ and set
      \begin{align*}
        \Sigma_r:=\Sigma_0\cap\beta_{-r}\Sigma_0\cap\left\{\sigma:\int_0^rK(\beta_s\sigma)ds<\infty\right\}.
      \end{align*}
      Then $m(\Sigma_r)=1$. Let $\sigma\in\Sigma_r$, $t=nT+r$ and $\sigma_r:=\beta_r\sigma$. Since $\sigma_r\in\Sigma_0$, the cocycle property and the skeleton invariance give
      \begin{align*}
        \delta_xP_t^\sigma=\delta_xP_r^\sigma Q_{\sigma_r}^{(n)},\qquad \Gamma_{\beta_t\sigma}=\Gamma_{\Theta^n\sigma_r}=\Gamma_{\sigma_r}Q_{\sigma_r}^{(n)}.
      \end{align*}
      Hence the forward contraction in Lemma \ref{l.042602} implies
      \begin{align*}
        \W_{D_{\beta_t\sigma}}\left(\delta_xP_t^\sigma,\Gamma_{\beta_t\sigma}\right)&=\W_{D_{\Theta^n\sigma_r}}\left(\delta_xP_r^\sigma Q_{\sigma_r}^{(n)},\Gamma_{\sigma_r}Q_{\sigma_r}^{(n)}\right)\\
        &\leq C_\lambda(\sigma_r)e^{-\lambda n}\W_{D_{\sigma_r}}\left(\delta_xP_r^\sigma,\Gamma_{\sigma_r}\right).
      \end{align*}
      Moreover, by $(H1)$ and Lemma \ref{l.042603},
      \begin{align*}
        P_r^\sigma L_{\sigma_r}(x)\leq \frac{V(x)+\int_0^rK(\beta_s\sigma)ds}{R(\sigma_r)},\qquad \int_E L_{\sigma_r}(u)\Gamma_{\sigma_r}(du)\leq\frac{\eta}{1-\eta}.
      \end{align*}
      Therefore
      \begin{align*}
        \W_{D_{\sigma_r}}\left(\delta_xP_r^\sigma,\Gamma_{\sigma_r}\right)\leq\left(1+\varkappa\frac{V(x)+\int_0^rK(\beta_s\sigma)ds}{R(\sigma_r)}+\frac{\varkappa\eta}{1-\eta}\right)^{1/2}=:C_r(\sigma,x)<\infty.
      \end{align*}
      Consequently,
      \begin{align*}
        \W_{D_{\beta_t\sigma}}\left(\delta_xP_t^\sigma,\Gamma_{\beta_t\sigma}\right)\leq C_\lambda(\sigma_r)C_r(\sigma,x)e^{-\lambda n}\leq C_{\lambda,r}(\sigma,x)e^{-\frac{\lambda}{T} t},\qquad t=nT+r,\quad n\geq0.
      \end{align*}

      Next we show invariance and uniqueness for continuous time with exceptional set depending on time. Since for any fixed $s\geq 0$, by \eqref{e.042701} and the cocycle property, we have 
      \begin{align*}
        \Gamma_\sigma P_s^\sigma=\lim _{t \rightarrow \infty} \delta_x P_t^{\beta_{-t} \sigma} P_s^\sigma=\lim _{t \rightarrow \infty} \delta_x P_{t+s}^{\beta_{-t} \sigma}=\Gamma_{\beta_s \sigma},
      \end{align*}
      for all $\s\in \Sigma_{0}\cap\beta_{-s}\Sigma_0$. Hence for each fixed $s\geq 0$, we have  $\Gamma_\sigma P_s^\sigma=\Gamma_{\beta_s \sigma},\, m \text {-a.s.}.$
      If $\widetilde{\Gamma}_\sigma \in \mathcal{P}_V(E)$ is another $P_t^{\s}$ invariant family, then it is invariant particularly for $t=T$. So by uniqueness of the $Q_{\s}$-invariant measure, $\widetilde{\Gamma}_\sigma =\Gamma_\sigma$. 
  
      {\it Step 4: Perfection.} Setting 
      \[\widehat{\Sigma}:=\bigcup_{r \in[0, T)} \beta_r \Sigma_0,\]
      it is clear that $\Sigma_0 \subset \widehat{\Sigma}$. Hence $m(\widehat{\Sigma})=1$, where we use the fact that $\widehat{\Sigma}=F([0,T)\times\Sigma_0)$ is universally measurable as $F(r,\sigma):=\beta_r\sigma$ is Borel measurable.  We claim that 
      \[\beta_t\widehat{\Sigma} = \widehat{\Sigma} \quad \text{ for all }t\in\mathbb R.\] Indeed, noting that for any $\eta\in \widehat{\Sigma}$, there exists $r\in[0,T)$ and $\s\in \Sigma_0$ such  that $\eta=\beta_r\s$. For $t\in\mathbb R$ we write $t+r=kT+r_0$ for some $r_0\in[0,T)$. Then 
      \begin{align*}
        \beta_t\eta = \beta_{t+r}\s=\beta_{r_0}\beta_{kT}\sigma\in \widehat{\Sigma}
      \end{align*}
      by the definition of $\widehat{\Sigma}$ and the $\b_T$ invariance of $\Sigma_0$. This shows that $\beta_t\widehat{\Sigma}\subset \widehat{\Sigma}$ and in turn implies the claim as the flow $\b_t$ is invertible. 
  
      We now define a modification $\widehat{\Gamma}$ of $\G$ as follows. For $\eta \in \widehat{\Sigma}$, we take a representative $\s\in\Sigma_0$ such that $\eta=\b_{r}\s$ for some $r\in[0,T)$ and define  $\widehat{\Gamma}_\eta:=\Gamma_\sigma P_r^\sigma$. 
      In particular, if $\eta=\sigma\in \Sigma_0$ then $r=0$ and $\widehat{\Gamma}_\eta=\Gamma_\s$. Hence $\widehat{\Gamma}$ coincides with $\G$ on $\Sigma_0$. This is well-defined since, if $\eta=\beta_r \sigma=\beta_{r^{\prime}} \sigma^{\prime}$ for different $r,r'$ (we may assume $r\geq r'$) and $\s,\s'$, then $\sigma^{\prime}=\beta_{r-r^{\prime}} \sigma$ and as $\s,\s'\in\Sigma_0$, we have $\sigma \in \Sigma_0 \cap \beta_{-\left(r-r^{\prime}\right)} \Sigma_0$. The invariance for fixed time then implies $\Gamma_\sigma P_{r-r^{\prime}}^\sigma=\Gamma_{\sigma^{\prime}}$, 
      which together with the cocycle property yields 
      \[\Gamma_\sigma P_r^\sigma=\Gamma_\sigma P_{r-r^{\prime}}^\sigma P_{r^{\prime}}^{\sigma^{\prime}}=\Gamma_{\sigma^{\prime}} P_{r^{\prime}}^{\sigma^{\prime}}.\]
      Hence $\widehat{\Gamma}_\eta$ is independent of the choice of the representative. 
      
      Now we prove that on $\widehat{\Sigma}$, the invariance property of $\widehat{\Gamma}$ holds for all $s\geq 0$. Taking any $\eta \in \widehat{\Sigma}$, writing $\eta=\beta_r \sigma$ for $\sigma \in \Sigma_0, r \in[0, T)$ and 
      \[r+s=k T+r_0, \quad k \in \mathbb{N}_0, \quad r_0 \in[0, T),\]
      it follows that 
      \[\beta_s \eta=\beta_{r+s} \sigma=\beta_{r_0} \Theta^k \sigma.\]
      By definition $\widehat{\Gamma}_\eta=\Gamma_\sigma P_r^{\s}$ and skeleton invariance $\Gamma_\sigma Q_\sigma^{(k)}=\Gamma_{\Theta^k \sigma}$, we have  
      \begin{align*}
        \begin{aligned}
          \widehat{\Gamma}_\eta P_s^\eta & =\Gamma_\sigma P_r^\sigma P_s^{\beta_r \sigma}  =\Gamma_\sigma P_{r+s}^\sigma =\Gamma_\sigma P_{k T+r_0}^\sigma\\
          &= \Gamma_{\Theta^k \sigma} P_{r_0}^{\Theta^k \sigma}=\widehat{\Gamma}_{\beta_{r_0} \Theta^k \sigma}=\widehat{\Gamma}_{\beta_s \eta}.
          \end{aligned}
      \end{align*}
      Consequently, 
      \[\widehat{\Gamma}_\eta P_s^\eta=\widehat{\Gamma}_{\beta_s \eta}, \quad \forall \eta \in \widehat{\Sigma}, \quad \forall s \geq 0 \]
      as desired. 
  \end{proof}

  \section{An abstract Freidlin--Wentzell principle for stationary measures}
  In this section, we formulate an abstract criterion for the Freidlin--Wentzell large deviation principle of stationary measures in the small noise limit. The deterministic equation is allowed to be non-autonomous through the driving environment base flow, and its long-time behavior is described by a compact pullback attractor. The small random perturbation gives rise to a time-inhomogeneous Markov cocycle with a unique stationary measure. Our goal is to identify the large deviation rate of the family of stationary measures in terms of the minimal controlled energy needed to reach a point from the pullback attractor in the remote past.

  Let $(\Sigma,\beta_t,m)$ be  an ergodic invertible measure preserving flow.  For each $\sigma\in\Sigma$, consider the evolution equation
  \begin{align}\label{e.080301}
      \partial_t u=F(\beta_t\sigma,u), \qquad u(0)=u_0 .
  \end{align}
  We assume that \eqref{e.080301} is well-posed in a separable Hilbert space $H$ with the solution map $\Phi_{0,t,\sigma}$, and that it has a pullback attractor $\Ac(\sigma)$ which is compact in $H$.
  
  We then consider the random perturbation of \eqref{e.080301} with a parameter $\varepsilon\in(0,1]$,
  \begin{align}\label{e.080302}
      \partial_t u=F(\beta_t\sigma,u)+\sqrt{\eps}Q(\beta_t\sigma)\partial_tW, \qquad u(0)=u_0,
  \end{align}
  where $W$ is a two-sided cylindrical Wiener process in $H$ over the canonical Wiener space $(\O,\Fc,\mathbf{P})$. Assume that for each $\s\in \Sigma$, $Q(\sigma):H\to H$ is Hilbert--Schmidt and $\sigma \mapsto\|Q(\sigma)\|_{\mathrm{HS}}^2 \in L^1(\Sigma, m)$. We also assume that \eqref{e.080302} is well-posed in $H$ with the solution map $\Phi_{0,t,\sigma}^{\eps}$. It follows from uniqueness of solutions that
  \begin{align}\label{e.080303}
      \Phi_{s,s+t,\sigma}^{\eps}(\omega)
      =\Phi_{0,t,\beta_s\sigma}^{\eps}(\theta_s\omega),
      \qquad t\geq 0,\ s\in\Rb,
  \end{align}
  where $\theta_s\omega(t)=\omega(t+s)-\omega(s)$ is the Wiener shift. Since the deterministic system is non-autonomous, the Markov process $\Phi_{0,t,\sigma}^{\eps}u_0$ is time inhomogeneous. The corresponding Markov transition operator is
  \begin{align*}
      P_{0,t,\sigma}^{\eps}\phi(u)
      =\mathbf{E}_u\phi(\Phi_{0,t,\sigma}^{\eps}u),
      \qquad \phi\in B_b(H),\ u\in H .
  \end{align*}
  Property \eqref{e.080303} and the $\mathbf{P}$-invariance of the Wiener shift yield the translation identity
  \begin{align*}
      P_{s,s+t,\sigma}^{\eps}=P_{0,t,\beta_s\sigma}^{\eps},
      \qquad s\in\Rb,\ t\geq0 .
  \end{align*}
  We assume that the randomly forced equation \eqref{e.080302} has a unique family of stationary measures $\{\mu_{\sigma}^{\eps}\}_{\sigma\in\Sigma}$ such that
  \begin{align*}
      \mu_{\sigma}^{\eps}P_{0,t,\sigma}^{\eps}
      =\mu_{\beta_t\sigma}^{\eps},
      \qquad \sigma\in\Sigma,\ t\geq0 .
  \end{align*}

  To describe the rate function, we introduce the deterministic control problem associated with the small noise perturbation. The control represents the large deviation cost of replacing the noise by a Cameron--Martin path. Consider the following controlled equation with a control $\varphi\in L_{\mathrm{loc}}^2(\Rb;H)$:
  \begin{align}\label{e.080601}
      \partial_t u=F(\beta_t\sigma,u)+Q(\beta_t\sigma)\varphi .
  \end{align}
  The solution of \eqref{e.080601} with initial condition $u_0\in H$ will be denoted by $\Phi_{0,t,\sigma}^{\varphi}u_0$. For any $T\geq0$ and $u\in C([0,T];H)$, the energy of $u$ with time symbol $\sigma$ is defined by
  \begin{align*}
      I_{u(0),T}^{\sigma}(u)
      =\inf_{\varphi}J_{u(0),T}^{\sigma}(\varphi)\quad \text{with}\quad
      J_{u(0),T}^{\sigma}(\varphi)=\frac12\int_0^T\|\varphi(s)\|^2\,ds,
  \end{align*}
  where the infimum is taken over all $\varphi\in L^2(0,T;H)$ such that $u(t)=\Phi_{0,t,\sigma}^{\varphi}u(0)$ for all $t\in[0,T]$. If no such control exists, then $I_{u(0),T}^{\sigma}(u)=\infty$.
  
  For each $\sigma\in\Sigma$, define the rate function (quasipotential) $E_{\Ac(\sigma)}:H\to[0,\infty]$ by
  \begin{align}\label{e.073003}
      E_{\Ac(\sigma)}(u_*)
      =\lim_{\eta\to0}\inf\Bigl\{
          I_{u(0),s}^{\beta_{-s}\sigma}(u): s>0,\ u\in C([0,s];H),
          \ u(0)\in\Ac(\beta_{-s}\sigma),\ u(s)\in B_{\eta}(u_*)
      \Bigr\},
  \end{align}
  for $u_*\in H$. Since the limiting dynamics is non-autonomous, the quasipotential is defined in a pullback form.  Namely, to reach a point near $u_*$ at the environment $\sigma$, one starts from the attractor $\mathcal A(\beta_{-s}\sigma)$ at time $-s$, evolves under the controlled equation, and then lets the starting time tend to the remote past.
    It follows that
  \begin{align*}
      \Ac(\sigma)\subset\{u\in H:E_{\Ac(\sigma)}(u)=0\}.
  \end{align*}

  The following finite time uniform LDP will be used as one of the assumptions.  It is stated in a form uniform over bounded sets of initial data, which is the natural form needed when applying the Markov property to stationary measures. Let $T>0$ and $\sigma\in\Sigma$. 
  \begin{defn}\label{d.070201}
    The family $\{\Phi_{0,\cdot,\sigma}^{\eps}u_0\}_{\eps\in(0,1]}$ of solutions of \eqref{e.080302} with time symbol $\sigma$ satisfies the Freidlin--Wentzell uniform LDP on $C([0,T];H)$ with rate function $I_{u_0,T}^{\sigma}$ if the following two conditions hold for the law $\nu_{u_0,\sigma}^{\eps}$ of the solution $\Phi_{0,\cdot,\sigma}^{\eps}u_0$:
    \begin{itemize}
        \item[(1)] For every $R>0$, $M\geq0$, and $\d,\d'>0$, there exists $\eps_0>0$ such that
        \begin{align*}
          \inf_{\|u_0\|\leq R}\inf_{\xi\in\{I_{u_0,T}^{\sigma}\leq M\}}
          \left(
          \nu_{u_0,\sigma}^{\eps}\bigl(\Nc_{\d'}(\xi)\bigr)
          -e^{-(I_{u_0,T}^{\sigma}(\xi)+\d)/\eps}
          \right)\geq0,
          \qquad \forall \eps\in(0,\eps_0],
        \end{align*}
        where $\Nc_{\d'}(\xi)$ is the $\d'$-neighborhood of $\xi$ in $C([0,T];H)$.
    
        \item[(2)] For every $R>0$, $M\geq0$, and $\d,\d'>0$, there exists $\eps_0>0$ such that
        \begin{align*}
          \sup_{\|u_0\|\leq R}
          \nu_{u_0,\sigma}^{\eps}
          \left(C([0,T];H)\setminus\Nc_{\d'}(I_{u_0,T}^{\sigma},M)\right)
          \leq e^{-(M-\d)/\eps},
          \qquad \forall \eps\in(0,\eps_0],
        \end{align*}
        where $\Nc_{\d'}(I_{u_0,T}^{\sigma},M)$ is the $\d'$-neighborhood of the level set $\{I_{u_0,T}^{\sigma}\leq M\}$.
    \end{itemize}
  \end{defn}
  We collect the assumptions needed to pass from the finite time trajectory LDP to the large deviation principle for the stationary family.  The first two assumptions provide the deterministic attractor and the stochastic stationary family.  The compactness and tracking assumptions identify the correct quasipotential level sets and allow controlled trajectories starting near the pullback attractor to remain close to those level sets.  The escaping energy and weak exponential tightness assumptions prevent the stationary measures from placing exponentially relevant mass far away from the deterministic pullback attractor.
 
  \begin{assumption}\label{a.050201}
    We assume that
  \begin{enumerate}[label=(H\arabic*)]
      \item The deterministic system \eqref{e.080301} has a compact pullback attractor $\Ac(\sigma)$ which is invariant and pullback attracts every bounded
      set.
  
      \item The stochastic system \eqref{e.080302} has a unique stationary measure and satisfies the trajectory Freidlin--Wentzell uniform LDP.
  
      \item Compactness of level sets: for each $M>0$, the set $K_M(\s):=\{u\in H: E_{\Ac(\sigma)}(u)\leq M\}$ is compact in $H$. 
  
      \item Tracking property: for any $\d,\d',M>0$, there exists $\eta=\eta(\s)\in(0,\d)$ $m$-a.s., such that 
      \begin{align}\label{e.072701}
        \left\{
        u(t):u(0)\in\Ac_{\eta(\beta_{-t}\s)}(\beta_{-t}\sigma),
        I_{u(0),t}^{\beta_{-t}\sigma}(u)\leq M-\d'
        \right\}
        \subset \Nc_{\d}\bigl(K_M(\s)\bigr),
        \qquad \forall \ t>0,
      \end{align}
      where $\Nc_{\d}(K_M(\s))$ is the $\d$-neighborhood of the level set $K_M(\s)$ and $\Ac_{\eta}(\sigma)$ is the $\eta$-neighborhood of  $\Ac(\sigma)$. 
      \item Nontrivial escaping energy: for any $R,T>0$, there is a finite measurable
      $C_{R,T}:\Sigma\to [1,\infty)$ such that, for $m$-a.e. $\sigma\in\Sigma$, for every
      $u_0\in B_R$ and every admissible control $\varphi\in L^2(0,T;H)$,
      \begin{align}\label{e.070101}
        \left\|
        \Phi_{0,T,\beta_{-T}\sigma}^{\varphi}u_0
        -
        \Phi_{0,T,\beta_{-T}\sigma}u_0
        \right\|^2
        \leq
        C_{R,T}(\sigma)J_{u_0,T}^{\beta_{-T}\sigma}(\varphi).
      \end{align}
      \item Weak exponential tightness:
      \begin{align}\label{e.072703}
        \lim_{R\to\infty}\limsup_{\eps\to0}
        \eps\ln\mu_{\sigma}^{\eps}(B_R^c)
        =-\infty, \quad m\text{-a.s.}.
      \end{align}
  \end{enumerate}
  \end{assumption}

  Several comments on these assumptions are in order.  The existence of compact pullback attractors is standard for many dissipative non-autonomous PDEs.  For additively forced stochastic PDEs, finite time Freidlin--Wentzell uniform LDPs are usually obtained either by the weak convergence approach or by a contraction principle.  The uniqueness of the stationary family is supplied in our applications by the generalized-coupling criterion developed in Section \ref{s.060901}, which is designed to cover degenerate noise under weak dissipativity assumptions.  The remaining assumptions are the abstract large-deviation inputs needed to transfer finite time trajectory estimates to stationary measures.
  \begin{rem}
    By $(H5)$, if $u\in C([0,T];H)$ is a controlled path with $u(0)=u_0\in B_R$ and
    $I_{u_0,T}^{\beta_{-T}\sigma}(u)<\infty$, then by taking the infimum over all admissible controls generating the same path $u$, we have 
    \begin{align}\label{e.070304}
      \left\|u(T)-\Phi_{0,T,\beta_{-T}\sigma}u_0\right\|^2\leq
      C_{R,T}(\sigma)I_{u_0,T}^{\beta_{-T}\sigma}(u).
    \end{align}
    This estimate, combined with $(H1)$, implies that for suitable large $T$ and for $\sigma$ in a large measure subset of $\Sigma$,
    \begin{align*}
      a_{R, \eta, T}(\sigma):=\inf \left\{I_{u(0),T}^{\beta_{-T} \sigma}(u): u(0) \in B_R, u(T) \notin \mathcal{A}_{\eta(\sigma)}(\sigma)\right\}
    \end{align*}
    has a positive lower bound, giving a nontrivial escaping energy. See Lemma \ref{l.070101} for related energy control. 
  \end{rem}
  Our main result on the Freidlin--Wentzell LDP is given below, whose proof will be given in the following subsections. 
  \begin{thm}\label{t.080601}
      Under Assumption \ref{a.050201}, for $m$-a.e. $\sigma\in\Sigma$, the family of stationary measures $\{\mu_{\sigma}^{\eps}\}_{\eps\in(0,1]}$ satisfies the large deviation upper bound
      \begin{align*}
          \limsup_{\eps\to0}\eps\ln\mu_{\sigma}^{\eps}(F)
          \leq -\inf_{u\in F}E_{\Ac(\sigma)}(u),
          \qquad \text{for any closed } F\subset H .
      \end{align*}
      If the attractor of the limit system \eqref{e.080301} is a random point, then the large deviation lower bound also holds:
      \begin{align*}
          \liminf_{\eps\to0}\eps\ln\mu_{\sigma}^{\eps}(G)
          \geq -\inf_{u\in G}E_{\Ac(\sigma)}(u),
          \qquad \text{for any open } G\subset H .
      \end{align*}
  \end{thm}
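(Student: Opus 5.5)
We follow the Sowers--Martirosyan scheme, using stationarity in pullback form. For every $t>0$ the invariance gives
\[
\mu^\eps_\sigma=\mu^\eps_{\beta_{-t}\sigma}P^\eps_{0,t,\beta_{-t}\sigma},
\]
so $\mu^\eps_\sigma(F)$ is the probability that a trajectory started at time $-t$ from $\mu^\eps_{\beta_{-t}\sigma}$ lands in $F$. We choose $t=nT$ on a fixed skeleton, with $\beta_T$ ergodic as in Remark~\ref{r.042601}. Then the sets of environments where the random constants $C_{R,T}$ and $\eta(\cdot)$ from $(H4)$, $(H5)$, and the tightness rates from $(H6)$, are controlled are visited by $\beta_{-nT}\sigma$ with positive frequency, by Birkhoff's theorem. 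This handles the nonuniformity exactly as in Lemma~\ref{l.042602}.

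\textbf{Upper bound.} Fix a closed $F$, let $M<\inf_F E_{\Ac(\sigma)}$, and let $\d>0$ be small. By $(H3)$, $F\cap \Nc_{\d}(K_M(\sigma))=\emptyset$ for $\d$ small, since $F$ is closed and $K_M(\sigma)$ is compact. By $(H6)$, choose $R$ so that $\mu^\eps_{\beta_{-s}\sigma}(B_R^c)\le e^{-M/\eps}$ along a positive-density set of good times $s$. We decompose trajectories started in $B_R$ at time $-t$ as follows. Either the trajectory enters the small neighbourhood $\Ac_{\eta(\beta_{-s}\sigma)}(\beta_{-s}\sigma)$ at some good skeleton time $-s\in[-t,0]$, or it stays away from the attractor during all good blocks. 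In the first case, apply the Markov property at $-s$ and the uniform trajectory LDP $(H2)$(2) on $[-s,0]$. By the tracking property $(H4)$, every path of energy $\le M-\d'$ starting in $\Ac_{\eta}$ ends in $\Nc_\d(K_M(\sigma))$, which is disjoint from $F$. Hence the probability is at most $e^{-(M-2\d')/\eps}$. Finitely many skeleton times suffice, because of the second case.

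\textbf{The escaping case.} This is the main obstacle. We must show that avoiding $\Ac_\eta$ on many good blocks costs energy $>M$. By $(H1)$, $\Phi_{0,T,\beta_{-T}\sigma'}B_R\subset\Ac_{\eta/2}(\sigma')$ for large $T$, uniformly on a large-measure set of $\sigma'$ where the attraction time is bounded. Then by $(H5)$ in the form \eqref{e.070304}, a controlled path ending outside $\Ac_\eta(\sigma')$ has energy at least $\eta(\sigma')^2/(4C_{R,T}(\sigma'))$. Restricting to the good set, where $C_{R,T}\le J$ and $\eta\ge 1/J$, gives a deterministic per-block cost $c_J>0$. We also need the path to stay in $B_R$ between blocks; this follows from a dissipativity bound for controlled paths with bounded energy, or we add it to the decomposition with the tightness estimate. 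The number of good blocks within time $t$ grows linearly, by the block-counting argument of Lemma~\ref{l.042602}. We therefore choose $t$ so that $K\,c_J>M$, and $(H2)$(2) over $[-t,0]$ bounds the escaping event by $e^{-(M-\d)/\eps}$. Adding the three contributions (tightness, hitting, escaping) and letting $\d,\d'\to0$, then $M\uparrow\inf_F E$, gives the upper bound. The quantitative choice of $\eta(\cdot)$ on a full-measure set independent of $\eps$ is where the ergodic control is essential.

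\textbf{Lower bound when $\Ac(\sigma)=\{a(\sigma)\}$.} Fix $u_*\in G$ with $E(u_*)<\infty$ and a ball $B_{2\eta'}(u_*)\subset G$. By \eqref{e.073003}, pick $s$ and a path $u$ from $a(\beta_{-s}\sigma)$ with $u(s)\in B_{\eta'}(u_*)$ and energy $\le E(u_*)+\d$. By continuous dependence on initial data on $[0,s]$, controlled paths starting in a small ball around $a(\beta_{-s}\sigma)$ with the same control stay within $\eta'/2$ of $u$. We lower bound
\[
\mu^\eps_\sigma(G)\ge\int_{B_r(a(\beta_{-s}\sigma))}P^\eps_{0,s,\beta_{-s}\sigma}(x,B_{2\eta'}(u_*))\,\mu^\eps_{\beta_{-s}\sigma}(dx),
\]
and apply $(H2)$(1) uniformly in $x$. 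It remains to show $\mu^\eps_{\beta_{-s}\sigma}(B_r(a))\to1$, which follows from the already proven upper bound applied to the closed set $B_r(a)^c$ at the environment $\beta_{-s}\sigma$. This requires $E>0$ there; we get it from $K_0=\{a\}$ and compactness $(H3)$, after choosing $s$ among good times so that $\beta_{-s}\sigma$ lies in the full-measure set. Letting $\d,\eta'\to0$ gives the lower bound.
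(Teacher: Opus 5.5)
Your upper bound follows the paper's proof of Theorem~\ref{t.070201}. The ingredients are the same: a good set combining weak exponential tightness at block starts with a deterministic per-block escaping cost from pullback attraction and \eqref{e.070304}, the Markov property plus tracking for paths entering $\mathcal A_\eta$ at a good skeleton time, and the trajectory LDP for paths escaping on every selected block. In the last step you need a positive sup-norm margin between the escaping paths and the level set. The paper obtains it by computing the block cost with $R+1$ and $\eta/2$ (via $a_{R+1,\eta/2,T}$ and $\widetilde{\mathcal Z}_{\mathcal J_n}$); your sketch needs the same margin, but this is routine.

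The lower bound departs from the paper and has a gap. You need $\mu^\varepsilon_{\beta_{-s}\sigma}(B_r(a(\beta_{-s}\sigma)))$ not to be exponentially small. You get this from the upper bound on $B_r(a)^c$, which requires $\inf_{B_r(a)^c}E_{\mathcal A(\beta_{-s}\sigma)}>0$, and you justify that by ``$K_0=\{a\}$''. Granting that identity, compactness and lower semicontinuity do give the positive infimum. But $\{E_{\mathcal A(\sigma')}=0\}=\mathcal A(\sigma')$ is not available: the paper only records $\mathcal A(\sigma)\subset\{E_{\mathcal A(\sigma)}=0\}$. Nothing in Assumption~\ref{a.050201} directly excludes points off the attractor being reached from $\mathcal A(\beta_{-s}\sigma)$ with vanishing action over ever longer times, since $(H5)$ only controls one window of length $T$ started in a fixed ball, with an environment-dependent constant. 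The reverse inclusion needs its own argument, for instance:
\begin{itemize}
\item use $(H3)$ to put the intermediate points $u(s-T)$ of near-zero-action paths into the compact sets $K_1(\beta_{-T}\sigma)$;
\item apply \eqref{e.070304} on the final window;
\item use pullback attraction along recurrent times $T_j\to\infty$ on which the radii of $K_1(\beta_{-T_j}\sigma)$ stay bounded, which also requires measurability of $\sigma\mapsto\sup_{K_1(\sigma)}\|u\|$.
\end{itemize}
Without this, the upper bound says nothing about the mass near $a(\beta_{-s}\sigma)$.

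The paper avoids the issue and uses only $(H1)$, $(H2)$, $(H6)$.
\begin{itemize}
\item By Lemma~\ref{l.050601} and Birkhoff's theorem there are $t_n\to\infty$ with $\mu^\varepsilon_{\beta_{-t_n}\sigma}(B_R)>1/2$ for small $\varepsilon$.
\item Pullback attraction with zero control carries all of $B_R$ from time $-t_n$ into $B_r(a(\beta_{-s}\sigma))$ by time $-s$.
\item Concatenating with the near-optimal control gives, from every $v\in B_R$, a path of the same action ending in $B_{\delta/2}(u)$.
\item The uniform lower bound $(H2)$(1) on $B_R$ then passes to $\mu^\varepsilon_\sigma$ by stationarity.
\end{itemize}

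Alternatively, you can keep your structure and replace the appeal to the upper bound by the weak convergence $\mu^\varepsilon_{\sigma'}\Rightarrow\delta_{a(\sigma')}$, proved right after Proposition~\ref{p.050601} from $(H1)$, $(H2)$, $(H5)$, $(H6)$. This gives $\mu^\varepsilon_{\beta_{-s}\sigma}(B_r(a(\beta_{-s}\sigma)))\to1$ directly, once $s$ is chosen (by zero-control backward extension along the invariant attractor) so that $\beta_{-s}\sigma$ lies in the relevant full-measure set.
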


  \begin{rem}[Uniform version on compact bases]
    The $m$-a.s. formulation of the main results is only needed because the base flow is allowed to be merely measurable.  In the compact continuous setting the exceptional sets can be removed.  Indeed, assume that $\Sigma$ is a compact metric space, $\beta_t$ is a continuous invertible flow, and the coefficients entering the equation and the coupling construction depend continuously on $\sigma$.  If the estimates verifying Theorem \ref{t.042501} and Assumption \ref{a.050201} are uniform in $\sigma$, then all good environment sets in the proofs can be chosen to be $\Sigma$. 
    Consequently the exponential mixing estimates and the Freidlin--Wentzell LDP bounds obtained in this paper hold for every environment variable $\sigma\in\Sigma$.
  \end{rem}

  \subsection{Upper bound}
  This subsection proves the large deviation upper bound.  The argument combines three ingredients.  First, weak exponential tightness allows us to restrict the stationary measures to a large ball at an exponentially negligible cost.  Second, the nontrivial escaping energy condition shows that a controlled path which stays in this ball but avoids a small neighborhood of the pullback attractor over sufficiently many selected time blocks must accumulate large action.  Third, if the path enters a small neighborhood of the pullback attractor, the tracking property implies that every controlled trajectory with action below the prescribed level must end near the quasipotential level set $K_M(\sigma)$. Together with the finite time trajectory LDP and the stationarity relation, these estimates yield the desired upper bound.  To deal with the nonuniform dynamics produced by the time inhomogeneity, we introduce good environment sets and work along recurrent good times, thereby passing the LDP from the trajectory level to the stationary measure level.

  For $R,T>0$ and measurable $\eta:\Sigma\to (0,\infty)$, recall that 
  \begin{align}\label{e.070204}
    a_{R, \eta, T}(\sigma):=\inf \left\{I_{u(0),T}^{\beta_{-T} \sigma}(u): u(0) \in B_R, u(T) \notin \mathcal{A}_{\eta(\sigma)}(\sigma)\right\}
  \end{align}
  is the energy for escaping from the random attractor. For $L,c>0$, we also set 
  \begin{align*}
    \mathcal{H}_{R, L}:=\left\{\sigma \in \Sigma: \limsup _{\varepsilon \rightarrow 0} \varepsilon \log \mu_\sigma^{\varepsilon}\left(B_R^c\right)<-L\right\},
  \end{align*}
  and 
  \begin{align*}
    \mathcal{G}:=\left\{\sigma \in \Sigma: a_{R+1, \eta / 2, T}(\sigma) \geq c\right\} \cap \beta_T \mathcal{H}_{R, L}. 
  \end{align*}

  The following lemma provides the good environment sets on which the dynamics is controllable. 
  \begin{lem}\label{l.070101}
    Assume $(H1)$, $(H5)$ and $(H6)$ from Assumption \ref{a.050201}. For any $L>0, \rho\in(0,1)$ and measurable $\eta:\Sigma\to (0,\infty)$, there are $R,T,c>0$ such that $\beta_T$ is ergodic and 
    \[m(\mathcal{G})\geq 1-\rho.\] 
    Moreover, given any $n_*\in\mathbb N$, for $m$-a.e. $\sigma \in \Sigma$, there is $n\in\mathbb N$ such that $\beta_{-nT} \sigma \in \mathcal{H}_{R, L}$, and the selected index set
    \[\mathcal{J}_n:=\left\{0 \leq k \leq n-1: \beta_{-kT} \sigma \in \mathcal{G}\right\}\]
    satisfies $\left|\mathcal{J}_n\right| \geq n_*$. 
  \end{lem}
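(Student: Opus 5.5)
The plan is to first fix $R$ using weak exponential tightness, then fix $T$ large enough that the deterministic flow from $B_{R+1}$ lands deep inside $\mathcal{A}_{\eta/4}$, then fix $c$ small using the escaping-energy estimate \eqref{e.070304}, and finally obtain the recurrence statement from the Birkhoff theorem for $\beta_T$.

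\textbf{Step 1 (choice of $R$).} By $(H6)$, for $m$-a.e. $\sigma$ we have $\limsup_{\eps\to0}\eps\ln\mu^\eps_\sigma(B_R^c)<-L$ for all $R$ large. Since $\mathcal H_{R,L}$ increases with $R$ and its union over $R$ has full measure, continuity of measure gives $R$ with $m(\mathcal H_{R,L})\ge 1-\rho/3$. By invariance of $m$ we then also have $m(\beta_T\mathcal H_{R,L})\ge1-\rho/3$ for every $T$.

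\textbf{Step 2 (choice of $T$ and $c$).} The pullback attraction in $(H1)$ gives
\[
\di\big(\Phi_{0,t,\beta_{-t}\sigma}B_{R+1},\Ac(\sigma)\big)\to0 \quad\text{as } t\to\infty
\]
for every $\sigma$. Since $\eta(\sigma)>0$, the set of $\sigma$ for which $\Phi_{0,t,\beta_{-t}\sigma}B_{R+1}\subset\Ac_{\eta(\sigma)/4}(\sigma)$ for all $t\ge T$ increases to a full-measure set as $T\to\infty$. (Some care is needed for measurability; it suffices to work along a countable dense set of times together with an outer-measure argument.) By Remark~\ref{r.042601} we may pick $T$ outside the countable exceptional set, so that $\beta_T$ is ergodic and this set $A_T$ has $m(A_T)\ge1-\rho/3$.

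Now take $\sigma\in A_T$ and a controlled path with $u(0)\in B_{R+1}$ and $u(T)\notin\Ac_{\eta(\sigma)/2}(\sigma)$. Then
\[
\|u(T)-\Phi_{0,T,\beta_{-T}\sigma}u(0)\|\ge \eta(\sigma)/4,
\]
so \eqref{e.070304} gives
\[
I^{\beta_{-T}\sigma}_{u(0),T}(u)\ge \eta(\sigma)^2/\big(16\,C_{R+1,T}(\sigma)\big).
\]
Because $C_{R+1,T}$ is finite and $\eta>0$, this lower bound is a positive measurable function of $\sigma$. Hence for $c>0$ small, the set where it is at least $c$ has measure at least $1-\rho/3$ inside $A_T$. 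This yields $\{a_{R+1,\eta/2,T}\ge c\}$ of measure at least $1-2\rho/3$, and intersecting with $\beta_T\mathcal H_{R,L}$ gives $m(\mathcal G)\ge1-\rho$.

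\textbf{Step 3 (recurrence).} Since $\beta_T$ is invertible and ergodic, the Birkhoff theorem applied to $\beta_{-T}$ and the indicators of $\mathcal G$ and $\mathcal H_{R,L}$ gives, for $m$-a.e. $\sigma$:
\begin{itemize}
\item $\frac1n|\mathcal J_n|\to m(\mathcal G)>0$, so $|\mathcal J_n|\ge n_*$ for all $n\ge n_0(\sigma)$;
\item $\beta_{-nT}\sigma\in\mathcal H_{R,L}$ for infinitely many $n$, since $m(\mathcal H_{R,L})>0$.
\end{itemize}
Choosing such an $n\ge n_0(\sigma)$ gives the claim.

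\textbf{Main obstacle.} The delicate point is Step 2. The attraction time of $B_{R+1}$ into $\Ac_{\eta(\sigma)/4}(\sigma)$ depends on $\sigma$, so a single deterministic $T$ only works on a large-measure set. The ergodicity of $\beta_T$ must be secured simultaneously, which is done by avoiding the countable exceptional set from Remark~\ref{r.042601}. The measurability of $a_{R+1,\eta/2,T}$ is also not automatic. I would sidestep it by working with the explicit measurable lower bound $\eta^2/(16C_{R+1,T})$ on $A_T$, and by replacing $\mathcal G$ with a measurable subset where needed.
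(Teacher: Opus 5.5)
Your proposal is correct and follows essentially the same route as the paper's proof. You choose $R$ from $(H6)$, then pick $T$ off the countable non-ergodic set via pullback attraction, then take $c$ from the lower bound $\eta^2/(16C_{R+1,T})$ supplied by \eqref{e.070304}, and finish with Birkhoff's theorem for $\beta_{-T}$.

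The one difference is how $T$ is chosen. The paper works only with the single-time quantity $d_T(\sigma)=\sup_{u_0\in B_{R+1}}\di(\Phi_{0,T,\beta_{-T}\sigma}u_0,\Ac(\sigma))$. Since $d_T\to0$ pointwise a.e., dominated convergence gives $m\{d_T<\eta/4\}\to1$. This avoids your ``for all $t\ge T$'' monotone sets and the measurability detour they force.
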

  \begin{proof}
    Set $\Lambda_R(\s):=\limsup_{\eps\to0}\eps\log\mu_{\s}^{\eps}(B_R^c)$, which tends to $-\infty$ as $R\to\infty$ for $m$-almost surely 
    by \eqref{e.072703} in $(H6)$. Since $\beta_T$ is measure preserving and invertible, choosing $R>0$ large enough such that $m(\mathcal H_{R,L})>1-\frac{\rho}{3}$,  we have 
    \begin{align*}
      m(\beta_T\mathcal H_{R,L})=m(\mathcal H_{R,L})>1-\frac{\rho}{3}
    \end{align*}
    for every $T>0$.

    We next choose $T$. For the above fixed $R$, define
    \begin{align*}
      d_T(\s):=\sup_{u_0\in B_{R+1}}\di\left(\Phi_{0,T,\beta_{-T}\s}u_0,\Ac(\s)\right).
    \end{align*}
    By the pullback attraction property of $\Ac(\s)$ from $(H1)$, for $m$-a.e. $\s\in\Sigma$, one has $d_T(\s)\to0$ as $T\to\infty$. Since $\eta>0$, it follows that
    \begin{align*}
      m\left\{\s\in\Sigma:d_T(\s)<\frac{\eta(\s)}{4}\right\}\to1,\qquad T\to\infty.
    \end{align*}
    Therefore, by Remark \ref{r.042601}, we may choose $T>0$ such that $\beta_T$ is ergodic and
    \begin{align}\label{e.070201}
      m(D_T)>1-\frac{\rho}{3},\qquad \text{ for } D_T:=
      \left\{\s\in\Sigma:d_T(\s)<\frac{\eta(\s)}{4}\right\}.
    \end{align}

    We now choose $c$. Set $ b_T(\s):=\frac{\eta(\s)^2}{16C_{R+1,T}(\s)}$
    where
    $C_{R+1,T}(\s)$ is given by 
    \eqref{e.070304} with $R$ replaced by $R+1$.
    Since $b_T(\s)>0$ for $m$-a.e. $\s\in\Sigma$, we can choose $c>0$ sufficiently small such that
    \begin{align*}
      m\{\s\in\Sigma:b_T(\s)\geq c\}>1-\frac{\rho}{3}.
    \end{align*}
    We claim that 
    \begin{align}\label{e.070203}
      D_T\cap\{b_T\geq c\}\subset\left\{\s\in\Sigma:a_{R+1,\eta/2,T}(\s)\geq c\right\}.
    \end{align}
    Indeed, fix $\s\in D_T\cap\{b_T\geq c\}$. Let $u$ be any controlled path such that 
    \begin{align}\label{e.070202}
      u(0)\in B_{R+1}, \quad u(T)\notin\Ac_{\eta(\s)/2}(\s)\quad
      \text{and } I_{u(0),T}^{\beta_{-T}\s}(u)<\infty.
    \end{align}
    Since $\s\in D_T$ defined in \eqref{e.070201}, we have
    \begin{align*}
      \di\left(
      \Phi_{0,T,\beta_{-T}\s}u(0),\Ac(\s)
      \right)
      <\frac{\eta(\s)}{4},
    \end{align*}
    combining it with \eqref{e.070202}, we infer 
    \begin{align*}
      \left\|u(T)-\Phi_{0,T,\beta_{-T}\s}u(0)\right\|\geq\frac{\eta(\s)}{4}.
    \end{align*}
    It then follows from \eqref{e.070304} implied by $(H5)$ that 
    \begin{align*}
      I_{u(0),T}^{\beta_{-T}\s}(u)\geq\frac{\eta(\s)^2}{16C_{R+1,T}(\s)} =b_T(\s)\geq c.
    \end{align*}
    Taking the infimum over all such controlled paths yields $a_{R+1,\eta/2,T}(\s)\geq c$, proving the claim \eqref{e.070203}.
    Consequently,
    \begin{align*}
      m\left\{\s\in\Sigma:a_{R+1,\eta/2,T}(\s)\geq c
      \right\}\geq m\left(D_T\cap\{b_T\geq c\}\right)>1-\frac{2\rho}{3}.
    \end{align*}
    Combining this with $m(\beta_T\mathcal H_{R,L})>1-\rho/3$, we conclude that
    \begin{align*}
      m(\mathcal G)=m\left(\left\{\s\in\Sigma:a_{R+1,\eta/2,T}(\s)\geq c
      \right\}\cap\beta_T\mathcal H_{R,L}
      \right)>1-\rho.
    \end{align*}

    Since $\beta_T$ is ergodic, by Birkhoff's ergodic theorem, for $m$-a.e. $\sigma\in\Sigma$, as $n\to\infty$ we have 
    \[\frac{1}{n} \sum_{k=0}^{n-1} \1_{\mathcal{G}}\left(\beta_{-kT} \sigma\right) \longrightarrow m(\mathcal{G})>0, \quad \frac{1}{n} \sum_{k=0}^{n-1} \1_{\mathcal{H}_{R, L}}\left(\beta_{-kT} \sigma\right) \longrightarrow m\left(\mathcal{H}_{R, L}\right)>0.\]
    The second convergence implies that the backward orbit visits $\mathcal{H}_{R, L}$ infinitely many times. Let $n_j \rightarrow \infty$ be a sequence such that $\beta_{-n_jT}\sigma \in \mathcal{H}_{R, L}$. Along the same sequence,
    \begin{align*}
    \left|\left\{0 \leq k \leq n_j-1: \beta_{-kT}  \sigma \in \mathcal{G}\right\}\right|=\sum_{k=0}^{n_j-1} \mathbf{1}_{\mathcal{G}}\left(\beta_{-kT}  \sigma\right) \sim n_j m(\mathcal{G}) \rightarrow \infty .
    \end{align*}
    Choosing $j$ large enough gives the desired $n=n_j$ such that $\beta_{-nT} \sigma \in \mathcal{H}_{R, L}$ and $\left|\mathcal{J}_n\right| \geq n_*$. 

  \end{proof}

  We are now ready to prove the upper bound estimate of Theorem \ref{t.080601}.  
  \begin{thm}\label{t.070201}
    Assume Assumption \ref{a.050201}. For $m$-a.e. $\sigma\in\Sigma$,
    \begin{align*}
      \limsup_{\eps\to0}\eps\ln\mu_{\sigma}^{\eps}(F)
      \leq -\inf_{u\in F}E_{\Ac(\sigma)}(u),
      \qquad \text{for any closed } F\subset H .
    \end{align*}
  \end{thm}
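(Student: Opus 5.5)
Fix a closed $F$ and a level $M<\inf_F E_{\Ac(\sigma)}$; it suffices to show $\limsup_{\eps\to0}\eps\ln\mu^\eps_\sigma(F)\le -M+\delta''$ for arbitrary small $\delta''$. Since $K_M(\sigma)$ is compact by (H3) and disjoint from $F$, there is $\delta>0$ with $\Nc_{2\delta}(K_M(\sigma))\cap F=\emptyset$. Choose $\delta'>0$ small and apply the tracking property (H4) with $(\delta,\delta',M)$ to get a measurable $\eta(\cdot)\in(0,\delta)$. With this $\eta$, take $L>M$ and apply Lemma \ref{l.070101} to obtain $R,T,c$, the good set $\mathcal G$ and, for a fixed $n_*$ with $n_*c>M$, an index $n$ with $\beta_{-nT}\sigma\in\mathcal H_{R,L}$ and $|\mathcal J_n|\ge n_*$. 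Throughout, $\sigma$ ranges over the full-measure set where Lemma \ref{l.070101} and (H4) hold; since countably many choices of $M,\delta,\delta'$ (rational) suffice, a single null set can be removed.

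Using stationarity, $\mu^\eps_\sigma(F)=\int \mu^\eps_{\beta_{-nT}\sigma}(du_0)\,P^\eps_{0,nT,\beta_{-nT}\sigma}(u_0,F)$. Split according to $u_0\in B_R$ or not. The contribution of $B_R^c$ is at most $\mu^\eps_{\beta_{-nT}\sigma}(B_R^c)\le e^{-L/\eps}$ for small $\eps$, since $\beta_{-nT}\sigma\in\mathcal H_{R,L}$. For $u_0\in B_R$, apply the uniform trajectory LDP (Definition \ref{d.070201}(2)) on $[0,nT]$ at symbol $\beta_{-nT}\sigma$, uniformly over $B_R$, with level $M$ and a small tube width $\delta_1$: except on a set of probability $\le e^{-(M-\delta'')/\eps}$, the path $\Phi^\eps$ lies within $\delta_1$ of a controlled path $u$ with $u(0)=u_0$ and $I(u)\le M$. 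It then remains to show the deterministic claim: every such controlled path has $u(nT)\in \Nc_{\delta}(K_M(\sigma))$ (in fact with slack $\delta'$ in action handled by perturbing $M$), so that $\Phi^\eps(nT)\in\Nc_{\delta+\delta_1}(K_M)$, disjoint from $F$ if $\delta_1<\delta$.

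For the deterministic claim, look at the times $kT$, $k\in\mathcal J_n$ (counted backward from $0$, i.e. times $nT-kT$ along the path). Either the path enters $\Ac_{\eta}$ at one of these times, or it avoids it at all of them. In the latter case, need the path to stay in $B_{R+1}$ at the start of each block (I would get this by the tube/tightness choice, possibly enlarging $R$ or using that controlled paths with bounded action from $B_R$ remain in a bounded set, shrinking the block starting condition accordingly); then each good block ending at $\beta_{-kT}\sigma\in\mathcal G$ costs at least $a_{R+1,\eta/2,T}\ge c$, giving total action $\ge n_*c>M$, contradiction. In the former case, if $u(nT-kT)\in\Ac_{\eta(\beta_{-kT}\sigma)}(\beta_{-kT}\sigma)$, then restricting $u$ to $[nT-kT,nT]$ and using the cocycle translation of the action, (H4) applied with $t=kT$ gives $u(nT)\in\Nc_\delta(K_M(\sigma))$ provided the remaining action is $\le M-\delta'$, which I would arrange by running the LDP at level $M-\delta'$ instead of $M$. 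Combining, $\mu^\eps_\sigma(F)\le e^{-L/\eps}+e^{-(M-\delta'-\delta'')/\eps}$, and letting parameters tend to their limits yields the bound.

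The main obstacle is the bookkeeping of the escaping-energy case: ensuring the controlled path sits in $B_{R+1}$ at the start of every selected block (the attractor neighborhoods are bounded but intermediate excursions need an a priori bound on controlled paths with action $\le M$), and matching the $\eta(\beta_{-kT}\sigma)$ used in $\mathcal G$ (with $\eta/2$) to the neighborhood required by (H4); I would try first to use the tube width $\delta_1$ below $\eta/2$ only at finitely many selected times, which is legitimate because $n$ and $\mathcal J_n$ are fixed before $\eps\to0$.
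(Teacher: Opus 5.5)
The gap is the one you flag as the main obstacle, and your proposed fixes do not close it. Your deterministic claim is that every controlled path $\xi$ with $\xi(0)\in B_R$ and action at most $M-\delta'$ on $[0,nT]$ ends in $\Nc_\delta(K_M(\sigma))$. This cannot be derived from Assumption \ref{a.050201}. The escaping-energy bound $a_{R+1,\eta/2,T}(\beta_{-kT}\sigma)\ge c$ applies only to blocks that start in $B_{R+1}$, and nothing in the assumptions locates $\xi(s_k)$, where $s_k=(n-k-1)T$ and $e_k=(n-k)T$. There is no a priori bound for controlled paths in the abstract setting. ``Enlarging $R$'' is circular: Lemma \ref{l.070101} builds $\mathcal G$, and hence $n$ and $\mathcal J_n$, from $R$, while any radius obtained by chaining (H5) over $[0,nT]$ depends on $n$ and on $\sigma$ along the orbit. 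The uniform-in-$B_R$ trajectory LDP does not help either: $\sup_{v\in B_R}\mathbf P\{\Phi^\eps_{0,s_k,\beta_{-nT}\sigma}v\notin B_R\}$ need not be small at all, since the zero-cost deterministic flow may itself leave $B_R$.

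The missing idea is to control the block starting points probabilistically, through stationarity at the intermediate symbols, before you invoke the trajectory LDP. Started from $\mu^\eps_{\beta_{-nT}\sigma}$, the noisy path at time $s_k$ has law $\mu^\eps_{\beta_{-(k+1)T}\sigma}$. Moreover, $\beta_{-(k+1)T}\sigma\in\mathcal H_{R,L}$ because $\mathcal G\subset\beta_T\mathcal H_{R,L}$. That factor in the definition of $\mathcal G$ exists precisely for this purpose, and your argument never uses it. Hence, for small $\eps$,
$\int_{B_R}\mathbf P\{\Phi^\eps_{0,s_k,\beta_{-nT}\sigma}v\notin B_R \text{ for some } k\in\mathcal J_n\}\,\mu^\eps_{\beta_{-nT}\sigma}(dv)\le|\mathcal J_n|e^{-L/\eps}$,
a term absent from your final bound. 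On the complementary event, a tube width $\delta_1<1$ places your approximating controlled path $\xi$ in $B_{R+1}$ at every $s_k$. Your dichotomy then goes through:
\begin{itemize}
\item if $\xi(e_k)\in\Ac_{\eta(\beta_{-kT}\sigma)}(\beta_{-kT}\sigma)$ for some $k\in\mathcal J_n$, tracking applied to $\xi|_{[e_k,nT]}$ gives $\xi(nT)\in\Nc_\delta(K_M(\sigma))$;
\item otherwise the action is at least $c|\mathcal J_n|>M$, a contradiction.
\end{itemize}

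With this patch your route is valid and slightly leaner than the paper's. The paper splits on the noisy path ($\mathcal Z_{\mathcal J_n}$ versus its complement). That costs it the $\eta/2$-enlarged set $\widetilde{\mathcal Z}_{\mathcal J_n}$ and a second LDP application, via the Markov property at time $e_k$, for the term $I_{31}$. Splitting on $\xi$ instead needs only one LDP application on $[0,nT]$, and it makes your worry about matching $\eta$ with $\eta/2$ moot.
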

  
  \begin{proof}
    Recall that $K_{M}(\s)=\{E_{\Ac(\sigma)}\leq M\}$ is the level set, which is compact by $(H3)$ in Assumption \ref{a.050201} and the lower semicontinuity of $E_{\Ac(\sigma)}$. Therefore, by \cite[Proposition 12.4]{da2014stochastic}, it is sufficient to show that for any positive $\d,\d'$ and $M$, there exists $\eps_0>0$ such that for all $\eps\in(0,\eps_0]$,
    \begin{align}\label{e.070212}
      \mu_{\sigma}^{\eps}
      \left\{u\in H:
      d\left(u,K_{M}(\s)\right)\geq\d
      \right\}
      \leq \exp\left(-(M-\d')/\eps\right).
    \end{align}
    If $M\leq\d'$, then \eqref{e.070212} is trivial. Hence we assume below that $M>\d'$. The proof is divided into the following five steps. 

    {\it Step 1: Preparation. }
    By $(H4)$ in Assumption \ref{a.050201}, there is a radius function $\eta=\eta(\sigma) \in(0, \delta / 2)$ such that for every $t>0$, and $m$-a.e. $\sigma\in\Sigma$, 
    \begin{align}\label{e.070208}
      \left\{u(t): u(0) \in \mathcal{A}_{\eta(\beta_{-t} \sigma)}\left(\beta_{-t} \sigma\right), I_{u(0), t}^{\beta_{-t}\sigma}(u) \leq M-\frac{\delta^{\prime}}{4}\right\} \subset \mathcal{N}_{\delta / 2}\left(K_M(\sigma)\right).
    \end{align}
    Let $M_1, L>M$. For the given $\eta, L$, applying Lemma \ref{l.070101} to choose corresponding $R,T,c>0$ such that $\beta_T$ is ergodic and $m(\mathcal{G})>0$, and for any fixed $n_*>M_1/c$ there is $n\in\mathbb N$ such that $\beta_{-n T} \sigma \in \mathcal{H}_{R, L}$ and $\left|\mathcal{J}_n\right| \geq n_*$ for almost every $\sigma\in\Sigma$. For the rest of  the proof we fix such a symbol $\sigma$ and the integer $n$, put $t=nT$ and denote $\beta_T=\Theta$.

    Define the selected block path set 
    \begin{align*}
      \mathcal{Z}_{\mathcal{J}_n}:=\left\{u \in C([0, nT] ; H): u(0) \in B_R, u\left(s_k\right) \in B_R, u\left(e_k\right) \in \mathcal{A}_{\eta\left(\Theta^{-k} \sigma\right)}^c\left(\Theta^{-k} \sigma\right), \quad k \in \mathcal{J}_n\right\}
    \end{align*}
    and also the enlarged set 
    \begin{align*}
      \widetilde{\mathcal{Z}}_{\mathcal{J}_n}:=\left\{u \in C([0, n T] ; H): u(0) \in B_{R+1}, u\left(s_k\right) \in B_{R+1}, u\left(e_k\right) \in \mathcal{A}_{\frac{1}{2} \eta\left(\Theta^{-k} \sigma\right)}^c\left(\Theta^{-k} \sigma\right), \quad k \in \mathcal{J}_n\right\},
    \end{align*}
    to deal with the separation from the low action level set. Here \begin{align*}
      s_k:=(n-k-1) T, \quad e_k:=(n-k) T, \text{ for } k \in \mathcal{J}_n
    \end{align*}
    are the starting and ending times of the selected block. 

    {\it Step 2: Decomposition and estimate of the tail $I_1$.}
    By the invariance of $\mu_{\sigma}^{\eps}$, namely
    $\mu_{\beta_{-t}\sigma}^{\eps}P_{0,t,\beta_{-t}\sigma}^{\eps}=\mu_{\sigma}^{\eps}$, we have for $t=nT$, 
    \begin{align}\label{e.072707}
      \begin{split}
        \mu_{\sigma}^{\eps}
        \left\{u\in H:
        d\left(u,K_{M}(\s)\right)\geq\d
        \right\} 
        &=\mu_{\sigma}^{\eps}
        \left\{u\in H:u\notin\Nc_{\d}(K_{M}(\s))\right\} \\
        &=\mu_{\beta_{-t}\sigma}^{\eps}P_{0,t,\beta_{-t}\sigma}^{\eps}
        \left\{u\in H:u\notin\Nc_{\d}(K_{M}(\s))\right\} \\
        &=\int_H
        \mathbf{P}\left\{\Phi_{0,t,\beta_{-t}\sigma}^{\eps}v
        \notin\Nc_{\d}(K_{M}(\s))\right\}
        \mu_{\beta_{-t}\sigma}^{\eps}(dv)\\
        &\leq I_1+I_2+I_3, 
      \end{split}
    \end{align}
    where 
    \begin{align}\label{e.070207}
      \begin{split}
        I_3&=\int_{B_R} \mathbf{P}\left\{\Phi_{0, nT, \Theta^{-n} \sigma}^{\varepsilon} v \notin \mathcal{N}_\delta\left(K_M(\sigma)\right) ; \Phi_{0, \cdot, \Theta^{-n} \sigma}^{\varepsilon} v \notin \mathcal{Z}_{\mathcal{J}_n}\right\} \mu_{\Theta^{-n} \sigma}^{\varepsilon}(d v)\\
        I_2&=\int_{B_R} \mathbf{P}\left\{\Phi_{0, nT, \Theta^{-n} \sigma}^{\varepsilon} v \notin \mathcal{N}_\delta\left(K_M(\sigma)\right) ; \Phi_{0, \cdot, \Theta^{-n} \sigma}^{\varepsilon} v \in \mathcal{Z}_{\mathcal{J}_n}\right\} \mu_{\Theta^{-n} \sigma}^{\varepsilon}(d v),
      \end{split}
    \end{align}
    and 
    \begin{align}\label{e.070309}
      I_1 = \int_{B_R^c} \mathbf{P}\left\{\Phi_{0, nT, \Theta^{-n} \sigma}^{\varepsilon} v \notin \mathcal{N}_\delta\left(K_M(\sigma)\right) \right\} \mu_{\Theta^{-n} \sigma}^{\varepsilon}(d v).
    \end{align}
    As $\Theta^{-n} \sigma \in \mathcal{H}_{R, L}$, the definition of $\mathcal{H}_{R, L}$ implies that, for all sufficiently small $\varepsilon>0$,
    \begin{align*}
      I_1\leq \mu_{\Theta^{-n} \sigma}^{\varepsilon}(B_R^c)\leq \exp\left(-\frac{L}{\varepsilon}\right). 
    \end{align*}

    {\it Step 3: Estimate of $I_2$.} We first claim that 
    \begin{align}\label{e.070205}
      \inf \left\{I_{u(0), nT}^{\Theta^{-n}\sigma}(u): u \in \widetilde{\mathcal{Z}}_{\mathcal{J}_n}\right\}>M_1. 
    \end{align}
    Indeed, let $\kappa>0, u \in \widetilde{\mathcal{Z}}_{\mathcal{J}_n}$ and suppose $I_{u(0), nT}^{\Theta^{-n} \sigma}(u)<\infty$. Then there exists an admissible control $\varphi \in L^2(0, nT ; H)$ such that $u(r)=\Phi_{0, r, \Theta^{-n} \sigma}^{\varphi} u(0)$ for $ 0 \leq r \leq n T$,  
    and
    \begin{align*}
    I_{u(0), nT}^{\Theta^{-n} \sigma}(u)\geq \frac{1}{2} \int_0^{nT}\|\varphi(r)\|^2 d r -\kappa.
    \end{align*}
    Since the selected intervals $\left[s_k, e_k\right], k \in \mathcal{J}_n$ are disjoint, one has 
    \begin{align*}
      I_{u(0), nT}^{\Theta^{-n} \sigma}(u) \geq \sum_{k \in \mathcal{J}_n} \frac{1}{2} \int_{s_k}^{e_k}\|\varphi(r)\|^2 d r -\kappa.
    \end{align*}
    By the cocycle property of the controlled flow, the restriction $  w_k(r):=u\left(s_k+r\right)$ for $ 0 \leq r \leq T$ is a controlled path on $[0, T]$ with time symbol $\Theta^{-(k+1)} \sigma$ and with control $\varphi\left(s_k+\cdot\right)$. Moreover, by the definition of $\widetilde{\mathcal{Z}}_{\mathcal{J}_n}$,
    \begin{align*}
    w_k(0)=u\left(s_k\right) \in B_{R+1}, \quad w_k(T)=u\left(e_k\right) \in \mathcal{A}_{\frac{1}{2} \eta\left(\Theta^{-k} \sigma\right)}^c\left(\Theta^{-k} \sigma\right).
    \end{align*}
    Therefore, by definition \eqref{e.070204}, 
    \[\frac{1}{2} \int_{s_k}^{e_k}\|\varphi(r)\|^2 d r \geq a_{R+1, \eta / 2, T}\left(\Theta^{-k} \sigma\right).\]
    Since $k \in \mathcal{J}_n$ implies $\Theta^{-k} \sigma \in \mathcal{G}$, the definition of $\mathcal{G}$ gives $a_{R+1, \eta / 2, T}\left(\Theta^{-k} \sigma\right) \geq c$. Consequently, 
    \begin{align*}
      I_{u(0), nT}^{\Theta^{-n} \sigma}(u) \geq \sum_{k \in \mathcal{J}_n} a_{R+1, \eta / 2, T}\left(\Theta^{-k} \sigma\right)-\kappa \geq c\left|\mathcal{J}_n\right| -\kappa.
    \end{align*}
    By letting $\kappa\to0$ and noting that $\left|\mathcal{J}_n\right|\geq n_*$ and $cn_*>M_1$, we obtain the desired claim \eqref{e.070205}. 

    Next we claim that by setting 
    \begin{align*}
      \rho_\sigma:=\frac{1}{2} \min \left\{1, \min _{k \in \mathcal{J}_n} \eta\left(\Theta^{-k} \sigma\right)\right\}>0, 
    \end{align*}
    for every $v \in B_R$, one has  
    \begin{align}\label{e.070206}
      \mathcal{Z}_{\mathcal{J}_n} \cap\{u(0)=v\} \subset C([0, nT] ; H) \backslash \mathcal{N}_{\rho_\sigma}\left(I_{v, nT}^{\Theta^{-n} \sigma}, M_1\right)
    \end{align}
    where, recall as in Definition \ref{d.070201},  $\mathcal{N}_{\rho_\sigma}\left(I_{v, nT}^{\Theta^{-n} \sigma}, M_1\right)$ denotes the $\rho_\sigma$-neighborhood in $C([0, nT] ; H)$ of the level set
    \begin{align*}
    \left\{\xi \in C([0, nT] ; H): \xi(0)=v, I_{v, nT}^{\Theta^{-n} \sigma}(\xi) \leq M_1\right\}.
    \end{align*}
    Indeed, if \eqref{e.070206} is not true, then for some $v \in B_R$ there exist $u \in \mathcal{Z}_{\mathcal{J}_n}$ and $\xi \in C([0, nT] ; H)$ such that
    \[u(0)=\xi(0)=v, \quad I_{v, nT}^{\Theta^{-n} \sigma}(\xi) \leq M_1, \quad\|u-\xi\|_{C([0, nT] ; H)}<\rho_\sigma .\]
    Since $v \in B_R \subset B_{R+1}$, we have $\xi(0) \in B_{R+1}$. For each $k \in \mathcal{J}_n$, the definition of $\mathcal{Z}_{\mathcal{J}_n}$ gives $u\left(s_k\right) \in B_R$. In view of $\rho_\sigma \leq 1 / 2$, one has 
    \begin{align*}
      \left\|\xi\left(s_k\right)\right\| \leq\left\|u\left(s_k\right)\right\|+\rho_\sigma<R+1 .
    \end{align*}
    Furthermore,
    \begin{align*}
      \begin{aligned}
        \operatorname{dist}\left(\xi\left(e_k\right), \mathcal{A}\left(\Theta^{-k} \sigma\right)\right) & \geq \operatorname{dist}\left(u\left(e_k\right), \mathcal{A}\left(\Theta^{-k} \sigma\right)\right)-\rho_\sigma \\
        & >\eta\left(\Theta^{-k} \sigma\right)-\rho_\sigma \geq \frac{1}{2} \eta\left(\Theta^{-k} \sigma\right).
      \end{aligned}
    \end{align*}
    Thus $\xi \in \widetilde{\mathcal{Z}}_{\mathcal{J}_n}$, contradicting \eqref{e.070205} as $I_{v, nT}^{\Theta^{-n} \sigma}(\xi) \leq M_1$. Therefore, the claim \eqref{e.070206} is true. 

    Now choose $\gamma \in\left(0, M_1-M\right)$. Then \eqref{e.070207}, \eqref{e.070206}  and  the upper bound part of the uniform trajectory LDP on the finite interval $[0, nT]$ with fixed time symbol $\Theta^{-n} \sigma$ gives 
    \begin{align}\label{e.070310}
      \begin{aligned}
        I_2 \leq \sup _{v \in B_R} \mathbf{P}\left\{\Phi_{0, \cdot, \Theta^{-n} \sigma}^{\varepsilon} v \in \mathcal{Z}_{\mathcal{J}_n}\right\}& \leq \sup _{v \in B_R} \mathbf{P}\left\{\Phi_{0, \cdot, \Theta^{-n}\sigma}^{\varepsilon} v \notin \mathcal{N}_{\rho_\sigma}\left(I_{v, nT}^{\Theta^{-n} \sigma}, M_1\right)\right\} \\
       & \leq \exp \left(-\frac{M_1-\gamma}{\varepsilon}\right) \leq \exp \left(-\frac{M}{\varepsilon}\right)
     \end{aligned}
    \end{align}
    for all sufficiently small $\varepsilon>0$.

    {\it Step 4: Estimate of $I_3$.} We first decompose $I_3$ in \eqref{e.070207} into two parts. If a path starts from $B_R$, ends outside $\mathcal{N}_\delta\left(K_M(\sigma)\right)$, and does not belong to $\mathcal{Z}_{\mathcal{J}_n}$, then for at least one selected index $k \in \mathcal{J}_n$ either the starting point of the selected block is outside $B_R$, or the endpoint of the selected block lies inside the $\eta$-neighborhood of the corresponding pullback attractor. Hence
    \begin{align*}
      I_3 \leq I_{31}+I_{32},
    \end{align*}
    where
    \begin{align*}
      I_{32}:=\sum_{k \in \mathcal{J}_n} \int_{B_R} \mathbf{P}\left\{\Phi_{0, s_k, \Theta^{-n} \sigma}^{\varepsilon} v \in B_R^c\right\} \mu_{\Theta^{-n} \sigma}^{\varepsilon}(d v),
    \end{align*}
    and
    \begin{align*}
      \begin{aligned}
        I_{31}:=\sum_{k \in \mathcal{J}_n} \int_{B_R} \mathbf{P}\left\{ \Phi_{0, nT, \Theta^{-n} \sigma}^{\varepsilon} v \notin\mathcal{N}_\delta\left(K_M(\sigma)\right); \Phi_{0, e_k, \Theta^{-n} \sigma}^{\varepsilon} v \in \mathcal{A}_{\eta\left(\Theta^{-k} \sigma\right)}\left(\Theta^{-k} \sigma\right)\right\} \mu_{\Theta^{-n} \sigma}^{\varepsilon}(d v)
     \end{aligned}
    \end{align*}
    We first estimate $I_{32}$. For $k \in \mathcal{J}_n$, stationarity gives
    \begin{align*}
      \begin{aligned}
        \int_H \mathbf{P}\left\{\Phi_{0, s_k, \Theta^{-n} \sigma}^{\varepsilon} v \in B_R^c\right\} \mu_{\Theta^{-n} \sigma}^{\varepsilon}(d v)  =\mu_{\Theta^{-n} \sigma}^{\varepsilon} P_{0, s_k, \Theta^{-n} \sigma}^{\varepsilon}\left(B_R^c\right) =\mu_{\Theta^{-(k+1)} \sigma}^{\varepsilon}\left(B_R^c\right)
      \end{aligned}
    \end{align*}
    because $s_k=(n-k-1) T$. Since $k \in \mathcal{J}_n$ implies $\Theta^{-k} \sigma \in \mathcal{G} \subset \Theta \mathcal{H}_{R, L}$, we have $\Theta^{-(k+1)} \sigma \in \mathcal{H}_{R, L}$. 
    Therefore, for each fixed $k \in \mathcal{J}_n$ and all sufficiently small $\varepsilon>0$,
    \begin{align*}
      \mu_{\Theta^{-(k+1) \sigma}}^{\varepsilon}\left(B_R^c\right) \leq \exp \left(-\frac{L}{\varepsilon}\right)
    \end{align*}
    Since $\mathcal{J}_n$ is finite, we may take the minimum of the corresponding smallness thresholds in $\varepsilon$. Thus
    \begin{align}\label{e.070311}
      I_{32} \leq\left|\mathcal{J}_n\right| \exp \left(-\frac{L}{\varepsilon}\right)
    \end{align}
    for all sufficiently small $\varepsilon>0$.

    We next estimate $I_{31}$. Fix $k \in \mathcal{J}_n$. After time $e_k=(n-k) T$, the remaining time is $k T$ and the time symbol is $\Theta^{-k} \sigma$. By the Markov property,
    \begin{align*}
      I_{31} \leq \sum_{k \in \mathcal{J}_n} \sup _{v \in \mathcal{A}_{\eta\left(\Theta^{-k} \sigma\right)}\left(\Theta^{-k} \sigma\right)} \mathbf{P}\left\{\Phi_{0, k T, \Theta^{-k} \sigma}^{\varepsilon}v \notin \mathcal{N}_\delta\left(K_M(\sigma)\right)\right\}. 
    \end{align*}
    For $k=0$, this event is empty. Indeed, $\eta(\sigma)<\delta / 2$ from {\it Step 1} and $\mathcal{A}(\sigma) \subset K_M(\sigma)$, so
    \begin{align*}
    \mathcal{A}_{\eta(\sigma)}(\sigma) \subset \mathcal{N}_\delta\left(K_M(\sigma)\right). 
    \end{align*}
    Now let $k \geq 1$. By the tracking property \eqref{e.070208}, for every $v \in \mathcal{A}_{\eta\left(\Theta^{-k} \sigma\right)}\left(\Theta^{-k} \sigma\right)$, one has 
    \begin{align*}
      \left\{u(k T): u(0)=v, I_{v, k T}^{\Theta^{-k}\sigma} (u) \leq M-\frac{\delta^{\prime}}{4}\right\} \subset \mathcal{N}_{\delta / 2}\left(K_M(\sigma)\right).
    \end{align*}
    Consequently,
    \begin{align*}
      \left\{\Phi_{0, kT, \Theta^{-k} \sigma}^{\varepsilon}v\notin \mathcal{N}_\delta\left(K_M(\sigma)\right)\right\}\subset\left\{\Phi_{0, \cdot, \Theta^{-k} \sigma}^{\varepsilon} v \notin \mathcal{N}_{\delta / 2}\left(I_{v, k T}^{\Theta^{-k} \sigma}, M-\frac{\delta^{\prime}}{4}\right)\right\}. 
    \end{align*}
    Since there are only finitely many selected indices, and each set $\mathcal{A}_{\eta\left(\Theta^{-k} \sigma\right)}\left(\Theta^{-k} \sigma\right)$ is bounded, the uniform trajectory LDP may be applied to each selected symbol and the minimum of the corresponding $\varepsilon_0$ may be taken. Using the uniform trajectory LDP upper bound with action level $M-\delta^{\prime} / 4$ and error $\delta^{\prime} / 4$, we obtain
    \begin{align}\label{e.070312}
      I_{31} \leq\left|\mathcal{J}_n\right| \exp \left(-\frac{M-\delta^{\prime} / 2}{\varepsilon}\right)
    \end{align}
    for all sufficiently small $\varepsilon>0$.
    
    {\it Step 5: Collecting all estimates together.} By \eqref{e.072707}, \eqref{e.070309}, \eqref{e.070310}, \eqref{e.070311} and \eqref{e.070312}, we have for all sufficiently small $\eps>0$, 
    \begin{align*}
      \begin{aligned}
        &\mu_\sigma^{\varepsilon}\left\{u \in H: \operatorname{dist}\left(u, K_M(\sigma)\right) \geq \delta\right\} \\
        & \quad \leq \exp \left(-\frac{L}{\varepsilon}\right)+\exp \left(-\frac{M}{\varepsilon}\right)+\left|\mathcal{J}_n\right| \exp \left(-\frac{L}{\varepsilon}\right)+\left|\mathcal{J}_n\right| \exp \left(-\frac{M-\delta^{\prime} / 2}{\varepsilon}\right). 
      \end{aligned}
    \end{align*}
    Because $L>M$ and $\left|\mathcal{J}_n\right|<\infty$ is fixed after $\sigma$ is fixed, all finite prefactors can be absorbed into the exponential rate. Hence
    \begin{align*}
      \mu_\sigma^{\varepsilon}\left\{u \in H: \operatorname{dist}\left(u, K_M(\sigma)\right) \geq \delta\right\} \leq \exp \left(-\frac{M-\delta^{\prime}}{\varepsilon}\right)
    \end{align*}
    for all sufficiently small $\varepsilon>0$. This proves \eqref{e.070212} and completes the proof of Theorem \ref{t.070201} on the upper bound. 
  \end{proof}
  
  \subsection{Lower bound}

    We now prove the large deviation lower bound.  In contrast with the upper bound, the lower bound requires a controllable deterministic starting point in the remote past.  For this reason we restrict ourselves in this abstract criterion to the case where the pullback attractor of the limiting system is a single point.  Starting from this deterministic state, the definition of the quasipotential provides a controlled path reaching any prescribed neighborhood of the target point. Stationarity and the trajectory lower bound then transfer this finite time controlled estimate to the stationary measures.

  \begin{prop}\label{p.050601}
    Assume $(H1)$, $(H2)$ and $(H6)$ from Assumption \ref{a.050201}, and suppose that
    $\mathcal A(\sigma)=\{a(\sigma)\}$ for some measurable $a:\Sigma\to H$. Then for $m$-a.e. $\sigma\in\Sigma$ we have 
    \begin{align*}
      \liminf_{\eps\to0}\eps\ln\mu_{\sigma}^{\eps}(G)
      \geq -\inf_{u\in G}E_{\Ac(\sigma)}(u),
      \qquad \text{for any open } G\subset H.
    \end{align*}
  \end{prop}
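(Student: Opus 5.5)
Fix a symbol $\sigma$ in a full-measure set, chosen below, and an open set $G\subset H$. It suffices to show that for every $u_*\in G$ with $e:=E_{\Ac(\sigma)}(u_*)<\infty$ and every $\d>0$,
\[\liminf_{\eps\to0}\eps\ln\mu_\sigma^\eps(G)\geq -e-2\d .\]
Choose $\eta>0$ with $B_{2\eta}(u_*)\subset G$. Since $\Ac(\beta_{-s}\sigma)=\{a(\beta_{-s}\sigma)\}$, the definition \eqref{e.073003} gives $s>0$ and a control $\varphi\in L^2(0,s;H)$ with $J^{\beta_{-s}\sigma}_{a(\beta_{-s}\sigma),s}(\varphi)\leq e+\d$ and $\Phi^{\varphi}_{0,s,\beta_{-s}\sigma}a(\beta_{-s}\sigma)\in B_\eta(u_*)$.

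The difficulty is that the stationary measure in the remote past does not sit exactly at $a(\cdot)$. I will therefore prepend a zero-control stretch of length $\tau$ that uses the deterministic pullback attraction. Concretely:
\begin{itemize}
\item Use well-posedness of \eqref{e.080601}, i.e.\ continuity of $v\mapsto \Phi^{\varphi}_{0,s,\beta_{-s}\sigma}v$ at $a(\beta_{-s}\sigma)$, to obtain $\kappa>0$ such that $\|v-a(\beta_{-s}\sigma)\|<\kappa$ implies $\Phi^{\varphi}_{0,s,\beta_{-s}\sigma}v\in B_{3\eta/2}(u_*)$.
\item Fix $R$, to be chosen large later. By $(H1)$, pullback attraction of $B_R$ to the single point $a(\beta_{-s}\sigma)$ gives $\tau$ large with $\sup_{v\in B_R}\|\Phi_{0,\tau,\beta_{-s-\tau}\sigma}v-a(\beta_{-s}\sigma)\|<\kappa$.
\item For each $v\in B_R$, define the concatenated control $\psi=0$ on $[0,\tau]$ and $\psi=\varphi(\cdot-\tau)$ on $[\tau,\tau+s]$. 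Let $\xi_v$ be the corresponding controlled path on $[0,\tau+s]$ with symbol $\beta_{-s-\tau}\sigma$, started at $v$. By the cocycle property of the controlled flow, $I^{\beta_{-s-\tau}\sigma}_{v,\tau+s}(\xi_v)\leq e+\d$ and $\xi_v(\tau+s)\in B_{3\eta/2}(u_*)$.
\end{itemize}

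Next apply the uniform trajectory LDP lower bound, Definition \ref{d.070201}(1), on $[0,\tau+s]$ with symbol $\beta_{-s-\tau}\sigma$. Take radius $R$, level $M=e+\d$ and neighborhood size $\d'=\eta/2$. Any path in $\Nc_{\eta/2}(\xi_v)$ ends in $B_{2\eta}(u_*)\subset G$, so for $\eps\le\eps_0$,
\[\inf_{v\in B_R}\mathbf P\{\Phi^\eps_{0,\tau+s,\beta_{-s-\tau}\sigma}v\in G\}\geq e^{-(e+2\d)/\eps}.\]
Stationarity, $\mu^\eps_\sigma=\mu^\eps_{\beta_{-t}\sigma}P^\eps_{0,t,\beta_{-t}\sigma}$ with $t=\tau+s$, then yields
\[\mu^\eps_\sigma(G)\geq e^{-(e+2\d)/\eps}\,\mu^\eps_{\beta_{-t}\sigma}(B_R).\]

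It remains to bound $\mu^\eps_{\beta_{-t}\sigma}(B_R)$ from below uniformly in small $\eps$. By $(H6)$ at the symbol $\beta_{-t}\sigma$, for $R$ large we have $\mu^\eps_{\beta_{-t}\sigma}(B_R^c)\leq e^{-1/\eps}\le 1/2$ for small $\eps$. This step carries the main technical subtlety: $R$ must be chosen before $\tau$, and hence before $t$, while $(H6)$ holds only almost surely and its threshold depends on the symbol. I would first handle this as in Lemma \ref{l.070101}. By Fubini, for $m$-a.e.\ $\sigma$, $\beta_{-t}\sigma$ lies in the $(H6)$-good set for Lebesgue-a.e.\ $t$. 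Then pick $R$ so that $\mathcal H_{R,1}$ has measure close to one, and use ergodicity/recurrence of the backward orbit to find arbitrarily large $\tau$ with $\beta_{-s-\tau}\sigma\in\mathcal H_{R,1}$. Pullback attraction holds for all large $\tau$, so such a $\tau$ is admissible. This gives $\liminf_{\eps\to0}\eps\ln\mu^\eps_\sigma(G)\ge -e-2\d$. Letting $\d\to0$ and taking the supremum over $u_*\in G$ finishes the proof. The other step needing care is the continuity in initial data of the controlled flow, which I take from the well-posedness of \eqref{e.080601}.
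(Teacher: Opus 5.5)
Your proof is correct and follows essentially the same route as the paper. Both arguments take a near-optimal control from the attractor point $a(\beta_{-s}\sigma)$, prepend a zero-control stretch using pullback attraction of $B_R$, apply the uniform trajectory LDP lower bound over $B_R$ together with stationarity, and use backward recurrence to a positive-measure set of symbols where $\mu^\eps(B_R)\geq 1/2$ for small $\eps$. The only cosmetic difference is that you obtain this set directly as $\mathcal{H}_{R,1}$, which is monotone in $R$ with full-measure union by $(H6)$, whereas the paper goes through the averaged statement of Lemma~\ref{l.050601} to get $m(\Sigma_R)>0$.
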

    We first extract a consequence of weak exponential tightness which will be used to choose good past environmental times.  The point is that the quenched exponential tail bound implies that, for sufficiently large $R$, the set of environmental symbols for which $\mu_\sigma^\varepsilon(B_R)$ carries a uniformly positive amount of mass has a positive \(m\)-measure.  By ergodicity, typical backward orbits visit this set infinitely often.
  \begin{lem}\label{l.050601}
    If
    \[\lim _{R \rightarrow \infty} \limsup _{\varepsilon \rightarrow 0} \varepsilon \log \mu_\sigma^{\varepsilon}\left(B_R^c\right)=-\infty \quad m \text {-a.s. }\]
    then one has 
    \[\lim _{R \rightarrow \infty} \int_{\Sigma} \limsup _{\varepsilon \rightarrow 0} \mu_\sigma^{\varepsilon}\left(B_R^c\right) m(d \sigma)=0.\]
  \end{lem}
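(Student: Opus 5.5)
The plan is to apply dominated convergence in $\sigma$, after a pointwise-in-$\sigma$ step showing that the weak exponential tightness hypothesis forces $\limsup_{\eps\to0}\mu_\sigma^\eps(B_R^c)$ to vanish for all large $R$.

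Set
\[
f_R(\sigma):=\limsup_{\eps\to0}\mu_\sigma^{\eps}(B_R^c)\in[0,1].
\]
Two properties of $f_R$ will be used. It is measurable in $\sigma$: the limsup can be taken along rational $\eps$, because $\eps\mapsto\mu^\eps_\sigma$ need not be continuous, so I will read the limsup as being over a countable set, or as an essential limsup. Note also that $\sigma\mapsto\mu^\eps_\sigma(B_R^c)$ is measurable since the stationary family is measurable. Second, $f_R$ is nonincreasing in $R$, since $B_R^c$ shrinks as $R$ grows.

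Now fix $\sigma$ in the full-measure set where the hypothesis holds. By assumption there exists $R_0(\sigma)$ such that for $R\ge R_0(\sigma)$,
\[
\limsup_{\eps\to0}\eps\log\mu_\sigma^\eps(B_R^c)\le -1<0.
\]
Hence for all sufficiently small $\eps$ we have $\mu_\sigma^\eps(B_R^c)\le e^{-1/(2\eps)}$, and letting $\eps\to0$ gives $f_R(\sigma)=0$ for all $R\ge R_0(\sigma)$. In particular $f_R(\sigma)\to0$ as $R\to\infty$ for $m$-a.e. $\sigma$.

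Finally, since $0\le f_R\le1$ and $m$ is a probability measure, dominated convergence (or monotone convergence applied to $1-f_R$) gives $\int_\Sigma f_R\,dm\to0$, which is the claim. I expect no real obstacle here. The only delicate point is the measurability of $f_R$, which I would settle by restricting $\eps$ to a countable sequence tending to $0$; the later use of the lemma only needs it along such sequences.
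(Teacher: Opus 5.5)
Your argument is correct and is essentially the paper's own proof: for $m$-a.e.\ $\sigma$ and $R\geq R_0(\sigma)$ the hypothesis gives $\mu_\sigma^\eps(B_R^c)\leq e^{-1/(2\eps)}$ for small $\eps$, so $\limsup_{\eps\to0}\mu_\sigma^\eps(B_R^c)=0$, and dominated convergence with the bound $1$ finishes.

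One caution on your measurability remark: do not replace the limsup by a limsup along a countable sequence. The paper simply takes measurability of $\sigma\mapsto\limsup_{\eps\to0}\mu_\sigma^\eps(B_R^c)$ for granted, and you should keep the full limsup and treat this measurability as an implicit standing assumption. A sequential limsup would prove a weaker statement than the one used later: the lower bound proof needs $\mu^\eps_{\beta_{-t_n}\sigma}(B_R^c)<1/2$ for all $\eps\in(0,\eps(n)]$, not only along a sequence.
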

  \begin{proof}
    Set 
    \begin{align*}
      \begin{gathered}
        p_{\varepsilon, R}(\sigma):=\mu_\sigma^{\varepsilon}\left(B_R^c\right), \quad  A_R(\sigma):=\underset{\varepsilon \rightarrow 0}{\lim \sup }\, p_{\varepsilon, R}(\sigma), \quad 
        L_R(\sigma):=\underset{\varepsilon \rightarrow 0}{\lim \sup }\, \varepsilon \log p_{\varepsilon, R}(\sigma) .
        \end{gathered}
    \end{align*}
    If $\lim _{R \rightarrow \infty} L_R(\sigma)=-\infty$, then there is $R_0(\sigma)$ such that for $R \geq R_0(\sigma)$, one has $L_R(\sigma)<-1$. Then there is $\varepsilon_0>0$, such that for $0<\varepsilon<\varepsilon_0$, one has $\varepsilon \log p_{\varepsilon, R}(\sigma)<-1/2$, 
    implying 
    \[p_{\varepsilon, R}(\sigma) \leq \exp \left(-\frac{1}{2 \varepsilon}\right).\]
    Hence, $\limsup _{\varepsilon \rightarrow 0} p_{\varepsilon, R}(\sigma)=0$, showing that $A_R(\sigma)=0$ for sufficiently large $R$. Therefore, as $R\rightarrow \infty$, we have $A_R(\sigma) \rightarrow 0$ almost surely. 
    Since $0 \leq A_R(\sigma) \leq 1$, by dominated convergence theorem, one has the desired. 
    \[\lim _{R \rightarrow \infty} \int_{\Sigma} A_R(\sigma) m(d \sigma)=0.\]
  \end{proof}
  \begin{proof}[Proof of Proposition \ref{p.050601}]
    It is sufficient to show that for any $u\in H$ and $\d,\d'>0$, there exists $\eps_0>0$ such that
  \begin{align*}
    \mu_{\sigma}^{\eps}(B_{\d}(u))
    \geq \exp\left(-(E_{\Ac(\sigma)}(u)+\d')/\eps\right),
    \qquad \forall \eps\in(0,\eps_0].
  \end{align*}
  By the definition of \eqref{e.073003}, there exist $s>0$, and a control $\varphi$ such that
  \begin{align}\label{e.050302}
    \Phi_{0,s,\beta_{-s}\sigma}^{\varphi}a(\b_{-s}\s)
    \in B_{\d/4}(u),  \text{ and } J_s^{\beta_{-s}\sigma}(\varphi)\leq E_{\Ac(\sigma)}(u)+\d'/3.
  \end{align}
  Since 
  \[\lim _{R \rightarrow \infty} \int_{\Sigma} \limsup _{\varepsilon \rightarrow 0} \mu_{\tau}^{\varepsilon}\left(B_R^c\right) m(d \tau)=0,\]
  by Lemma \ref{l.050601} and Assumption \ref{a.050201}, we can choose $R$ large such that 
  \[\Sigma_{R}:=\left\{\tau: \limsup _{\varepsilon \rightarrow 0} \mu_{\tau}^{\varepsilon}\left(B_R^c\right)<1/2\right\}\]
  satisfies $m(\Sigma_{R})>0$. Then by Birkhoff's ergodic theorem, there is a sequence $t_n\to\infty$ and $\varepsilon(n)>0$ such that 
  \begin{align}\label{e.050304}
    \mu_{\beta_{-t_n}\sigma}^{\eps}(B_R^c)<1/2, \quad \varepsilon\in(0,\varepsilon(n)]
  \end{align}
  By \eqref{e.050302} and the continuous dependence on the initial condition, there is $r>0$ such that $\Phi_{0,s,\beta_{-s}\sigma}^{\varphi}v
  \in B_{\d/2}(u)$ for any $v\in B_r(a(\b_{-s}\s))$. On the other hand, the uniform pullback attraction implies that 
  \begin{align}\label{e.050303}
    \sup _{\|v\| \leq R}\left\|\Phi_{0, t_n-s, \beta_{-t_n}\sigma} v-a\left(\beta_{-s} \sigma\right)\right\|<r
  \end{align}
  for sufficiently large $n$. Fix such an $n$ from now on. If we set $\widetilde{\varphi}=0$ on $[0,t_n-s]$ and $\widetilde{\varphi}(\cdot)=\varphi(\cdot+s-t_n)$ on $[t_n-s,t_n]$, then for any $v\in B_R(0)$, the corresponding controlled path $\Phi_{0,t,\b_{-t_n}\s}^{\widetilde{\varphi}}v, t\in [0,t_n]$ satisfies $\Phi_{0,t_n-s,\b_{-t_n}\s}^{\widetilde{\varphi}}v\in B_r\left(a\left(\beta_{-s} \sigma\right)\right)$ by \eqref{e.050303}, and therefore 
  \[\Phi_{0,t_n,\b_{-t_n}\s}^{\widetilde{\varphi}}v = \Phi_{t_n-s,t_n,\b_{-t_n}\s}^{\widetilde{\varphi}}\Phi_{0,t_n-s,\b_{-t_n}\s}^{\widetilde{\varphi}}v=\Phi_{0,s,\b_{-s}\s}^{\varphi}\Phi_{0,t_n-s,\b_{-t_n}\s}^{\widetilde{\varphi}}v\in B_{\d/2}(u).\]
  Note that 
  \[J_{t_n}^{\beta_{-t_n}\s}(\widetilde{\varphi})= J_s^{\beta_{-s}\sigma}(\varphi)\leq E_{\Ac(\sigma)}(u)+\d'/3.\]
  It follows from the trajectory LDP lower bound that 
  \begin{align*}
    \inf _{v \in B_R(0)} P_{0, t_n, \b_{-t_n}\s}^{\varepsilon}\left(v, B_\delta\left(u\right)\right)=\inf _{v \in B_R(0)} \mathbf{P}\left\{\Phi_{0, t_n, \beta_{-t_n}\sigma}^{\varepsilon} v \in B_\delta(u)\right\} \geq \exp \left(-\frac{E_{\mathcal{A}(\sigma)}\left(u\right)+2 \delta^{\prime} / 3}{\varepsilon}\right)
  \end{align*}
  for small $\varepsilon$. Consequently, by the invariance of the stationary measure, one has the desired 
  \begin{align*}
    \begin{aligned}
      \mu_\sigma^{\varepsilon}\left(B_\delta\left(u\right)\right) & =\mu_{\b_{-t_n}\s}^{\varepsilon} P_{0, t_n, \b_{-t_n}\s}^{\varepsilon}\left(B_\delta\left(u\right)\right) \\
      & \geq \mu_{\b_{-t_n}\s}^{\varepsilon}\left(B_R(0)\right) \inf _{v \in B_R(0)} P_{0, t_n, \b_{-t_n}\s}^{\varepsilon}\left(v, B_\delta\left(u\right)\right)\geq \exp \left(-\frac{E_{\mathcal{A}(\sigma)}\left(u\right)+\delta^{\prime} }{\varepsilon}\right)
      \end{aligned}
  \end{align*}
  by \eqref{e.050304} and taking $\varepsilon$ small so that $\mu_{\b_{-t_n}\s}^{\varepsilon}\left(B_R\right)>1/2>e^{-\delta^{\prime} /(3 \varepsilon)}$. 
  \end{proof}
  
  The preceding proof does not rely on the weak convergence of $\mu_\sigma^\varepsilon$ as $\varepsilon\to0$.  Nevertheless, in the random-point attractor case this convergence follows from the same ingredients. Indeed, the pullback attraction brings bounded deterministic trajectories close to $a(\sigma)$,
  which along with weak exponential tightness implies the weak convergence of the stationary measures to the deterministic stationary family.

  \begin{prop}
    Assuming $(H1),(H2),(H5),(H6)$ from Assumption \ref{a.050201} and $\mathcal{A}(\sigma)=\{a(\sigma)\}$, then for $m$-a.e. $\sigma\in\Sigma$, the family $\mu_{\sigma}^{\eps}$ converges weakly to 
    $\mu_\sigma=\delta_{a(\sigma)}$
    as $\eps\to0$.
  \end{prop}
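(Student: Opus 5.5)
Since weak convergence on a Polish space is equivalent to $\mu^\eps_\sigma(B_\d(a(\sigma))^c)\to0$ for every $\d>0$ (the limit being a Dirac mass), it suffices to show that for $m$-a.e. $\sigma$ and each fixed $\d>0$,
\[
\limsup_{\eps\to0}\mu_\sigma^\eps\bigl(B_\d(a(\sigma))^c\bigr)=0 .
\]
A natural shortcut is the upper bound of Theorem \ref{t.070201}. However, it needs $(H3)$ and $(H4)$, which are not assumed here. Moreover, the closed set $B_\d(a(\sigma))^c$ could in principle contain zero-energy points. I will therefore argue directly, in the spirit of Step 2–Step 3 of that proof, using stationarity at a remote past time together with a single escaping block.

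First, fix a small $\g>0$. By Lemma \ref{l.050601}, I choose $R$ so large that
\[
\Sigma_R:=\{\tau:\limsup_{\eps\to0}\mu^\eps_\tau(B_R^c)<\g\}
\]
has positive measure. By pullback attraction $(H1)$ of bounded sets to the single point, there is $T_0(\sigma)$ with $\sup_{\|v\|\le R+1}\|\Phi_{0,t,\beta_{-t}\sigma}v-a(\sigma)\|<\d/4$ for all $t\ge T_0$. By ergodicity (Birkhoff along a rationally chosen ergodic time-$T$ map, Remark \ref{r.042601}), the backward orbit visits $\Sigma_R$ at times $t_n\to\infty$, and I fix one $t=t_n\ge T_0$. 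Stationarity then gives
\[
\mu^\eps_\sigma(B_\d(a(\sigma))^c)\le \mu^\eps_{\beta_{-t}\sigma}(B_R^c)+\sup_{\|v\|\le R}\mathbf P\{\Phi^\eps_{0,t,\beta_{-t}\sigma}v\notin B_\d(a(\sigma))\}.
\]
The first term is at most $\g$ for small $\eps$.

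For the second term, I apply $(H5)$ with base point $\sigma$ and horizon $t$. Any controlled path starting in $B_R$ whose endpoint lies outside $B_{\d/2}(a(\sigma))$ satisfies $\|u(t)-\Phi_{0,t,\beta_{-t}\sigma}u(0)\|\ge\d/4$. Hence its energy is at least $c:=\d^2/(16C_{R,t}(\sigma))>0$. Consequently, any path $\xi$ starting at $v$ with $I^{\beta_{-t}\sigma}_{v,t}(\xi)\le c/2$ ends in $B_{\d/2}(a(\sigma))$. So the event $\{\Phi^\eps v\notin B_\d(a)\}$ is contained in the complement of the $\d/2$-neighbourhood of the level set $\{I\le c/2\}$. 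The uniform trajectory LDP upper bound $(H2)$, Definition \ref{d.070201}(2), then bounds the probability by $e^{-c/(4\eps)}$ uniformly in $\|v\|\le R$. Letting $\eps\to0$ and then $\g\to0$ gives the claim.

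The main obstacle is the measure-theoretic bookkeeping. The choice of $R$ must be independent of $\d$ and $\g$ (only through $\Sigma_R$), and the exceptional null set must be independent of $\d$, $\g$ and $t$. I will handle this by taking countable intersections over rational $\d,\g$ and integer $R$, since $C_{R,t}(\sigma)$ is finite $m$-a.s. for each of countably many $(R,t)$ with $t$ in a countable lattice $T\mathbb N$. This lattice is available because the visits to $\Sigma_R$ can be taken along $\beta_{-kT}$.
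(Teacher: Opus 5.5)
Your proposal is correct and follows essentially the same route as the paper: stationarity at a remote past time $t$ with $\beta_{-t}\sigma$ in a positive-measure tightness set, pullback attraction to $a(\sigma)$, the (H5) escaping-energy bound placing the bad event outside a neighbourhood of a small action level set, and the uniform trajectory LDP upper bound. The differences are minor. You test against complements of balls around the Dirac limit instead of bounded Lipschitz functions. You also restrict $t$ to a lattice $T\mathbb{N}$ with $\beta_T$ ergodic, which makes explicit the null-set bookkeeping for the $t$-dependent exceptional sets in (H5) that the paper leaves implicit.
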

  \begin{proof}
     It is enough to prove convergence against bounded Lipschitz functions. Let $\psi\in\Lip_b(H)$ and $\gamma>0$. Set $C_\psi:=1+2\|\psi\|_\infty+\Lip(\psi)$ and $\rho_\gamma:=\frac{\gamma}{16C_\psi}.$ By the weak exponential tightness \eqref{e.072703}, choose $R>0$ such that
    \[
      \Sigma_{R,\gamma}:=\left\{\tau\in\Sigma:\limsup_{\eps\to0}\mu_\tau^\eps(B_R^c)<\rho_\gamma\right\}
    \]
    has positive $m$-measure. By the ergodicity of $\beta_t$, for $m$-a.e. $\sigma\in\Sigma$, there exists a sequence $t_n\to\infty$ such that $\beta_{-t_n}\sigma\in\Sigma_{R,\gamma}$. Fix such a $\sigma$. Since $\Ac(\sigma)=\{a(\sigma)\}$ is the pullback attractor, we have
    \[
      \lim_{t\to\infty}\sup_{u\in B_R}\left\|\Phi_{0,t,\beta_{-t}\sigma}u-a(\sigma)\right\|=0.
    \]
    Hence, for some sufficiently large $n$, with $t:=t_n$ and $\tau:=\beta_{-t}\sigma$, we have $\tau\in\Sigma_{R,\gamma}$ and
    \begin{align}\label{e.070102}
      \sup_{u\in B_R}\left\|\Phi_{0,t,\tau}u-a(\sigma)\right\|<\rho_\gamma .
    \end{align}
    Since $\tau\in\Sigma_{R,\gamma}$, there exists $\eps_1>0$ such that
    \begin{align}\label{e.070103}
      \mu_\tau^\eps(B_R^c)<\rho_\gamma,\qquad 0<\eps\leq\eps_1 .
    \end{align}

    We next derive the terminal time convergence of the noisy trajectories. Set $M_\gamma:=\frac{\rho_\gamma^2}{4C_{R,t}(\sigma)}.$ By \eqref{e.070304}, if $u_0\in B_R$ and $I_{u_0,t}^{\tau}(u)\leq M_\gamma$, then
    \[\left\|u(t)-\Phi_{0,t,\tau}u_0\right\|\leq\frac{\rho_\gamma}{2}.\]
    Therefore, for every $u_0\in B_R$,
    \[\left\{w\in C([0,t];H):\left\|w(t)-\Phi_{0,t,\tau}u_0\right\|>\rho_\gamma\right\}\subset C([0,t];H)\setminus\Nc_{\rho_\gamma/2}\left(I_{u_0,t}^{\tau},M_\gamma\right).\]
    Applying the upper bound part of the trajectory uniform LDP on $C([0,t];H)$, with time symbol $\tau$, radius $\rho_\gamma/2$, and level $M_\gamma$, we get
    \begin{align}\label{e.070104}
      \sup_{u_0\in B_R}\mathbf P\left\{\left\|\Phi_{0,t,\tau}^\eps u_0-\Phi_{0,t,\tau}u_0\right\|>\rho_\gamma\right\}\leq \exp\left(-\frac{M_\gamma}{2\eps}\right)<\rho_\gamma
    \end{align}
    for all sufficiently small $\eps>0$.

    By the invariance of $\mu_\sigma^\eps$,
    \[
      \int_H\psi(u)\mu_\sigma^\eps(du)=\int_H P_{0,t,\tau}^\eps\psi(u)\mu_\tau^\eps(du).
    \]
    Hence, using $\mu_\sigma=\delta_{a(\sigma)}$, \eqref{e.070102}, \eqref{e.070103}, and \eqref{e.070104}, we obtain
    \begin{align*}
      &\left|\int_H\psi(u)\mu_\sigma^\eps(du)-\int_H\psi(u)\mu_\sigma(du)\right|\\
      &\leq 2\|\psi\|_\infty\mu_\tau^\eps(B_R^c)+\int_{B_R}\mathbf E\left|\psi\left(\Phi_{0,t,\tau}^\eps u\right)-\psi(a(\sigma))\right|\mu_\tau^\eps(du)\\
      &\leq C_\psi\mu_\tau^\eps(B_R^c)+C_\psi\rho_\gamma+C_\psi\sup_{v\in B_R}\mathbf P\left\{\left\|\Phi_{0,t,\tau}^\eps v-\Phi_{0,t,\tau}v\right\|>\rho_\gamma\right\}\leq 4C_\psi\rho_\gamma<\gamma,
    \end{align*}
    for all sufficiently small $\eps>0$. Since $\gamma>0$ is arbitrary, it follows that
    \[\lim_{\eps\to0}\int_H\psi(u)\mu_\sigma^\eps(du)=\psi(a(\sigma))=\int_H\psi(u)\mu_\sigma(du),\]
    implying that $\mu_\sigma^\eps\Rightarrow\mu_\sigma$ for $m$-a.e. $\sigma\in\Sigma$.
  \end{proof}

  We finally give a practical criterion for verifying the abstract condition \eqref{e.072701}.  It shows that the tracking property follows from stability of controlled trajectories with respect to nearby initial data, together with a comparable control cost.  Such an estimate is often the natural energy estimate available for dissipative PDEs.
  
  \begin{lem}\label{l.050602}
       Suppose for any $M>0$, there is a finite measurable $C_M:\Sigma\to \mathbb R_+$ such that for any $t>0$, any controlled path $u
       =\Phi^{\varphi_u}_{0,\cdot, \xi}u_0
       $ satisfying 
       \[
      {\frac 12} \int_0^t
      \|\varphi_u (s)\|^2 ds  \leq M, \quad d(u_0, \mathcal{A}(\xi))<1,\]
      and any $v_0 \in \mathcal{A}(\xi)$, there is a controlled path $v
      =\Phi^{\varphi_v}_{0,\cdot, \xi}v_0
      $ such that 
      \begin{align}\label{e.050605}
        \sup _{0 \leq s \leq t}\|u(s)-v(s)\|^2 \leq C_M(\xi)\left\|u_0-v_0\right\|^2
      \end{align}
      and 
      \begin{align}\label{e.050606}
        \int_0^t\left\|\varphi_u(s)-\varphi_v(s)\right\|^2 d s \leq C_M(\xi)\left\|u_0-v_0\right\|^2. 
      \end{align}
      Then \eqref{e.072701} is true.
  \end{lem}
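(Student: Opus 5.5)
Given $\delta,\delta',M>0$, the plan is to choose $\eta(\sigma)$ so small that a perturbation of size $\eta$ at the start, paid for by the stability estimates \eqref{e.050605}--\eqref{e.050606}, moves both the endpoint and the action by less than $\delta$ and $\delta'$ respectively. The comparison path will start exactly on the pullback attractor, so its endpoint lies in a low-action set of the quasipotential. The difficulty is that the constant $C_M(\xi)$ is evaluated at the remote-past symbol $\xi=\beta_{-t}\sigma$, while $\eta$ is evaluated there as well. This matches the form of \eqref{e.072701}, where the radius is $\eta(\beta_{-t}\sigma)$. So I would define, for a constant $M'$ fixed below,
\[
\eta(\xi):=\min\Big\{\tfrac{\delta}{2},\,\tfrac12,\,\tfrac{\delta}{2\sqrt{C_{M'}(\xi)+1}},\,\eta_1(\xi)\Big\},
\]
where $\eta_1(\xi)$ is chosen to make the cost difference at most $\delta'/2$. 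This function is measurable and positive wherever $C_{M'}$ is finite, which holds $m$-a.s.

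The steps are as follows. Fix $t>0$ and a path $u$ with $u(0)\in\Ac_{\eta(\xi)}(\xi)$, $\xi=\beta_{-t}\sigma$, and $I^{\xi}_{u(0),t}(u)\le M-\delta'$. Choose an admissible control $\varphi_u$ with $J(\varphi_u)\le M-\delta'/2\le M$. By compactness of $\Ac(\xi)$, pick $v_0\in\Ac(\xi)$ with $\|u_0-v_0\|<\eta(\xi)\le 1/2$, so that $d(u_0,\Ac(\xi))<1$. Apply the hypothesis with $M'=M$ to get a path $v$ with control $\varphi_v$. Then \eqref{e.050605} gives $\|u(t)-v(t)\|\le \sqrt{C_M(\xi)}\,\eta(\xi)<\delta/2$.

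For the cost, I would use $\|\varphi_v\|_{L^2}\le\|\varphi_u\|_{L^2}+\sqrt{C_M(\xi)}\,\eta$. This yields
\[
\tfrac12\|\varphi_v\|^2\le J(\varphi_u)+\sqrt{2M}\sqrt{C_M}\,\eta+\tfrac12 C_M\eta^2,
\]
which is at most $M-\delta'/2+\delta'/2=M$ once $\eta_1(\xi)$ is chosen accordingly, for instance $\eta_1=\delta'/(4(\sqrt{2M}+1)\sqrt{C_M+1})\wedge 1$. Since $v(0)\in\Ac(\beta_{-t}\sigma)$ and $I^{\beta_{-t}\sigma}_{v_0,t}(v)\le M$, the definition \eqref{e.073003} (taking $s=t$) gives $E_{\Ac(\sigma)}(v(t))\le M$, so $v(t)\in K_M(\sigma)$. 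Hence $u(t)\in \Nc_{\delta}(K_M(\sigma))$. The statement in \eqref{e.072701} uses level $M-\delta'$ with the given $\delta'$, so this margin suffices. If the definition were read with a strict inequality, I would replace $M$ by $M$ minus a small slack.

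The main obstacle I expect is the measurability and almost-sure issue: the radius must be a fixed function $\eta(\cdot)$ on $\Sigma$ satisfying $\eta<\delta$, valid simultaneously for all $t>0$. This is handled by defining $\eta$ through $C_M(\cdot)$ alone, with no dependence on $t$. A second subtlety is the claim $E_{\Ac(\sigma)}(v(t))\le M$. It follows since, for every $\eta'>0$, the path $v$ is admissible in the infimum with $v(t)\in B_{\eta'}(v(t))$. A third is that $u(t)$ might fail to be in $K_M$ itself, but only $\delta$-closeness is required, and that is what we obtain.
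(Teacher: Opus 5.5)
Your proposal is correct and follows the paper's argument. You choose $\eta(\xi)$ from $C_M(\xi)$ alone, compare with the controlled path started at a point of $\mathcal{A}(\xi)$ within $\eta(\xi)$ of $u(0)$, and plug that path with $s=t$ into the definition of $E_{\mathcal{A}(\sigma)}$ to get $v(t)\in K_M(\sigma)$. The differences from the paper are cosmetic: the paper applies the hypothesis at level $M-\tfrac34\delta'$ and bounds the cost via $(x+y)^2\le a x^2+\tfrac{a}{a-1}y^2$, whereas you use level $M$ and Minkowski's inequality in $L^2$. Your explicit $\eta_1$ works because the case $\delta'>M$ is vacuous.
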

  \begin{proof}
    Fix any $\delta, \delta^{\prime}, M>0$
    such that $M-\delta^{\prime} \geq 0$.
    Then $M^{\prime}:= M- {\frac 34} \delta^{\prime} >0$
    and by assumptions we find that
    there 
     is a finite measurable $C_{M^{\prime}}:\Sigma\to \mathbb R_+$ such that for any $t>0$, any controlled path $u
       =\Phi^{\varphi_u}_{0,\cdot, \xi}u_0
       $ with
       \[
      {\frac 12} \int_0^t
      \|\varphi_u (s)\|^2 ds  \leq M^{\prime}, \quad d(u_0, \mathcal{A}(\xi))<1,\]
      and any $v_0 \in \mathcal{A}(\xi)$, there is a controlled path $v
      =\Phi^{\varphi_v}_{0,\cdot, \xi}v_0
      $ such that 
      \begin{align}\label{jul10a1}
        \sup _{0 \leq s \leq t}\|u(s)-v(s)\|^2 \leq C_
        {M^{\prime}}(\xi)\left\|u_0-v_0\right\|^2
      \end{align}
      and 
      \begin{align}\label{jul10a2}
        \int_0^t\left\|\varphi_u(s)-\varphi_v(s)\right\|^2 d s 
        \leq C_
        {M^\prime} (\xi)\left\|u_0-v_0\right\|^2. 
      \end{align}

    Choose $a>1$ sufficiently close to $1$ such that $a\left(M-
    {\frac 34}
    \delta^{\prime}\right) \leq M-\frac{\delta^{\prime}}{2}$. Then take 
    \begin{align}\label{jul10a3}
    \eta(\xi)<\min \left\{1, \delta, \frac{\delta}{\sqrt{C_
    {M^\prime}
    (\xi)}},\left(\frac{(a-1) \delta^{\prime}}{2 a C_
    {M^\prime}
    (\xi)}\right)^{1 / 2}\right\}.
    \end{align}
    We now  prove 
\eqref{e.072701} is valid for $\eta$ given by \eqref{jul10a3},
for which we first
show that if $u\in C([0,t], H)$
with $t>0$,
$u(0) \in \mathcal{A}_{\eta(\xi)}(\xi)$ and 
$I^\xi_{u(0),t} (u) \leq M-\delta^\prime$, then
  $u(t) \in \mathcal{N}_\delta\left(K_M\left(\beta_t \xi\right)\right)$.

 For  $I^\xi_{u(0),t} (u) \leq M-\delta^\prime$, we infer that
 there exists $\varphi_u$ such that
 $u=\Phi^{\varphi_u}_{0,\cdot, \xi} u(0)$ with
 ${\frac 12} \int_0^t \|\varphi_u (s)\|^2 ds 
 <   M-{\frac 34}
 \delta^\prime = M^\prime$.
 On the other hand,
  for  $u(0) \in \mathcal{A}_{\eta(\xi)}(\xi)$,  
    there exists $v_0 \in \mathcal{A}(\xi)$ such that
    $\left\|u(0)-v_0\right\|<\eta(\xi)$, 
    and thus by \eqref{jul10a1}-\eqref{jul10a2} we 
    find that there exists
    $v=\Phi_{0,\cdot,\xi}^{\varphi_v} v_0$ such that
    \[\|u(t)-v(t)\| \leq \sqrt{C_
    {M^\prime} (\xi)} \eta(\xi)<\delta.\]
    In addition, by \eqref{jul10a2}-\eqref{jul10a3},  
    \begin{align*}
      \begin{aligned}
        I_{v(0),t}^{\xi}(v)  \leq \frac{1}{2} \int_0^t\left\|\varphi_v\right\|^2 d s 
        & \leq 
        {\frac 12}
        a
        \int_0^t
        \|\varphi_u\|^2 ds
        +\frac{a}{2(a-1)} \int_0^t\left\|\varphi_v-\varphi_u\right\|^2 d s \\
        & \leq a\left( M-
        {\frac 34}
        \delta^{\prime} \right)
        +\frac{a}{2(a-1)}\eta(\xi)^2  C
        _{M^\prime}  (\xi) \\ 
        & < M-{\frac 12} \delta^\prime
        +{\frac 14}\delta^\prime < M, 
        \end{aligned}
    \end{align*}  
    implying that $v(t) \in K_M\left(\beta_t \xi\right)$. Therefore, $u(t) \in \mathcal{N}_\delta\left(K_M\left(\beta_t \xi\right)\right)$. In particular, for $\xi=\beta_{-t} \sigma$, we have $u(t) \in \mathcal{N}_\delta\left(K_M(\sigma)\right)$ as desired. 
  \end{proof}

\section{Applications to Navier--Stokes equations}\label{s.060701}

Let $(\Sigma,m)$ be a standard Borel probability space and $(\beta_t)_{t\in\mathbb R}$ be an invertible ergodic measure-preserving dynamical system on $\Sigma$.  Fix $ \mathbf{d}\in\mathbb N$ and let
\[
   W(t)=(W_1(t),\ldots,W_{ \mathbf{d}}(t))
\]
be a two-sided standard Brownian motion in $\Rb^{ \mathbf{d}}$.  Consider a family of Navier--Stokes equations indexed by $\s\in\Sigma$, $\varepsilon\in(0,1]$, 
\begin{align*}
  \partial_t u+u\cdot\nabla u = \nu\Delta u +\nabla p + f(\beta_t\s,x)+\sqrt{\eps}\sum_{k=1}^{ \mathbf{d}}q_k(\beta_t\s,x)\dot W_k(t), \quad \nabla\cdot u =0, 
  \, u|_{\partial D}=0, 
\end{align*}
on a bounded domain $D$ with smooth boundary. Applying the Leray projection $\Pi$ reduces the equation to an evolution equation 
\begin{align}\label{e.072601}
  \partial_t u+B(u,u) + \nu A u  =  F(\beta_t\s)+\sqrt{\eps}Q(\beta_t\s)\dot W_t, \quad \nabla\cdot u =0, 
  \, u|_{\partial D}=0, 
\end{align}
where $B(u,u)=\Pi(u\cdot\nabla u), \, A = -\Pi\Delta, F=\Pi f$ and $Q=\Pi q$. We consider the phase space as 
\[H : = \{u\in L^2(D;\mathbb R^2): \nabla\cdot u=0 \text{ in } D, \, \langle u, \mathbf{n}\rangle = 0 \text{ on } \partial D\},\]
with norm $\|\cdot\|$, where $\mathbf{n}$ is the outward unit normal to $\partial D$. Let $H^m$ denote the divergence-free subspace of the usual Sobolev space of order $m$, with norm $\|\cdot\|_m$. Under conditions on the external forces that will be given below, the above equation is well-posed in $H$. The corresponding solution flow is denoted by $\Phi_{0,t,\s}^{\eps}$. For $\eps=0$, the deterministic solution flow $\Phi_{0,t,\s}$ is a cocycle over $\Sigma$, 
\begin{align*}
  \Phi_{0,s+t,\s}= \Phi_{0,t,\b_s\s}\circ\Phi_{0,s,\s}, \quad \s\in \Sigma. 
\end{align*}

Assume that $\{e_m\}$ is an orthonormal basis of $H$ consisting of eigenfunctions of the Stokes operator $A$ with eigenvalues $\{\lambda_m\}$ and let $P_N$ be the orthogonal projection onto the span of the first $N$ eigenfunctions.  The noise is finite dimensional: for each $\sigma\in\Sigma$ let
\[q_1(\sigma),\ldots,q_{ \mathbf{d}}(\sigma)\in H^1,\qquad Q(\sigma)a:=\sum_{k=1}^{ \mathbf{d}}a_kq_k(\sigma),\quad a=(a_1,\ldots,a_{ \mathbf{d}})\in\Rb^{ \mathbf{d}}.\]
Thus $Q(\sigma)\in\mathcal L(\Rb^{ \mathbf{d}},H)$ and
\[\|Q(\sigma)\|_{\mathcal L_2(\Rb^{ \mathbf{d}},H^1)}^2=\sum_{k=1}^{ \mathbf{d}}\|q_k(\sigma)\|_1^2.\]
Here $F:\Sigma\to H$ is a deterministic force in the random environment $\Sigma$.  

\begin{thm}\label{t.050601}
  Assume
  \[
     \|F\|^2\in L^1(\Sigma,m),\qquad
     \|Q\|_{\mathcal L_2(\Rb^{ \mathbf{d}},H^1)}^2\in L^1(\Sigma,m),\qquad
     Q_\infty:=\operatorname*{sup}_{\sigma\in\Sigma}
     \|Q(\sigma)\|_{\mathcal L(\Rb^{ \mathbf{d}},H)}<\infty .
  \]
  Suppose that there exist an integer
  $N_0\geq 1$ and a measurable family of linear maps
  $R_0(\sigma):P_{N_0}H\to \Rb^{ \mathbf{d}}$ such that
  \[
      Q(\sigma)R_0(\sigma)P_{N_0}=P_{N_0},\qquad
      \|R_0\|_{\mathcal L(P_{N_0}H,\Rb^{ \mathbf{d}})}^2\in L^1(\Sigma,m),
  \]
  and $N_0$ is chosen large enough to satisfy the determining mode gap
  \begin{align}\label{e.060417}
    \nu\lambda_{N_0+1}>
    C_*\int_\Sigma \left(\|F(\sigma)\|^2+\|Q(\sigma)\|_{\mathcal L_2(\Rb^{ \mathbf{d}},H)}^2\right)\,m(d\sigma),
  \end{align}
  where $C_*>0$ 
  is an embedding constant 
  depending  only on $\nu$, $\lambda_1$, and the domain $D$.

  Then the Navier--Stokes system \eqref{e.072601} has a unique stationary measure $\mu_{\s}^{\varepsilon}$ that attracts transition probabilities exponentially under the Wasserstein metric $\mathbb{W}_{\|\cdot\|\wedge 1}$. Moreover, for $m$-a.e. $\s\in \Sigma$, the family $\{\mu_{\s}^{\varepsilon}\}_{\varepsilon\in(0,1]}$ obeys an upper bound Freidlin--Wentzell LDP with a good rate function, which also gives a lower bound, and hence the full Freidlin--Wentzell LDP, in the case when the random attractor of the limiting system consists of a single random point. 
\end{thm}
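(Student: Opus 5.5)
The plan is to deduce the theorem from the two abstract results: Theorem~\ref{t.042501} for ergodicity and mixing, and Theorem~\ref{t.080601} (through Theorem~\ref{t.070201} and Proposition~\ref{p.050601}) for the Freidlin--Wentzell bounds. Everything reduces to a priori estimates for \eqref{e.072601} and for its controlled version.

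\textbf{Mixing.} I will apply Theorem~\ref{t.042501} with $E=H$, $p(u,v)=\|u-v\|^2$ (so $\varrho=1/2$), and $V(u)=U(u)=\|u\|^2$ (or $\varepsilon^{-1}\|u\|^2$ to make the constants $\varepsilon$-free), and $S(u)=\|u\|_1^2$.

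Condition $(H1)$ follows from It\^o's formula and Poincar\'e's inequality: one gets $d\|u\|^2+\nu\|u\|_1^2dt\le (C\|F(\beta_t\sigma)\|^2+\varepsilon\|Q(\beta_t\sigma)\|_{\mathcal L_2}^2)dt+dM_t$. Then $K=C\|F\|^2+\|Q\|^2_{\mathcal L_2}\in L^1$, and $d\langle M\rangle_t\le 4Q_\infty^2\|u\|^2dt\le b_1 S\,dt$. The same energy identity gives $(H3)$ with $\varpi=\nu$ and $b=K$.

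For the coupling I use the Foia\c{s}--Prodi type nudged process
\[
dY=\big(-\nu AY-B(Y,Y)+F\big)dt+\sqrt\varepsilon Q\,dW+\lambda P_{N_0}(X-Y)\,dt,
\]
with $Y_0=y$ and $\lambda\sim\nu\lambda_{N_0+1}$. The difference $w=X-Y$ satisfies
\[
\tfrac{d}{dt}\|w\|^2+\nu\|w\|_1^2+2\lambda\|P_{N_0}w\|^2\le C\nu^{-1}\|X\|_1^2\|w\|^2 ,
\]
using the 2D Ladyzhenskaya estimate $|b(w,X,w)|\le C\|w\|\|w\|_1\|X\|_1$. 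Splitting high and low modes gives $(H2)$ with $\zeta\approx\nu\lambda_{N_0+1}$ and $\kappa=C/\nu$. The gap condition \eqref{e.060417} is exactly $\zeta>\frac{\kappa}{\varpi}\int b\,dm$, which is why $C_*$ depends only on $\nu,\lambda_1,D$.

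For $(H4)$ I apply Girsanov with drift $h_t=\varepsilon^{-1/2}\lambda R_0(\beta_t\sigma)P_{N_0}(X_t-Y_t)$. By Pinsker, or the Hellinger/exponential bound, the total variation defect is controlled by $\mathbf E(\int_0^t\|h_s\|^2ds)^\delta$. This has the required form with $b_3=C\lambda^2\varepsilon^{-1}\|R_0\|^2\in L^1$, and the bound $1-\bar\varepsilon_\delta$ comes from the standard Hellinger estimate for Girsanov densities. Theorem~\ref{t.042501} then gives the unique stationary family and exponential attraction in $\W_{\|\cdot\|\wedge1}$.

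\textbf{LDP.} I verify Assumption~\ref{a.050201} for the deterministic and controlled equations.
\begin{itemize}[label={}]
\item $(H1)$: the standard pullback attractor theory for 2D Navier--Stokes with $\|F\|^2\in L^1$ gives pullback absorbing balls and tempered radii via Birkhoff, and $H^1$-regularity gives compactness.
\item $(H2)$: uniform trajectory LDP by the Budhiraja--Dupuis weak convergence method, using the finite-dimensional noise and $Q_\infty<\infty$.
\item $(H5)$: the difference between controlled and uncontrolled solutions satisfies $\tfrac{d}{dt}\|w\|^2\le C\|\Phi\|_1^2\|w\|^2+2Q_\infty\|w\|\|\varphi\|$. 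Gronwall and Cauchy--Schwarz over $[0,T]$ give $C_{R,T}(\sigma)$, which is finite because $\int\|\Phi\|_1^2$ is bounded on $B_R$ in terms of $\int_{-T}^0\|F(\beta_s\sigma)\|^2ds$.
\item $(H6)$: an exponential moment bound $\int e^{c\|u\|^2/\varepsilon}\,d\mu^\varepsilon_\sigma\le e^{C(\sigma)/\varepsilon}$, obtained from It\^o applied to $e^{c\|u\|^2/\varepsilon}$ together with the pullback stationarity representation and Chebyshev.
\item $(H3)$: level sets of $E_{\Ac(\sigma)}$ are bounded in $H^1$, by parabolic smoothing along controlled paths of bounded energy started from the bounded set $\Ac$, hence compact in $H$. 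Lower semicontinuity follows from weak compactness of controls.
\item $(H4)$: via Lemma~\ref{l.050602}. Given $u=\Phi^{\varphi_u}u_0$ and $v_0\in\Ac$, I set $\varphi_v=\varphi_u+R_0\,\lambda P_{N_0}(u-v)$, so that $v$ is the nudged controlled path. The same Foia\c{s}--Prodi estimate as in $(H2)$ gives \eqref{e.050605} and \eqref{e.050606}.
\end{itemize}
With these in hand, Theorem~\ref{t.080601} applies, and Proposition~\ref{p.050601} gives the lower bound in the single-point case.

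\textbf{Main obstacle.} I expect the hardest step to be making \eqref{e.050605}--\eqref{e.050606} hold uniformly in $t>0$. The factor $\exp(-\zeta t+\kappa\int_0^t\|u\|_1^2)$ must stay bounded for controlled paths of energy $\le M$ started near $\Ac(\xi)$. I will control it by the energy inequality: $\int_0^t\|u\|_1^2\le C+\int_0^t K(\beta_s\xi)ds+CM$. The Birkhoff sum is then handled by the gap \eqref{e.060417} and a tempered random constant $C_M(\xi)=\sup_t\exp\big(\kappa\nu^{-1}\int_0^tK(\beta_s\xi)\,ds-\zeta' t\big)$, which is finite almost surely by the ergodic theorem. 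The $R_0$ factor in \eqref{e.050606} additionally requires $\int_0^t\|R_0(\beta_s\xi)\|^2e^{-\zeta's}ds$ to be finite, and this holds by the same argument.
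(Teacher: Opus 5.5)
Your plan follows the paper's proof nearly step by step. You use the same choice $U=V=\|u\|^2$, $p=\|u-v\|^2$, $S=\|u\|_1^2$ in Theorem~\ref{t.042501}, the same nudged coupling and Girsanov shift through $R_0$, and the same feedback control $\varphi_v=\varphi_u+\lambda R_0P_{N_0}(u-v)$ with a Birkhoff-tempered constant for Lemma~\ref{l.050602}. The Gronwall bound for \eqref{e.070101} and the parabolic smoothing for the level sets are also as in the paper.

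\textbf{The gap is in weak exponential tightness $(H6)$.} Suppose you apply It\^o's formula to $Z=e^{c\|u\|^2/\varepsilon}$ with a fixed constant $c$, small enough to absorb the correction from $d\langle M\rangle_t\le C\varepsilon Q_\infty^2\|u\|^2dt$. Then the forcing enters the exponent undiscounted. Dropping the dissipative term gives $\mathbf E Z(t)\le Z(0)\exp\bigl(\tfrac{cC}{\varepsilon}\int_0^t\|F(\beta_s\sigma)\|^2ds+cCt\bigr)$. Over a pullback interval $[-t,0]$ this exponent grows like $t\int_\Sigma\|F\|^2dm/\varepsilon$, so nothing survives as $t\to\infty$.

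Keeping the term $-\tfrac{c\nu\lambda_1}{2\varepsilon}\|u\|^2Z$ and closing a linear inequality for $\mathbf EZ$ does not help either. It produces source terms of size $\exp(C\|F(\beta_s\sigma)\|^2/\varepsilon)$. These need not be integrable along orbits, and even when they are, $\varepsilon\log$ of their discounted integral blows up as $\varepsilon\to0$ unless $\|F\|$ is essentially bounded. The hypothesis $\|F\|^2\in L^1(\Sigma,m)$ does not give that.

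\textbf{The fix is the paper's time-dependent weight.} With $\alpha=\nu\lambda_1$, $\gamma=\alpha/2$ and $a$ small, set $a_T(t)=ae^{-\gamma(T-t)}$, so that $a_T'-\alpha a_T+CQ_\infty^2a_T^2\le 0$. Then the $\|u\|^2$-dependent drift of $\exp(a_T(t)\|u(t)\|^2/\varepsilon)$ is nonpositive. The forcing appears only through $\tfrac{Ca}{\varepsilon}\int_{-T}^0e^{\gamma r}\|F(\beta_r\sigma)\|^2dr\le L_F(\sigma)/\varepsilon$, which is finite $m$-a.s.; this is \eqref{e.070404}.

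With that estimate your route works and is slightly more direct than the paper's. Take $u_0=0$ and let $T\to\infty$, using the pullback convergence $\delta_0P_T^{\beta_{-T}\sigma}\to\mu_\sigma^\varepsilon$ from Theorem~\ref{t.042501} and lower semicontinuity of $u\mapsto e^{a\|u\|^2/\varepsilon}$. This gives $\int e^{a\|u\|^2/\varepsilon}d\mu_\sigma^\varepsilon\le e^{L_F(\sigma)/\varepsilon+C}$, and Chebyshev finishes. The paper instead pushes $\mu^\varepsilon_{\beta_{-T}\sigma}$ forward and controls its mass outside $\{e^{-\gamma T}\|u\|^2\le R^2/4\}$ by the second-moment bound $K_0$. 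It then takes $T=\lceil R^2/\varepsilon\rceil$ together with Borel--Cantelli for $K_0(\beta_{-n}\sigma)$.

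\textbf{Two further omissions.} First, Theorem~\ref{t.042501} applied at each fixed $\varepsilon$ gives an $\varepsilon$-dependent null set. However, Theorem~\ref{t.070201}, Proposition~\ref{p.050601} and your own $(H6)$ argument fix $\sigma$ and use invariance or pullback convergence at that $\sigma$ for a continuum of $\varepsilon\to0$. You therefore need a common full-measure set. The paper gets this in Remark~\ref{r.060401}: for $\varepsilon\in(1/j,1]$ all constants in (H1)--(H4) can be chosen uniformly (for instance $b_3\propto\varepsilon^{-1}\|R_0\|^2$ is at most $j$ times an $\varepsilon$-free integrable function), and one then intersects over $j$.

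Second, in the compactness step the fibres $\mathcal A(\beta_{-T}\sigma)$ are not uniformly bounded in $T$. The smoothing estimate must therefore start from the tempered pullback absorbing bound, as in \eqref{e.070213}, not from a fixed bounded set.
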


\subsection{Ergodicity and exponential mixing}
 
The main result of this subsection is the following 
\begin{thm}
  Under the assumptions of Theorem \ref{t.050601}, the Navier--Stokes system \eqref{e.072601} has a unique stationary measure $\mu_{\s}^{\varepsilon}$ that is exponentially mixing under the Wasserstein metric $\mathbb{W}_{\|\cdot\|\wedge 1}$.
\end{thm}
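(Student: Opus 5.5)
We will verify the hypotheses (H1)--(H4) of Theorem \ref{t.042501} for the Navier--Stokes cocycle and then apply it. We take $E=H$ and $\rho(u,v)=\|u-v\|$. We choose $p(u,v)=\|u-v\|^2$, so that \eqref{e.042604} holds with $\varrho=1/2$. For the Lyapunov functions we take $V(u)=\|u\|^2$ and $U(u)=\|u\|^2$, and we set $S(u)=\|u\|_1^2$. Fix $x,y\in H$ and $\varepsilon\in(0,1]$. Let $X_t^\sigma$ be the solution of \eqref{e.072601} starting from $x$. Let $Y_t^\sigma$ solve the same equation from $y$ with the additional feedback control
\[
\sqrt{\varepsilon}\,Q(\beta_t\sigma)R_0(\beta_t\sigma)\,\tfrac{K}{\sqrt{\varepsilon}}\,P_{N_0}(X_t^\sigma-Y_t^\sigma)\,dt,
\]
which is the Hairer--Mattingly/Butkovsky--Kulik--Scheutzow nudging. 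By the relation $QR_0P_{N_0}=P_{N_0}$, this control equals $K P_{N_0}(X-Y)\,dt$ in the low modes.

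For (H1), Itô's formula gives $d\|u\|^2+2\nu\|u\|_1^2\,dt=2\langle F,u\rangle\,dt+\varepsilon\|Q\|_{\mathcal L_2}^2\,dt+dM_t$. Combining this with Poincaré and Young yields $\mathbf E\|u_t\|^2+\gamma\int_0^t\mathbf E\|u_s\|^2\,ds\le\|x\|^2+\int_0^t K(\beta_s\sigma)\,ds$, where $K=C(\|F\|^2+\|Q\|_{\mathcal L_2}^2)\in L^1$. The same identity also gives (H3): take $\varpi=\nu$ and $b=C_*'(\|F\|^2+\|Q\|^2_{\mathcal L_2})$. The quadratic variation of $M_t$ is $4\varepsilon\sum_k\langle u,q_k\rangle^2\,dt\le 4Q_\infty^2\lambda_1^{-1}\|u\|_1^2\,dt$, so $b_1=4Q_\infty^2/\lambda_1$ and $b_2=0$. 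For (H2), the difference $w=X-Y$ satisfies
\[
\tfrac{d}{dt}\|w\|^2+2\nu\|w\|_1^2+2K\|P_{N_0}w\|^2=-2\langle B(w,X),w\rangle.
\]
Ladyzhenskaya's inequality gives $|\langle B(w,X),w\rangle|\le C\|w\|\|w\|_1\|X\|_1\le\nu\|w\|_1^2+C\nu^{-1}\|X\|_1^2\|w\|^2$. Choosing $K\ge\nu\lambda_{N_0+1}$, we get $\nu\|w\|_1^2+K\|P_{N_0}w\|^2\ge\nu\lambda_{N_0+1}\|w\|^2$. Hence $p(X_t,Y_t)\le p(x,y)\exp(-2\nu\lambda_{N_0+1}t+\kappa\int_0^tS(X_s)\,ds)$ with $\kappa=2C/\nu$. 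The condition $\zeta>\frac{\kappa}{\varpi}\int b\,dm$ then reduces exactly to the gap \eqref{e.060417}, after absorbing all constants into $C_*$. The factor $\varepsilon$ only improves these bounds, so the constants are uniform in $\varepsilon\in(0,1]$.

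For (H4), $\mathcal L(X)=\mathbf P_{\sigma,x}$ holds by construction. Since $Y$ solves the original equation driven by the shifted noise $\widetilde W_t=W_t+\int_0^t h_s\,ds$ with $h_s=\varepsilon^{-1/2}KR_0(\beta_s\sigma)P_{N_0}(X_s-Y_s)$, we can use Girsanov. Pinsker's inequality (or the $\delta$-moment variant of \cite{BKS2020}) gives
\[
d_{TV}(\mathcal L(Y_t),P_t^\sigma(y,\cdot))\le C_\delta\bigl(\mathbf E(\textstyle\int_0^t|h_s|^2\,ds)^\delta\bigr)^{1/2},
\]
together with the companion bound $1-\bar\varepsilon_\delta(\cdot)$ derived from the Hellinger/Girsanov density estimate. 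This fits the form required by (H4) with $b_3=\varepsilon^{-1}K^2\|R_0\|^2\in L^1$, which is where the square integrability of $R_0$ is used. Here $b_3$ does depend on $\varepsilon$. That is harmless for mixing at fixed $\varepsilon$, since (H4) only requires $b_3\in L^1$. Theorem \ref{t.042501} then yields a unique stationary family and exponential pullback and forward mixing in $\W_{\|\cdot\|\wedge1}$.

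The main obstacle is to make (H2) and (H3) hold pathwise with constants depending only on $\nu,\lambda_1,D$, and to check that the resulting inequality $\zeta>\frac\kappa\varpi\int b$ is implied by \eqref{e.060417}. This needs careful bookkeeping of the Ladyzhenskaya constant and of the Itô correction, which involves $\|Q\|_{\mathcal L_2(\mathbb R^{\mathbf d},H)}$ rather than its $H^1$ norm. A secondary technical point is global well-posedness and progressive measurability of the controlled process $Y$, since its drift contains a random, possibly unbounded coefficient $R_0(\beta_t\sigma)$. We would handle this by noting that the control acts in a finite-dimensional space with locally square-integrable coefficient, because $\|R_0(\beta_\cdot\sigma)\|^2\in L^1_{loc}$ for $m$-a.e. $\sigma$ by Birkhoff's theorem. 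A standard fixed-point argument on the Galerkin/energy estimates then applies.
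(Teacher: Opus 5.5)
Your proposal is correct and is essentially the paper's own proof. It makes the same choices $U=V=\|u\|^2$, $p(u,v)=\|u-v\|^2$, $S=\|u\|_1^2$, uses the same low-mode nudging (the paper takes gain $K=\nu\lambda_{N_0+1}/2$), follows the same It\^o/Poincar\'e bookkeeping to get (H1) and (H3) with $b_1=4Q_\infty^2/\lambda_1$, $b_2=0$, and obtains (H4) from Girsanov via \cite[Theorem A.5]{BKS2020} with an $\varepsilon$-dependent $b_3\propto\varepsilon^{-1}\|R_0\|^2$.

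The only slip is in (H2). As written, your bound $|\langle B(w,X),w\rangle|\le\nu\|w\|_1^2+\cdots$ uses up all of $2\nu\|w\|_1^2$ once doubled. Split instead as $2|\langle B(w,X),w\rangle|\le\nu\|w\|_1^2+C\nu^{-1}\|X\|_1^2\|w\|^2$; this gives $\zeta=\nu\lambda_{N_0+1}$ rather than $2\nu\lambda_{N_0+1}$, which is harmless since constants are absorbed into $C_*$. Also, since $QR_0P_{N_0}=P_{N_0}$, the drift of $Y$ is exactly $KP_{N_0}(X-Y)$. So $R_0$ enters only through the Girsanov shift, and your well-posedness concern about $Y$ does not arise.
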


\begin{proof}
  We only need to verify the assumptions of Theorem \ref{t.042501}. Denoting $X_t^{\s}=\Phi_{0,t,\s}^{\varepsilon}u$ and setting 
  \[U(u) = V(u)=\|u\|^2, \, p(u,v)=\|u-v\|^2, \, S(u)=\|u\|_1^2, \]
  then by Ito's formula and Poincare's inequality, we have 
  \begin{align*}
    \mathbf E_{u}V(X_t^{\s}) + \gamma \mathbf E_{u}\int_0^tV(X_s^{\s})ds\leq V(u) + \int_0^tK(\beta_s\s)ds
  \end{align*}
  where $\gamma=\nu\l_1$ and $K(\xi) = C\left(\|F(\xi)\|^2+\|Q(\xi)\|_{\mathcal L_2(\Rb^{ \mathbf{d}},H)}^2\right), \, \xi\in \Sigma$ with a constant $C>0$ that depends only on $\nu,\l_1$. As $K\in L^1(\Sigma,m)$ and $U,p$ are bounded on $\{V\leq M\}$ and $\{V\leq M\}\times \{V\leq M\}$ for any $M>0$, assumption $(H1)$ is verified. 
  
  Let $Y_t^{\s}$ be the solution to 
  \begin{align*}
    \partial_t {Y_t^{\s}} +B(Y_t^{\s}, Y_t^{\s}) + \nu A  Y_t^{\s} =  F(\beta_t\s)+\frac{\nu\l_{N+1}}{2} P_N(X_t^{\s}-Y_t^{\s} )+\sqrt{\eps}Q(\beta_t\s)\dot W_t, 
  \end{align*}
  with initial value $v\in H$,
  where $N=N_0$ is the fixed integer in the theorem.
   Then from the equation of $X_t^{\s}-Y_t^{\s}$, by Ito's formula and Poincare's inequality, we infer 
  \begin{align*}
    p(X_t^{\s}, Y_t^{\s})\leq p(u,v)\exp\left(-\nu\l_{N+1} t + \kappa\int_0^tS(X_s^{\s})ds\right), \, t\geq0,
  \end{align*}
  where $\kappa>0$ is a constant that depends only on $\nu$ and the domain $D$. This verifies assumption $(H2)$.
  
  Again, by Ito's formula and Poincare's inequality, we have 
  \begin{align*}
    U(X_t^{\s}) + \nu\int_0^t\|X_s^{\s}\|_1^2ds\leq U(X_0^{\s})+\int_0^tK(\b_s\s)ds +M_t^{\s},
  \end{align*}
  with  $dM_{t}^{\s} = 2\sqrt{\varepsilon}\langle X_t^{\s},Q(\beta_t\s) dW_t\rangle$. 
  Note that 
  \begin{align*}
    d\langle M^{\s}\rangle_t=4\varepsilon\|Q^*(\beta_t\sigma)X_t^{\s}\|^2dt\leq 4\lambda_1^{-1}\|Q\|_{L^{\infty}(\Sigma;\mathcal L(\Rb^{ \mathbf{d}},H))}^2\|X_t^{\s}\|_1^2dt.
  \end{align*}
  The determining mode gap imposed on $N_0$ gives the strict inequality required in $(H3)$. 
  
  Now note that the solution $\Phi_{0,t,\sigma}^{\varepsilon}v$ has true law  $P_t^{\sigma}(v,\cdot) = \left(\Phi_{0,t,\sigma}^{\varepsilon}\right)_{*}\mathbf{P}_{\sigma,v}$
  as the pushforward of the Wiener measure by the solution map. Denoting 
  \[h(t)= \frac{\nu\lambda_{N+1}}{2\sqrt{\varepsilon}}R_0(\beta_t\sigma)P_N(X_t^{\s}-Y_t^{\s}), \text{ and } \bar{h}(t) =\int_0^th(s)ds\]
  and $T_{\bar{h}}\omega = \omega+\bar{h}$ on the Brownian path space $C([0,\infty);\Rb^{ \mathbf{d}})$, then  $\mathcal{L}(Y_t^{\s}) = \left(\Phi_{0,t,\sigma}^{\varepsilon}\right)_{*}(T_{\bar{h}})_*\mathbf{P}_{\sigma,v}.$
  Therefore, by Theorem A.5 from \cite{BKS2020}, for any $\delta\in(0,1]$, and 
  \begin{align*}
    M_{\delta}(t,\sigma) = \mathbf{E}\left(\int_0^t\frac{\nu^2\lambda_{N+1}^2}{4\varepsilon}\|R_0(\beta_s\sigma)\|_{\mathcal{L}(P_NH,\Rb^{ \mathbf{d}})}^2\|X_s^{\s}-Y_s^{\s}\|^2ds\right)^{\delta}, 
  \end{align*}
  we have 
  \begin{align*}
    d_{TV}\left(\mathcal{L}(Y_t^{\s}),P_t^{\sigma}(v,\cdot) \right)&\leq d_{TV}\left((T_{\bar{h}})_*\mathbf{P}_{\sigma,v},\mathbf{P}_{\sigma,v} \right)\\
    &\leq 2^{\frac{1-\delta}{1+\delta}}(M_{\delta}(t,\sigma))^{\frac{1}{1+\delta}},
  \end{align*}
  and $d_{TV}\left(\mathcal{L}(Y_t^{\s}),P_t^{\sigma}(v,\cdot) \right)\leq 1-\bar{\varepsilon}_{\delta}(M_{\delta}(t,\sigma))$, where 
  \[\bar{\varepsilon}_{\delta}(x) = \frac16\min\left(\frac18,e^{-(2^{2-\delta}x)^{1/\delta}}\right), \quad x\geq 0. \]
  This verifies assumption $(H4)$. 
\end{proof}

\begin{rem}\label{r.060401}
  From the proof we see that for each $j\in\mathbb N$, the relevant estimates can be made uniform for $\varepsilon\in(1/j,1]$. Therefore upon taking a countable intersection, the full measure subset $\widehat{\Sigma}$ in Theorem \ref{t.060201}, and hence the exceptional set in Theorem \ref{t.042501} can be made independent of $\varepsilon\in (0,1]$. 
\end{rem}

\subsection{The Freidlin--Wentzell LDP}
In this subsection we prove the desired Freidlin--Wentzell LDP in Theorem \ref{t.050601} for the family of unique stationary measures obtained in the previous subsection, by verifying the conditions of Theorem \ref{t.080601}. 

The controlled equation for the symbol $\sigma$ with a control $\varphi\in L_{\text{loc}}^2(\Rb;\Rb^{ \mathbf{d}})$ is 
\begin{align}\label{e.072704}
  \partial_t u+B(u,u) + \nu A u  =  F(\beta_t\s)+ Q(\beta_t\s)\varphi,
\end{align}
whose solution with initial condition $v$ is denoted by $\Phi^{\varphi}_{0,t,\s}v$. The energy for any $u\in C([0,T],H)$ is defined as 
\begin{align}\label{e.072901}
  I_T^{\s}(u)=\inf_{\varphi}J_T^{\s}(\varphi),\qquad
  J_T^{\s}(\varphi)=\frac12\int_0^T\|\varphi(s)\|^2ds,
\end{align}
where the infimum is taken over all $\varphi\in L^2(0,T;\Rb^{ \mathbf{d}})$ such that $u(t)=\Phi^{\varphi}_{0,t,\sigma}u(0)$ for all $t\in[0,T]$. If no such control exists, then $I_T^{\s}(u)=\infty$. 

It is known \cite{garcia2014pullback,carvalho2012attractors} that \eqref{e.072601} when $\varepsilon=0$ has a pullback attractor $\Ac(\s)$ such that for $m\text{-a.e.}$ $\sigma$, we have the invariance $\Phi_{0,t,\s}\Ac(\s) = \Ac(\b_t\s)$ for all $t\geq 0$, 
and for any bounded set $B\subset H$,
\begin{align*}
  \lim_{t\to\infty}d\left(\Phi_{0,t,\b_{-t}\s}B, \Ac(\s)\right)=0. 
\end{align*}
Let $P_{0,t,\s}^{\eps}$ be the Markov operators and $\mu_{\s}^{\eps}$ the unique invariant measure of \eqref{e.072601} so that for $m\text{-a.e.}$ $\sigma$, one has $\mu_{\s}^{\eps}P_{0,t,\s}^{\eps} = \mu_{\b_t\s}^{\eps}$ for all $t\geq 0$, 
where we emphasize that the exceptional set does not depend on $\varepsilon\in(0,1]$ by Remark \ref{r.060401}.  

The finite time uniform Freidlin--Wentzell large deviation principle for solution trajectories of the stochastic Navier--Stokes system follows from standard results; see, for instance, \cite{salins2019uniform,budhiraja2008large}. Related trajectory large deviation principles for stochastic Navier--Stokes equations driven by L\'evy or jump noises were established in \cite{xu2009large,zhai2015large,brzezniak2022well}. Therefore, in order to apply Theorem \ref{t.080601}, it remains only to verify the tracking property, the nontrivial escaping energy property, the precompactness of the level sets, and the weak exponential tightness.

We first verify the tracking property \eqref{e.072701} and nontrivial escaping energy property \eqref{e.070101}. 

\begin{lem}
  Under the conditions of Theorem \ref{t.050601}, the estimates \eqref{e.072701} and \eqref{e.070101} in Assumption \ref{a.050201} are true.
\end{lem}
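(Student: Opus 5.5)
For the tracking property \eqref{e.072701} we use Lemma \ref{l.050602}, so we must build, for each controlled path $u=\Phi^{\varphi_u}_{0,\cdot,\xi}u_0$ with $J^\xi_t(\varphi_u)\le M$ and $d(u_0,\Ac(\xi))<1$, and each $v_0\in\Ac(\xi)$, a controlled path $v$ from $v_0$ obeying \eqref{e.050605}--\eqref{e.050606}. The generalized coupling of the previous subsection suggests the construction. Set $N=N_0$ and define $v$ by
\[
\partial_t v+B(v,v)+\nu Av=F(\beta_t\xi)+Q(\beta_t\xi)\varphi_u+\tfrac{\nu\lambda_{N+1}}{2}P_N(u-v),\qquad v(0)=v_0 .
\]
This is a controlled path with control
\[
\varphi_v=\varphi_u+\tfrac{\nu\lambda_{N+1}}{2}R_0(\beta_t\xi)P_N(u-v),
\]
because $Q R_0P_N=P_N$. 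Take the energy estimate for $w=u-v$ and use $|\langle B(w,u),w\rangle|\le C\|w\|\|w\|_1\|u\|_1$, Ladyzhenskaya's inequality, and the fact that $\nu\|w\|_1^2+\frac{\nu\lambda_{N+1}}{2}\|P_Nw\|^2\ge \frac{\nu\lambda_{N+1}}{2}\|w\|^2$. This gives
\[
\|w(s)\|^2\le\|w(0)\|^2\exp\Big(-\nu\lambda_{N+1}s+\kappa\int_0^s\|u(r)\|_1^2dr\Big),
\]
exactly as in the verification of $(H2)$.

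The main obstacle is controlling $\int_0^s\|u\|_1^2$ \emph{uniformly in $t$}. The deterministic energy identity for the controlled path gives
\[
\|u(s)\|^2+\nu\int_0^s\|u\|_1^2\le\|u_0\|^2+C\int_0^s\|F(\beta_r\xi)\|^2dr+C Q_\infty\Big(\int_0^s\|\varphi_u\|^2\Big)^{1/2}\sup_{r\le s}\|u(r)\|,
\]
so that
\[
\kappa\int_0^s\|u\|_1^2\le C(1+\|u_0\|^2+M)+\frac{\kappa C}{\nu}\int_0^s\|F(\beta_r\xi)\|^2dr .
\]
The gap \eqref{e.060417}, with $C_*$ taken large enough to cover this constant, together with Birkhoff's theorem, gives $\nu\lambda_{N+1}s-\frac{\kappa C}{\nu}\int_0^s\|F(\beta_r\xi)\|^2dr\ge 2\lambda s-c(\xi)$ with a finite random $c(\xi)$ and some $\lambda>0$. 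Since $\Ac(\xi)$ is bounded, $\|u_0\|\le 1+\sup_{a\in\Ac(\xi)}\|a\|$, and we need this to be finite and measurable in $\xi$. Hence $\|w(s)\|^2\le C'_M(\xi)e^{-\lambda s}\|w(0)\|^2$ for all $s\le t$, which is \eqref{e.050605}. Integrating this in time yields \eqref{e.050606}:
\[
\int_0^t\|\varphi_u-\varphi_v\|^2\le C\int_0^t\|R_0(\beta_s\xi)\|^2e^{-\lambda s}ds\;C'_M(\xi)\|w(0)\|^2 .
\]
Here $\int_0^\infty\|R_0(\beta_s\xi)\|^2e^{-\lambda s}ds<\infty$ almost surely, because $\|R_0\|^2\in L^1$ and the ergodic theorem gives subexponential growth of $\int_0^s\|R_0(\beta_r\xi)\|^2dr$.

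For the escaping energy property \eqref{e.070101} we fix $R,T$ and compare $u=\Phi^\varphi_{0,\cdot,\beta_{-T}\sigma}u_0$ with $\bar u=\Phi_{0,\cdot,\beta_{-T}\sigma}u_0$, where $u_0\in B_R$. Write $w=u-\bar u$; then
\[
\frac12\frac{d}{ds}\|w\|^2+\nu\|w\|_1^2\le C\|w\|^2\|\bar u\|_1^2+Q_\infty\|\varphi\|\|w\| .
\]
We use Young's inequality, $Q_\infty\|\varphi\|\|w\|\le\frac12\|w\|^2+\frac12Q_\infty^2\|\varphi\|^2$, and then Gronwall's inequality. With $\int_0^T\|\bar u\|_1^2\le C(R^2+\int_{-T}^0\|F(\beta_r\sigma)\|^2dr)$ this gives
\[
\|w(T)\|^2\le 2Q_\infty^2\exp\Big(T+C(R^2+\textstyle\int_{-T}^0\|F(\beta_r\sigma)\|^2dr)\Big)J^{\beta_{-T}\sigma}_{u_0,T}(\varphi).
\]
The right-hand constant is finite for $m$-a.e.\ $\sigma$ since $\|F\|^2\in L^1$, and it is measurable in $\sigma$. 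This defines $C_{R,T}(\sigma)\ge1$ and proves \eqref{e.070101}.
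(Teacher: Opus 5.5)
Your architecture is the paper's. You use the same feedback path $v$ with the fixed projection $P_{N_0}$; your gain $\frac{\nu\lambda_{N+1}}{2}$ is the paper's $\lambda$ with $\gamma_N=\nu\lambda_{N+1}$. You use the same Birkhoff bound $\int_0^s\|F(\beta_r\xi)\|^2dr\le s(\int_\Sigma\|F\|^2dm+\delta)+c(\xi)$, which is the paper's $\Theta_F$, and the same a.s.\ integrability of $\|R_0(\beta_s\xi)\|^2e^{-\lambda s}$. For \eqref{e.070101} you use the same Gronwall comparison with the uncontrolled flow; your Young-inequality version and the paper's Gronwall on $\|w\|$ plus Cauchy--Schwarz give equivalent finite random constants.

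However, the energy inequality you display for the reference path $u$ is false as written, and it sits exactly at the step you call the main obstacle. Cauchy--Schwarz in time gives
\[
\int_0^s\|\varphi_u\|\|u\|\,dr\le\|\varphi_u\|_{L^2(0,s)}\|u\|_{L^2(0,s;H)}\le\sqrt{s}\,\|\varphi_u\|_{L^2(0,s)}\sup_{r\le s}\|u(r)\|,
\]
and the factor $\sqrt{s}$ cannot be dropped. Take $F=0$ and a small constant control $\varphi\equiv\varepsilon e$ acting on the stationary solution. Then your left-hand side grows like $\varepsilon^2 s$ and your right-hand side only like $\varepsilon^2\sqrt{s}$, while the action stays below $M$ for $s\le 2M/(\varepsilon^2|e|^2)$. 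If you keep the $\sqrt{s}$ and close the estimate through $\sup_{r\le s}\|u(r)\|$, you only get
\[
\int_0^s\|u\|_1^2\,dr\lesssim \|u_0\|^2+\int_0^s\|F(\beta_r\xi)\|^2dr+sM .
\]
The slope then grows with $M$, and the fixed gap $\nu\lambda_{N_0+1}$ cannot absorb it for large $M$. That is precisely the failure mode the construction must avoid.

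The fix is the one the paper uses: apply Poincar\'e before Young,
\[
\langle Q(\beta_t\xi)\varphi_u,u\rangle\le\frac{\nu}{4}\|u\|_1^2+\frac{Q_\infty^2}{\nu\lambda_1}\|\varphi_u\|^2 ,
\]
and treat $\langle F(\beta_t\xi),u\rangle$ the same way. Absorbing both into the dissipation gives
\[
\int_0^s\|u\|_1^2\,dr\le \frac1\nu\|u_0\|^2+\frac{2}{\nu^2\lambda_1}\int_0^s\|F(\beta_r\xi)\|^2dr+\frac{4Q_\infty^2}{\nu^2\lambda_1}M ,
\]
in which $M$ enters only additively. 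This is the inequality you state and use on the next line. With this derivation in place, the rest of your argument (Birkhoff, Gronwall for $w$, the control-difference estimate, and the escaping-energy step) goes through.
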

\begin{proof}
  For \eqref{e.072701} we only need to verify the energy estimate in Lemma \ref{l.050602}.  The point is that the
  feedback below uses the fixed projection $P_{N_0}$ from Theorem \ref{t.050601}; in particular, unlike \cite{mar2},
  the number of controlled modes does not depend on the action level $M$.

  {\it Step 1: verification of Lemma \ref{l.050602}.}
  Fix a time symbol $\xi\in\Sigma$ and $M>0$, $t>0$. Let $u$ be a reference controlled path with an admissible control $\varphi_u$ such that
  \[\frac12\int_0^t\|\varphi_u(s)\|^2\,ds\leq M,\qquad d(u(0),\Ac(\xi))<1.\]
  For any $v_0\in\Ac(\xi)$, set $N=N_0$ and define $v$ by
  \begin{align*}
    \partial_t v+B(v,v)+\nu A v
    =F(\beta_t\xi)+Q(\beta_t\xi)\varphi_u+\lambda P_N(u-v),
    \qquad v(0)=v_0,
  \end{align*}
  where $\lambda>0$ will be chosen below.  Since
  $Q(\sigma)R_0(\sigma)P_N=P_N$, the path $v$ is generated by the admissible control
  \begin{align}\label{e.070302}
      \varphi_v(t)=\varphi_u(t)+\lambda R_0(\beta_t\xi)P_N(u(t)-v(t)).
  \end{align}

  We first estimate the reference path $u$ with a long time slope independent of $M$.  Taking the $H$ inner product of the equation for $u$ with $u$ gives
  \begin{align}\label{e.050401}
      \frac12\frac{d}{dt}\|u\|^2+\nu\|u\|_1^2
      =\langle F(\beta_t\xi),u\rangle+
      \langle Q(\beta_t\xi)\varphi_u,u\rangle .
  \end{align}
  By Poincare's inequality and Young's inequality,
  \[\langle F(\beta_t\xi),u\rangle\leq \frac{\nu}{4}\|u\|_1^2+\frac{1}{\nu\lambda_1}\|F(\beta_t\xi)\|^2, \text{ and } \langle Q(\beta_t\xi)\varphi_u,u\rangle\leq \frac{\nu}{4}\|u\|_1^2+\frac{Q_\infty^2}{\nu\lambda_1}\|\varphi_u\|^2 . \]
  Hence, for every $0\leq \tau\leq t$,
  \begin{align}\label{e.050509}
      \int_0^\tau\|u(s)\|_1^2\,ds
      \leq \frac{1}{\nu}\|u(0)\|^2
      +\frac{2}{\nu^2\lambda_1}\int_0^\tau\|F(\beta_s\xi)\|^2\,ds
      +\frac{4Q_\infty^2}{\nu^2\lambda_1}M .
  \end{align}
  We shall also use the standard $H$-bound obtained from \eqref{e.050401} and Gronwall's inequality:
  \begin{align}\label{e.050506}
    \begin{split}
      \|u(\tau)\|\leq e^{-\nu\lambda_1\tau}\|u(0)\|
      +\int_0^{\tau} e^{-\nu\lambda_1(\tau-s)}\|F(\beta_s\xi)\|\,ds +\int_0^{\tau} e^{-\nu\lambda_1(\tau-s)}
      \|Q(\beta_s\xi)\|_{\mathcal L(\Rb^{ \mathbf{d}},H)}\|\varphi_u(s)\|\,ds .
    \end{split}
  \end{align}
  Let
  \[F_0:=2\int_\Sigma \|F(\sigma)\|^2\,m(d\sigma),\qquad \Theta_F(\xi):=\sup_{s\geq0} \left[\int_0^s\|F(\beta_r\xi)\|^2\,dr-sF_0\right]_+,\]
  where $a_+$ denotes the positive part of $a$. By Birkhoff's theorem, $\Theta_F(\xi)<\infty$ for $m$-almost surely.  Since
  $d(u(0),\Ac(\xi))<1$, if $\mathcal R(\xi):=2(\sup_{a\in\Ac(\xi)}\|a\|^2+1)$,  then \eqref{e.050509} yields
  \begin{align}\label{e.060418}
      \int_0^\tau\|u(s)\|_1^2\,ds\leq K_M(\xi)+c_F F_0\tau, \qquad 0\leq\tau\leq t,
  \end{align}
  where
  \[c_F:=\frac{2}{\nu^2\lambda_1},\qquad K_M(\xi):=\frac{\mathcal R(\xi)}{\nu} +c_F\Theta_F(\xi)+\frac{4Q_\infty^2}{\nu^2\lambda_1}M. \]
  Notice that the slope $c_FF_0$ is independent of the action level $M$.

  Now set $w=u-v$.  Subtracting the equations for $u$ and $v$ gives
  \[\partial_t w+\nu Aw+B(w,u)+B(v,w)=-\lambda P_Nw .\]
  Using $\langle B(v,w),w\rangle=0$ and the standard two-dimensional estimate $|\langle B(w,u),w\rangle|\leq \frac{\nu}{2}\|w\|_1^2+\frac{\kappa}{2}\|u\|_1^2\|w\|^2$, 
  we obtain
  \[\frac{d}{dt}\|w\|^2+\nu\|w\|_1^2+2\lambda\|P_Nw\|^2\leq \kappa\|u\|_1^2\|w\|^2.\]
  The spectral inequality gives
  \[\nu\|w\|_1^2+2\lambda\|P_Nw\|^2\geq \gamma_N\|w\|^2,\qquad \gamma_N:=\min\{2\lambda,\nu\lambda_{N+1}\}.\]
  By the determining mode gap in Theorem \ref{t.050601}, choose $\lambda>0$ so that $\alpha_N:=\gamma_N-\kappa c_FF_0>0$. 
  Combining this with \eqref{e.060418} and Gronwall's inequality gives, for all
  $0\leq s\leq t$,
  \begin{align}\label{e.060419}
      \|u(s)-v(s)\|^2 \leq \|u(0)-v_0\|^2\exp\{\kappa K_M(\xi)-\alpha_Ns\} .
  \end{align}
  Therefore
  \[\sup_{0\leq s\leq t}\|u(s)-v(s)\|^2\leq e^{\kappa K_M(\xi)}\|u(0)-v_0\|^2,\]
  which is \eqref{e.050605} in Lemma \ref{l.050602}.

  It remains to estimate the difference of controls.  From \eqref{e.070302}, one has $\varphi_v(s)-\varphi_u(s)=\lambda R_0(\beta_s\xi)P_Nw(s)$. 
  Hence, by \eqref{e.060419},
  \begin{align*}
      \int_0^t\|\varphi_v(s)-\varphi_u(s)\|^2\,ds
      \leq \lambda^2 e^{\kappa K_M(\xi)}\|u(0)-v_0\|^2
      \int_0^t\|R_0(\beta_s\xi)\|_{\mathcal L(P_NH,\Rb^{ \mathbf{d}})}^2 e^{-\alpha_Ns}\,ds \leq C_M(\xi)\|u(0)-v_0\|^2,
  \end{align*}
  where
  \[C_M(\xi):=e^{\kappa K_M(\xi)}\left(1+\lambda^2\int_0^\infty\|R_0(\beta_s\xi)\|_{\mathcal L(P_NH,\Rb^{ \mathbf{d}})}^2e^{-\alpha_Ns}\,ds\right).\]
  Since $\|R_0\|_{\mathcal L(P_NH,\Rb^{ \mathbf{d}})}^2\in L^1(\Sigma,m)$, Fubini's theorem implies that
  $C_M(\xi)<\infty$ for $m$-a.e. $\xi$.  This verifies Lemma \ref{l.050602}, implying \eqref{e.072701}.

  {\it Step 2: verification of \eqref{e.070101}.}
  Consider $u(t)=\Phi_{0,t,\beta_{-T}\sigma}^{\varphi}u_0, \, v(t)=\Phi_{0,t,\beta_{-T}\sigma}u_0$ and $w(t)=u(t)-v(t)$. 
  Then $w$ solves 
  \[\partial_t w+\nu Aw+B(u,u)-B(v,v)=Q(\beta_{t-T}\sigma)\varphi(t),\qquad w(0)=0 .\]
  The standard energy estimate gives
  \begin{align*}
    \frac12\frac{d}{dt}\|w\|^2+\nu\|\nabla w\|^2
    &=-\langle B(w,v),w\rangle+\bigl(Q(\beta_{t-T}\sigma)\varphi(t),w\bigr)\\
    &\leq C\|\nabla v\|\|w\|\|\nabla w\|
      +\|Q(\beta_{t-T}\sigma)\|_{\mathcal L(\Rb^{ \mathbf{d}},H)}\|\varphi(t)\|\|w\|\\
    &\leq \frac{\nu}{2}\|\nabla w\|^2
      +C\|\nabla v\|^2\|w\|^2
      +\|Q(\beta_{t-T}\sigma)\|_{\mathcal L(\Rb^{ \mathbf{d}},H)}\|\varphi(t)\|\|w\| .
  \end{align*}
  Denoting $r(t)=\|w(t)\|$, we have
  \[r'(t)\leq C_\nu\|\nabla v(t)\|^2r(t) +\|Q(\beta_{t-T}\sigma)\|_{\mathcal L(\Rb^{ \mathbf{d}},H)}\|\varphi(t)\| .\]
  Thus Gronwall's inequality yields
  \begin{align*}
      \|w(T)\|^2
      \leq \exp\left(C\int_0^T\|\nabla v(s)\|^2\,ds\right)
      \left(\int_0^T\|Q(\beta_{s-T}\sigma)\|_{\mathcal L(\Rb^{ \mathbf{d}},H)}^2\,ds\right)
      \left(\int_0^T\|\varphi(s)\|^2\,ds\right).
  \end{align*}
  From the deterministic equation for $v$,
  \[\int_0^T\|\nabla v(s)\|^2\,ds\leq \frac{R^2}{\nu}+\frac{1}{\nu^2\lambda_1}\int_{-T}^0\|F(\beta_s\sigma)\|^2\,ds,\qquad u_0\in B_R .
  \]
  Therefore, with
  \begin{align*}
    C_{R,T}(\sigma)
    :=2\left(\int_{-T}^0\|Q(\beta_s\sigma)\|_{\mathcal L(\Rb^{ \mathbf{d}},H)}^2\,ds\right)
    \exp\left[C\left(R^2+\frac{1}{\nu\lambda_1}
    \int_{-T}^0\|F(\beta_s\sigma)\|^2\,ds\right)\right],
  \end{align*}
  we obtain
  \[\|\Phi_{0,T,\beta_{-T}\sigma}^{\varphi}u_0-\Phi_{0,T,\beta_{-T}\sigma}u_0\|^2\leq C_{R,T}(\sigma)J_{u_0,T}^{\beta_{-T}\sigma}(\varphi),\qquad u_0\in B_R .
  \]
  Since $Q_\infty<\infty$ and $F\in L^2(\Sigma,m)$, the random constant $C_{R,T}(\sigma)$ is finite for every fixed $R,T>0$ and $m$-a.e. $\sigma$, which verifies \eqref{e.070101}.
\end{proof}

Next we verify that the level sets of the rate function are precompact. 
\begin{lem}\label{l.070301}
  Under the conditions of Theorem \ref{t.050601}, for $m$-a.e. $\sigma\in\Sigma$ and each $M>0$, the level set 
  \[K_M(\sigma)= \left\{E_{\Ac(\s)}\leq M\right\}\]
  is compact in $H$. 
\end{lem}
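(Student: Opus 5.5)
To prove Lemma \ref{l.070301}, I would show that $K_M(\sigma)$ is closed and totally bounded, with total boundedness coming from parabolic smoothing of the controlled equation \eqref{e.072704} started on the pullback attractor in the remote past. Closedness is immediate. By \eqref{e.073003}, $E_{\Ac(\sigma)}$ is lower semicontinuous: it is a limit as $\eta\downarrow0$ of infima over shrinking balls, and if $u_k\to u_*$ with $E_{\Ac(\sigma)}(u_k)\le M$, then every ball $B_\eta(u_*)$ eventually contains a ball $B_{\eta/2}(u_k)$. Hence $E_{\Ac(\sigma)}(u_*)\le M$.

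For precompactness, take $u_*\in K_M(\sigma)$. By definition there are $s>0$, $u(0)\in\Ac(\beta_{-s}\sigma)$ and a control $\varphi$ with $\frac12\int_0^s\|\varphi\|^2\le M+1$ such that $\Phi^{\varphi}_{0,s,\beta_{-s}\sigma}u(0)$ lies within a small distance of $u_*$. It therefore suffices to show that the set
\[
\mathcal S_M(\sigma):=\bigl\{\Phi^{\varphi}_{0,s,\beta_{-s}\sigma}u_0:\ s>0,\ u_0\in\Ac(\beta_{-s}\sigma),\ J_s^{\beta_{-s}\sigma}(\varphi)\le M+1\bigr\}
\]
is bounded in $H^1$. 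Its $H$-closure then contains $K_M(\sigma)$ and is compact by Rellich.

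The obstacle is that $s$ can be arbitrarily large and $\Ac(\beta_{-s}\sigma)$ need not be uniformly bounded. I would handle this with the $H$-estimate \eqref{e.050506} on a final window of fixed length. Split the interval at $s-1$. For $s\le 1$, or more generally on the initial part, the bound for $\|u(s-1)\|$ from \eqref{e.050506} involves $e^{-\nu\lambda_1 (s-1)}\|u_0\|$, and for $u_0$ in the attractor this is a priori uncontrolled.

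To control it I would use the invariance and pullback attraction of $\Ac$ under the deterministic flow. Compare with the uncontrolled trajectory: the deterministic image $\Phi_{0,\tau,\beta_{-s}\sigma}u_0$ stays in $\Ac(\beta_{\tau-s}\sigma)$. By the escaping-energy estimate of Step 2 of the previous lemma (with $R$ replaced by the attractor bound), the controlled path stays within $C\sqrt{M}$ of that deterministic image in $H$ on each unit window. The constant in that estimate involves $\sup_{a\in\Ac(\beta_{-r}\sigma)}\|a\|$ for $r\in[0,1]$ and $\int_{-1}^0\|F(\beta_r\sigma)\|^2dr$, both finite for $m$-a.e.\ $\sigma$.

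Rather than chaining this over all unit windows, I would use a cleaner global estimate. Apply \eqref{e.050506} from time $0$ together with the ergodic control $\Theta_F$ from the previous lemma. The force term $\int_0^\tau e^{-\nu\lambda_1(\tau-r)}\|F(\beta_{r-s}\sigma)\|dr$ is bounded by a tempered quantity evaluated at $\sigma$; this is where a pullback version of $\Theta_F$ is used, i.e.\ $\sup_{r\ge0}\bigl[\int_{-r}^0\|F(\beta_\tau\sigma)\|^2d\tau-rF_0\bigr]_+$, combined with the exponential weight. The control term is bounded by $Q_\infty\sqrt{2(M+1)/(2\nu\lambda_1)}$ via Cauchy--Schwarz.

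The initial term $e^{-\nu\lambda_1 s}\sup_{a\in\Ac(\beta_{-s}\sigma)}\|a\|$ is handled by showing that the attractor radius is tempered. This follows because $\Ac(\xi)$ is contained in the pullback absorbing ball given by the same estimate with $\varphi=0$, whose radius is $\bigl(\int_{-\infty}^0 e^{\nu\lambda_1 r}\|F(\beta_r\xi)\|^2dr\bigr)^{1/2}$ up to constants. This is a tempered random variable, since $\|F\|^2\in L^1$ and Birkhoff's theorem gives subexponential growth along orbits. Hence $\sup_{\tau\in[s-1,s]}\|u(\tau)\|\le R_M(\sigma)$ uniformly in $s$ and in the admissible data.

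Finally, on $[s-1,s]$ I would run the standard enstrophy estimate: take the inner product of \eqref{e.072704} with $tAu$, using time weight $t-(s-1)$. In 2D, $|\langle B(u,u),Au\rangle|\le C\|u\|^{1/2}\|u\|_1\|Au\|^{3/2}$ on bounded domains. Gronwall, with $\int_{s-1}^s\|u\|_1^2$ controlled as in \eqref{e.050509}, then gives
\[
\|u(s)\|_1^2\le C\bigl(R_M(\sigma),\textstyle\int_{-1}^0\|F(\beta_r\sigma)\|^2dr,\ Q_\infty^2M\bigr)<\infty.
\]
This uses $F\in H$ together with $\langle Q\varphi,Au\rangle\le\frac{\nu}{4}\|Au\|^2+C\|Q\|^2\|\varphi\|^2$. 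The main difficulty is the temperedness of the attractor radius and of the forcing along the backward orbit, which is what makes $R_M(\sigma)$ finite for $m$-a.e.\ $\sigma$.
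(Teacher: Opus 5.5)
Your proposal is correct and follows essentially the paper's route: you bound the reachable set (your $\mathcal S_M(\sigma)$, the paper's $\mathcal R_{M+1}(\sigma)$) at time $s-1$ using the $H$ energy estimate together with the pullback absorbing bound on the attractor, then apply parabolic smoothing on the last unit interval, and conclude by Rellich and $K_M(\sigma)\subset\overline{\mathcal S_M(\sigma)}$. Your time-weighted enstrophy estimate plays the role of the paper's choice of $r_0\in(0,1/2)$ with $Y(r_0)\le 2\int_0^{1/2}Y$, and your lower semicontinuity argument supplies the closedness that the paper only asserts.

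The one loose end is travel times $s<1$. There the window $[s-1,s]$ is not available, and time-weighted smoothing from time $0$ only gives $\|u(s)\|_1^2\le C/s$. Handle this as the paper does: prepend a zero-control segment, using the invariance $\Phi_{0,t,\xi}\mathcal A(\xi)=\mathcal A(\beta_t\xi)$. This leaves both the endpoint and the action unchanged, so you may assume $s\ge 1$.
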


\begin{proof}
  For $M>0$, define the reachable set 
  \begin{align*} 
    \mathcal R_M(\sigma):=\left\{
    \Phi_{0,T,\beta_{-T}\sigma}^{\varphi}u_0:
    T>0,\ u_0\in\Ac(\beta_{-T}\sigma),\
    \frac12\int_0^T\|\varphi(s)\|_{ \Rb^{ \mathbf{d}}}^2ds\leq M
    \right\}.
  \end{align*}
  We first prove that $\mathcal R_M(\sigma)$ is precompact in $H$. Since the pullback attractor is invariant, any controlled path with travel time $T<1$ may be extended backward by a zero-control deterministic segment without changing either its endpoint or its action. Hence it is enough to consider $T\geq1$.

  Let
  \begin{align*}
    u(t)=\Phi_{0,t,\beta_{-T}\sigma}^{\varphi}u_0,
    \qquad
    u_0\in\Ac(\beta_{-T}\sigma),
    \qquad
    \frac12\int_0^T\|\varphi(s)\|_{ \Rb^{ \mathbf{d}}}^2ds\leq M.
  \end{align*}
  By the standard energy estimate for the controlled two-dimensional Navier--Stokes equation, for $0\leq t\leq T$,
  \begin{align*}
    \|u(t)\|^2&\leq e^{-\nu\lambda_1t}\|u_0\|^2+C\int_0^t e^{-\nu\lambda_1(t-s)}\|F(\beta_{s-T}\sigma)\|^2ds+C Q_\infty^2\int_0^t e^{-\nu\lambda_1(t-s)}\|\varphi(s)\|_{ \Rb^{ \mathbf{d}}}^2ds.
  \end{align*}
  Taking $t=T-1$ and using the standard pullback absorbing estimate for the deterministic attractor, we obtain a finite random constant $C_M(\sigma)$ such that
  \begin{align}\label{e.070213}
    \|u(T-1)\|^2\leq C_M(\sigma)
  \end{align}
  for every such controlled path. More explicitly, $C_M(\sigma)$ may be chosen in terms of $M$, $Q_\infty$, and
  \begin{align*}
    \sup_{T\geq1} e^{-\nu\lambda_1(T-1)}\sup_{a\in\Ac(\beta_{-T}\sigma)}\|a\|^2+\int_{-\infty}^{-1}e^{\nu\lambda_1(r+1)}\|F(\beta_r\sigma)\|^2dr,
  \end{align*}
  which is finite for $m$-a.e. $\sigma$. In what follows, $C_M(\sigma)$ denotes a finite random constant which may change from line to line.

  We now use the smoothing on the last unit time interval. Set
  \begin{align*}
    z(r):=u(T-1+r),\qquad\psi(r):=\varphi(T-1+r),\qquad 0\leq r\leq1.
  \end{align*}
  Then $z$ solves the equation  
  \begin{align*}
    \partial_r z+\nu Az+B(z,z)=F(\beta_{r-1}\sigma)+Q(\beta_{r-1}\sigma)\psi(r),\qquad z(0)=u(T-1). 
  \end{align*}
  By \eqref{e.070213}, $Q_\infty<\infty$, and $F\in L^2_{\mathrm{loc}}$ along $m$-a.e. environmental orbit, the forcing
  \begin{align}\label{e.070214}
    h(r):=F(\beta_{r-1}\sigma)+Q(\beta_{r-1}\sigma)\psi(r), \quad \text{ satisfies } \int_0^1\|h(r)\|^2dr
    \leq
    C_M(\sigma).
  \end{align}
  The $H$ energy estimate on $[0,1]$ gives
  \begin{align}\label{e.070215}
    \sup_{0\leq r\leq1}\|z(r)\|^2
    +
    \int_0^1\|z(r)\|_1^2dr
    \leq C_M(\sigma).
  \end{align}
  We next estimate the terminal $V$ norm. Put $Y(r):=\|z(r)\|_1^2$. 
  Taking the inner product of the equation with $Az$ and using the standard two-dimensional estimate for the nonlinear term
  \begin{align*}
    |\langle B(z,z), Az\rangle|\leq C\|z\|^{1/2}\|z\|_1\|Az\|^{3/2}
    \leq\frac{\nu}{4}\|Az\|^2+C\|z\|^2\|z\|_1^4,
  \end{align*}
  together with Young's inequality for the forcing term, one obtains
  \begin{align}\label{e.070301}
    \frac{d}{dr}Y(r)+\nu\|Az(r)\|^2&\leq C\|z(r)\|^2Y(r)^2 +C\|h(r)\|^2.
  \end{align}
  By \eqref{e.070215}, there exists $r_0\in(0,1/2)$ such that
  \begin{align*}
    Y(r_0)\leq 2\int_0^{1/2}Y(r)dr
    \leq 2C_M(\sigma).
  \end{align*}
  Using \eqref{e.070215} and \eqref{e.070214}, Gronwall's inequality on $[r_0,1]$ applied to \eqref{e.070301} yields
  \begin{align}\label{e.070216}
    \begin{split}
      \|z(1)\|_1^2 = Y(1) \leq\left(Y\left(r_0\right)+C \int_{r_0}^1\|h(r)\|^2 d r\right) \exp \left(C\sup _{0 \leq r \leq 1}\|z(r)\|^2\int_{r_0}^1Y(r) d r\right)\leq C_M(\sigma). 
    \end{split}
  \end{align}
  Since $z(1)=u(T)$, \eqref{e.070216} shows that
  \begin{align*}
    \mathcal R_M(\sigma)
    \subset
    \{v\in V:\|v\|_1^2\leq C_M(\sigma)\}.
  \end{align*}
  As the embedding $V\subset H$ is compact, $\mathcal R_M(\sigma)$ is precompact in $H$.

  It remains to pass from reachable sets to the quasipotential level set. By the definition of $E_{\Ac(\sigma)}$, if $x\in K_M(\sigma)$, then for every $\rho>0$ and every $\kappa>0$ there exists a controlled path starting from $\Ac(\beta_{-T}\sigma)$ for some $T>0$, with action at most $M+\kappa$, whose endpoint lies in $B_\rho(x)$. Therefore,
  \begin{align*}
    K_M(\sigma)
    \subset
    \overline{\mathcal R_{M+\kappa}(\sigma)}^{\,H},
    \qquad \kappa>0.
  \end{align*}
  Taking, for instance, $\kappa=1$, and using the precompactness of $\mathcal R_{M+1}(\sigma)$ in $H$, we conclude that $K_M(\sigma)$ is precompact in $H$.
  One can also verify $K_M(\sigma)$ is closed in $H$ and thus compact.
\end{proof}

Next we verify the weak exponential tightness. 
\begin{lem}
  Under the conditions of Theorem \ref{t.050601}, we have 
  \begin{align*}
    \lim _{R \rightarrow \infty} \limsup _{\varepsilon \rightarrow 0} \varepsilon \ln \mu_\sigma^{\varepsilon}\left(B_R^c\right)=-\infty, \quad m \text{-a.s.}.
  \end{align*}
\end{lem}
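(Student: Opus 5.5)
The plan is to derive an exponential moment bound for the stationary measures that is uniform in $\eps$, via an exponential supermartingale argument applied to $\|u\|^2$. Then we pass it through stationarity in pullback form, using the same recurrence device as in Lemma \ref{l.070101}. Concretely, for $X_t=\Phi^{\eps}_{0,t,\tau}v$, Itô's formula and Poincaré's inequality give
\begin{align*}
d\|X_t\|^2+\nu\|X_t\|_1^2\,dt\leq \Big(\tfrac{1}{\nu\lambda_1}\|F(\beta_t\tau)\|^2+\eps\|Q(\beta_t\tau)\|^2_{\mathcal L_2}\Big)dt+2\sqrt{\eps}\langle X_t,Q(\beta_t\tau)dW_t\rangle .
\end{align*}
Multiply by $\gamma/\eps$ with $\gamma>0$ small, namely $\gamma\leq \nu\lambda_1/(4Q_\infty^2)$, so that the quadratic variation $4\gamma^2\eps^{-1}\|Q^*X\|^2$ is absorbed by half of the dissipation $\gamma\eps^{-1}\nu\|X\|_1^2$. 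Applying Itô to $\exp(\gamma\|X_t\|^2/\eps)$ then yields
\begin{align*}
\mathbf E\, e^{\gamma\|X_t\|^2/\eps}\leq e^{\gamma e^{-ct}\|v\|^2/\eps}\exp\Big(\frac{\gamma}{\eps}\int_0^t e^{-c(t-s)}\big(C\|F(\beta_s\tau)\|^2+\|Q(\beta_s\tau)\|^2_{\mathcal L_2}\big)ds\Big),
\end{align*}
with $c=\nu\lambda_1/2$. This follows from Gronwall applied to $\frac{d}{dt}\mathbf E Z\leq \frac{\gamma}{\eps}(\text{forcing})\mathbf E Z-c\,\mathbf E(\ldots)$. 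If the linear-in-$\|X\|^2$ decay is awkward, I would instead use the standard trick $\mathbf E\exp(\frac{\gamma}{\eps}\|X_t\|^2+\frac{\gamma\nu}{2\eps}\int_0^t\|X\|_1^2)\leq e^{\frac{\gamma}{\eps}(\|v\|^2+\int_0^t k)}$ and then recover the decay factor by working with $e^{ct}\|X_t\|^2$ and Itô.

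Next we use pullback stationarity at well-chosen past times. Fix $\sigma$, set $k(\xi):=C\|F(\xi)\|^2+\|Q(\xi)\|^2_{\mathcal L_2}$, and let $\Lambda(\sigma):=\int_{-\infty}^0 e^{cs}k(\beta_s\sigma)ds$, which is finite $m$-a.s. by integrability and Birkhoff. By Lemma \ref{l.050601}-type reasoning (in fact simply by Remark \ref{r.060401} combined with the first-moment bound from $(H1)$ and Lemma \ref{l.042603}), there are $R_0$ and a set $\Sigma_{R_0}$ of positive measure on which $\limsup_{\eps\to0}\mu^\eps_\tau(B_{R_0}^c)<1/2$. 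Hmm—this only gives mass control, not exponential control, so instead I split: $\mu_\sigma^\eps(B_R^c)=\int\mathbf P(\|\Phi^\eps_{0,t,\beta_{-t}\sigma}v\|>R)\mu^\eps_{\beta_{-t}\sigma}(dv)$. I bound the integrand on $B_{R_0}$ by Chebyshev with the exponential moment, which gives $\exp(-\frac{\gamma}{\eps}(R^2-e^{-ct}R_0^2-\Lambda(\sigma)))$. On $B_{R_0}^c$ I bound it by $\mu^\eps_{\beta_{-t}\sigma}(B_{R_0}^c)$.

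The main obstacle is this second term, since only a non-exponential bound is available there a priori. I would resolve it with the better approach of avoiding truncation: bound $\int e^{\gamma e^{-ct}\|v\|^2/\eps}\mu^\eps_{\beta_{-t}\sigma}(dv)$ directly. The function $G_\eps(\tau):=\int e^{\gamma\|v\|^2/\eps}\mu_\tau^\eps(dv)$ satisfies, by stationarity and the moment bound with Jensen ($e^{\gamma e^{-ct}x}\leq (e^{\gamma x})^{e^{-ct}}$), the recursion $\log G_\eps(\sigma)\leq e^{-ct}\log G_\eps(\beta_{-t}\sigma)+\frac{\gamma}{\eps}\Lambda(\sigma)$. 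We iterate this along times $t_n$ with $\beta_{-t_n}\sigma$ in a set where $\eps\log G_\eps$ is bounded, exactly as in Lemma \ref{l.042603}. The needed a priori finiteness of $\eps \log G_\eps(\tau)\leq C(\tau)$ on a positive-measure set (uniform in small $\eps$) is the delicate point. I would obtain it by constructing $\mu^\eps$ as the pullback limit from Theorem \ref{t.060201}: starting from $\delta_0$, Fatou gives $\eps\log G_\eps(\sigma)\leq\gamma\Lambda(\sigma)$ directly. Chebyshev then yields $\eps\log\mu^\eps_\sigma(B_R^c)\leq -\gamma R^2+\gamma\Lambda(\sigma)\to-\infty$.
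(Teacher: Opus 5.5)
Your final argument is correct, and it takes a different, shorter route than the paper.

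\emph{The paper's route.} It never bounds an exponential moment of $\mu^\eps_\sigma$ itself. It combines four ingredients:
\begin{itemize}
\item a first-moment bound $\int\|u\|^2\mu^\eps_\tau(du)\le K_0(\tau)$ with $K_0\in L^1(\Sigma,m)$;
\item the pullback exponential estimate for deterministic initial data, obtained with the time-dependent weight $a_T(t)=ae^{-\gamma(T-t)}$;
\item a split of the initial data according to whether $e^{-\gamma T}\|u\|^2\le R^2/4$;
\item the $\eps$-dependent choice $T=\lceil R^2/\eps\rceil$, so that Markov's inequality on the bad set is already exponentially small, with Borel--Cantelli controlling the subexponential growth of $K_0(\beta_{-n}\sigma)$.
\end{itemize}
This is exactly how the paper overcomes the obstacle you hit when truncating at a fixed radius $R_0$.

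\emph{Your route.} You instead transfer the exponential moment to the stationary measure itself. You use the pullback limit $\delta_0P^{\eps}_{0,t,\beta_{-t}\sigma}\to\mu^\eps_\sigma$ from Theorem~\ref{t.060201}, valid on a full-measure set independent of $\eps$ by Remark~\ref{r.060401}, together with lower semicontinuity of $v\mapsto e^{\gamma\|v\|^2/\eps}$. This gives $\mu^\eps_\sigma(B_R^c)\le e^{C}\exp(-\gamma(R^2-\Lambda(\sigma))/\eps)$ directly.

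\emph{Comparison.} Your version avoids the $\eps$-dependent time choice and Borel--Cantelli, and it yields an explicit sub-Gaussian tail. It also makes your truncation attempt and the recursion for $G_\eps$ unnecessary. The price is reliance on the specific pullback construction of $\mu^\eps_\sigma$, whereas the paper's splitting argument uses only stationarity plus a first-moment bound.

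\emph{One point to tighten.} For the Fatou step you need the bound from $v=0$ to be uniform in $t$. So the forcing must enter with the weight $e^{-c(t-s)}$. The constant-$\gamma$ supermartingale bound $e^{\frac{\gamma}{\eps}(\|v\|^2+\int_0^t k)}$ grows in $t$ and does not suffice. Your first suggestion does close:
\begin{itemize}
\item Write $\|X\|^2=\frac{\eps}{\gamma}\log Z$. The leftover dissipation then gives $\frac{d}{dt}\Eb Z\le \frac{\gamma k}{\eps}\Eb Z-c\,\Eb[Z\log Z]$.
\item Jensen's inequality $\Eb[Z\log Z]\ge \Eb Z\log\Eb Z$ turns this, after localization, into a linear inequality for $\log\Eb Z$.
\end{itemize}
Alternatively, use the paper's weight $a_T(t)$ directly.

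A minor point: the It\^o correction contributes $\gamma\|Q\|^2_{\mathcal L_2}$ rather than $\frac{\gamma}{\eps}\|Q\|^2_{\mathcal L_2}$. This only makes your stated bound conservative.
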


\begin{proof}
  Set $f(\sigma):=\|F(\sigma)\|^2$ and $\alpha:=\nu\lambda_1$. Note $\|Q(\sigma)\|_{\mathcal L_2(\Rb^{\mathbf d},H)}^2\leq \mathbf d Q_\infty^2$. 
  By Itô's formula, Poincare's inequality and Young's inequality, the solution satisfies
  \begin{align}\label{e.070401}
    d\|u(t)\|^2+\alpha\|u(t)\|^2dt\leq C f(\beta_t\sigma)dt+C\eps dt+dM_t,
  \end{align}
  where $M_t$ is the martingale term satisfying 
  \begin{align}\label{e.070402}
    d\langle M\rangle_t\leq C\eps Q_\infty^2\|u(t)\|^2dt.
  \end{align}
  Taking expectation in \eqref{e.070401}, using the stationarity relation and letting the pullback time tend to infinity, we obtain
  \begin{align}\label{e.070403}
    \int_H\|u\|^2\mu_\sigma^\eps(du)\leq K_0(\sigma),\quad \eps\in(0,1], \text{ where } K_0(\sigma):=C\int_{-\infty}^0e^{\alpha r}\left(1+f(\beta_r\sigma)\right)dr, 
  \end{align}
  and $K_0\in L^1(\Sigma,m)$.

  We next prove an exponential pullback estimate. Choose $a>0$ small enough such that $CaQ_\infty^2\leq\alpha/4$, set $\gamma:=\alpha/2$, and define $a_T(t):=ae^{-\gamma(T-t)}$ for $0\leq t\leq T$. Then $a_T(T)=a$, $a_T(0)=ae^{-\gamma T}$, and
  \begin{align}\label{e.070303}
    A_T(t):=a_T'(t)-\alpha a_T(t)+CQ_\infty^2a_T(t)^2\leq0.
  \end{align}
  Let
  \begin{align*}
    X(t):=\|u(t)\|^2,\qquad Z(t):=\exp\left(\frac{a_T(t)X(t)}{\eps}\right). 
  \end{align*}
  By Itô's formula, \eqref{e.070401}--\eqref{e.070402} and \eqref{e.070303}, we have 
  \begin{align*}
    dZ(t)&\leq Z(t)\left[\frac{A_T(t)}{\eps}X(t)+\frac{Ca_T(t)}{\eps}f(\beta_{t-T}\sigma)+Ca_T(t)\right]dt+Z(t)\frac{a_T(t)}{\eps}dM_t\\
    &\leq Z(t)\left[\frac{Ca_T(t)}{\eps}f(\beta_{t-T}\sigma)+Ca_T(t)\right]dt+Z(t)\frac{a_T(t)}{\eps}dM_t\\
    &=Z(t)d\Lambda_T(t)+Z(t)\frac{a_T(t)}{\eps}dM_t,
  \end{align*}
  by setting 
  \begin{align*}
    \Lambda_T(t):=\frac{C}{\eps}\int_0^t a_T(s)f(\beta_{s-T}\sigma)ds+C\int_0^t a_T(s)ds.
  \end{align*}
  The integration factor then gives
  \begin{align*}
    d\left(e^{-\Lambda_T(t)}Z(t)\right)
    \leq
    e^{-\Lambda_T(t)}Z(t)\frac{a_T(t)}{\eps}dM_t .
  \end{align*}
  Let $\tau_N$ be a localizing sequence for the stochastic integral on the right. Taking expectation on $[0,T\wedge\tau_N]$, we obtain
  \begin{align*}
    \mathbf E\left(e^{-\Lambda_T(T\wedge\tau_N)}Z(T\wedge\tau_N)\right)\leq Z(0).
  \end{align*}
  Letting $N\to\infty$ and using Fatou's lemma yield $\mathbf E Z(T)\leq Z(0)e^{\Lambda_T(T)}$, implying 
  \begin{align}\label{e.070404}
    \mathbf E\exp\left(\frac{a\|\Phi_{0,T,\beta_{-T}\sigma}^{\eps}u\|^2}{\eps}\right)\leq \exp\left(\frac{ae^{-\gamma T}\|u\|^2}{\eps}+\frac{L_F(\sigma)}{\eps}+C\right),
  \end{align}
  where
  \begin{align*}
    L_F(\sigma):=C\int_{-\infty}^0e^{\gamma r}f(\beta_r\sigma)dr<\infty, \quad \text{ for } m\text{-a.e. }  \sigma\in\Sigma. 
  \end{align*}
  For $R,T>0$, define $G_{T,R}:=\left\{u\in H:e^{-\gamma T}\|u\|^2\leq R^2/4\right\}$. 
  By Chebyshev's inequality and \eqref{e.070404}, for $u\in G_{T,R}$,
  \begin{align}\label{e.070405}
    \mathbf P\left\{\|\Phi_{0,T,\beta_{-T}\sigma}^{\eps}u\|>R\right\}\leq \exp\left(-\frac{3aR^2/4-L_F(\sigma)}{\eps}+C\right).
  \end{align}
  On the other hand, by \eqref{e.070403},
  \begin{align}\label{e.070406}
    \mu_{\beta_{-T}\sigma}^\eps(G_{T,R}^c)\leq \frac{4e^{-\gamma T}}{R^2}\int_H\|v\|^2\mu_{\beta_{-T}\sigma}^\eps(dv)\leq \frac{4e^{-\gamma T}}{R^2}K_0(\beta_{-T}\sigma).
  \end{align}
  Therefore, by stationarity, \eqref{e.070405} and \eqref{e.070406},
  \begin{align}\label{e.070407}
    \mu_\sigma^\eps(B_R^c)\leq \frac{4e^{-\gamma T}}{R^2}K_0(\beta_{-T}\sigma)+\exp\left(-\frac{3aR^2/4-L_F(\sigma)}{\eps}+C\right).
  \end{align}
  Since $K_0\in L^1(\Sigma,m)$, the Borel--Cantelli lemma implies that, for every $\eta>0$ and $m$-a.e. $\sigma\in\Sigma$, there exists $N_\eta(\sigma)$ such that
  \begin{align}\label{e.070408}
    K_0(\beta_{-n}\sigma)\leq e^{\eta n},\qquad n\geq N_\eta(\sigma).
  \end{align}
  Taking $T=T_{\eps,R}:=\left\lceil\frac{R^2}{\eps}\right\rceil,$ and using \eqref{e.070407}--\eqref{e.070408}, we obtain for fixed $R>0$,
  \begin{align*}
    \limsup_{\eps\to0}\eps\ln\mu_\sigma^\eps(B_R^c)
    \leq
    \max\left\{-(\gamma-\eta)R^2,\ L_F(\sigma)-\frac{3aR^2}{4}\right\}.
  \end{align*}
  Choosing $\eta\in(0,\gamma)$ and letting $R\to\infty$, we conclude that
  \begin{align*}
    \lim_{R\to\infty}\limsup_{\eps\to0}\eps\ln\mu_\sigma^\eps(B_R^c)=-\infty,\qquad m\text{-a.s.}.
  \end{align*}
\end{proof}

\section{Applications to damped Sine--Gordon equations}\label{s.060702}

Let $(\Sigma,m)$ be a standard Borel probability space and $(\beta_t)_{t\in\mathbb R}$ be an invertible ergodic measure-preserving dynamical system on $\Sigma$.  Fix $\mathbf{d}\in\mathbb N$ and let
\begin{align*}
  W(t)=(W_1(t),\ldots,W_{\mathbf{d}}(t))
\end{align*}
be a two-sided standard Brownian motion in $\Rb^{\mathbf{d}}$.  Let $D\subset\Rb^d$ be a bounded domain with smooth boundary and let $H:=L^2(D),\, V:=H_0^1(D),\, A=-\Delta$
with Dirichlet boundary condition.  We write $\|\cdot\|$ for the norm of $H$ and $\|u\|_1:=\|A^{1/2}u\|$ for the norm of $V$.  Let $\{e_j\}_{j\geq1}$ be an orthonormal basis of $H$ consisting of eigenfunctions of $A$, with eigenvalues $0<\lambda_1\leq\lambda_2\leq\cdots$, and let $P_N$ denote the orthogonal projection in $H$ onto $\spa\{e_1,\ldots,e_N\}$.

We consider the damped Sine--Gordon equation in a random environment
\begin{align}\label{e.060401}
  \partial_{tt}u+\alpha\partial_tu+Au+\varsigma\sin u
  =F(\beta_t\s)+\sqrt{\eps}\,Q(\beta_t\s)\dot W_t,
  \qquad u|_{\partial D}=0,
\end{align}
where $\alpha>0$ and $\varsigma\in\Rb$ are fixed constants.  Here
\begin{align*}
  Q(\sigma):\Rb^{\mathbf{d}}\to H, \qquad Q(\sigma)a=\sum_{k=1}^{\mathbf{d}}a_kq_k(\sigma),
\end{align*}
with $q_k(\sigma)\in V$.  Thus
\begin{align*}
  Q(\beta_t\sigma)\dot W_t =\sum_{k=1}^{\mathbf{d}}q_k(\beta_t\sigma)\dot W_k(t)
\end{align*}
is a finite-dimensional additive noise.  Equivalently, setting $v=\partial_tu$ and $U=(u,v)$, we write
\begin{align}\label{e.060402}
  du&=v\,dt,\nonumber\\
  dv+\big(\alpha v+Au+\varsigma\sin u\big)dt
  &=F(\beta_t\s)dt+\sqrt{\eps}\,Q(\beta_t\s)dW_t .
\end{align}
The phase space is $X:=V\times H$ with norm  $\|U\|_X^2:=\|u\|_1^2+\|v\|^2$ for $U=(u,v)$. 
Under the assumptions below, \eqref{e.060402} is well-posed in $X$ and generates a Markov cocycle, whose solution map is denoted by $\Phi_{0,t,\s}^{\eps}$.  When $\eps=0$, the deterministic solution map $\Phi_{0,t,\s}$ satisfies $\Phi_{0,s+t,\s}=\Phi_{0,t,\beta_s\s}\circ\Phi_{0,s,\s}$ for $s,t\geq0$. 

We assume throughout this section that
\begin{align}\label{e.060403}
   \|F\|_1^2\in L^1(\Sigma,m),\qquad
   Q_{1,\infty}:=\operatorname*{sup}_{\sigma\in\Sigma}
        \|Q(\sigma)\|_{\mathcal L(\Rb^{\mathbf{d}},V)}<\infty .
\end{align}
Note that the following boundedness properties hold from $Q_{1,\infty}<\infty$ as we are considering finite dimensional noise:
\begin{align}\label{e.060404}
  \|Q\|_{\mathcal L_2(\Rb^{\mathbf{d}},V)}^2\in L^1(\Sigma,m),    \qquad
   Q_{0,\infty}:=\operatorname*{sup}_{\sigma\in\Sigma}
        \|Q(\sigma)\|_{\mathcal L(\Rb^{\mathbf{d}},H)}<\infty.
\end{align}

The noise is assumed to be non-degenerate only on a fixed determining subspace.  Choose once and for all
$\rho_*>0$ sufficiently small so that
\begin{align}\label{e.060405}
   0<\rho_*<\frac{\alpha}{2},
   \qquad
   \rho_*(\alpha-\rho_*)\leq \frac{\lambda_1}{4}.
\end{align}
Let $N_0\geq1$ be so large that
\begin{align}\label{e.060406}
   \frac{|\varsigma|}{\sqrt{\lambda_{N_0+1}}}
   \leq \frac{\rho_*}{4}.
\end{align}
This explicit determining mode gap will be used to absorb the high-mode part of the Lipschitz nonlinearity.  We assume that there exists a measurable family of linear maps $R_0(\sigma):P_{N_0}H\to \Rb^{\mathbf{d}}$ 
such that
\begin{align}\label{e.060407}
   Q(\sigma)R_0(\sigma)P_{N_0}=P_{N_0},
   \qquad
   \|R_0\|_{\mathcal L(P_{N_0}H,\Rb^{\mathbf{d}})}^2\in L^1(\Sigma,m).
\end{align}
No non-degeneracy is imposed on modes above $P_{N_0}H$.

\begin{thm}\label{t.060401}
  Assume \eqref{e.060403}, \eqref{e.060406}, and \eqref{e.060407}. Then the damped Sine--Gordon Markov cocycle \eqref{e.060402} has a unique stationary measure $\mu_\s^{\eps}$ that attracts transition probabilities exponentially under the Wasserstein metric $\mathbb{W}_{\|\cdot\|_X\wedge1}$.  
  
  Moreover, for $m$-a.e. $\s\in\Sigma$, the family $\{\mu_\s^{\eps}\}_{\eps\in(0,1]}$ obeys the upper bound Freidlin--Wentzell LDP with a good rate function.  If the pullback attractor of the limiting deterministic Sine--Gordon cocycle consists of a single random point, then the full LDP holds with the same rate function.
\end{thm}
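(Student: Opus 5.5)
The plan mirrors the Navier--Stokes case: first verify $(H1)$--$(H4)$ of Theorem~\ref{t.042501}, then verify Assumption~\ref{a.050201} and apply Theorem~\ref{t.080601}. Since the system is damped-hyperbolic, the energy must be corrected. I will work with the modified energy
\[
\mathcal E_{\rho_*}(U)=\|u\|_1^2+\|v+\rho_* u\|^2,
\]
which is equivalent to $\|U\|_X^2$ by \eqref{e.060405}. Set $V(U)=U(U)=\mathcal E_{\rho_*}(U)$ and $p(U,\tilde U)=\mathcal E_{\rho_*}(U-\tilde U)$. The natural choice $S\equiv1$ should suffice, because $\sin$ is globally Lipschitz and bounded.

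\textbf{Verification of $(H1)$--$(H4)$.} Itô's formula for $\mathcal E_{\rho_*}$, together with \eqref{e.060405}, will give
\[
d\mathcal E+\rho_*\mathcal E\,dt\le C\bigl(1+\|F(\beta_t\sigma)\|^2+\varepsilon\|Q\|_{\mathcal L_2}^2\bigr)dt+dM_t .
\]
Here $|\varsigma\langle\sin u,\cdot\rangle|\le|\varsigma||D|^{1/2}\|\cdot\|$ is absorbed into the constant, and $d\langle M\rangle_t\le 4\varepsilon Q_{0,\infty}^2\mathcal E\,dt$. This yields $(H1)$ and $(H3)$.

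The constraint $\zeta>\frac{\kappa}{\varpi}\int b\,dm$ is satisfied trivially. I will take $S=\mathcal E$ in $(H3)$, i.e. let $\varpi\int S$ record the dissipation, and in $(H2)$ use $\kappa$ arbitrarily small. This is possible because the contraction in $(H2)$ will be uniform, not merely driven by the path.

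For $(H2)$, the nudged process is
\[
dY^v=\bigl(\dots+\lambda P_{N_0}(X^u-Y^u)\bigr)dt
\]
in the velocity equation. Writing $w=X-Y$, the difference satisfies
\[
w_{tt}+\alpha w_t+Aw+\varsigma(\sin X^u-\sin Y^u)=-\lambda P_{N_0}w_t .
\]
Take the $\rho_*$-energy of $w$ and split the Lipschitz term into $P_{N_0}$ and $Q_{N_0}$ parts. The high modes are controlled by $|\varsigma|\|Q_{N_0}w\|\le |\varsigma|\lambda_{N_0+1}^{-1/2}\|w\|_1$, and \eqref{e.060406} absorbs this into $\rho_*/4$. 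The low modes need damping of $P_{N_0}w$ itself, not just $P_{N_0}w_t$. I would therefore nudge with $\lambda P_{N_0}\bigl(w_t+\rho_*w\bigr)$ together with a large coefficient, or use an additional low-mode position term, so that $\mathcal E(w)$ decays like $e^{-\zeta t}$ deterministically. This gives $(H2)$ with $\kappa$ essentially $0$.

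$(H4)$ then follows exactly as in the Navier--Stokes case, through Girsanov with $h=\varepsilon^{-1/2}\lambda R_0P_{N_0}(\cdot)$, \eqref{e.060407}, and Theorem~A.5 of \cite{BKS2020}.

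\textbf{Verification of Assumption~\ref{a.050201}.}
\begin{itemize}
\item $(H1)$: the pullback attractor exists by standard results for damped wave equations with dissipative forcing in $L^2_{loc}$ along orbits.
\item $(H2)$: the uniform trajectory LDP holds for additive noise via the contraction principle, since the solution map is Lipschitz in the noise path, or via \cite{salins2019uniform}.
\item $(H5)$: the difference of the controlled and uncontrolled solutions satisfies a linear damped wave inequality. Gronwall then gives a constant $C_{R,T}(\sigma)=C\,e^{CT}\int_{-T}^0\|Q\|^2$.
\item $(H4)$ tracking: I would use Lemma~\ref{l.050602}. The same feedback $\lambda P_{N_0}(\cdot)$ as in $(H2)$ makes $\|U-\tilde U\|_X$ decay exponentially. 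Thanks to the Lipschitz structure, the constant $C_M$ is even independent of $M$, and the control difference is bounded by $\lambda^2\int\|R_0\|^2e^{-\alpha s}$.
\item $(H6)$ weak exponential tightness: repeat the exponential-moment pullback argument of the Navier--Stokes lemma with $\mathcal E$ in place of $\|u\|^2$.
\end{itemize}

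\textbf{Main obstacle.} I expect the hardest step to be $(H3)$ of Assumption~\ref{a.050201}, the compactness of the level sets $K_M(\sigma)$, because the equation has no parabolic smoothing. I would first decompose a controlled trajectory as $U=U^{1}+U^{2}$. The component $U^{1}$ solves the linear damped wave equation with initial data on the attractor and decays like $e^{-\rho_* t}$. The component $U^{2}$ has zero data and forcing
\[
F+Q\varphi-\varsigma\sin u ,
\]
which lies in $L^2_{loc}(V)$ for the $F$ and $Q\varphi$ parts by \eqref{e.060403}. The term $\sin u$ lies in $V$ with norm bounded by $\|u\|_1$, using $\nabla\sin u=\cos u\,\nabla u$, and the energy bounds are uniform in terms of $M$.

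A $D(A)\times V$ energy estimate for $U^{2}$ then gives a uniform bound in $D(A)\times V$, which embeds compactly into $X$. Combined with the decay of $U^{1}$, this shows the reachable set $\mathcal R_M(\sigma)$ is precompact. The paper's expected compact-tail estimate would do the same job, bounding $\|(I-P_N)U\|_X$ uniformly by something tending to zero as $N\to\infty$. Passing from $\mathcal R_M$ to $K_M$ is then identical to Lemma~\ref{l.070301}.
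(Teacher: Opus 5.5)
Your plan has the same architecture as the paper. You verify $(H1)$--$(H4)$ of Theorem~\ref{t.042501} with $U=V=S=\mathcal E$ and a uniformly contracting coupling, so $\kappa$ can be small and the gap in $(H3)$ is automatic. You then verify Assumption~\ref{a.050201} and invoke Theorem~\ref{t.080601} and Proposition~\ref{p.050601}. Escaping energy, tracking via Lemma~\ref{l.050602} (with an $M$-independent constant) and weak exponential tightness go as in the paper. Discard the remark $S\equiv1$: $(H3)$ needs $d\langle M\rangle_t\leq b_1S+b_2$, and $d\langle M\rangle_t$ grows like $\mathcal E$. You depart from the paper in two places, and in the first only one of the variants you list actually closes.

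\emph{The coupling.} The paper does not nudge linearly. It adds $\varsigma P_{N_0}(\sin u-\sin\tilde u)$ to the $Y$-equation, so that $w_{tt}+\alpha w_t+Aw+\varsigma(I-P_{N_0})(\sin u-\sin\tilde u)=0$. The low modes of $w$ then solve the free damped wave equation, the high modes are forced only by them, and the triangular functional $A_0L_N+H_N$ contracts with no gain parameter. Your displayed $w$-equation, with $-\lambda P_{N_0}w_t$, does not match the nudge $\lambda P_{N_0}(X^u-Y^u)$ you wrote. More importantly, a large gain on $P_{N_0}(w_t+\rho_*w)$ does not close the estimate for the plain energy $\|y\|^2+\|w\|_1^2$, $y=w_t+\rho_*w$. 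The obstruction is the term $2|\varsigma|\,\|P_{N_0}w\|\,\|(I-P_{N_0})y\|$: since $\sin$ mixes modes, the high-mode part of $\sin u-\sin\tilde u$ still carries a piece of size $\|P_{N_0}w\|$. Absorbing it needs roughly $\varsigma^2\lesssim\rho_*(\alpha-\rho_*)\lambda_1$, which is not assumed. The feedback only adds $-2\lambda\|P_{N_0}y\|^2$; it neither damps $(I-P_{N_0})y$ nor improves the dissipation of $P_{N_0}w$.

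Your other option, the low-mode position term, is what works. With feedback $\mu P_{N_0}w$, take $p:=\|y\|^2+\|w\|_1^2+\mu\|P_{N_0}w\|^2$. Then $\frac{d}{dt}p$ gains $-2\rho_*\mu\|P_{N_0}w\|^2$, which dominates $\frac{4\varsigma^2}{\alpha-\rho_*}\|P_{N_0}w\|^2$ once $\mu\geq 4\varsigma^2/(\rho_*(\alpha-\rho_*))$. The high-mode part is absorbed by \eqref{e.060406}, and you get $\frac{d}{dt}p\leq-\frac{3\rho_*}{4}p$. Alternatively, keep the velocity feedback with $\lambda\gg A_0$ and use a weighted functional as the paper does. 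Take this $p$ itself as the premetric, so that $(H2)$ holds with constant one. The Girsanov shift becomes $\varepsilon^{-1/2}\mu R_0P_{N_0}w$, and $\mu^2$ only enters $b_3$.

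\emph{Compactness of $K_M(\sigma)$.} Your splitting $U=U^1+U^2$ with a $D(A)\times V$ bound on $U^2$ is a valid alternative to the paper's uniform tail estimate \eqref{e.060436}. Both use the same inputs: $\|g\|_1\leq\|F\|_1+Q_{1,\infty}\|\varphi\|+|\varsigma|\,\|u\|_1$ (using $\|\sin u\|_1\leq\|u\|_1$), $\|F\|_1^2\in L^1(\Sigma,m)$, and the weighted bound \eqref{e.060435}. Together these bound $\int_0^Te^{-c(T-s)}\|g(s)\|_1^2\,ds$ uniformly in $T$ and the path. Your route proves more, a genuine regularity bound for $U^2(T)$, at the cost of a second-level energy estimate. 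The paper stays at the level of $X$: it runs the energy estimate on the $(I-P_n)$-projected equation and gains $\lambda_{n+1}^{-1}$ from the $V$-regularity of the forcing. In your version, spell out what makes $U^1(T)$ small; decay of $U^1$ at a fixed, possibly short, $T$ gives nothing. Every endpoint in $\mathcal R_M(\sigma)$ can be reached with arbitrarily long travel time by prepending zero-control segments on the invariant attractor. Moreover $e^{-\rho_*T}\sup_{a\in\mathcal A(\beta_{-T}\sigma)}\|a\|_X^2\to0$ by temperedness.
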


\begin{rem}\label{r.070301}
  The trajectory Freidlin--Wentzell LDP in Theorem \ref{t.060401} is standard for the additive noise damped wave equation with globally Lipschitz nonlinearity.  In the present case the map $u\mapsto\sin u$ is globally Lipschitz from $H$ to $H$, and the skeleton equation below is globally well-posed in $X$.  Thus the usual weak-convergence or contraction-principle \cite{salins2019uniform,budhiraja2008large} gives the required uniform finite horizon LDP on bounded subsets of $X$.
\end{rem}

\subsection{Ergodicity and exponential mixing}

For the ergodicity argument we use the equivalent energy
\begin{align*}
   \mathcal E(U):=\|v+\rho_*u\|^2+\|u\|_1^2,
   \qquad U=(u,v)\in X.
\end{align*}
By \eqref{e.060405} and Poincar\'{e}'s inequality, there exist constants $0<c_E<C_E<\infty$ such that
\begin{align}\label{e.060408}
   c_E\|U\|_X^2\leq \mathcal E(U)\leq C_E\|U\|_X^2,
   \qquad U\in X.
\end{align}

\begin{thm}\label{t.060402}
  Under the assumptions of Theorem \ref{t.060401}, the Markov cocycle generated by \eqref{e.060402} has a unique stationary measure that is exponentially mixing under the Wasserstein metric $W_{\|\cdot\|_X\wedge1}$.
\end{thm}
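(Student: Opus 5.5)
The plan is to verify hypotheses $(H1)$--$(H4)$ of Theorem \ref{t.042501} on $E=X$, in parallel with the Navier--Stokes case. The choices are
\[
V(U)=U(U)=\mathcal E(U),\qquad p(U,\tilde U)=\mathcal E(U-\tilde U),\qquad S\equiv 0\ \text{(or a constant)}.
\]
Because $\sin$ is globally Lipschitz, no enstrophy-type quantity $S$ is needed. By \eqref{e.060408}, $\|\cdot\|_X^2\le c_E^{-1}p$, so \eqref{e.042604} holds with $\varrho=1/2$. The conclusion in $\W_{\|\cdot\|_X\wedge 1}$ then follows from Theorem \ref{t.042501}, up to constants, and Remark \ref{r.060401} makes the exceptional set independent of $\varepsilon$.

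\textbf{Step 1: $(H1)$ and $(H3)$.} Apply Itô's formula to $\mathcal E(U_t)$ with $z=v+\rho_*u$, so that
\[
dz=\big(-(\alpha-\rho_*)z+\rho_*(\alpha-\rho_*)u-Au-\varsigma\sin u+F\big)dt+\sqrt\varepsilon Q\,dW.
\]
The cross terms $\langle Au,z\rangle$ and $\partial_t\|u\|_1^2=2\langle Au,z-\rho_*u\rangle$ cancel, which leaves
\[
-2(\alpha-\rho_*)\|z\|^2-2\rho_*\|u\|_1^2+2\rho_*(\alpha-\rho_*)\langle u,z\rangle .
\]
The last term is absorbed using \eqref{e.060405} and Poincaré. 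The terms $|\varsigma|\,|D|^{1/2}\|z\|$ and $\langle F,z\rangle$ are handled by Young's inequality and contribute to $K(\sigma)=C(1+\|F(\sigma)\|^2+\varepsilon\|Q(\sigma)\|_{\mathcal L_2}^2)\in L^1$. This gives
\[
d\mathcal E+\gamma\mathcal E\,dt\le K(\beta_t\sigma)\,dt+dM_t,\qquad d\langle M\rangle_t=4\varepsilon\|Q^*z\|^2dt\le C Q_{0,\infty}^2\,\mathcal E\,dt .
\]
Taking expectations yields $(H1)$, and $U,p$ are bounded on sublevel sets. For $(H3)$ with $S=0$ the martingale bound $b_1S+b_2$ fails, since $\mathcal E$ is not integrable in the environment. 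The fix I intend is to take $S(U)=\mathcal E(U)$ instead. The pathwise inequality above gives $U(X_t)+\gamma\int_0^t S\le U(x)+\int_0^t b+M_t$ with $\varpi=\gamma$ and $b_1=CQ_{0,\infty}^2$. In $(H2)$ we then simply use $\kappa$ as small as we like, because the contraction below is deterministic. This makes $\zeta>\frac{\kappa}{\varpi}\int b\,dm$ trivially true.

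\textbf{Step 2: $(H2)$, the key estimate.} Take $Y$ solving the same equation with the extra feedback $\lambda P_{N_0}(u-\tilde u)$ in the $v$-equation. Set $w=u-\tilde u$ and $\tilde z=\partial_t w+\rho_* w$, so the noise cancels and the difference solves a deterministic damped wave equation. The energy computation on $\mathcal E(w,\partial_tw)$ gives a dissipation of order $c\,\mathcal E$. The nonlinear term satisfies $|\varsigma|\,\|\sin u-\sin\tilde u\|\le|\varsigma|\,\|w\|$. Split $w=P_{N_0}w+(I-P_{N_0})w$: the high-mode part obeys $\|(I-P_{N_0})w\|\le\lambda_{N_0+1}^{-1/2}\|w\|_1$ and is absorbed by \eqref{e.060406}. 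The low-mode part is controlled by the feedback $\lambda P_{N_0}w$, which is where I expect the main difficulty. The feedback enters the $\tilde z$-equation as $-\lambda\langle P_{N_0}w,\tilde z\rangle$ and is not sign-definite, so I will first modify the energy by adding $\lambda\|P_{N_0}w\|^2$. This turns the feedback into a dissipative contribution $-2\lambda\rho_*\|P_{N_0}w\|^2$ plus a term that cancels. For $\lambda$ large, the low-mode Lipschitz term is then absorbed. The premetric $p$ must use this modified, still equivalent, energy. The outcome is $p(X_t,Y_t)\le p(x,y)e^{-\zeta t}$, which is $(H2)$ with $\kappa=0$, or any $\kappa>0$.

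\textbf{Step 3: $(H4)$.} Since $QR_0P_{N_0}=P_{N_0}$, $Y$ is the true solution driven by the shifted noise $W+\int_0^\cdot h$ with
\[
h=\varepsilon^{-1/2}\lambda R_0(\beta_t\sigma)P_{N_0}(u-\tilde u).
\]
Hence $\|h\|^2\le b_3(\beta_t\sigma)\,p(X_t,Y_t)$ with $b_3=C\varepsilon^{-1}\lambda^2\|R_0\|^2\in L^1$. Theorem A.5 of \cite{BKS2020} then gives both total variation bounds exactly as in the Navier--Stokes proof. With all four hypotheses in place, Theorem \ref{t.042501} applies.
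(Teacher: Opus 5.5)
Your proposal is correct. Steps 1 and 3 coincide with the paper's verification: the paper also takes $U=V=S=\mathcal E$, $\varpi=c_0$, $b_1=CQ_{0,\infty}^2$, $b_2=0$, and gets the strict gap in $(H3)$ by shrinking $\kappa$, since the coupling contracts deterministically. The genuine difference is the coupling used for $(H2)$. The paper does not use a linear feedback. Instead it adds $\varsigma P_{N_0}(\sin u-\sin\tilde u)$ to the equation for $\tilde u$, so that the low-mode part of the nonlinearity of $Y$ is replaced by that of $X$. The low modes of the difference then solve an unforced linear damped wave equation, and $L_N$ decays at rate $\rho_*$ by itself. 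After \eqref{e.060406} is used, the high-mode energy satisfies $H_N'+\frac{\rho_*}{2}H_N\le C_{\rm sg}L_N$, and the premetric is the weighted sum $A_0L_N+H_N$. The paper's Girsanov drift is $-\varsigma\varepsilon^{-1/2}R_0P_{N_0}(\sin u-\sin\tilde u)$, of size $|\varsigma|$, with no gain to tune. Your route uses the Navier--Stokes type feedback $\lambda P_{N_0}(u-\tilde u)$ with a single energy adapted to $A+\lambda P_{N_0}$, and the computation you sketch does close. The terms $\mp2\lambda\langle P_{N_0}w,\tilde z\rangle$ cancel. The term $-2\lambda\rho_*\|P_{N_0}w\|^2$ absorbs the low-mode Lipschitz contribution $4\varsigma^2(\alpha-\rho_*)^{-1}\|P_{N_0}w\|^2$ once $\lambda\geq 4\varsigma^2/(\rho_*(\alpha-\rho_*))$. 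The high modes and the cross term $2\rho_*(\alpha-\rho_*)\langle w,\tilde z\rangle$ are absorbed using \eqref{e.060406} and \eqref{e.060405}. Your version buys a one-piece energy and a coupling that uses only global Lipschitz continuity of the nonlinearity, in line with the Navier--Stokes section. The price is a large gain, which enters $b_3$ as a factor $\lambda^2$; this is harmless, since only $b_3\in L^1$ matters. The paper's coupling makes the low-mode decay exact without choosing a gain, at the cost of the two-level $L_N/H_N$ bookkeeping. Finally, the constant in $\rho\le c_E^{-1/2}p^{1/2}$ is removed simply by rescaling $p$.
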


\begin{proof}
  We verify the assumptions of Theorem \ref{t.042501}.  Given two initial states $U_0=(u_0,v_0)$ and $\widetilde U_0=(\widetilde u_0,\widetilde v_0)$, let $X_t^\s=(u(t),v(t))=\Phi_{0,t,\s}^{\eps}U_0$ and set $U(U_0)=V(U_0)=\mathcal E(U_0)$ and  $S(U_0)=\mathcal E(U_0).$
  Define the premetric
  \begin{align}\label{e.060409}
      p(U_1,U_2):=\|w_1\|_1^2+\|w_2+\rho_*w_1\|^2,
      \qquad U_i=(u_i,v_i),
  \end{align}
  where $w_1=u_1-u_2$ and $w_2=v_1-v_2$.  By \eqref{e.060408}, $p$ is equivalent to the square of the $X$-distance.

  We first verify the Lyapunov estimate.  Let $r(t):=v(t)+\rho_*u(t)$. 
  From \eqref{e.060402},
  \begin{align*}
    dr+ (\alpha-\rho_*)r\,dt =\big(\rho_*(\alpha-\rho_*)u-Au-\varsigma\sin u+F(\beta_t\s)\big)dt +\sqrt\eps Q(\beta_t\s)dW_t .
  \end{align*}
  Applying Ito's formula to $\mathcal E(X_t^\s)=\|r(t)\|^2+\|u(t)\|_1^2$ and using Poincare's inequality, \eqref{e.060405}, and Young's inequality, we obtain
  \begin{align}\label{e.060410}
     d\mathcal E(X_t^\s)+c_0\mathcal E(X_t^\s)dt
     \leq C_0\left(1+\|F(\beta_t\s)\|^2+\eps\|Q(\beta_t\s)\|_{\mathcal L_2(\Rb^{\mathbf{d}},H)}^2\right)dt+dM_t^\s,
  \end{align}
  where $c_0,C_0>0$ are constants and $dM_t^\s=2\sqrt\eps\langle r(t),Q(\beta_t\s)dW_t\rangle$. 
  Consequently,
  \begin{align*}
    d\langle M^\s\rangle_t =4\eps\|Q(\beta_t\s)^*r(t)\|_{\Rb^{\mathbf{d}}}^2dt \leq C Q_{0,\infty}^2\mathcal E(X_t^\s)dt .
  \end{align*}

  Taking expectations in \eqref{e.060410} gives assumption $(H1)$, with
  \begin{align*}
    K(\sigma)=C\left(1+\|F(\sigma)\|^2+\|Q(\sigma)\|_{\mathcal L_2(\Rb^{\mathbf{d}},H)}^2\right), \qquad K\in L^1(\Sigma,m).
  \end{align*}
  Moreover, since $\eps\in(0,1]$, integrating \eqref{e.060410} gives
  \begin{align*}
    \mathcal E(X_t^\s)+c_0\int_0^t\mathcal E(X_s^\s)\,ds \leq \mathcal E(X_0^\s)+\int_0^t b(\beta_s\s)\,ds+M_t^\s,
  \end{align*}
  where
  \begin{align*}
    b(\sigma):=C_0\left(1+\|F(\sigma)\|^2+\|Q(\sigma)\|_{\mathcal L_2(\Rb^{\mathbf{d}},H)}^2\right)\in L^1(\Sigma,m).
  \end{align*}
  Thus the structural part of $(H3)$ holds with $U=S=\mathcal E,\, \varpi=c_0, \, b_1=CQ_{0,\infty}^2$ and $b_2=0.$
  The remaining strict gap in $(H3)$ will be verified after the contraction estimate below.  Indeed, the Sine--Gordon coupling gives a pure exponential contraction, and hence $(H2)$ holds with any sufficiently small $\kappa>0$.

  We next verify the contraction estimate.  Let $Y_t^\s=(\widetilde u(t),\widetilde v(t))$ solve the controlled equation
  \begin{align}\label{e.060430}
    d\widetilde u&=\widetilde v\,dt,\nonumber\\
    d\widetilde v+\big(\alpha\widetilde v+A\widetilde u+\varsigma\sin\widetilde u
        +\varsigma P_{N_0}(\sin u-\sin\widetilde u)\big)dt
    &=F(\beta_t\s)dt+\sqrt\eps Q(\beta_t\s)dW_t,
  \end{align}
  with initial data $\widetilde U_0$. For simplicity, write $N=N_0$ and $\Pi_N:=I-P_N$. 
  Set $w=u-\widetilde u$ and $z=v-\widetilde v=\partial_tw$.  Then
  \begin{align}\label{e.060412}
      \partial_{tt}w+\alpha\partial_tw+Aw
      +\varsigma\Pi_{N}(\sin u-\sin\widetilde u)=0 .
  \end{align}
  Set $y=z+\rho_*w$.  Then $\partial_tw=z=y-\rho_*w$, and from \eqref{e.060412} we infer
  \begin{align}\label{e.060411}
      \partial_t y+(\alpha-\rho_*)y+Aw
      -\rho_*(\alpha-\rho_*)w
      =-\varsigma\Pi_{N}(\sin u-\sin\widetilde u).
  \end{align}
  Define
  \begin{align*}
      L_N(t):=\|P_{N}y(t)\|^2+\|P_{N}w(t)\|_1^2, \, \text{ and } \, H_N(t):=\|\Pi_{N}y(t)\|^2+\|\Pi_{N}w(t)\|_1^2 .
  \end{align*}

  We first estimate the low modes.  Projecting \eqref{e.060411} onto $P_{N}H$, the right-hand side vanishes.  Hence
  \begin{align*}
    \partial_t P_{N}y+(\alpha-\rho_*)P_{N}y+AP_{N}w -\rho_*(\alpha-\rho_*)P_{N}w=0,
  \end{align*}
  and
  \begin{align*}
    \partial_t P_{N}w=P_{N}y-\rho_*P_{N}w .
  \end{align*}
  Taking the scalar product of the first equation with $2P_{N}y$ and using the second equation gives
  \begin{align*}
      \frac d{dt}L_N =-2(\alpha-\rho_*)\|P_{N}y\|^2-2\rho_*\|P_{N}w\|_1^2+2\rho_*(\alpha-\rho_*)\langle P_{N}w,P_{N}y\rangle .
  \end{align*}
  By Poincare's inequality and Young's inequality,
  \begin{align*}
      2\rho_*(\alpha-\rho_*)|\langle P_{N}w,P_{N}y\rangle|
      &\leq
      2\rho_*(\alpha-\rho_*)\lambda_1^{-1/2}
      \|P_{N}w\|_1\|P_{N}y\| \\
      &\leq
      \rho_*\|P_{N}w\|_1^2
      +\frac{\rho_*(\alpha-\rho_*)^2}{\lambda_1}\|P_{N}y\|^2 .
  \end{align*}
  Therefore, by \eqref{e.060405},
  \begin{align}\label{e.060413}
    \frac d{dt}L_N+\rho_*L_N\leq0 .
  \end{align}

  We now estimate the high modes.  Projecting \eqref{e.060411} onto $\Pi_{N}H$ and using $\partial_t\Pi_{N}w=\Pi_{N}y-\rho_*\Pi_{N}w$, we obtain
  \begin{align*}
      \frac d{dt}H_N
      &+2(\alpha-\rho_*)\|\Pi_{N}y\|^2+2\rho_*\|\Pi_{N}w\|_1^2\\
      &\leq2\rho_*(\alpha-\rho_*)|\langle\Pi_{N}w,\Pi_{N}y\rangle|
      +2|\varsigma|\,\|\Pi_{N}(\sin u-\sin\widetilde u)\|\|\Pi_{N}y\| .
  \end{align*}
  The first term on the right is estimated by the inverse Poincare inequality on high modes:
  \begin{align*}
      2\rho_*(\alpha-\rho_*)|\langle\Pi_{N}w,\Pi_{N}y\rangle|
      &\leq
      2\rho_*(\alpha-\rho_*)\lambda_{N_0+1}^{-1/2}
      \|\Pi_{N}w\|_1\|\Pi_{N}y\|                                      \\
      &\leq
      \frac{\rho_*}{4}\|\Pi_{N}w\|_1^2
      +\frac{4\rho_*(\alpha-\rho_*)^2}{\lambda_{N_0+1}}\|\Pi_{N}y\|^2  \leq
      \frac{\rho_*}{4}\|\Pi_{N}w\|_1^2
      +(\alpha-\rho_*)\|\Pi_{N}y\|^2 .
  \end{align*}
  In the last inequality we used $4\rho_*(\alpha-\rho_*)\leq\lambda_1\leq\lambda_{N_0+1}$, which follows from \eqref{e.060405}.

  For the nonlinear term, we use the global Lipschitz property of the nonlinear term:
  \begin{align*}
    \|\Pi_{N}(\sin u-\sin\widetilde u)\| \leq \|\sin u-\sin\widetilde u\| \leq \|w\| \leq \|P_{N}w\|+\|\Pi_{N}w\|.
  \end{align*}
  Hence
  \begin{align*}
      2|\varsigma|\,\|\Pi_{N}(\sin u-\sin\widetilde u)\|\|\Pi_{N}y\|
      &\leq2|\varsigma|\|P_{N}w\|\|\Pi_{N}y\|
       +2|\varsigma|\|\Pi_{N}w\|\|\Pi_{N}y\|\\
      &\leq\frac{\alpha-\rho_*}{4}\|\Pi_{N}y\|^2
      +\frac{4|\varsigma|^2}{(\alpha-\rho_*)\lambda_1}\|P_{N}w\|_1^2\\
      &\quad+\frac{\alpha-\rho_*}{4}\|\Pi_{N}y\|^2
      +\frac{4|\varsigma|^2}{(\alpha-\rho_*)\lambda_{N_0+1}}\|\Pi_{N}w\|_1^2 .
  \end{align*}
  As $\rho_*\leq\alpha-\rho_*$, the determining mode gap \eqref{e.060406} implies $  \frac{4|\varsigma|^2}{(\alpha-\rho_*)\lambda_{N_0+1}} \leq\frac{\rho_*}{4}.$
  Therefore, with $C_{\rm sg}:=\frac{4|\varsigma|^2}{(\alpha-\rho_*)\lambda_1}$, 
  we have
  \begin{align*}
    2|\varsigma|\,\|\Pi_{N}(\sin u-\sin\widetilde u)\|\|\Pi_{N}y\| \leq\frac{\alpha-\rho_*}{2}\|\Pi_{N}y\|^2 +\frac{\rho_*}{4}\|\Pi_{N}w\|_1^2+C_{\rm sg}L_N .
  \end{align*}
  Combining the preceding estimates gives
  \begin{align}\label{e.060414}
      \frac d{dt}H_N+\frac{\rho_*}{2}H_N
      \leq C_{\rm sg}L_N .
  \end{align}

  Now let $A_0\geq \max\left\{1,\frac{2C_{\rm sg}}{\rho_*}\right\}$
  and define $p_*(U,\widetilde U):=A_0L_N+H_N$. 
  Multiplying \eqref{e.060413} by $A_0$ and adding \eqref{e.060414}, we obtain
  \begin{align*}
      \frac d{dt}p_*(X_t^\s,Y_t^\s)\leq-(A_0\rho_*-C_{\rm sg})L_N(t)-\frac{\rho_*}{2}H_N(t)\leq-\frac{\rho_*}{2}\big(A_0L_N(t)+H_N(t)\big).
  \end{align*}
  Thus
  \begin{align}\label{e.060415}
      p_*(X_t^\s,Y_t^\s)
      \leq e^{-\rho_*t/2}p_*(U_0,\widetilde U_0),
      \qquad t\geq0 .
  \end{align}
  Since $L_N+H_N=\|y\|^2+\|w\|_1^2$ and $A_0\geq1$, the premetric $p_*$ is equivalent to the original premetric \eqref{e.060409}.  We relabel $p_*$ as $p$ and set $\zeta:=\frac{\rho_*}{2}$. 
  This verifies $(H2)$ in the form
  \begin{align*}
    p(X_t^\s,Y_t^\s)\leq e^{-\zeta t}p(U_0,\widetilde U_0).
  \end{align*}
  Since $S=\mathcal E\geq0$, the stated form of $(H2)$ also holds with the same $\zeta$ and with any fixed $\kappa>0$:
  \begin{align*}
    p(X_t^\s,Y_t^\s) \leq p(U_0,\widetilde U_0)\exp\left(-\zeta t+\kappa\int_0^tS(X_s^\s)\,ds\right).
  \end{align*}
  We now choose $\kappa>0$ sufficiently small so that $\zeta>\frac{\kappa}{c_0}\int_\Sigma b(\sigma)\,m(d\sigma).$
  Since $\varpi=c_0$ in $(H3)$, this is exactly the strict gap condition required in $(H3)$.

  It remains to verify the Girsanov condition $(H4)$.  Since \eqref{e.060407} holds, the process $Y_t^\s$ in \eqref{e.060430} is the solution of the original Sine--Gordon equation starting from $\widetilde U_0$ driven by the shifted Wiener path
  \begin{align*}
  W(t)+\int_0^t h(s)ds, \qquad h(s)=-\frac{\varsigma}{\sqrt\eps} R_0(\beta_s\s)P_{N_0}(\sin u(s)-\sin\widetilde u(s)).
\end{align*}
  Therefore, using $|\sin a-\sin b|\leq |a-b|$ and \eqref{e.060415},
  \begin{align*}
      \int_0^t\|h(s)\|_{\Rb^{\mathbf{d}}}^2ds
      \leq \frac{C\varsigma^2}{\eps}p(U_0,\widetilde U_0)
      \int_0^t\|R_0(\beta_s\s)\|_{\mathcal L(P_{N_0}H,\Rb^{\mathbf{d}})}^2e^{-\zeta s}ds .
  \end{align*}
  Since $\|R_0\|_{\mathcal L(P_{N_0}H,\Rb^{\mathbf{d}})}^2\in L^1(\Sigma,m)$, the integral on the right is finite for $m$-a.e. $\s$.  The same Girsanov estimate used in the Navier--Stokes example verifies $(H4)$ for each fixed $\eps\in(0,1]$, with
  \begin{align*}
    b_3^\eps(\sigma)=\frac{C\varsigma^2}{\eps}\|R_0(\sigma)\|_{\mathcal L(P_{N_0}H,\Rb^{\mathbf{d}})}^2 .
  \end{align*}
  The conclusions now follow from Theorem \ref{t.042501}.
\end{proof}

\subsection{The Freidlin--Wentzell LDP}

The controlled Sine--Gordon equation for the symbol $\sigma$ and a control $\varphi\in L^2_{\rm loc}(\Rb;\Rb^{\mathbf{d}})$ is
\begin{align*}
  \partial_{tt}u+\alpha\partial_tu+Au+\varsigma\sin u
  =F(\beta_t\s)+Q(\beta_t\s)\varphi,
  \qquad u|_{\partial D}=0 .
\end{align*}
The solution with initial condition $U_0=(u_0,v_0)\in X$ is denoted by
$\Phi_{0,t,\s}^{\varphi}U_0$.  For $U\in C([0,T];X)$, define
\begin{align*}
   I_T^\s(U)=\inf_\varphi J_T^\s(\varphi),
   \qquad
   J_T^\s(\varphi)=\frac12\int_0^T\|\varphi(s)\|_{\Rb^{\mathbf{d}}}^2ds,
\end{align*}
where the infimum is taken over all $\varphi\in L^2(0,T;\Rb^{\mathbf{d}})$ such that
$U(t)=\Phi_{0,t,\s}^{\varphi}U(0)$ for all $t\in[0,T]$.  If no such control exists, then $I_T^\s(U)=\infty$.

It is known \cite{wang2007pullback,chepyzhov2005sinegordon} that the deterministic Sine--Gordon cocycle has a compact pullback attractor
$\Ac(\s)\subset X$ satisfying $\Phi_{0,t,\s}\Ac(\s)=\Ac(\beta_t\s)$ for all $t\geq0$  and for every bounded set $B\subset X$,
\begin{align*}
  \lim_{t\to\infty}d_X\big(\Phi_{0,t,\beta_{-t}\s}B,\Ac(\s)\big)=0, \qquad m\text{-a.s.}
\end{align*}
Let $P_{0,t,\s}^{\eps}$ be the Markov operators and let $\mu_\s^{\eps}$ be the stationary measure of \eqref{e.060402}, so that
\begin{align*}
  \mu_\s^{\eps}P_{0,t,\s}^{\eps}=\mu_{\beta_t\s}^{\eps}.
\end{align*}
The quasipotential $E_{\Ac(\sigma)}$ and its level sets $K_M(\sigma)$
are defined by the same formula as \eqref{e.073003}, with the Hilbert space there replaced by the present phase space $X$.

In view of Remark \ref{r.070301}, to prove the Freidlin--Wentzell LDP for the stationary measure by Theorem \ref{t.080601}, it remains to verify the tracking property, the nontrivial escaping
energy property, the precompactness of the level sets and the weak exponential tightness. 

We first verify the tracking property and the nontrivial escaping energy property in Assumption \ref{a.050201}.

\begin{lem}
  Under the assumptions of Theorem \ref{t.060401}, the tracking property \eqref{e.072701} and the nontrivial escaping energy property \eqref{e.070101} in Assumption \ref{a.050201} are true for the Sine--Gordon cocycle.
\end{lem}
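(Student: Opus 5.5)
For the tracking property I will verify the hypotheses of Lemma \ref{l.050602}, mirroring the Navier--Stokes argument but using the low/high mode splitting already exploited in Theorem \ref{t.060402}. Fix $\xi\in\Sigma$, $M>0$, $t>0$, a reference controlled path $U=(u,\partial_t u)=\Phi^{\varphi_u}_{0,\cdot,\xi}U_0$ with $J_t^\xi(\varphi_u)\le M$ and $d_X(U_0,\Ac(\xi))<1$, and $V_0\in\Ac(\xi)$. The companion path $\widetilde U=(\widetilde u,\partial_t\widetilde u)$ solves the controlled equation with control $\varphi_u$ and the additional feedback $-\varsigma P_{N_0}(\sin u-\sin\widetilde u)$, exactly as in \eqref{e.060430}. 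By \eqref{e.060407}, $\widetilde U$ is generated by the admissible control
\[
\varphi_v(s)=\varphi_u(s)-\varsigma R_0(\beta_s\xi)P_{N_0}\bigl(\sin u(s)-\sin\widetilde u(s)\bigr).
\]
The difference $w=u-\widetilde u$ then satisfies \eqref{e.060412} verbatim, because both controls and forces cancel. The low/high estimates \eqref{e.060413}--\eqref{e.060415} therefore give $p_*(U(s),\widetilde U(s))\le e^{-\rho_* s/2}p_*(U_0,V_0)$ with deterministic constants. This yields \eqref{e.050605} with a deterministic constant. For \eqref{e.050606} I will use $\|\sin u-\sin\widetilde u\|\le\|w\|\le\lambda_1^{-1/2}\|w\|_1$ to obtain
\[
\int_0^t\|\varphi_u-\varphi_v\|^2\,ds\le C\varsigma^2\|U_0-V_0\|_X^2\int_0^\infty\|R_0(\beta_s\xi)\|^2e^{-\rho_* s/2}\,ds .
\]
The right-hand side is finite for $m$-a.e. $\xi$ by Fubini, since $\|R_0\|^2\in L^1$. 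Notably, no $M$-dependence or bound on the reference path is needed, because the Lipschitz nonlinearity makes the contraction independent of the trajectory. This is simpler than the Navier--Stokes case.

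For the escaping energy property \eqref{e.070101}, I set $U(s)=\Phi^{\varphi}_{0,s,\beta_{-T}\sigma}U_0$, $\bar U(s)=\Phi_{0,s,\beta_{-T}\sigma}U_0$, and $W=U-\bar U=(w,\partial_t w)$, with $W(0)=0$. Then
\[
\partial_{tt}w+\alpha\partial_t w+Aw+\varsigma(\sin u-\sin\bar u)=Q(\beta_{s-T}\sigma)\varphi(s).
\]
Applying the energy functional $\mathcal E$ to $W$, and using the Lipschitz bound $|\varsigma|\|\sin u-\sin\bar u\|\le|\varsigma|\lambda_1^{-1/2}\|w\|_1$ together with $\|Q\varphi\|\le Q_{0,\infty}\|\varphi\|$, gives
\[
\tfrac{d}{ds}\mathcal E(W)^{1/2}\le C\,\mathcal E(W)^{1/2}+C Q_{0,\infty}\|\varphi(s)\|.
\]
Gronwall and Cauchy--Schwarz then give $\|W(T)\|_X^2\le C e^{CT}T\,Q_{0,\infty}^2\,J_{U_0,T}^{\beta_{-T}\sigma}(\varphi)$. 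So \eqref{e.070101} holds with a deterministic constant, even uniform in $R$.

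The only delicate point is the tracking step, specifically checking that the combined functional $p_*$ is equivalent to $\|\cdot\|_X^2$ uniformly. This follows from \eqref{e.060408} and $A_0\ge1$, so that the constant $C_M(\xi)$ in Lemma \ref{l.050602} is finite and measurable in $\xi$. Measurability follows from the measurability of $R_0$ and of the flow. I expect no substantive obstacle beyond carefully tracking the constants.
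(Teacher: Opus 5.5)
Your proposal is correct and follows the paper's proof essentially step for step. It uses the same $P_{N_0}$-feedback companion path with control $\varphi_v=\varphi_u-\varsigma R_0(\beta_s\xi)P_{N_0}(\sin u-\sin\widetilde u)$, the deterministic contraction \eqref{e.060415} for \eqref{e.050605}, and the Fubini bound on $\int_0^\infty\|R_0(\beta_s\xi)\|^2e^{-\zeta s}ds$ for \eqref{e.050606}; for \eqref{e.070101} you apply Gronwall to $\mathcal E(W)^{1/2}$ and pick up a harmless factor $T$ via Cauchy--Schwarz, whereas the paper applies Gronwall to $\mathcal E(W)$ directly, and both yield a deterministic $C_{R,T}$.
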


\begin{proof}
  The proof is divided into two steps as in the case of Navier-Stokes equations. 

  \emph{Step 1: verification of Lemma \ref{l.050602}.}
  Fix $\xi\in\Sigma$, $M>0$, and $t>0$.  Let $U=(u,u_t)$ be a controlled path
  with control $\varphi_u$ satisfying  $d_X(U(0),\Ac(\xi))<1$
  and 
  \[\frac12\int_0^t\|\varphi_u(s)\|_{\Rb^{\mathbf{d}}}^2ds\leq M.\]  
  For any $V_0=(\widetilde u_0,\widetilde v_0)\in\Ac(\xi)$, define $V=(\widetilde u,\widetilde u_t)$ by
  \begin{align}\label{e.060433}
      \widetilde u_{tt}+\alpha\widetilde u_t+A\widetilde u+\varsigma\sin\widetilde u
      +\varsigma P_{N_0}(\sin u-\sin\widetilde u)
      =F(\beta_t\xi)+Q(\beta_t\xi)\varphi_u,
      \qquad V(0)=V_0.
  \end{align}
  Since $Q(\sigma)R_0(\sigma)P_{N_0}=P_{N_0}$, \eqref{e.060433} is the original controlled equation with the control
  \begin{align}\label{e.060431}
      \varphi_v(t)=\varphi_u(t)-\varsigma R_0(\beta_t\xi)P_{N_0}(\sin u(t)-\sin\widetilde u(t)).
  \end{align}
  Let $w=u-\widetilde u$.  Then $w$ solves \eqref{e.060412} with $\s$ replaced by $\xi$.  The deterministic estimate \eqref{e.060415} gives
  \begin{align}\label{e.060432}
      \|U(s)-V(s)\|_X^2
      \leq C e^{-\zeta s}\|U(0)-V_0\|_X^2,
      \qquad 0\leq s\leq t.
  \end{align}
  In particular,
  \begin{align*}
    \sup_{0\leq s\leq t}\|U(s)-V(s)\|_X^2 \leq C\|U(0)-V_0\|_X^2 .
  \end{align*}
  Moreover, by \eqref{e.060431}, the Lipschitz property of $\sin$, and \eqref{e.060432},
  \begin{align*}
      \int_0^t\|\varphi_v(s)-\varphi_u(s)\|_{\Rb^{\mathbf{d}}}^2ds
      &\leq C\varsigma^2\|U(0)-V_0\|_X^2
      \int_0^t\|R_0(\beta_s\xi)\|_{\mathcal L(P_{N_0}H,\Rb^{\mathbf{d}})}^2e^{-\zeta s}ds  \\
      &\leq C(\xi)\|U(0)-V_0\|_X^2,
  \end{align*}
  where
  \begin{align*}
    C(\xi):=C\varsigma^2\int_0^\infty \|R_0(\beta_s\xi)\|_{\mathcal L(P_{N_0}H,\Rb^{\mathbf{d}})}^2e^{-\zeta s}ds<\infty
  \end{align*}
  for $m$-a.e. $\xi$, by Fubini's theorem and \eqref{e.060407}.  This proves the conditions of Lemma \ref{l.050602}, implying \eqref{e.072701}. 

  \emph{Step 2: verification of \eqref{e.070101}.}
  Let $U(t)=\Phi_{0,t,\beta_{-T}\s}^{\varphi}U_0$, $\bar U(t)=\Phi_{0,t,\beta_{-T}\s}U_0$ and $\mathcal{U}(t)=U(t)-\bar U(t)$. 
  Then $\mathcal{U}=(w,w_t)$ satisfies
  \begin{align*}
     w_{tt}+\alpha w_t+Aw+\varsigma (\sin u-\sin\bar u)
     =Q(\beta_{t-T}\s)\varphi(t),
     \qquad \mathcal{U}(0)=0.
  \end{align*}
  Applying the energy estimate based on $w_t+\rho_*w$ , we get
  \begin{align*}
    \frac d{dt}\mathcal E(\mathcal{U}(t)) \leq C\mathcal E(\mathcal{U}(t))+C\|Q(\beta_{t-T}\s)\varphi(t)\|^2 .
  \end{align*}
  Hence
  \begin{align}\label{e.060434}
      \|\mathcal{U}(T)\|_X^2
      \leq C e^{CT}\int_0^T\|Q(\beta_{t-T}\s)\|_{\mathcal L(\Rb^{\mathbf{d}},H)}^2\|\varphi(t)\|_{\Rb^{\mathbf{d}}}^2dt
      \leq C_{R,T}(\s)J_{U_0,T}^{\beta_{-T}\s}(\varphi),
  \end{align}
  where one may take $C_{R,T}(\s)=2Ce^{CT}Q_{0,\infty}^2.$ Thus \eqref{e.060434}  implies \eqref{e.070101}.
\end{proof}

Next we verify the compactness of level sets.  Because the damped wave equation has no parabolic smoothing, this step uses the additional compact-tail information encoded in $Q_{1,\infty}<\infty$ and $\|F\|_1^2\in L^1(\Sigma,m)$.

\begin{lem}
  For $m$-a.e. $\sigma\in\Sigma$ and every $M>0$, the level set
  \begin{align*}
  K_M(\sigma)=\{E_{\Ac(\sigma)}\leq M\}
\end{align*}
  is compact in $X$.
\end{lem}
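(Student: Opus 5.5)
We follow the structure of Lemma \ref{l.070301}. We bound the reachable set
\[
\mathcal R_M(\sigma):=\Bigl\{\Phi_{0,T,\beta_{-T}\sigma}^{\varphi}U_0:\ T>0,\ U_0\in\Ac(\beta_{-T}\sigma),\ \tfrac12\int_0^T\|\varphi\|_{\Rb^{\mathbf d}}^2\,ds\le M\Bigr\},
\]
show it is precompact in $X$, and then use $K_M(\sigma)\subset\overline{\mathcal R_{M+1}(\sigma)}^{\,X}$ together with closedness of $K_M(\sigma)$. Since there is no parabolic smoothing, precompactness comes from a decomposition $U=U^{(1)}+U^{(2)}$ of each controlled trajectory. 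Here $U^{(1)}$ is a linear damped wave flow carrying the initial datum and decaying exponentially in $X$. The part $U^{(2)}$ is driven by the forcing $F+Q\varphi-\varsigma\sin u$ from zero data and is bounded in the smoother space $X_1:=D(A)\times V$, which embeds compactly in $X$.

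\textbf{Step 1: uniform $X$-bound.} Apply the energy identity for $\mathcal E$ (as in \eqref{e.060410}, deterministic version with control) on $[0,T]$. This gives
\[
\mathcal E(U(T))\le e^{-c_0T}\mathcal E(U_0)+C\int_{-T}^0 e^{c_0 s}\bigl(1+\|F(\beta_s\sigma)\|^2\bigr)ds+CQ_{0,\infty}^2\int_0^T e^{-c_0(T-s)}\|\varphi(s)\|^2ds .
\]
The attractor is tempered, so $\sup_T e^{-c_0T}\sup_{\Ac(\beta_{-T}\sigma)}\|\cdot\|_X^2<\infty$ for $m$-a.e. $\sigma$ by the standard pullback absorbing argument. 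Hence $\sup_{0\le t\le T}\|U(t)\|_X^2\le C_M(\sigma)$ uniformly over admissible paths. The same bound holds at intermediate times when the estimate is taken along the path.

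\textbf{Step 2: splitting.} Let $U^{(1)}=(u^1,u^1_t)$ solve $u^1_{tt}+\alpha u^1_t+Au^1=0$ with $U^{(1)}(0)=U_0$, so that $\|U^{(1)}(T)\|_X^2\le Ce^{-c_0T}\|U_0\|_X^2$. Let $U^{(2)}=U-U^{(1)}$, which solves
\[
u^2_{tt}+\alpha u^2_t+Au^2=F(\beta_{t-T}\sigma)+Q(\beta_{t-T}\sigma)\varphi-\varsigma\sin u,\qquad U^{(2)}(0)=0 .
\]
Apply the $\mathcal E$-energy estimate at the level of $X_1$, i.e.\ test with $A(u^2_t+\rho_*u^2)$. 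This requires the forcing to lie in $V$. We have $\|F\|_1^2\in L^1$ and $\|Q\varphi\|_1\le Q_{1,\infty}\|\varphi\|$. For the nonlinearity, $\|\sin u\|_1\le\|u\|_1\le C_M(\sigma)$ by Step 1, since $\nabla\sin u=\cos u\,\nabla u$ and $\sin 0=0$ on $\partial D$. Then
\[
\|U^{(2)}(T)\|_{X_1}^2\le C\int_0^T e^{-c_0(T-s)}\Bigl(\|F(\beta_{s-T}\sigma)\|_1^2+Q_{1,\infty}^2\|\varphi(s)\|^2+C_M(\sigma)\Bigr)ds\le C_M(\sigma),
\]
again using the Birkhoff/Fubini finiteness of $\int_{-\infty}^0 e^{c_0r}\|F(\beta_r\sigma)\|_1^2\,dr$.

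\textbf{Step 3: conclusion.} By Step 2, $\mathcal R_M(\sigma)$ lies within $X$-distance $\epsilon$ of a bounded subset of $X_1$ for every $\epsilon>0$. Indeed, the $U^{(1)}$ part is small once $T$ is large. To ensure $T$ is large, extend short paths backward by zero-control segments inside the invariant attractor; this changes neither the endpoint nor the action. Bounded subsets of $X_1$ are precompact in $X$, so $\mathcal R_M(\sigma)$ is totally bounded, hence precompact. Closedness of $K_M(\sigma)$ follows from lower semicontinuity of $E_{\Ac(\sigma)}$, which is built into the $\eta\to0$ limit in \eqref{e.073003}: if $x_n\to x$ with $E_{\Ac(\sigma)}(x_n)\le M$, then balls around $x$ contain balls around $x_n$, so $E_{\Ac(\sigma)}(x)\le M$.

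The main obstacle is Step 2, specifically making the $X_1$-estimate for $U^{(2)}$ uniform in $T$ with a deterministic decay rate. The term $2\rho_*(\alpha-\rho_*)\langle Au^2,u^2_t+\rho_*u^2\rangle$ must be absorbed exactly as in the $\mathcal E$-estimate, now at the shifted level using \eqref{e.060405}. The $\sin u$ forcing must be controlled in $V$ using only the $X$-bound from Step 1, which is what the one-derivative chain rule estimate provides.
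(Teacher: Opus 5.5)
Your overall argument is correct, but it follows a different route from the paper, and one claim in Step 1 is false as stated.

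\textbf{Comparison of routes.} The paper stays in the energy space $X$. It applies the damped-wave energy estimate to the high-mode part $P_n^\perp U$, treating $P_n^\perp(F+Q\varphi-\varsigma\sin u)$ as forcing, and uses $\|P_n^\perp g\|\le\lambda_{n+1}^{-1/2}\|g\|_1$ to get a tail estimate uniform over $\mathcal R_M(\sigma)$. Precompactness then follows because the finite-dimensional projections $\Pi_n^X\mathcal R_M(\sigma)$ are bounded. You instead split off an exponentially decaying linear part carrying the initial datum, bound the zero-data remainder in $X_1=D(A)\times V$, and use the compact embedding $X_1\subset X$.

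The ingredients are the same in both: integrability of $\|F\|_1^2$ along the backward orbit, $Q_{1,\infty}<\infty$, the chain-rule bound $\|\sin u\|_1\le\|u\|_1$, a weighted energy bound, and backward extension by zero control inside the invariant attractor. Your version gives slightly more, since $\|P_n^\perp U^{(2)}\|_X\le\lambda_{n+1}^{-1/2}\|U^{(2)}\|_{X_1}$ recovers the paper's tail estimate. The cost is that you need exponential stability of the linear damped-wave flow one regularity level up, which is harmless here because $A$ commutes with the linear part. Your proof that $K_M(\sigma)$ is closed, via the $\eta\to0$ limit, is correct and supplies a step the paper only asserts.

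\textbf{The Step 1 claim.} You assert $\sup_{0\le t\le T}\|U(t)\|_X^2\le C_M(\sigma)$ uniformly in $T$ and in the path. This does not hold in general. Already at $t=0$ it would require $\sup_{T>0}\sup_{\Ac(\beta_{-T}\sigma)}\|\cdot\|_X<\infty$, but temperedness only controls this quantity after multiplication by $e^{-c_0T}$. At an intermediate time $t$, the estimate along the path yields the absorbing radius at the environment $\beta_{t-T}\sigma$. That radius is a stationary, merely integrable quantity, and by ergodicity it is unbounded along typical backward orbits when $\|F\|$ is unbounded on $\Sigma$.

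For example, take $\varsigma=0$ and let $\mathcal S$ be the linear damped-wave semigroup. The attractor is then the single point $a(\tau)=\int_{-\infty}^0\mathcal S(-r)(0,F(\beta_r\tau))\,dr$. The zero-control paths $U(t)=a(\beta_{t-T}\sigma)$ satisfy $\sup_{0\le t\le T}\|U(t)\|_X\to\infty$ as $T\to\infty$ whenever $a$ is not essentially bounded.

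\textbf{The repair.} Step 2 does not need a pointwise-in-time bound. Replace it by the weighted estimate $\int_0^Te^{-c(T-s)}\|u(s)\|_1^2\,ds\le B_M(\sigma)$, uniform in $T$ and in the path. You get this by keeping part of the dissipation in the $\mathcal E$-inequality, which is the content of the paper's \eqref{e.060435}. Choose $c\in(0,c_0)$ no larger than the decay rate in your $X_1$-estimate. Then $\int_0^Te^{-c(T-s)}\|\sin u(s)\|_1^2\,ds\le B_M(\sigma)$, and Step 2 goes through unchanged.

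Similarly, Step 3 needs $e^{-cT}\sup_{\Ac(\beta_{-T}\sigma)}\|\cdot\|_X^2\to0$ as $T\to\infty$, not just boundedness of this quantity. The tempered pullback absorbing bound gives exactly this decay.
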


\begin{proof}
  Let
  \begin{align*}
    \mathcal R_M(\sigma):= \left\{U(T):T>0,\ U(0)\in\Ac(\beta_{-T}\sigma),\ I_{U_0,T}^{\beta_{-T}\sigma}(U)\leq M\right\}.
  \end{align*}
  It suffices to show that $\mathcal R_M(\sigma)$ is precompact in $X$.  Since the attractor is invariant, any path in the definition of $\mathcal R_M(\sigma)$ may be extended backward by a zero-control deterministic segment of arbitrary length.  Thus, when estimating a given endpoint, we may assume that the travel time $T$ is as large as needed.

  We first derive a uniform weighted energy bound.  From the deterministic version of \eqref{e.060410} applied to the controlled equation and from \eqref{e.060404}, one obtains
  \begin{align*}
    \begin{split}
      \frac{d}{d s} \mathcal{E}(U(s))+c_0 \mathcal{E}(U(s)) &\leq C\left(1+\left\|F\left(\beta_{s-T} \sigma\right)\right\|^2+\left\|Q\left(\beta_{s-T} \sigma\right) \varphi(s)\right\|^2\right)\\
      &\leq  C\left(1+\left\|F\left(\beta_{s-T} \sigma\right)\right\|^2+Q_{0, \infty}^2\|\varphi(s)\|_{\mathbb{R}^{\mathbf{d}}}^2\right),
    \end{split}
  \end{align*}
  from which we infer 
  \begin{align*}
    \begin{aligned}
      & \mathcal{E}(U(T))+c_0 \int_0^T e^{-c_0(T-s)} \mathcal{E}(U(s)) d s \\
      & \quad \leq e^{-c_0 T} \mathcal{E}(U(0))+C \int_{-T}^0 e^{c_0 r}\left(1+\left\|F\left(\beta_r \sigma\right)\right\|^2\right) d r+2 C M Q_{0, \infty}^2.
      \end{aligned}
  \end{align*}
  As $U(0) \in \mathcal{A}\left(\beta_{-T} \sigma\right)$, the deterministic pullback absorbing estimate gives
  \begin{align*}
    \sup _{a \in \mathcal{A}(\tau)} \mathcal{E}(a) \leq C\left(1+\int_{-\infty}^0 e^{c_0 r}\left(1+\left\|F\left(\beta_r \tau\right)\right\|^2\right) d r\right).
  \end{align*}
  Taking $\tau=\beta_{-T} \sigma$, we have 
  \begin{align*}
    \begin{aligned}
    e^{-c_0 T} \mathcal{E}(U(0)) & \leq C e^{-c_0 T}+C e^{-c_0 T} \int_{-\infty}^0 e^{c_0 r}\left(1+\left\|F\left(\beta_{r-T} \sigma\right)\right\|^2\right) d r \\
    & \leq C\left(1+\int_{-\infty}^0 e^{c_0 q}\left(1+\left\|F\left(\beta_q \sigma\right)\right\|^2\right) d q\right).
    \end{aligned}
  \end{align*}
  Therefore, one has 
  \begin{align}\label{e.060435}
      \mathcal E(U(T))
      +\int_0^T e^{-c_0(T-s)}\mathcal E(U(s))ds
      \leq B_M(\sigma),
  \end{align}
  where 
  \begin{align*}
    B_M(\sigma)=C\left(1+M Q_{0, \infty}^2+\int_{-\infty}^0 e^{c_0 r}\left[1+\left\|F\left(\beta_r \sigma\right)\right\|^2+\left\|Q\left(\beta_r \sigma\right)\right\|_{\mathcal{L}_2\left(\mathbb{R}^{\mathbf{d}}, H\right)}^2\right] d r\right)<\infty
  \end{align*}
  and the constant $C>0$ does not depend on $\sigma, T, M$ and the controlled path. 

  Let $P_n^\perp:=I-P_n$ and define the phase-space projection $\Pi_{n}^X(u,v):=(P_nu,P_nv)$. 
  We prove that
  \begin{align}\label{e.060436}
      \sup_{Z\in\mathcal R_M(\sigma)}\|(I-\Pi_{n}^X)Z\|_X\longrightarrow0
      \qquad\text{as }n\to\infty .
  \end{align}
  Fix a controlled path $U=(u,u_t)$ and write $U_n^\perp=(P_n^\perp u,P_n^\perp u_t)$.  Applying the same damped-wave energy estimate to the projected equation gives
  \begin{align}\label{e.060437}
      \|U_n^\perp(T)\|_X^2
      &\leq C e^{-c_0T}\|U_n^\perp(0)\|_X^2\nonumber\\
      &\quad +C\int_0^T e^{-c_0(T-s)}
      \Big(\|P_n^\perp F(\beta_{s-T}\sigma)\|^2
      +\|P_n^\perp Q(\beta_{s-T}\sigma)\varphi(s)\|^2
      +\|P_n^\perp\sin u(s)\|^2\Big)ds .
  \end{align}
  Since we may extend the path backward by an arbitrary zero-control segment, the first term in \eqref{e.060437} can be made arbitrarily small uniformly over the reachable endpoint, using the tempered pullback absorbing bound for the attractor.

  For the force term, since $\|F\|_1^2\in L^1(\Sigma,m)$,
  \begin{align*}
      \int_0^T e^{-c_0(T-s)}\|P_n^\perp F(\beta_{s-T}\sigma)\|^2ds
      \leq \frac1{\lambda_{n+1}}
      \int_{-\infty}^0 e^{c_0r}\|F(\beta_r\sigma)\|_1^2dr
      \longrightarrow0 .
  \end{align*}
  For the controlled noise term, \eqref{e.060404} gives
  \begin{align*}
      \int_0^T e^{-c_0(T-s)}\|P_n^\perp Q(\beta_{s-T}\sigma)\varphi(s)\|^2ds
      \leq \frac{Q_{1,\infty}^2}{\lambda_{n+1}}
      \int_0^T\|\varphi(s)\|_{\Rb^{\mathbf{d}}}^2ds
      \leq \frac{2MQ_{1,\infty}^2}{\lambda_{n+1}} .
  \end{align*}
  Finally, the chain rule gives $\sin u\in V$ and $\|\sin u\|_1\leq \|u\|_1$, 
  hence by \eqref{e.060435},
  \begin{align*}
      \int_0^T e^{-c_0(T-s)}\|P_n^\perp\sin u(s)\|^2ds
      \leq \frac1{\lambda_{n+1}}
      \int_0^T e^{-c_0(T-s)}\|u(s)\|_1^2ds
      \leq \frac{B_M(\sigma)}{\lambda_{n+1}} .
  \end{align*}
  Combining these estimates proves \eqref{e.060436}.  Since the projected set $\Pi_{n}^X\mathcal R_M(\sigma)$ is bounded in the finite-dimensional space $P_nV\times P_nH$, it is precompact.  The uniform tail estimate \eqref{e.060436} implies that $\mathcal R_M(\sigma)$ is precompact in $X$.  Therefore $K_M(\sigma)$ is compact by a similar argument as in the proof of Lemma \ref{l.070301}.

\end{proof}

We now verify weak exponential tightness.

\begin{lem}
  Assume \eqref{e.060403} and \eqref{e.060404}.  If $\{\mu_\sigma^\eps\}$ is the stationary family obtained in Theorem \ref{t.060402}, then
  \begin{align*}
  \lim_{R\to\infty}\limsup_{\eps\to0} \eps\ln \mu_\sigma^\eps\big(\{U\in X:\|U\|_X>R\}\big)=-\infty, \qquad m\text{-a.s.}
\end{align*}
\end{lem}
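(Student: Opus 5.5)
I will follow the same scheme used for the Navier--Stokes weak exponential tightness lemma, with the energy $\mathcal E$ replacing $\|u\|^2$ and \eqref{e.060408} converting back to $\|U\|_X$. From \eqref{e.060410} (with $\eps\leq1$ and $\|Q\|_{\mathcal L_2(\Rb^{\mathbf d},H)}^2\leq \mathbf d Q_{0,\infty}^2$) we have
\begin{align*}
d\mathcal E(U(t))+c_0\mathcal E(U(t))dt\leq C\bigl(1+f(\beta_t\sigma)\bigr)dt+dM_t,\qquad d\langle M\rangle_t\leq C\eps Q_{0,\infty}^2\mathcal E(U(t))dt,
\end{align*}
where $f:=\|F\|^2\in L^1(\Sigma,m)$ by \eqref{e.060403}. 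I then take expectations, use stationarity $\mu^\eps_{\beta_{-T}\sigma}P^\eps_{0,T,\beta_{-T}\sigma}=\mu^\eps_\sigma$, and let $T\to\infty$. The first moment $\int \mathcal E\,d\mu_\sigma^\eps$ is finite by Theorem \ref{t.060402}, since the stationary family lies in $\mathcal P_V$ with $V=\mathcal E$, and Lemma \ref{l.042603} bounds the normalized moment. This gives
\begin{align*}
\int_X\mathcal E(U)\,\mu_\sigma^\eps(dU)\leq K_0(\sigma):=C\int_{-\infty}^0e^{c_0r}\bigl(1+f(\beta_r\sigma)\bigr)dr,
\end{align*}
uniformly in $\eps\in(0,1]$. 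Here $K_0\in L^1(\Sigma,m)$ by Fubini. To justify letting $T\to\infty$, I use that the term $e^{-c_0T}\int \mathcal E\, d\mu^\eps_{\beta_{-T}\sigma}$ vanishes along a subsequence of times at which the moment stays bounded; such times exist by recurrence, exactly as in Lemma \ref{l.042603}.

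Next comes the exponential pullback estimate. I choose $a>0$ with $CaQ_{0,\infty}^2\leq c_0/4$, set $\gamma=c_0/2$ and $a_T(t)=ae^{-\gamma(T-t)}$, so that $a_T'-c_0a_T+CQ_{0,\infty}^2a_T^2\leq0$. I apply It\^o's formula to $Z(t)=\exp(a_T(t)\mathcal E(U(t))/\eps)$, use the integrating factor $e^{-\Lambda_T(t)}$, localize, and apply Fatou. This should yield
\begin{align*}
\mathbf E\exp\Bigl(\frac{a\,\mathcal E(\Phi^\eps_{0,T,\beta_{-T}\sigma}U)}{\eps}\Bigr)\leq\exp\Bigl(\frac{ae^{-\gamma T}\mathcal E(U)+L_F(\sigma)}{\eps}+C\Bigr),\qquad L_F(\sigma)=C\int_{-\infty}^0e^{\gamma r}f(\beta_r\sigma)dr .
\end{align*}
Chebyshev's inequality on $\{\|U\|_X>R\}\subset\{\mathcal E>c_ER^2\}$, restricted to initial data with $e^{-\gamma T}\mathcal E(U)\leq c_ER^2/4$, together with Markov's inequality applied to the first moment for the remaining initial data, gives
\begin{align*}
\mu_\sigma^\eps(\|U\|_X>R)\leq\frac{4e^{-\gamma T}}{c_ER^2}K_0(\beta_{-T}\sigma)+\exp\Bigl(-\frac{3ac_ER^2/4-L_F(\sigma)}{\eps}+C\Bigr).
\end{align*}

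The step needing the most care is choosing $T$ depending on $\eps$ so that the polynomial first-moment term also decays at exponential rate $1/\eps$. Since $K_0\in L^1$, Borel--Cantelli gives $K_0(\beta_{-n}\sigma)\leq e^{\eta n}$ for $n\geq N_\eta(\sigma)$. Taking $T=\lceil R^2/\eps\rceil$ then yields
\begin{align*}
\limsup_{\eps\to0}\eps\ln\mu_\sigma^\eps(B_R^c)\leq\max\bigl\{-(\gamma-\eta)R^2,\;L_F(\sigma)-3ac_ER^2/4\bigr\},
\end{align*}
which tends to $-\infty$ as $R\to\infty$ for $\eta<\gamma$. Two points in this step need checking. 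First, the Borel--Cantelli bound is stated on integer times, so $T$ must be an integer. Second, applying Borel--Cantelli needs $\sum_n m(K_0>e^{\eta n})<\infty$; this holds because it is dominated by $\eta^{-1}\int\log^+K_0\,dm\leq\eta^{-1}\int K_0\,dm<\infty$.
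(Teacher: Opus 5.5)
Your proposal is correct and follows the paper's proof essentially step by step: the pullback first-moment bound $K_0$, the time-weighted exponential moment with $a_T(t)=ae^{-\gamma(T-t)}$, the splitting of initial data according to $e^{-\gamma T}\mathcal E(U)\lesssim R^2/4$, and Borel--Cantelli with $T=\lceil R^2/\eps\rceil$; your remarks on passing $T\to\infty$ via recurrence and on summability in Borel--Cantelli supply details the paper leaves implicit. The only blemish is bookkeeping: the constant term in your drift $C(1+f)$ contributes $Ca/(\gamma\eps)$ to $\Lambda_T(T)$, i.e.\ at scale $1/\eps$ and not $O(1)$, so it belongs inside $L_F(\sigma)$ (the paper takes $f=1+\|F\|^2$ and keeps only the It\^o term $C\eps$ for the $O(1)$ constant); this just shifts $L_F(\sigma)$ by a deterministic constant and does not affect the conclusion.
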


\begin{proof}
  The proof is parallel to the Navier--Stokes case, with $\mathcal E$ replacing $\|u\|^2$. Put $f(\sigma):=1+\|F(\sigma)\|^2$ and $\gamma:=\frac{c_0}{2}.$
  By \eqref{e.060404}, one has
  \begin{align*}
    \|Q(\sigma)\|_{\mathcal L_2(\Rb^{\mathbf d},H)}^2\leq \mathbf d Q_{0,\infty}^2,\qquad m\text{-a.s.}.
  \end{align*}
  From \eqref{e.060410},
  \begin{align}\label{e.060438}
    d\mathcal E(U(t))+c_0\mathcal E(U(t))dt\leq C f(\beta_t\sigma)dt+C\eps dt+dM_t,
  \end{align}
  and
  \begin{align}\label{e.060439}
    d\langle M\rangle_t\leq C\eps Q_{0,\infty}^2\mathcal E(U(t))dt.
  \end{align}
  Taking expectations in the pullback form of \eqref{e.060438}, using the stationarity relation and letting the pullback time tend to infinity, we obtain
  \begin{align}\label{e.060440}
    \int_X\mathcal E(U)\mu_\sigma^\eps(dU)\leq K_0(\sigma):=C\int_{-\infty}^0e^{\gamma r}f(\beta_r\sigma)dr,\qquad \eps\in(0,1],
  \end{align}
  where $K_0\in L^1(\Sigma,m)$.

  We next prove an exponential pullback estimate. Choose $a>0$ small enough such that $CaQ_{0,\infty}^2\leq c_0/4$, and define $a_T(t):=ae^{-\gamma(T-t)}$ for $0\leq t\leq T$. Then $a_T(T)=a$, $a_T(0)=ae^{-\gamma T}$, and
  \begin{align}\label{e.060441}
    a_T'(t)-c_0a_T(t)+CQ_{0,\infty}^2a_T(t)^2\leq0.
  \end{align}
  Let
  \begin{align*}
    X(t):=\mathcal E(U(t)),\qquad Z(t):=\exp\left(\frac{a_T(t)X(t)}{\eps}\right).
  \end{align*}
  By Itô's formula, \eqref{e.060438}--\eqref{e.060439} and \eqref{e.060441}, we have
  \begin{align*}
    dZ(t)&\leq Z(t)\left[\frac{Ca_T(t)}{\eps}f(\beta_{t-T}\sigma)+Ca_T(t)\right]dt+Z(t)\frac{a_T(t)}{\eps}dM_t\\
    &=Z(t)d\Lambda_T(t)+Z(t)\frac{a_T(t)}{\eps}dM_t,
  \end{align*}
  where
  \begin{align*}
    \Lambda_T(t):=\frac{C}{\eps}\int_0^t a_T(s)f(\beta_{s-T}\sigma)ds+C\int_0^t a_T(s)ds.
  \end{align*}
  The integration factor gives
  \begin{align*}
    d\left(e^{-\Lambda_T(t)}Z(t)\right)\leq e^{-\Lambda_T(t)}Z(t)\frac{a_T(t)}{\eps}dM_t.
  \end{align*}
  Let $\tau_N$ be a localizing sequence for the stochastic integral on the right. Taking expectation on $[0,T\wedge\tau_N]$, letting $N\to\infty$ and using Fatou's lemma yield
  \begin{align*}
    \mathbf E\left(e^{-\Lambda_T(T)}Z(T)\right)\leq Z(0).
  \end{align*}
  Hence
  \begin{align}\label{e.060442}
    \Eb\exp\left(\frac{a\mathcal E(\Phi_{0,T,\beta_{-T}\sigma}^{\eps}U)}{\eps}\right)\leq \exp\left(\frac{ae^{-\gamma T}\mathcal E(U)}{\eps}+\frac{K_0(\sigma)}{\eps}+C\right).
  \end{align}

  For $R,T>0$, define
  \begin{align*}
    G_{T,R}:=\left\{U\in X:e^{-\gamma T}\mathcal E(U)\leq \frac{R^2}{4}\right\}.
  \end{align*}
  By Chebyshev's inequality and \eqref{e.060442}, for $U\in G_{T,R}$,
  \begin{align}\label{e.060443}
    \mathbf P\left\{\mathcal E(\Phi_{0,T,\beta_{-T}\sigma}^{\eps}U)>R^2\right\}\leq \exp\left(-\frac{3aR^2/4-K_0(\sigma)}{\eps}+C\right).
  \end{align}
  On the other hand, by \eqref{e.060440},
  \begin{align}\label{e.060444}
    \mu_{\beta_{-T}\sigma}^\eps(G_{T,R}^c)\leq \frac{4e^{-\gamma T}}{R^2}\int_X\mathcal E(U)\mu_{\beta_{-T}\sigma}^\eps(dU)\leq \frac{4e^{-\gamma T}}{R^2}K_0(\beta_{-T}\sigma).
  \end{align}
  Therefore, by stationarity, \eqref{e.060443} and \eqref{e.060444},
  \begin{align}\label{e.060445}
    \mu_\sigma^\eps(\mathcal E>R^2)\leq \frac{4e^{-\gamma T}}{R^2}K_0(\beta_{-T}\sigma)+\exp\left(-\frac{3aR^2/4-K_0(\sigma)}{\eps}+C\right).
  \end{align}

  Since $K_0\in L^1(\Sigma,m)$, the Borel--Cantelli lemma implies that, for every $\eta>0$ and $m$-a.e. $\sigma\in\Sigma$, there exists $N_\eta(\sigma)$ such that
  \begin{align}\label{e.060446}
    K_0(\beta_{-n}\sigma)\leq e^{\eta n},\qquad n\geq N_\eta(\sigma).
  \end{align}
  Taking $T=T_{\eps,R}:=\left\lceil R^2/\eps\right\rceil$ and using \eqref{e.060445}--\eqref{e.060446}, we obtain for fixed $R>0$,
  \begin{align*}
    \limsup_{\eps\to0}\eps\ln \mu_\sigma^\eps(\mathcal E>R^2)\leq \max\left\{-(\gamma-\eta)R^2,\ K_0(\sigma)-\frac{3aR^2}{4}\right\}.
  \end{align*}
  Choosing $\eta\in(0,\gamma)$ and letting $R\to\infty$, we get
  \begin{align*}
    \lim_{R\to\infty}\limsup_{\eps\to0}\eps\ln \mu_\sigma^\eps(\mathcal E>R^2)=-\infty,\qquad m\text{-a.s.}.
  \end{align*}
  Finally, by \eqref{e.060408}, there is $c_E>0$ such that $c_E\|U\|_X^2\leq\mathcal E(U)$. Therefore
  \begin{align*}
    \{U\in X:\|U\|_X>R\}\subset\left\{U\in X:\mathcal E(U)>c_E R^2\right\},
  \end{align*}
  and the desired weak exponential tightness follows by replacing $R$ by $\sqrt{c_E}R$ in the preceding estimate.
\end{proof}

\bibliographystyle{amsplain}
\bibliography{ref}

\end{document}